\newtheorem{thm}{Theorem}[section]
\newtheorem{cor}[thm]{Corollary}
\newtheorem{claim}[thm]{Claim}
\newtheorem{fact}[thm]{Fact}
\newtheorem{lemma}[thm]{Lemma}
\newtheorem{prop}[thm]{Proposition}
\theoremstyle{definition}
\newtheorem{definition}[thm]{Definition}
\newtheorem{remark}[thm]{Remark}
\newtheorem{question}[thm]{Question}
\def\rquotient#1#2{%
	\makeatletter
	\raise.3ex\hbox{$#1$}/\lower.3ex\hbox{$#2$}%
	\makeatother
}	
\newcommand{\subjclass}[2][2010]{%
	\let\@oldtitle\@title%
	\gdef\@title{\@oldtitle\footnotetext{#1 \emph{Mathematics subject classification.} #2}}%
}
\newcommand{\keywords}[1]{%
	\let\@@oldtitle\@title%
	\gdef\@title{\@@oldtitle\footnotetext{\emph{Key words and phrases.} #1.}}%
}
\newcommand{\Address}{{% additional braces for segregating \footnotesize
		\bigskip
		\small
		
				\textsc{Institut Montpellierain Alexander Grothendieck, 499-554 Rue du Truel, 34090 Montpellier, France.}\par\nopagebreak
		\textit{E-mail address}: \texttt{anthony.genevois@umontpellier.fr}
\medskip

		\textsc{Department of Mathematics, University of the
			Basque Country UPV/EHU,  Sarriena s/n, 48940 Leioa, Bizkaia, Spain. 
			IKERBASQUE, Basque Foundation for Science, Bilbao, Spain.}\par\nopagebreak
		\textit{E-mail address}: \texttt{anne.lonjou@ehu.eus}
\medskip

		\textsc{Department of Mathematics, 
			ETH Z\"urich,
			Rämistrasse 101, 
			8092 Z\"urich, Switzerland.}\par\nopagebreak
\textit{E-mail address}: \texttt{christian.urech@math.ethz.ch}
\medskip
		
}}
\title{Asymptotically rigid mapping class groups II: strand diagrams and nonpositive curvature}
\date{\today}
\author{Anthony Genevois, Anne Lonjou, and Christian Urech}
\subjclass{Primary 20F65. Secondary 20F67.}
\keywords{Mapping class groups, Thompson groups, CAT(0) cube complexes}
\begin{document}

\maketitle

\begin{abstract}
In this article, we introduce a new family of groups, called \emph{Chambord groups} and constructed from \emph{braided strand diagrams} associated to specific semigroup presentations. It includes the asymptotically rigid mapping class groups previously studied by the authors such as the braided Higman-Thompson groups and the braided Houghton groups. Our main result shows that polycyclic subgroups in Chambord groups are virtually abelian and undistorted. 
\end{abstract}

\tableofcontents

\section{Introduction}

\noindent
The groups $F$, $T$, and $V$ introduced by R. Thompson in the 1960s have a particular place in the history of group theory. First, $T$ and $V$ are the first examples of infinite finitely presented simple groups, and $F$ is the first example of a torsion-free group of type $F_\infty$ that is not of type $F$. But, since then, many groups have been constructed by varying the constructions of $F$, $T$, and $V$; see for instance \cite{HigmanV, Stein, MR1396957, Rover, Nekrashevych, BrinnV, FunarUniversal, DehornoybrV, BrinbrV, FunarKapoudjian, Sim, Monod, QV, MR4009393}. Although all these groups turn out to share similar properties, axiomatising the family of ``Thompson-like groups'' seems difficult; see \cite{Thumann, Witzel} for attempts in this direction. Nowadays, the investigation of Thompson-like groups is a subject on its own. Recent successes include the construction of new examples of non-amenable groups without non-abelian free subgroups \cite{Monod, LM} and the construction of simple groups distinguished by finiteness properties \cite{MR3910073, TwistedThompson}.

\medskip \noindent
In this article, we pursue the study initiated in \cite{GLU} of a particular family of braided Thompson-like groups. Our framework, largely inspired by \cite{FunarKapoudjian} (see the survey \cite{Survey} and the references therein for more background), is the following. Fix a locally finite tree $A$ embedded into the plane in such a way that its vertex-set is {closed and} discrete. The \emph{arboreal surface} $\mathscr{S}(A)$ is the oriented planar surface with boundary obtained by thickening $A$ in the plane. We denote by $\mathscr{S}^\sharp(A)$ the punctured arboreal surface obtained from $\mathscr{S}(A)$ by adding a puncture for each vertex of the tree. Now we fix a \emph{rigid structure} on $\mathscr{S}^\sharp(A)$, i.e. a decomposition into \emph{polygons} by means of a family of pairwise non-intersecting arcs whose endpoints are on the boundary of $\mathscr{S}(A)$ such that each polygon contains exactly one vertex of the underlying tree in its interior and such that each arc crosses once and transversely a unique edge of the tree. See for instance Figure~\ref{Dsharp}. We are interested in a specific subgroup $\mathfrak{mod}(A)$ of the big mapping class group of $\mathscr{S}^\sharp(A)$, corresponding to the (isotopy classes of the) homeomorphisms that send every polygon of the rigid structure to another polygon up to finitely many exceptions (loosely speaking, they preserve the rigid structure ``almost everywhere'').

\medskip \noindent
The main problem addressed in this article is the structure of the subgroups in asymptotically rigid mapping class groups. Our main result in this direction is the following statement:

\begin{thm}\label{thm:BigIntro}
Let $A$ be a locally finite planar tree. A polycyclic subgroup in $\mathfrak{mod}(A)$ is virtually abelian and undistorted in every finitely generated subgroup containing it.
\end{thm}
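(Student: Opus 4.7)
The overall plan, suggested by the paper's title and by the mention of CAT(0) cube complexes in its keywords, is to cubulate the Chambord group $\mathfrak{mod}(A)$: construct from the braided strand diagram calculus a CAT(0) cube complex $X$ on which $\mathfrak{mod}(A)$ acts by combinatorial automorphisms, and then use cubical nonpositive curvature to constrain polycyclic subgroups. Inspired by Farley's cubulation of Thompson's groups, I would expect the vertices of $X$ to correspond to equivalence classes of braided strand diagrams, and the cubes to record families of mutually commuting local moves (splittings, twists, braidings) that can be performed simultaneously; Gromov's link condition should then reduce to a combinatorial check on the underlying semigroup presentation that defines the Chambord formalism.

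With $X$ in hand, I would first analyze how a single element $g \in \mathfrak{mod}(A)$ acts, using Haglund's dichotomy for isometries of a CAT(0) cube complex: either $g$ is combinatorially elliptic (fixing a vertex after a cubical subdivision) or it is combinatorially hyperbolic, translating a combinatorial axis by a positive integer. The next step is to verify, by strand-diagram combinatorics, that every infinite-order element of $\mathfrak{mod}(A)$ falls in the hyperbolic case and that the action is regular enough (``proper on cubes'') to support the arguments below.

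For the main statement, let $H \leq \mathfrak{mod}(A)$ be polycyclic. I would argue by induction on the Hirsch length, invoking at each step the cubical analogue of the Flat Torus Theorem: two commuting combinatorially hyperbolic isometries of a CAT(0) cube complex jointly stabilise a combinatorial flat on which they act by translations. Iterating along the polycyclic series forces $H$ virtually to preserve a combinatorial $\mathbb{Z}^n$-flat and act on it by translations; a kernel computation, together with the regularity of the action, then shows that $H$ is virtually abelian of rank at most $n$. The same invariant flat also provides a quasi-isometric orbit map $H \to X$; composing with the (automatically Lipschitz) orbit map $G \to X$ of any finitely generated overgroup $G \supseteq H$ shows that the inclusion $H \hookrightarrow G$ is itself a quasi-isometric embedding, so $H$ is undistorted in $G$, which gives the second conclusion of the theorem.

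The hardest part will be the cubulation itself: strand-diagram cube complexes are classical for the unbraided Thompson-like setting, but the braided variants introduce a symmetric-group factor at each splitting that complicates the verification of Gromov's link condition, and the semigroup presentations produced by the Chambord formalism need not be the classical associative ones. A secondary difficulty is that the action on $X$ is unlikely to be metrically proper, so I would need to extract just enough ``properness on cubes'' to guarantee that infinite-order elements have positive translation length and that commuting pairs share a combinatorial flat. Both points look like careful combinatorial bookkeeping on braided strand diagrams rather than genuine geometric obstructions.
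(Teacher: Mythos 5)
Your overall frame (cubulate via braided strand diagrams, then constrain polycyclic subgroups through the action) matches the paper, but the core of your argument rests on a false premise: that the action is ``proper enough'' for every infinite-order element to be combinatorially hyperbolic. It is not. The cube-stabilisers of the complex $M(\mathcal{P},w)$ are finite extensions of finitely generated braid groups $\mathcal{B}_{p,q}$ (Theorem~\ref{thm:CCintro}, Lemmas~\ref{lem:Translate} and~\ref{lem:Stab}), so $\mathfrak{mod}(A)$ contains enormous subgroups --- all of $\mathrm{Mod}(\Sigma)$ for each admissible $\Sigma$ --- acting elliptically. There is no ``properness on cubes'' to extract, and your induction on Hirsch length via a cubical Flat Torus Theorem never gets started for a polycyclic subgroup sitting inside a vertex-stabiliser. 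The entire difficulty of the theorem lives in this elliptic part, which your proposal does not address. What the paper actually does is: (i) invoke a structural result for polycyclic actions on CAT(0) cube complexes (Theorem~\ref{thm:PolycyclicCC}, from \cite{CubicalFlat}) saying that, after passing to a finite-index subgroup, the elliptic elements form a normal subgroup $\mathcal{E}$ with free abelian quotient; (ii) show $\mathcal{E}$ fixes a vertex (Lemma~\ref{lem:FixedPoint} plus the height function), hence embeds in a surface mapping class group, where solvable subgroups are virtually abelian \cite{MR726319}; (iii) use a centraliser-versus-normaliser argument in braid groups (Lemma~\ref{lem:Normaliser}) to upgrade the extension to a virtually nilpotent group, and finally rule out non-abelian nilpotence via distortion of cyclic subgroups (Lemma~\ref{lem:NilpotentAbelian}).

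The undistortion claim has the same problem: for a subgroup $H$ fixing a vertex, the orbit map is bounded and detects nothing, so composing orbit maps cannot prove $H\hookrightarrow G$ is a quasi-isometric embedding. The paper instead builds an explicit Lipschitz quasi-retraction $\pi_A$ of $C(\mathcal{P},w)$ onto each braid vertex-stabiliser $\mathcal{B}_A$ by forgetting strands (Proposition~\ref{prop:BraidUndistorted}, whose finiteness claim about subbraids, Claim~\ref{claim:Bfinite}, is the delicate combinatorial point), then combines this with the biautomaticity of braid groups (so their abelian subgroups are undistorted) and a translation-length estimate for the loxodromic part (Lemma~\ref{lem:CCundistorted}, Proposition~\ref{prop:CCundistorted}). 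Your translation-length/orbit-map idea is essentially the loxodromic half of that argument; the elliptic half, which is where the real work is, is missing. To repair your proposal you would need to add: a proof that braid vertex-stabilisers are undistorted in $\mathfrak{mod}(A)$, the known structure theory of solvable subgroups of braid and mapping class groups, and the factorisation result for polycyclic actions on cube complexes.
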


\noindent
This prevents many groups from belonging to our family of asymptotically rigid mapping groups, and even from embedding into one of them. It is worth mentioning that Theorem~\ref{thm:BigIntro} is essentially sharp since there are many examples containing distorted metabelian subgroups (namely, the lamplighter group $\mathbb{Z}\wr \mathbb{Z}$), see Proposition~\ref{prop:Wreath}.

\medskip \noindent
Along the road towards the proof of Theorem~\ref{thm:BigIntro}, we find a few other restrictions on the possible subgroups in asymptotically rigid mapping class groups. Our first by-product is the following statement:

\begin{thm}\label{thm:IntroFW}
Let $A$ be a locally finite planar tree. Every subgroup in $\mathfrak{mod}(A)$ satisfying the property $(FW_\omega)$ must be finite. In particular, this applies to finitely generated torsion groups and to groups satisfying Kazhdan's property $(T)$.
\end{thm}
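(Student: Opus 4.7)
The plan is to exploit the CAT(0) cube complex $X$ on which $\mathfrak{mod}(A)$ acts, whose construction via braided strand diagrams is the combinatorial heart of the paper (``strand diagrams and nonpositive curvature'' in the title). Since property $(FW_\omega)$ is by definition the global fixed-point property for cellular actions on arbitrary-dimensional CAT(0) cube complexes, any subgroup $H \leq \mathfrak{mod}(A)$ satisfying $(FW_\omega)$ fixes a vertex $v \in X$ and hence embeds into $\mathrm{Stab}_{\mathfrak{mod}(A)}(v)$.

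The next step, and the main technical one, is to analyze these vertex stabilizers. In the spirit of the construction, a vertex of $X$ should parametrize combinatorial data of the form of an admissible arc system isolating a finite-type subsurface $\Sigma_v$ of $\mathscr{S}^\sharp(A)$. The stabilizer of such a vertex should then fit into a short exact sequence
\[
1 \longrightarrow K \longrightarrow \mathrm{Stab}_{\mathfrak{mod}(A)}(v) \longrightarrow Q \longrightarrow 1
\]
where $K$ is (a factor of) the classical mapping class group of $\Sigma_v$ and $Q$ is finite. The cleanest scenario is that these vertex stabilizers are in fact finite, yielding the conclusion at once. Should they not be finite, the problem reduces to showing the corresponding statement for mapping class groups of finite-type surfaces, namely that subgroups satisfying $(FW_\omega)$ are finite. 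This follows from the Nielsen--Thurston classification, using that pseudo-Anosov elements act as hyperbolic isometries on suitable CAT(0) cube complex models (ruling out a fixed point), together with an induction on surface complexity for the reducible case.

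The main obstacle is therefore the vertex stabilizer analysis: one needs a sufficiently explicit combinatorial description of $X$ to pin down $\mathrm{Stab}_{\mathfrak{mod}(A)}(v)$ as a small extension of a classical, finite-type object. Assuming the necessary structural information has been extracted in the earlier sections, the two explicit applications follow at once: finitely generated torsion subgroups have $(FW_\omega)$ by Cornulier's fixed-point theorem, and subgroups satisfying Kazhdan's property $(T)$ have $(FW_\omega)$ by the Niblo--Reeves theorem, so both classes of subgroups of $\mathfrak{mod}(A)$ inherit the conclusion of the theorem.
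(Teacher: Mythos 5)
Your overall strategy is the paper's: a subgroup $H$ with $(FW_\omega)$ has bounded orbits on the CAT(0) cube complex $M(\mathcal{P},w)$, hence stabilises a cube, hence fixes a vertex (the paper gets this last upgrade from the fact that each cube has a unique vertex of minimal height and the height function is $C(\mathcal{P},w)$-invariant; note that $(FW_\omega)$ only gives bounded orbits, not a fixed vertex by definition). The vertex stabilisers are then computed explicitly: up to conjugation they are exactly the groups $\mathcal{B}_{p,q}$ of mapping classes of a $p$-punctured disc with $q$ marked boundary points, i.e.\ extensions $1 \to \mathcal{B}_p \to \mathcal{B}_{p,q} \to \mathbb{Z}_q \to 1$, essentially as you guessed. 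So your reduction to vertex stabilisers is sound and matches Sections 4--5 of the paper.

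The genuine gap is in the final step, which is the actual content of the theorem: these stabilisers are infinite, so you are in your ``second scenario,'' and your proposed argument --- Nielsen--Thurston plus ``pseudo-Anosov elements act as hyperbolic isometries on suitable CAT(0) cube complex models'' --- does not hold up. There is no known construction of a CAT(0) cube complex without infinite cubes on which an arbitrary infinite subgroup of a mapping class group acts with unbounded orbits; the standard unbounded actions attached to pseudo-Anosovs (curve complexes, Bestvina--Bromberg--Fujiwara quasi-trees) are not cubical, and even properness of cubical actions of braid groups is open beyond small rank. The paper closes this step with an elementary argument specific to braid groups: by induction on $p$, using Artin's theorem that the strand-forgetting map $P\mathcal{B}_p \twoheadrightarrow P\mathcal{B}_{p-1}$ has free kernel, every nontrivial subgroup of $P\mathcal{B}_p$ surjects onto $\mathbb{Z}$; since $P\mathcal{B}_p$ has finite index in $\mathcal{B}_{p,q}$, every infinite subgroup of $\mathcal{B}_{p,q}$ virtually surjects onto $\mathbb{Z}$, and a virtual surjection onto $\mathbb{Z}$ gives an unbounded action on the line, contradicting $(FW_\omega)$ (which passes to finite-index subgroups). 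Some argument of this kind is needed to finish your proof. Your deduction of the two special cases is fine, except that for finitely generated torsion groups the paper invokes the lemma that a purely elliptic action of a finitely generated group on a CAT(0) cube complex without infinite cubes is elliptic, rather than a fixed-point theorem of Cornulier.
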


\noindent
Here, $(FW_\omega)$ denotes the following fixed-point property: a group $G$ satisfies the property $(FW_\omega)$ if every action by automorphisms of $G$ on a CAT(0) cube complex with no infinite cube necessarily has bounded orbits. This is a particular case of the property $(FW)$ introduced in \cite{GuidoFW}, the latter encompassing Kazhdan's property $(T)$ according to \cite{MR1459140}, but, interestingly, finitely generated torsion groups satisfy the property $(FW_\omega)$ (see Lemma~\ref{lem:FixedPoint} below) but not {necessarily} the property $(FW)$ {(including the Grigorchuk group, because it admits Schreier graphs with at least two ends \cite{MR1841750, MR3027509}; and some Burnside groups \cite{MR3786300})}. 

\medskip \noindent
Our second by-product, {which is also a key step in the proof of Theorem~\ref{thm:BigIntro}}, shows that the natural copies of braid groups in asymptotically rigid mapping class groups are undistorted.

\begin{thm}\label{thm:IntroDistortion}
Let $A$ be a locally finite planar tree. For every admissible subsurface $\Sigma \subset \mathscr{S}^\sharp(A)$, the braid group $\mathrm{Mod}(\Sigma) \leq \mathfrak{mod}(A)$ is undistorted in every finitely generated subgroup containing it.
\end{thm}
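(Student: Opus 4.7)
The plan is to exploit the CAT(0) cube complex $X$ on which $\mathfrak{mod}(A)$ acts, constructed in this paper from braided strand diagrams (as foreshadowed by the subtitle). The strategy is to identify a convex subcomplex $Y_\Sigma \subset X$ on which $\mathrm{Mod}(\Sigma)$ acts properly and cocompactly, and then to deduce undistortion from standard CAT(0) cube complex geometry.

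First, I would associate to the admissible subsurface $\Sigma$ a subcomplex $Y_\Sigma$ of $X$ whose vertices correspond, loosely speaking, to the locus where the strands living outside $\Sigma$ are ``frozen'', so that only the braid action on $\Sigma$ remains visible. Two properties then need to be verified. First, $\mathrm{Mod}(\Sigma)$ should preserve $Y_\Sigma$ and act properly cocompactly on it; a fundamental domain would be parametrised by the finitely many isotopy classes of strand diagrams supported on the polygonal decomposition of $\Sigma$. Second, $Y_\Sigma$ should be convex in $X$, which would follow from a hyperplane criterion: the hyperplanes of $X$ crossing $Y_\Sigma$ are exactly those dual to strand moves happening inside $\Sigma$, and these commute (as hyperplanes) with the moves affecting the complement of $\Sigma$.

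Once convexity of $Y_\Sigma$ is established, the inclusion $Y_\Sigma \hookrightarrow X$ is an isometric embedding for the combinatorial $\ell^1$-metric. Applying the \v{S}varc--Milnor lemma to the geometric action $\mathrm{Mod}(\Sigma) \curvearrowright Y_\Sigma$, orbit maps $\mathrm{Mod}(\Sigma) \to Y_\Sigma \hookrightarrow X$ are quasi-isometric embeddings. For any finitely generated subgroup $H \leq \mathfrak{mod}(A)$ containing $\mathrm{Mod}(\Sigma)$, the orbit map $H \to X$ is Lipschitz, since every generator moves a chosen basepoint of $X$ by a bounded amount. Composing the two, the inclusion $\mathrm{Mod}(\Sigma) \hookrightarrow H$ is a quasi-isometric embedding of word metrics, which is exactly the desired undistortion.

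The main obstacle is the first step: producing $Y_\Sigma$ with the two claimed properties. Cocompactness is delicate because strand diagrams localised on $\Sigma$ can still be related through moves performed outside $\Sigma$, so the enumeration of cells modulo $\mathrm{Mod}(\Sigma)$ has to be controlled with care; convexity in turn demands a sharp description of the hyperplane pattern around $Y_\Sigma$ and of the commutation relations between local and external moves. If a clean convex subcomplex is not available, an alternative would be to build a coarsely Lipschitz retraction $\mathfrak{mod}(A) \to \mathrm{Mod}(\Sigma)$ by restricting mapping classes to $\Sigma$ after a strand-diagram normalisation: any such retraction would likewise imply undistortion of $\mathrm{Mod}(\Sigma)$ in any finitely generated subgroup containing it.
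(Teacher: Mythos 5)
Your main route cannot work, for a structural reason. Under the identification of $\mathfrak{mod}(A)$ with a Chambord group $C(\mathcal{P},w)$ (Theorem~\ref{thm:BigChambordMCG}), the braid group $\mathrm{Mod}(\Sigma)$ is precisely a vertex stabiliser of the cube complex $M(\mathcal{P},w)$: by Lemmas~\ref{lem:Translate} and~\ref{lem:Stab} it fixes the vertex $[A,\mathrm{id},\emptyset,\eta(m)]$ associated with the $(\mathcal{P},w)$-forest $A$ encoding $\Sigma$. An infinite group with a global fixed point cannot act properly on any invariant subcomplex through that point, and its orbit maps are bounded, so no convex subcomplex $Y_\Sigma$ with a proper cocompact $\mathrm{Mod}(\Sigma)$-action exists and no \v{S}varc--Milnor argument can produce a quasi-isometric embedding $\mathrm{Mod}(\Sigma)\to M(\mathcal{P},w)$. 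This is not an accident: the complex is built so that all the braiding is swallowed by vertex stabilisers, and its geometry only records the forest expansions and contractions. Indeed the paper's Proposition~\ref{prop:CCundistorted} takes the undistortion of vertex stabilisers as a \emph{hypothesis}; the content of the present theorem is exactly that hypothesis, so it cannot be extracted from the cube complex action itself.

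Your one-sentence fallback --- a coarsely Lipschitz retraction obtained by ``restricting to $\Sigma$ after a strand-diagram normalisation'' --- is in fact the paper's actual proof (Proposition~\ref{prop:BraidUndistorted}): one defines $\pi_A(g)$ by writing $g\equiv(R,\alpha,S)$ with $A$ a prefix of $R$ and keeping only the strands attached to the $\mathcal{P}$-interior vertices of $A$. But as stated your fallback hides the genuine difficulty, which is why this map is Lipschitz on a finitely generated subgroup $G$ containing $\mathrm{Mod}(\Sigma)$. The point is that $\pi_A(gs)$ differs from $\pi_A(g)$ by right multiplication by a subbraid of size $i(A)$ extracted from \emph{some} representative of the generator $s$, and an element has infinitely many representatives obtained by adding dipoles; a priori these could yield arbitrarily long braids. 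The crux of the paper's argument is Claim~\ref{claim:Bfinite}: the set of all size-$k$ subbraids arising from all representatives of a fixed element is finite, proved by showing that strands created by dipole additions are parallel to already existing strands. Only then can one enlarge the generating set of the braid subgroup so as to absorb all these subbraids, making $\pi_A$ restricted to $G$ a Lipschitz quasi-retraction (note that the needed generating set depends on the generating set of $G$, which is why the statement is phrased relative to each finitely generated subgroup). Without an argument for this finiteness, the fallback is a restatement of the goal rather than a proof.
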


\noindent
In the rest of the introduction, we describe in more details the strategy followed in order to prove Theorem~\ref{thm:BigIntro}.

\paragraph{Chambord groups.} First of all, inspired by the description of Thompson's groups as pairs of trees (see Section~\ref{section:warmup}), we develop a diagrammatic description of asymptotically rigid mapping class groups. More precisely, to every \emph{arboreal} semigroup presentation $\mathcal{P}= \langle \mathcal{A} \mid \mathcal{R} \rangle$ and to every baseword $w \in \mathcal{A}^+$, we construct the \emph{Chambord group}\footnote{The terminology refers to the \emph{ch\^ateau de Chambord}, a famous castle in France with a double-spiral staircase, similar to the wires turning around their cylinders in our diagrams.} $C(\mathcal{P},w)$ in terms of \emph{braided strand diagrams}. Then, we prove that asymptotically rigid mapping class groups are Chambord groups:

\begin{thm}\label{thm:IntroModChambord}
For every locally finite planar tree $A$, there exist an arboreal semigroup presentation $\mathcal{P}=\langle \mathcal{A} \mid \mathcal{R} \rangle$ and a letter $w \in \mathcal{A}$ such that the asymptotically rigid mapping class group $\mathfrak{mod}(A)$ is isomorphic to the Chambord group $C(\mathcal{P},w)$. 
\end{thm}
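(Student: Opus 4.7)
The plan is to read off the data $(\mathcal{P},w)$ from the combinatorics of the rigid structure on $\mathscr{S}^\sharp(A)$ and then to match asymptotically rigid homeomorphisms with braided strand diagrams.

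First, I would build the presentation from $A$ as follows. Because $A$ is locally finite, each vertex has finitely many incident edges with a well-defined cyclic ordering induced by the planar embedding. Classify the vertices of $A$ (equivalently, the polygons of the rigid structure) into \emph{local types} according to this ordered-star data, propagated down the tree as far as needed to close up the recursion; introduce one letter in $\Sigma$ for each local type and declare $w$ to be the letter associated with a chosen root polygon. For each letter $a$ representing a polygon with $k$ boundary arcs, add the relation $a = a_1 a_2 \cdots a_k$, where $a_i$ is the letter of the neighbour across the $i$\textsuperscript{th} arc in cyclic planar order. The arborality of $\mathcal{P}$ is automatic because the underlying $1$\nobreakdash-skeleton is the tree $A$, so no expansion ever reintroduces a face.

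Next, I would construct a candidate isomorphism $\Psi \colon \mathfrak{mod}(A) \to C(\mathcal{P},w)$. Any $f \in \mathfrak{mod}(A)$ admits a representative whose non-rigid behaviour is supported on some admissible subsurface $\Sigma_\text{in}$ with image $\Sigma_\text{out} = f(\Sigma_\text{in})$. Translate each of $\Sigma_\text{in}$ and $\Sigma_\text{out}$ into a derivation tree above $w$ using the relations of $\mathcal{R}$, and record the restriction of $f$ between them as a collection of braided strands joining the polygons of $\Sigma_\text{in}$ to those of $\Sigma_\text{out}$, with braiding data extracted from the action of $f$ on a fixed collar of each polygon boundary. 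This yields an element of $C(\mathcal{P},w)$.

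I would then verify that $\Psi$ is a group isomorphism. Independence of the choice of representative reduces to the fact that enlarging $\Sigma_\text{in}$ to an admissible refinement corresponds to an expansion move on the strand diagram, and that isotopic homeomorphisms produce isotopic braids; the homomorphism property follows because composition in $\mathfrak{mod}(A)$ is computed by passing to a common admissible refinement of two representatives, which is exactly the stacking-and-reducing procedure for strand diagrams. Surjectivity is obtained by realising any braided strand diagram geometrically: its endpoints specify admissible subsurfaces $\Sigma_\text{in}, \Sigma_\text{out}$ and its strands specify a homeomorphism supported in a neighbourhood of $\Sigma_\text{in} \cup \Sigma_\text{out}$. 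Injectivity follows because a diagram in the kernel reduces, by expansion moves, to the trivial diagram on $w$, corresponding to the identity mapping class.

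The main obstacle will be the coherent treatment of the braiding data under refinement. Restricting an asymptotically rigid homeomorphism to a polygon yields an element of the mapping class group of a punctured disk with marked boundary arcs, that is, a surface braid group; when one expands that polygon into the admissible subsurface formed by adjoining its neighbours, the braid must be rewritten as a braid on the refined boundary arcs in a way compatible with both composition and the $\mathcal{R}$\nobreakdash-expansion moves on strand diagrams. Establishing this compatibility, which will require fixing once and for all canonical collar neighbourhoods of each polygon and canonical identifications between collars of adjacent polygons, is the technical heart of the proof; once it is in place, the rest of the verification is a routine matching of the two definitions of composition.
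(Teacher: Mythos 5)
Your overall strategy is the one the paper follows: read off an arboreal presentation from the rooted combinatorics of $A$, identify admissible subsurfaces with $(\mathcal{P},w)$-forests, and match asymptotically rigid homeomorphisms with braided strand diagrams so that enlarging the support corresponds to adding dipoles (the paper builds the map in the opposite direction, $C(\mathcal{P},w)\to\mathfrak{mod}(A)$, but this is immaterial). Two points, however, need repair. First, your relation $a=a_1\cdots a_k$ ranges over \emph{all} $k$ boundary arcs of the polygon, i.e.\ over all neighbours of the vertex; it must range only over the \emph{children} relative to the chosen root. Otherwise the derivation tree $T(\mathcal{P},w)$ is not isomorphic to $A$: for a bi-infinite line each vertex has two neighbours, your recipe gives $a=aa$, and $T(\mathcal{P},a)$ is the infinite binary tree rather than a ray, so the correspondence between $(\mathcal{P},w)$-forests and admissible subsurfaces collapses. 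Relatedly, ``local types propagated as far as needed to close up the recursion'' need not close up at any finite depth for a general locally finite tree; the correct letter attached to a vertex $x$ is the full isomorphism type of the planar rooted tree of $x$ and all its descendants (the alphabet may be infinite, which is permitted), and this choice is also what makes arboreality hold --- arboreality is a condition on the relations (left sides of length one, and a bijection between left and right sides), not a statement about faces.

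Second, the step you defer as the ``technical heart'' --- coherence of the braiding data under refinement --- is exactly where the paper's work lies, and it is not routine. The paper fixes, for each admissible subsurface $\Sigma$ with $p$ punctures and $q$ frontier arcs, a canonical homeomorphism $\psi_\Sigma:\Sigma\to\mathscr{D}_{p,q}$ (built from a monotone line through the punctures), defines the correspondence by $ (R,\beta,S)\mapsto$ rigid extension of $\psi_R^{-1}\circ\beta^{-1}\circ\psi_S$, and proves invariance under dipole reduction by exhibiting, for each dipole, an extremal disc in $\mathscr{D}_{p,q}$ whose $\psi$-preimage is isotopic to the extremal polygon being removed. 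Without some such explicit normalisation your ``braiding data extracted from the action on a fixed collar'' is not yet a well-defined element of $\mathcal{B}_{p,q}$, and the claim that refinement of $\Sigma_{\mathrm{in}}$ is an expansion move remains unproved. So the architecture is right, but the proposal as written has one genuine error in the construction of $\mathcal{R}$ and leaves the central verification unexecuted.
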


\noindent
{Our proof of Theorem~\ref{thm:BigIntro} is fundamentally based on this description of asymptotically rigid mapping class groups. But the interest of the formalism we introduce goes beyond Theorem~\ref{thm:BigIntro}, and we expect this point of view to be as fruitful as the diagrammatic representation of classical Thompson's groups. In our opinion, Theorem~\ref{thm:IntroModChambord} is the main contribution of the article: the introduction of a compact, two-dimensional, computable representation of asymptotically rigid mapping class groups.}

\medskip
\noindent
This contrasts drastically with the braided Thompson group $\mathrm{br}V$, introduced in \cite{BrinbrV, DehornoybrV}, whose definition already makes use of a diagrammatic representation. Asymptotically rigid mapping class groups are different in nature, and far from any obvious diagrammatic framework, which explains why Theorem 1.4 is surprising.

\paragraph{Cube complexes.} Inspired by the cubulation of \emph{diagram groups} \cite{MR1978047, MR2136028}, a family of groups also constructed from an extension of the representations of Thompson's groups in terms of pairs of trees \cite{MR1448329, MR1396957}, we construct CAT(0) cube complexes on which Chambord groups act with stabilisers isomorphic to finite extensions of braid groups. 

\begin{thm}\label{thm:CCintro}
Let $\mathcal{P}=\langle \mathcal{A} \mid \mathcal{R} \rangle$ be an arboreal semigroup presentation and $w \in \mathcal{A}^+$ a baseword. There exists a locally finite-dimensional CAT(0) cube complex $M(\mathcal{P},w)$ on which the Chambord group $C(\mathcal{P},w)$ acts with cube-stabilisers isomorphic to finite extensions of finitely generated braid groups.
\end{thm}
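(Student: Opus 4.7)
The plan is to adapt the Farley--Guba--Sapir cubulation of diagram groups \cite{MR1978047, MR2136028} to the braided setting. Define the vertices of $M(\mathcal{P},w)$ as equivalence classes of ``upper halves'' of braided strand diagrams with source $w$: pairs $(D,u)$ where $D : w \to u$ is a braided strand diagram, with two pairs identified when they differ by post-composing $D$ with a braiding of the target word $u$. Intuitively, a vertex records the combinatorial splitting pattern imposed on $w$ by $D$ while forgetting the final braiding of the resulting strands. Two vertices are joined by an edge precisely when they differ by one application of a relation from $\mathcal{R}$, and a set of $k$ such edges sharing a common vertex spans a $k$-cube whenever the relations involved are applied at pairwise disjoint positions of the intermediate word (hence commute).

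The Chambord group $C(\mathcal{P},w)$ acts on $M(\mathcal{P},w)$ by pre-composition on the source $w$, and this is well-defined on equivalence classes since pre-composition commutes with post-multiplication by braidings of the target. The stabiliser of a vertex $(D,u)$ then consists of elements of $C(\mathcal{P},w)$ of the form $D^{-1} \circ \beta \circ D$ where $\beta$ is a braiding of $u$; it is therefore isomorphic to the braiding group of the strands of $u$, which is a finite extension of the braid group $B_{|u|}$ on $|u|$ strands (the finite part coming from the permutation of endpoints). The stabiliser of a higher-dimensional cube consists of those braidings that respect the chosen collection of dipole positions, which is again a finite extension of a finitely generated braid group on the strands unaffected by the selected relations. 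To prove the complex is CAT(0), I would first establish its contractibility by contracting along reductions (any two disjoint reductions commute, so the contraction is well-defined), and then verify Gromov's link condition by noting that any three pairwise compatible relations can be applied simultaneously, yielding the flag condition on links. Local finite-dimensionality follows from the bound that at most $\lfloor n/\ell \rfloor$ pairwise disjoint dipoles can coexist in a word of length $n$, where $\ell$ is the minimum length of a left-hand side among the relations of $\mathcal{R}$.

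The main obstacle, I expect, lies in the rigorous treatment of the braiding equivalence used to define vertices, and in pinning down the stabilisers exactly. In the unbraided Farley--Guba--Sapir situation, every diagram admits a canonical pair-of-trees representative and vertex stabilisers are typically trivial; in our setting, braiding contributes genuine non-trivial elements to stabilisers, so one must verify that pre-composition does not collapse distinct equivalence classes while producing stabilisers of exactly the claimed form. The arboreal hypothesis on $\mathcal{P}$ should play a crucial role here, guaranteeing that every reachable target word $u$ has finite length and that the braid groups involved remain finitely generated, so that the local structure of $M(\mathcal{P},w)$ is controlled even though the complex itself is typically infinite-dimensional globally.
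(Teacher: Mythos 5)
Your overall strategy --- adapt the Farley cubulation, act by pre-composition, identify stabilisers with braid groups, verify a local CAT(0) criterion --- is the one the paper follows, but the definition of your vertex set has a genuine gap that the rest of the argument inherits. Identifying two upper halves $D:w\to u$ whenever they differ by post-composition with \emph{any} braiding of the target erases all braid data: every class has a representative with trivial braid, so your vertices are just the $(\mathcal{P},w)$-forests, i.e.\ the vertices of the Farley complex of the underlying diagram group. Pre-composition by a closed diagram $(R,\alpha,S)$ then does not preserve the set of upper halves (matching $S$ with the top forest of $D$ forces carets, hence interior vertices, into the bottom of $D$), and the only way back --- deleting those interior vertices and again quotienting by all braidings --- makes the action factor through the forgetful quotient $C(\mathcal{P},w)\twoheadrightarrow D_a^b(\mathcal{P},w)$. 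Every vertex stabiliser then contains the kernel $B_\infty$, the infinitely generated group of finitely supported braids on infinitely many strands, so the claimed structure of stabilisers fails. (Relatedly, your description of a stabiliser as ``the braiding group of $u$'', a finite extension of $B_{|u|}$, conflates the wires attached to leaves, which lie on the boundary of the disc and contribute only a finite cyclic quotient, with the strands attached to interior vertices, which are what actually braid: the correct group is $\mathcal{B}_{p,q}$ with $p$ the number of interior vertices, not $|u|$.)

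The paper's construction differs at exactly this point: its vertices are \emph{clopen} diagrams $(A,\beta,\sigma,B)$ whose bottom forest $B$ may retain interior vertices, taken up to dipole reduction and up to the \emph{isocephalese} relation, which only quotients by braids fixing the strands still attached to interior vertices of $B$; edges add or remove one such interior vertex. The retained vertices and the residual braid data are exactly what make the action by pre-composition well defined and what make vertex stabilisers the finitely generated groups $\mathcal{B}_{i(A),\ell(A)}$ (Lemmas~\ref{lem:Translate} and~\ref{lem:Stab}); cube stabilisers then fix the unique vertex of minimal height in the cube. Finally, on CAT(0)ness: Gromov's link condition presupposes that links are simplicial complexes, which is not automatic here, so the paper instead verifies simple connectivity via a height function, the $3$-cube condition (which must handle mixed configurations of top and bottom expansions/contractions, not only simultaneous applications of disjoint relations), and $K_{3,2}$-freeness of the one-skeleton.
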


\noindent
Observe that Theorem~\ref{thm:IntroFW} follows almost immediately from Theorem~\ref{thm:CCintro}. Nevertheless, we emphasize that the key point in Theorem~\ref{thm:CCintro} is the actual construction of the cube complex. It is because its structure is tightly connected to the structure of the corresponding Chambord group that we are able to deduce valuable information on the structure of subgroups.

\medskip \noindent
It is worth noticing that, in our previous work \cite{GLU}, we already constructed contractible cube complexes on which asymptotically rigid mapping class groups act. However, it may or may not be CAT(0). For instance, it turns out to be CAT(0) for the braided Ptolemy-Thompson groups $\mathrm{br}T_n$, but it turns out not to be CAT(0) for the braided Houghton groups (as shown in \cite[Section~3.3]{GLU}) or some braided Higman-Thompson groups. The main obstruction comes from the absence of polygons with no puncture in the rigid structure. In the particular case of the braided Houghton and Higman-Thompson groups, one can overcome the difficulty by modifying artificially the planar trees and adding a ``ghost vertex'' that does not contribute to the puncture of the planar surface. Then, the construction of \cite{GLU} can adapted in a straightforward way, leading to a CAT(0) cube complex. Unfortunately, it seems unlikely that such a trick can be applied in full generality. For instance, difficulties already appear with the tree obtained from the right tree given by Figure~\ref{Graphes} by subdividing all the horizontal edges. This is why a new approach was needed, and why the formalism of braided strand diagrams is fundamental in our work.

\paragraph{Sketch of proof.} Let $A$ be a locally finite planar tree. Given an admissible subsurface $\Sigma \subset \mathscr{S}^\sharp(A)$, we exploit the description of $\mathfrak{mod}(A)$ as a Chambord group in order to define a natural projection $\mathfrak{mod}(A) \to \mathrm{Mod}(\Sigma)$. It turns out that the restriction of this projection to any finitely generated subgroup in $\mathfrak{mod}(A)$ induces a quasi-retraction, proving that $\mathrm{Mod}(\Sigma)$ is undistorted in $\mathfrak{mod}(A)$, as claimed by Theorem~\ref{thm:IntroDistortion}. Thus, $\mathfrak{mod}(A)$ acts on a CAT(0) cube complex with undistorted vertex-stabilisers. Because we know that abelian subgroups are undistorted in braid groups, we conclude from a general argument (see Proposition~\ref{prop:CCundistorted}) that finitely generated abelian subgroups in $\mathfrak{mod}(A)$ are undistorted, proving the second part of Theorem~\ref{thm:BigIntro}.

\medskip \noindent
Next, let $H \leq \mathfrak{mod}(A)$ be a polycyclic subgroup. As a consequence of \cite{CubicalFlat}, we know that, as soon as a polycyclic group acts on a CAT(0) cube complex, necessarily the action factorises (more or less) through an abelian quotient. By exploiting the structure of vertex-stabilisers in our cube complex and by using the fact that polycyclic subgroups in braid groups are virtually abelian, we manage to prove that $H$ must be nilpotent. Because a finitely generated nilpotent group that is not virtually abelian must contain distorted cyclic subgroups, we deduce from the previous paragraph that $H$ must be virtually abelian, concluding the proof of Theorem~\ref{thm:BigIntro}.

\paragraph{Organisation of the article.} The formalism of braided strand diagrams is developed in Section~\ref{section:Chambord}, where Chambord groups are defined. Section~\ref{section:MODA} is dedicated to the proof of Theorem~\ref{thm:IntroModChambord}. The construction of the cube complex from Theorem~\ref{thm:CCintro} is described in Section~\ref{section:CCC}, where it is also proved to be CAT(0). A few direct applications of this construction, including Theorem~\ref{thm:IntroFW}, are presented in Section~\ref{section:Afew}. Theorem~\ref{thm:BigIntro} is finally proved in Section~\ref{section:Polycyclic}. We conclude the article with a short discussion and a few open questions in Section~\ref{section:last}.

\paragraph{Acknowledgments.} The authors would like to thank the anonymous referee for helpful suggestions. The second author was partially supported by the Basque Government grant IT1483-22, by MCIN /AEI /10.13039/501100011033 / FEDER through the Spanish Governement grant PID2022-138719NA-I00, and by the French National grant through the project GOFR ANR-22-CE40-0004.

\section{Chambord groups}\label{section:Chambord}

\subsection{Warm up: Thompson's groups}\label{section:warmup}

\begin{figure}
\begin{center}
\includegraphics[width=\linewidth]{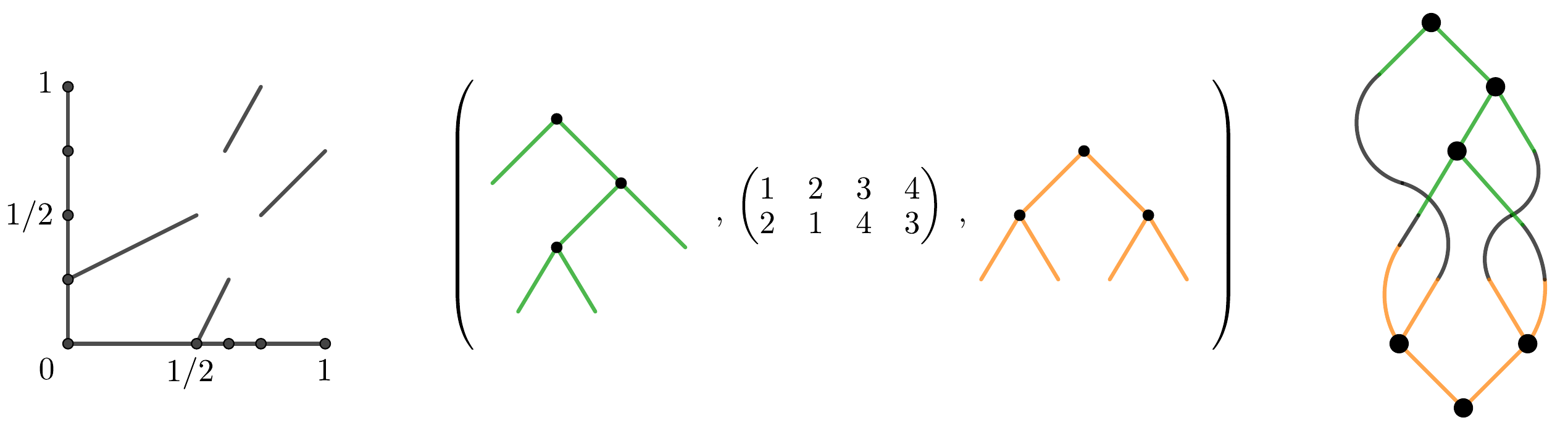}
\caption{Equivalent representations of an element of $V$.}
\label{Thompson}
\end{center}
\end{figure}
\noindent
A \emph{dyadic decomposition} of $[0,1]$ is a collection of intervals $(I_k)$ of the form $\left[ \frac{j}{2^m}, \frac{j+1}{2^m} \right]$ covering $[0,1]$ such that the intersection between any two intervals contains at most one point. Given two dyadic decompositions $\mathfrak{A}, \mathfrak{B}$ of $[0,1]$ and a bijection $\sigma : \mathfrak{A} \to \mathfrak{B}$, the map $\mathfrak{C} \to \mathfrak{C}$ defined on the Cantor set $\mathfrak{C} \subset [0,1]$ by sending $A \cap \mathfrak{C}$ to $\sigma(A) \cap \mathfrak{C}$ through an affine map induces a homeomorphism of $\mathfrak{C}$. \emph{Thompson's group $V$} is the group of the homeomorphisms of $\mathfrak{C}$ that decompose in this way. By adding the restriction that $\sigma$ has to preserve the left-right (resp. cyclic) order of $[0,1]$, one obtains \emph{Thompson's group} $F$ (resp. $T$). 

\medskip \noindent
Because a dyadic decomposition can be encoded as a rooted binary tree, in such a way that its leaves correspond the pieces of the decomposition (see Figure~\ref{Thompson}), an element of $V$ can be written as a triple $(R,\sigma,S)$ where $R,S$ are two rooted binary trees with the same number of vertices and where $\sigma$ is a bijection from the leaves of $R$ to the leaves of $S$. Following \cite{MR1396957, MR2706280}, it is convenient to think of $(R,\sigma,S)$ as a unique object, a \emph{strand diagram}, obtained by placing a reversed copy of $S$ below $R$ and by connecting with strands the leaves of $R$ to the leaves of $S$ according to $\sigma$. See Figure~\ref{Thompson}. 

\medskip \noindent
An element of $V$ can be represented by different strand diagrams. This is clear from the topological point of view: the identity coincides with the homeomorphism $(\mathfrak{C} \cap [0,1/2]) \cup (\mathfrak{C} \cap [1/2,1]) \to (\mathfrak{C} \cap [0,1/2]) \cup (\mathfrak{C} \cap [1/2,1])$ that sends the left interval to the left interval and the right interval to the right interval. In terms of diagrams, such a phenomenon corresponds to a \emph{dipole}, i.e. there exist a vertex $r$ in the top tree and a vertex $s$ in the bottom tree such that the children of $r$ are connected with strands to the children of $s$ from left to right. A dipole can be \emph{reduced} by removing the children of $r$ and $s$ from the top and bottom trees, and by connecting $r$ and $s$ with a strand. See the first equality of Figure~\ref{Product}. Now, an element of $V$ is represented by a unique \emph{reduced} diagram, i.e. a diagram without any dipole. 

\medskip \noindent
The group law of $V$ can be described in terms of diagrams. Let $a,b \in V$ be two elements respectively represented by $(R,\mu,S), (U,\nu,V)$. By adding dipoles, we can represent $a,b$ by $(R',\mu',S'), (U',\nu',V')$ in such a way that $S'=U'$. Then, it is clear that $(R',\nu' \mu',V')$ represents the product $ab$. See Figure~\ref{Product}. In other words, we add dipoles to the two diagrams until the bottom tree of the first diagram coincides with the top tree of the second diagram, then we remove these two trees, and we glue the second diagram below the first one. 

\medskip \noindent
In the rest of the section, our goal is to introduce a new kind of strand diagrams and to construct groups by mimicking what happens for Thompson's groups as described above. 
\begin{figure}
\begin{center}
\includegraphics[width=0.8\linewidth]{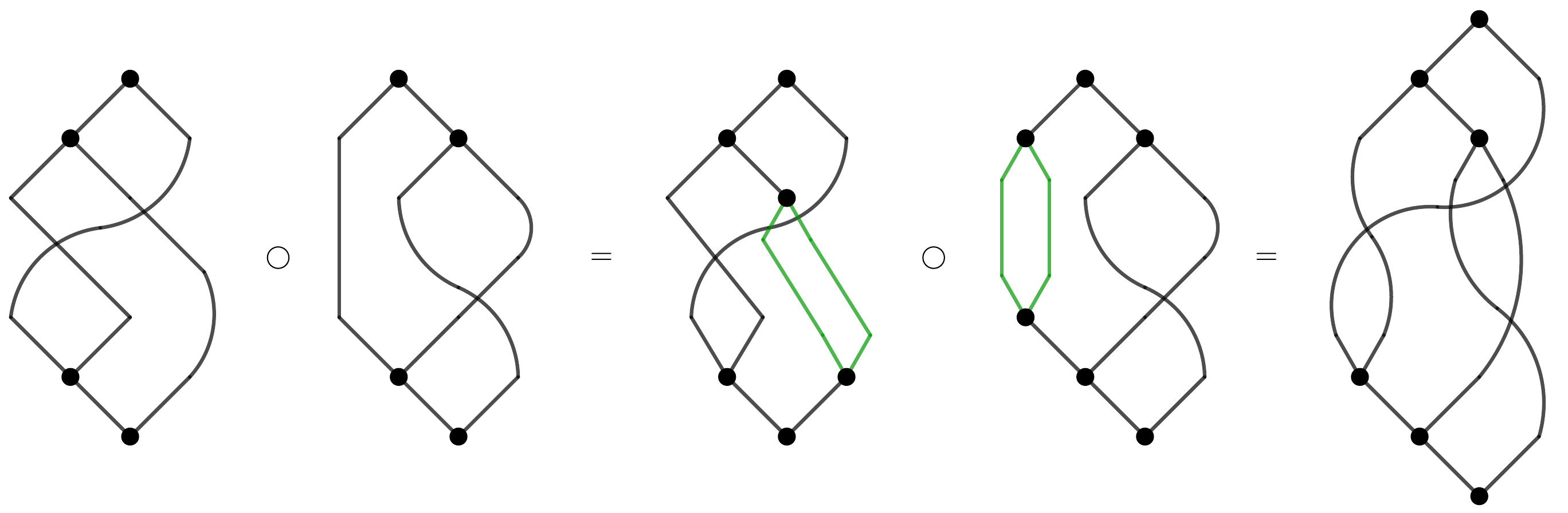}
\caption{A product in $V$.}
\label{Product}
\end{center}
\end{figure}

\subsection{Arboreal semigroup presentations}

\noindent
Recall that a \emph{semigroup presentation} $\langle \mathcal{A} \mid \mathcal{R} \rangle$ is the data of an \emph{alphabet} {$\mathcal{A}$} and a collection $\mathcal{R}$ of \emph{relations} of the form $u=v$ where $u,v$, are non-empty words written over $\mathcal{A}$, i.e. elements of $\mathcal{A}^+$. This section is dedicated to a specific family of semigroup presentations, which will be central in the construction of our groups. 

\begin{definition}
A semigroup presentation $\mathcal{P}= \langle \mathcal{A} \mid \mathcal{R} \rangle$ is \emph{arboreal} if the following conditions hold:
\begin{itemize}
	\item for every relation $u=v$ in $\mathcal{R}$, $u$ has length one;
	\item for all relations $u_1=v_1$ and $u_2=v_2$ in $\mathcal{R}$, $u_1$ and $u_2$ coincide if and only if so do $v_1$ and $v_2$.
\end{itemize}
\end{definition}

\noindent
The terminology is justified by a tight connection between arboreal semigroup presentations and planar labelled forests. More precisely, let $\mathcal{P}= \langle \mathcal{A}\mid \mathcal{R} \rangle$ be an arboreal semigroup presentation. Given a word $w \in \mathcal{A}^+$, we define a $\mathcal{A}$-labelled planar rooted forest $T(\mathcal{P},w)$ as follows:
\begin{itemize}
	\item $T(\mathcal{P},w)$ has $|w|$ roots, labelled from left to right by the letters of $w$;
	\item if $x$ is a vertex of $T(\mathcal{P},w)$ labelled by $u \in \mathcal{A}$ and if the relation $u=v$ belongs to $\mathcal{R}$, then $x$ has $|v|$ children labelled from left to right by the letters of $v$.
\end{itemize}
See Figure \ref{TP} for an example. Notice that $T(\mathcal{P},w)$ is a tree if and only if $w$ has length one. Conversely, let $\mathcal{A}$ be an alphabet and let $T$ be a $\mathcal{A}$-labelled planar locally finite rooted forest such that
\begin{itemize}
	\item the colour of a vertex depends only on the colours of its children (ordered from left to right);
	\item and, conversely, the colours of the children of a vertex $x$ (ordered from left to right) depend only on the colour of $x$.
\end{itemize}
Then $T$ coincides with $T(\langle \mathcal{A} \mid \mathcal{R} \rangle,w)$ where $w$ is the word obtained by reading from left to right the colours of the roots of $T$ and where $\mathcal{R}$ is the set of relations of the form $u=v$ where $u$ is the colour of a vertex $x$ and where $v$ is the word obtained by reading from left to right the colours of the children of $x$. Observe that $\langle \mathcal{A} \mid \mathcal{R} \rangle$ is an arboreal semigroup presentation as a consequence of our assumptions on the $\mathcal{A}$-colouring of $T$. For instance, the coloured tree given by Figure~\ref{TP} coincides with $T(\mathcal{P},a)$ where $\mathcal{P}= \langle a,\textcolor{red}{b}, \textcolor{green}{c} , \textcolor{blue}{d} \mid a = \textcolor{red}{b} \textcolor{green}{c}, \ \textcolor{green}{c} = \textcolor{green}{cc}, \ \textcolor{red}{b}= \textcolor{red}{b} \textcolor{blue}{d}  \rangle$.

\medskip \noindent
{It is worth noticing that, for every planar locally finite rooted forest $F$, there exists an arboreal semigroup presentation $\mathcal{P}= \langle \mathcal{A} \mid \mathcal{R} \rangle$ and a word $w \in \mathcal{A}^+$ such that $F$ is isomorphic to $T(\mathcal{P},w)$. Indeed, let our alphabet $\mathcal{A}$ be the vertex-set $V(F)$ of $F$; let $w$ be the word obtained by reading the roots of $F$ from left to right; and let $\mathcal{R}$ be the set of relations of the form $u=u_1 \cdots u_k$ where $u$ is a vertex of $F$ that is not a leaf and where $u_1, \ldots, u_k$ are its children ordered from left to right. Then map $F \to T(\mathcal{P},w)$ that sends every vertex $u$ of $F$ to the unique vertex of $T(\mathcal{P},w)$ labelled by the letter $u \in \mathcal{A}$ induces a graph isomorphism.}

\medskip \noindent
In the next sections, we will be interested in finite pieces of $T(\mathcal{P},w)$, as formalised by the following definition:
\begin{figure}
\begin{center}
\includegraphics[width=0.5\linewidth]{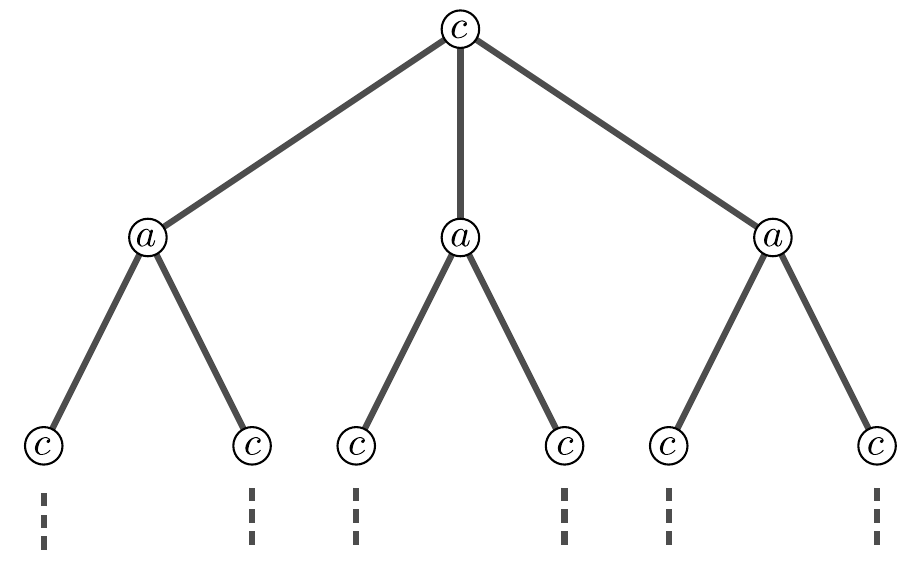} \hspace{1cm}
\includegraphics[width=0.37\linewidth]{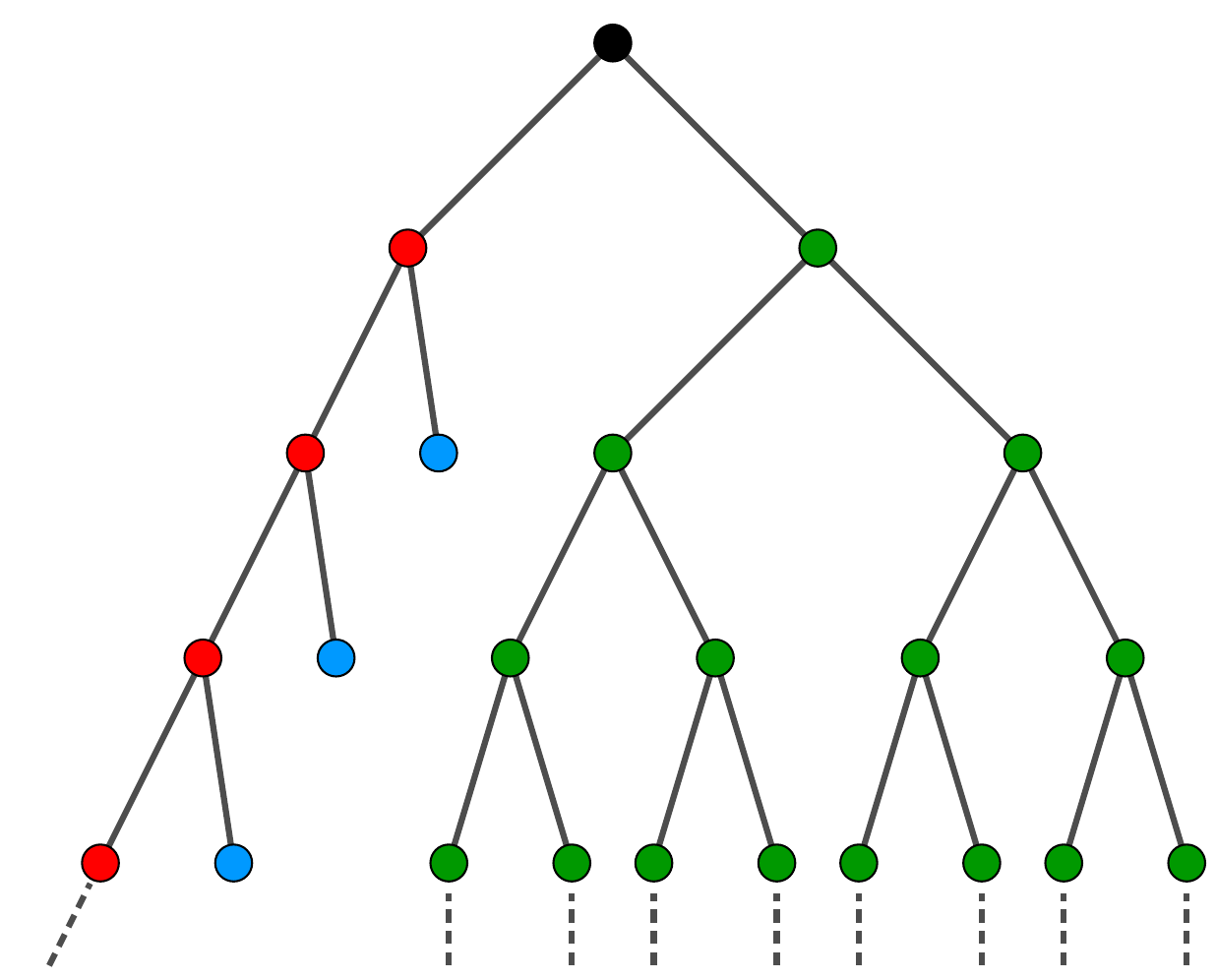}
\caption{On the left, the tree $T(\mathcal{P},c)$ where $\mathcal{P}=\langle a,c \mid a=c^2, c=a^3 \rangle$. On the right, a coloured planar rooted tree.}
\label{TP}
\end{center}
\end{figure}
\begin{definition}
Let $\mathcal{P}= \langle \mathcal{A} \mid \mathcal{R} \rangle$ be an arboreal semigroup presentation. A \emph{$\mathcal{P}$-forest} is a finite planar $\mathcal{A}$-labelled rooted forest $F$ such that, if $x \in F$ is a vertex labelled by a letter $u \in \mathcal{A}$, if $x$ is not a leaf of $F$, and if $u=v$ is a relation in $\mathcal{R}$, then $x$ has $|v|$ children labelled from left to right by the letters of $v$. If $w \in \mathcal{A}^+$ is the word obtained by reading from left to right the labels of the roots of $F$, then $F$ is a \emph{$(\mathcal{P},w)$-forest}.
\end{definition}

\noindent
In other words, a $\mathcal{P}$-forest $F$ is a rooted subforest of $T(\mathcal{P},w)$ such that, if $x$ is a vertex in $F$ having one child in $F$, then all its children in $T(\mathcal{P},w)$ lie in $F$. There is a natural partial order on the $(\mathcal{P},w)$-forests: given two such forests $F_1$ and $F_2$, $F_1$ is a \emph{prefix} of $F_2$ if $F_2$ can be obtained from $F_1$ by adding all the children to leaves iteratively. It may be useful to keep in mind that $(\mathcal{P},w)$-forests are rooted subforests of $T(\mathcal{P},w)$ containing the roots. In particular, the union $F_1 \cup F_2$ of two $(\mathcal{P},w)$-forest $F_1$ and $F_2$ makes sense. Formally, it coincides with the smallest prefix of $T(\mathcal{P},w)$ admitting $F_1$ and $F_2$ as prefixes.

\paragraph{Left-right orders on planar trees.} In all the article, vertices of planar rooted trees are ordered by left-right orders in the following way. First, up to isomorphism (in the category of planar rooted trees), we can always assume that our tree $T$ is drawn on the plane in such a way that, for every vertex $x$ of $T$, the vertical line passing through $x$ separates the leftmost child of $x$ and its descendants from the other children of $x$ and their descendants. Then, we order the vertices of $T$ through the obvious left-right order on the vertical lines of the plane.

\subsection{Braided strand diagrams and dipole reduction}\label{sec:braidedstrand}

\noindent
Before describing the braided strand diagrams we are interested in, we begin with a general discussion about punctured discs.

\medskip \noindent
For all $p,q \geq 0$, let $\mathscr{D}_{p,q}$ be a closed disc with $p$ punctures in its interior and $q$ marked points on its boundary; and let $\mathcal{B}_{p,q}$ denote the group of the orientation-preserving homeomorphisms of $\mathscr{D}_{p,q}$ that preserve (setwise) the punctures and marked points up to isotopy fixing pointwise the punctures and the marked points. As usual, any element of $\mathcal{B}_{p,q}$ can be represented by the data, in the cylinder $\mathscr{D}_{p,q} \times [0,1]$, of $p$ \emph{strands} in $\mathrm{int}(\mathscr{D}_{p,q}) \times [0,1]$ connecting the punctures of $\mathscr{D}_{p,q}\times \{0\}$ to the punctures of $\mathscr{D}_{p,q} \times \{1\}$ and $q$ \emph{wires} in $\partial \mathscr{D}_{p,q} \times [0,1]$ connecting the marked points of $\mathscr{D}_{p,q} \times \{0\}$ to the marked points of $\mathscr{D}_{p,q} \times \{1\}$ preserving the cyclic order on the wires. See Figure~\ref{Disque} for an example. We emphasize that, in $\mathcal{B}_{p,q}$, a full twist along the boundary coincides with a full twist of the punctures, as represented by Figure~\ref{Disque}. Algebraically speaking, this amounts to saying that $\mathcal{B}_{p,q}$ satisfies the short exact sequence
$$1 \to \mathcal{B}_p \to \mathcal{B}_{p,q} \to \mathbb{Z}_q \to 1$$
where $\mathcal{B}_p$ denotes the subgroup of $\mathcal{B}_{p,q}$ corresponding to the homeomorphisms fixing pointwise the boundary, and which coincides with the usual braid group on $p$ strands, and where the morphism $\mathcal{B}_{p,q} \to \mathbb{Z}_q$ is given by the action by cyclic permutations on the marked points. 
\begin{figure}
\begin{center}
\includegraphics[trim={1cm 3cm 1cm 1cm},clip,width=0.45\linewidth]{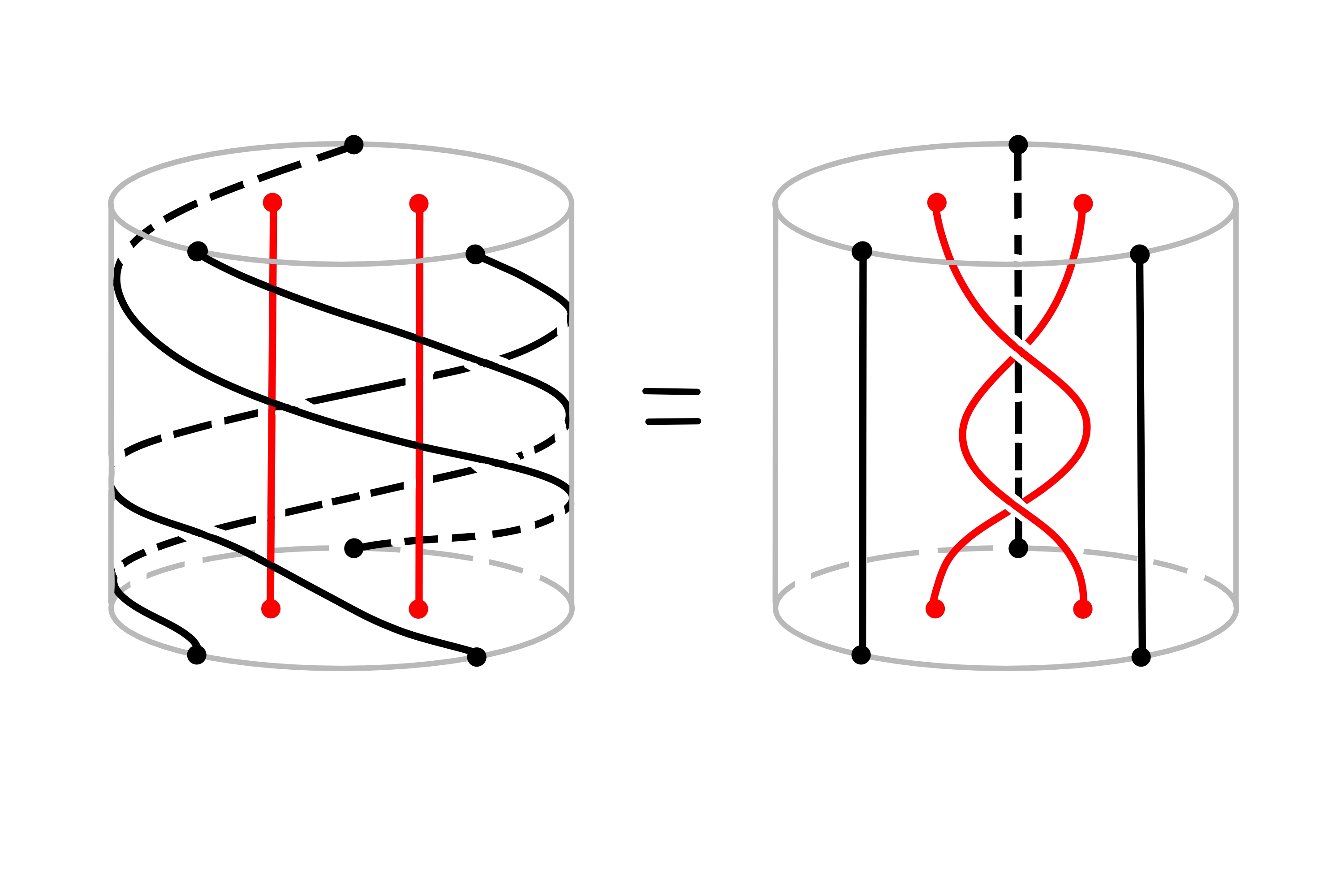}
\includegraphics[trim={0 2cm 2cm 2cm},clip,width=0.45\linewidth]{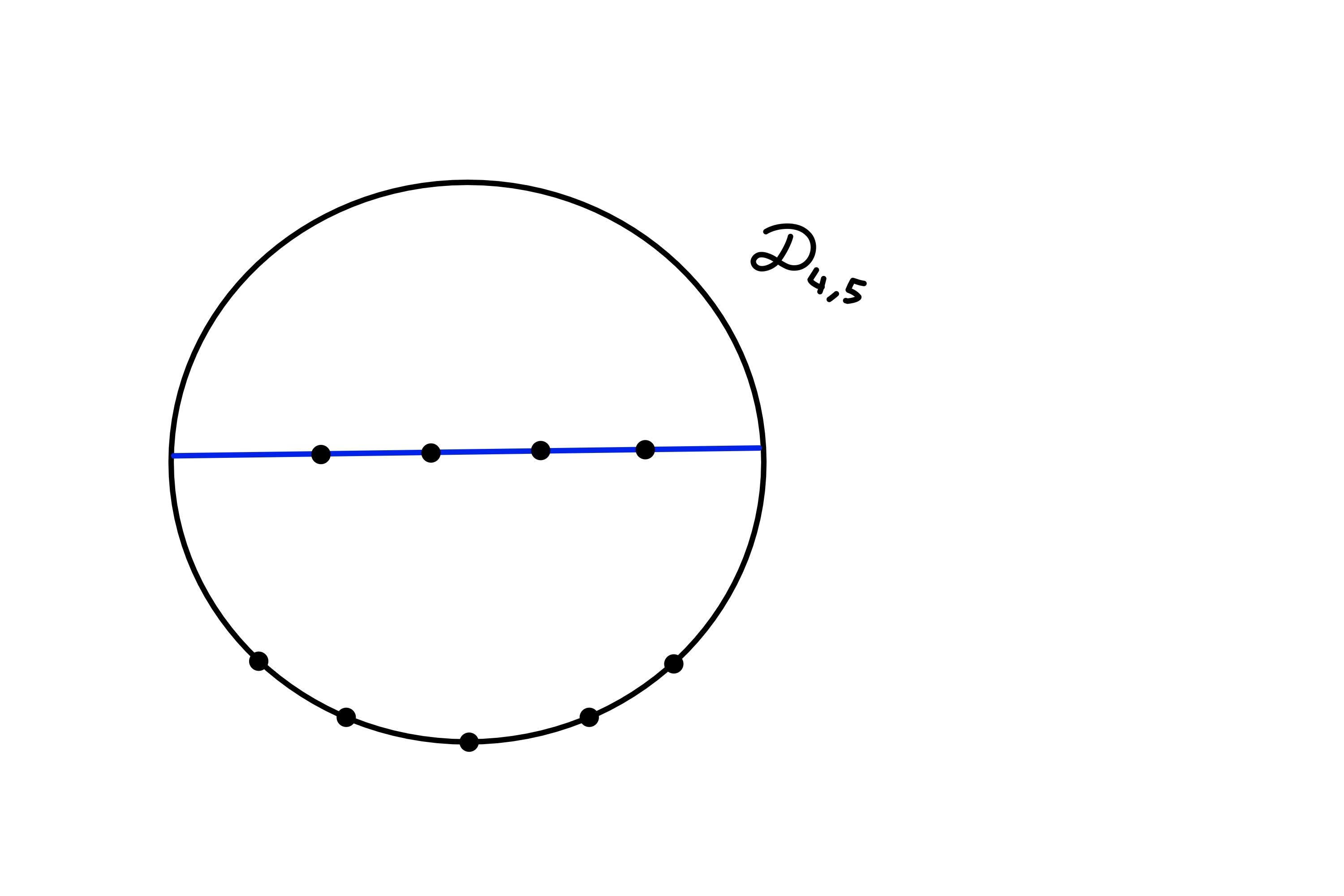}
\caption{An element of $\mathcal{B}_{p,q}$ and a representation of $\mathscr{D}_{4,5}$.}
\label{Disque}
\end{center}
\end{figure}

\medskip \noindent
\textbf{Convention:} In all the article, the punctures of $\mathscr{D}_{p,q}$ will be thought of as lying on the same horizontal line and its marked points as lying below this line; see Figure~\ref{Disque}. 

\medskip \noindent
We are now ready to define our braided strand diagrams. In the rest of this section, we fix an arboreal semigroup presentation $\mathcal{P}=\langle \mathcal{A}\mid \mathcal{R} \rangle$ and a baseword $w \in \mathcal{A}^+$. 

\begin{definition}\label{def:Diag}
A \emph{braided strand diagram over $(\mathcal{P},w)$} is a triple $(A, \beta, B)$ where $A,B$ are two  $(\mathcal{P},w)$-forests, with the same numbers of \emph{$\mathcal{P}$-leaves} $q$ (i.e. vertices that are leaves in $A,B$ but not in $T(\mathcal{P},w)$) and \emph{$\mathcal{P}$-interior vertices} $p$ (i.e. vertices that are not $\mathcal{P}$-leaves), and where $\beta \in \mathcal{B}_{p,q}$ is a braid that induces a bijection from the $\mathcal{P}$-leaves of $A$ to the $\mathcal{P}$-leaves of $B$ preserving the $\mathcal{A}$-labelling.
\end{definition}

\noindent
In this definition, we think of the marked points of $\mathscr{D}_{p,q}$ as being indexed from left to right by the $\mathcal{P}$-leaves of both $A$ and $B$. Therefore, a braid $\beta \in \mathcal{B}_{p,q}$, which induces a permutation of the marked points, naturally defines a bijection from the $\mathcal{P}$-leaves of $A$ to the $\mathcal{P}$-leaves of $B$. In our definition, we require this bijection to preserve the $\mathcal{A}$-labelling. %We emphasize that empty subtrees are allowed, i.e. $(\emptyset, \emptyset,\emptyset)$ is a braided strand diagram.

\medskip \noindent
Graphically, a braided strand diagram $(A,\beta,B)$ can be thought of as follows. Draw a copy of $A$ above a cylinder $\mathscr{D}_{p,q} \times [0,1]$ and connect its $\mathcal{P}$-leaves (resp. its $\mathcal{P}$-interior vertices) to the marked points (resp. the punctures) of $\mathscr{D}_{p,q} \times \{0\}$ from left to right; similarly, draw an inverted copy of $B$ below the cylinder and connect its $\mathcal{P}$-leaves (resp. its $\mathcal{P}$-interior vertices) to the marked points (resp. the punctures) of $\mathscr{D}_{p,q} \times \{1\}$ from left to right. Now, following the given braid $\beta$, connect the marked points of $\mathscr{D}_{p,q} \times \{0\}$ to the marked points of $\mathscr{D}_{p,q} \times \{1\}$ with \emph{wires} in $\partial \mathscr{D}_{p,q} \times [0,1]$ and connect the punctures of $\mathscr{D}_{p,q} \times\{0\}$ to the punctures in $\mathscr{D}_{p,q} \times \{1\}$ with \emph{strands} in the interior of the cylinder $\mathscr{D}_{p,q} \times [0,1]$. See Figure~\ref{BraidDiag}. 
\begin{figure}
\begin{center}
\includegraphics[width=0.85\linewidth]{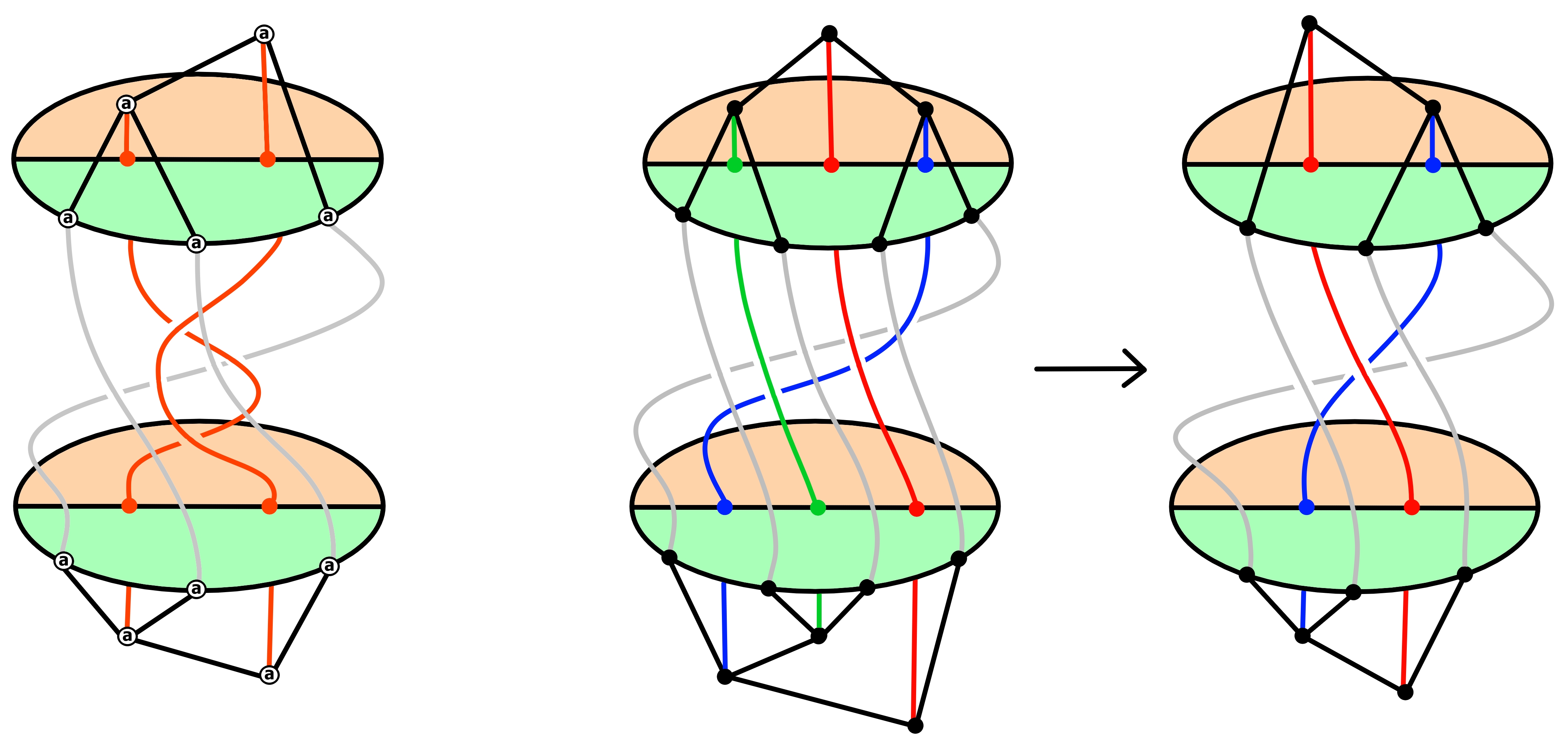}
\caption{On the left, a braided strand diagram over $\left( \langle a \mid a=a^2 \rangle, a \right)$; on the right, a dipole reduction.}
\label{BraidDiag}
\end{center}
\end{figure}

\medskip \noindent
A trivial example of a braided strand diagram over $(\mathcal{P},w)$ is $\epsilon(w):= (\eta(w), \mathrm{id}, \eta(w))$ where $\eta(w)$ denotes the $(\mathcal{P},w)$-forest with no edges. Graphically, it has no strands. This diagram will play a central role in the sequel since it will yield the neutral element of our group.

\begin{definition}
In a braided strand diagram $(A, \beta,B)$, a strand $\mu$ is \emph{parallel} to a wire $\nu$ if the latter is homotopic to the arc $\mu'$ in the cylinder $\mathscr{D}_{p,q} \times [0,1]$ minus the strands distinct from $\mu$, where $\mu'$ is obtained by following a straight line in $\mathscr{D}_{p,q} \times \{0\}$ from the starting point of $\nu$ to the starting point of $\mu$, then $\mu$, and finally a straight line in $\mathscr{D}_{p,q} \times \{1\}$ from the ending point of $\mu$ to the starting point of $\nu$. 
\end{definition}

\noindent
The right part of Figure~\ref{BraidDiag} illustrates a dipole reduction. In the left diagram, the blue strand is parallel to the back wire. Roughly speaking, this is because it can be ``pushed'' to the back wire without crossing the red and green strands. But the blue strand is not parallel to the other wires.

\begin{definition}\label{def:Dipoles}
In a braided strand diagram $(A, \beta, B)$, a \emph{dipole} is the data of two $\mathcal{P}$-interior vertices $a \in A$, $b \in B$ such that:
\begin{itemize}
	\item the children of $a$ and $b$ are $\mathcal{P}$-leaves;
	\item if $\ell_1, \ldots, \ell_k$ denote the children of $a$ written from left to right, then the children of $b$ are $\beta(\ell_1), \ldots, \beta(\ell_k)$ from left to right;
	\item the strand starting from $a$ ends at $b$ and is parallel to the wires connecting the children of $a$ to the children of $b$. 
\end{itemize}
The diagram $(A',\beta',B')$ is obtained from $(A,\beta,B)$ by \emph{reducing the dipole} if $A'=A \backslash \{ \text{children of $a$} \}$, if $B'=B \backslash \{ \text{children of $b$}\}$, and if $\beta' \in \mathcal{B}_{p-1, q-k+1}$ is obtained from $\beta$ by replacing the strand connecting $a,b$ and the wires connecting their children with a single wire. A diagram is \emph{reduced} if it does not contain any dipole.
\end{definition}

\noindent
See Figure~\ref{BraidDiag} for an example of a dipole reduction. Observe that the fact that the green strand passes in front of the blue strand is primordial: if we modify the crossing, letting the blue strand pass in front of the green strand, then the dipole does not exist anymore and the diagram becomes reduced.

\begin{definition}\label{def:equipotent}
Two braided strand diagrams $\Phi,\Psi$ are \emph{equipotent} if there exists a sequence of diagrams 
$$\Delta_1 = \Phi, \ \Delta_2, \ldots, \ \Delta_{k-1}, \ \Delta_k=\Psi$$
such that, for every $1 \leq i \leq k-1$, one of $\Delta_i,\Delta_{i+1}$ is obtained from the other by reducing a dipole. 
\end{definition}

\noindent
In the rest of this section, our goal is to show the following statement:

\begin{prop}\label{prop:reduction}
A braided strand diagram is equipotent to a unique reduced diagram. 
\end{prop}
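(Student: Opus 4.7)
The plan is to invoke Newman's diamond lemma: I will show that dipole reduction is terminating and locally confluent, which together yield a unique normal form per equipotence class.

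For termination, a single dipole reduction strictly decreases $|V(A)|+|V(B)|$, since the reduction removes the $k \ge 1$ children of $a$ from $A$ and the $k$ children of $b$ from $B$ while adding no new vertices to the forests. Hence every chain of reductions starting from a given diagram is finite, which already proves the existence of a reduced form. For local confluence, suppose $(A,\beta,B)$ contains two distinct dipoles $(a_1,b_1)$ and $(a_2,b_2)$. I would first check that they involve disjoint sets of vertices: every $\mathcal{P}$-interior vertex of $A$ is the top endpoint of a unique strand of $\beta$, hence belongs to at most one dipole; the same holds on the $B$-side; and the children of $a_1,a_2$ (resp. $b_1,b_2$) are automatically disjoint because every vertex has a unique parent.

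Next, I would verify that after reducing $(a_1,b_1)$, the pair $(a_2,b_2)$ is still a dipole. The combinatorial conditions (children of $a_2,b_2$ are still $\mathcal{P}$-leaves, and $\beta$ still matches them in order) are untouched, since the forests and braid were only modified locally near $(a_1,b_1)$. For the parallelism condition defining a dipole, the key observation is that the reduction replaces the strand from $a_1$ to $b_1$, together with a family of parallel wires in $\partial \mathscr{D}_{p,q} \times [0,1]$, by a single boundary wire; in particular one strand is removed from the interior of the cylinder. Since the homotopy class of the strand from $a_2$ to $b_2$ is computed in the cylinder minus the other strands, removing a strand can only enlarge that complement, hence can only preserve or merge the relevant homotopy classes. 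Thus the parallelism of $(a_2,b_2)$ survives, and by symmetry so does that of $(a_1,b_1)$ after reducing $(a_2,b_2)$ first. Because the two reductions act on disjoint portions of the forests and of the braid, performing them in either order produces the same resulting diagram, establishing local confluence.

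Newman's lemma then yields global confluence: any two reducing sequences starting from a common diagram can be extended to meet, and in particular every diagram admits a unique reduct. Finally, since equipotence is the equivalence relation generated by reduction, a standard zig-zag argument resolving each peak via confluence shows that all diagrams in a single equipotence class share the same reduced representative, proving uniqueness and completing the proof.

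The main obstacle will be the topological step in the local-confluence argument, namely the careful verification that the parallelism condition is preserved when a disjoint dipole is reduced; this is where one must really use the fact that reduction only \emph{removes} strands from the interior of the cylinder (replacing them by wires on the boundary), so that the ambient space controlling parallelism is enlarged rather than restricted.
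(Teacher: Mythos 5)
Your proposal is correct and follows essentially the same route as the paper: the paper also establishes termination via the decrease of the total vertex count and local confluence by reducing the two dipoles simultaneously and independently, then invokes the terminating-plus-locally-confluent criterion (Lemma~\ref{lem:Confluent}) to conclude. Your additional care about why the parallelism condition survives the reduction of a disjoint dipole is a welcome elaboration of a step the paper treats as immediate ``by construction.''
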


\noindent
Consequently, it makes sense to refer to the \emph{reduced form} of a diagram. The proof of Proposition~\ref{prop:reduction} relies on the following observation:  

\begin{lemma}[\cite{MR7372}]\label{lem:Confluent}
If a directed graph is terminating and locally confluent, then it must be confluent.
\end{lemma}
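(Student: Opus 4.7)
The statement is Newman's diamond lemma, and my plan is the standard well-founded (Noetherian) induction argument made possible by the termination hypothesis.

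Formally, let me write $x \to^* y$ for the reflexive-transitive closure of the directed-edge relation, and call an element $x$ \emph{confluent} if, whenever $x \to^* y$ and $x \to^* z$, there exists $w$ with $y \to^* w$ and $z \to^* w$. The goal is to show every vertex is confluent. Because the graph is terminating, the relation $\to^+$ is well-founded, so it suffices to argue by Noetherian induction: assuming every strict descendant $x'$ of $x$ (i.e.\ $x \to^+ x'$) is confluent, I want to show $x$ is confluent.

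The main body of the induction is a diagram chase. Given $x \to^* y$ and $x \to^* z$, the trivial cases $x=y$ or $x=z$ are immediate, so I may assume the existence of first steps $x \to y_1 \to^* y$ and $x \to z_1 \to^* z$. Local confluence applied to the pair $(y_1, z_1)$ yields some $u$ with $y_1 \to^* u$ and $z_1 \to^* u$. Since $x \to^+ y_1$, the induction hypothesis says $y_1$ is confluent, so applied to the two reductions $y_1 \to^* y$ and $y_1 \to^* u$ it gives $v$ with $y \to^* v$ and $u \to^* v$. Similarly, $z_1$ is confluent by the induction hypothesis, and applied to $z_1 \to^* u \to^* v$ and $z_1 \to^* z$ it produces $w$ with $v \to^* w$ and $z \to^* w$. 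Concatenating, $y \to^* v \to^* w$ and $z \to^* w$, which is the desired common successor.

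The only delicate point is the well-foundedness of the induction itself. Termination of a directed graph, in the sense used here, means that there are no infinite directed paths; this is exactly what is needed to make $\to^+$ a well-founded relation and hence validate Noetherian induction. Beyond this foundational verification, the argument is an elementary diagram chase with no hidden difficulty; I would simply include a figure illustrating the two successive applications of the inductive hypothesis to make the chase transparent to the reader.
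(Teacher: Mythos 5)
Your proof is correct: this is the standard Noetherian-induction proof of Newman's lemma (in the form popularized by Huet), and the diagram chase is carried out properly — in particular, local confluence is applied only to the pair of single edges $x \to y_1$, $x \to z_1$, and the inductive hypothesis only to vertices $y_1, z_1$ with $x \to^+ y_1$ and $x \to^+ z_1$, exactly as required. The paper itself gives no proof of this lemma, citing it to Newman's 1942 article, so there is nothing to compare against; your identification of the well-foundedness of $\to^+$ (equivalent, via concatenation of paths, to the absence of infinite directed paths) as the only delicate point is accurate.
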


Recall that a directed graph $X$ is \emph{terminating} if it has no infinite directed path; it is \emph{confluent} if, for all vertices $a,b,c$ such that $X$ contains directed paths from $a$ to $b$ and $c$, there exists a fourth vertex $d$ such that $X$ contains directed paths from $b$ and $c$ to $d$; and it is \emph{locally confluent} if, for all vertices $a,b,c$ such that $X$ contains oriented edges from $a$ to $b$ and $c$, there exists a fourth vertex $d$ such that $X$ contains directed paths from $b$ and $c$ to $d$.

\begin{proof}[Proof of Proposition \ref{prop:reduction}.]
Fix a diagram $\Delta$ and consider the directed graph $X$ whose vertices are all the diagrams equipotent to $\Delta$ and whose oriented edges connect two diagrams when we can pass from one to the other by reducing a dipole. In order to conclude that there exists a unique reduced diagram equipotent to $\Delta$, it suffices to show that $X$ is confluent. First, notice that $X$ is clearly terminating since reducing a dipole makes the total number of vertices in the forests decrease. Next, let $\Phi,\Psi_1,\Psi_2$ be diagrams equipotent to $\Delta$ such that $\Psi_1,\Psi_2$ are obtained from $\Phi$ by reducing two distinct dipoles. {We claim that one can reduce simultaneously these two dipoles, producing a new diagram $\Psi$ that is the terminal vertex in $X$ of two oriented edges starting from $\Psi_1$ and $\Psi_2$. This will show that $X$ is locally confluent. Concretely,} we can write $\Phi =(A,\beta,B)$, for some $(\mathcal{P},w)$-forests $A,B$ and some braid $\beta \in \mathcal{B}_{p,q}$, and we can find two $\mathcal{P}$-interior vertices $a_1,a_2$ in $A$ (resp. $b_1,b_2$ in $B$) such that:
\begin{itemize}
	\item the children of $a_1,a_2,b_1,b_2$ are $\mathcal{P}$-leaves;
	\item if $m_1, \ldots, m_r$ denote the children of $a_1$ written from left to right, then the children of $b_1$ are $\beta(m_1), \ldots, \beta(m_r)$;
	\item the strand starting from $a_1$ ends at $b_1$ and is parallel to the wires connecting the children of $a_1$ to the children of $b_1$;
	\item $\Psi_1=(A_1,\beta_1,B_1)$ where $A_1:= A \backslash \{\text{children of $a_1$}\}$, where $B_1:= B \backslash \{\text{children of $b_1$}\}$, and where $\beta_1 \in \mathcal{B}_{p-1,q-r+1}$ is obtained from $\beta$ by replacing the strand connecting $a_1,b_1$ and the wires connecting their children with a single wire;
	\item if $n_1, \ldots, n_s$ denote the children of $a_2$ written from left to right, then the children of $b_2$ are $\beta(n_1),\ldots, \beta(n_r)$;
	\item the strand starting from $a_2$ ends at $b_2$ and is parallel to the wires connecting the children of $a_2$ to the children of $b_2$;
	\item $\Psi_2=(A_2,\beta_2,B_2)$ where $A_2:= A \backslash \{\text{children of $a_2$}\}$, where $B_2:= B \backslash \{\text{children of $b_2$}\}$, and where $\beta_2 \in \mathcal{B}_{p-2,q-r+2}$ is obtained from $\beta$ by replacing the strand connecting $a_2,b_2$ and the wires connecting their children with a single wire.
\end{itemize}
Then, set the diagram $\Psi := (A',\beta',B')$ where $A':= A \backslash \{\text{children of $a_1,a_2$}\}$, where $B':= B \backslash \{\text{children of $b_1,b_2$}\}$, and where $\beta'$ is obtained from $\beta$ by replacing the strand connecting $a_1,b_1$ (resp. $a_2,b_2$) and the wires connecting their children with a single wire. In other words, $\Psi$ is obtained from $\Phi$ by reducing our two dipoles simultaneously and independently. Then, by construction, $\Psi$ can be obtained from both $\Psi_1,\Psi_2$ by reducing a dipole, i.e. there exist two oriented edges in our directed graph from $\Psi_1$ and $\Psi_2$ to $\Psi$. 

\medskip \noindent
Thus, we have shown that $X$ is locally confluent, and we conclude that $X$ is confluent thanks to Lemma~\ref{lem:Confluent}, as desired.
\end{proof}

\noindent
\textbf{Notation:} In the rest of the article, given two diagrams $\Delta_1=(A_1,\beta_1,B_1)$ and $\Delta_2 = (A_2, \beta_2,B_2)$, we write $\Delta_1= \Delta_2$ if $A_1=A_2$, $B_1=B_2$ and $\beta_1=\beta_2$; and we write $\Delta_1 \equiv \Delta_2$ if $\Delta_1$ and $\Delta_2$ are equipotent, which amounts to saying, according to Proposition~\ref{prop:reduction}, that they have the same reduction.

\subsection{Group structure on braided strand diagrams}\label{section:GroupLaw}

\noindent
In this section, we fix an arboreal semigroup presentation $\mathcal{P}= \langle \mathcal{A} \mid \mathcal{R} \rangle$ and a baseword $w \in \mathcal{A}^+$. Our goal is to define a group law on the set of braided strand diagrams over $(\mathcal{P},w)$ up to equipotence. We mimic the group law defined on Thompson's groups as described in Section~\ref{section:warmup}. For this purpose, the following lemma will be needed:

\begin{lemma}\label{lem:Bigger}
Let $\Delta= (A, \beta,B)$ be a diagram. For every $(\mathcal{P},w)$-forest $B'$ containing $B$ as a prefix, there exist $A'$ and $\beta'$ such that $\Delta$ is equipotent to $(A',\beta',B')$. Similarly, for every $(\mathcal{P},w)$-forest $A''$ containing $A$ as a prefix, there exist $\beta''$ and $B''$ such that $\Delta$ is equipotent to $(A'', \beta'',B'')$. 
\end{lemma}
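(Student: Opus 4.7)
The plan is to prove both statements by induction on the number of "elementary child-expansions" separating the forests, where an elementary child-expansion consists of turning a single $\mathcal{P}$-leaf into a $\mathcal{P}$-interior vertex by adding all of its children in $T(\mathcal{P},w)$. The two statements are symmetric (exchange top and bottom forests), so I would focus on the first one and then say the second is obtained by the obvious symmetry.

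For the base case $B'=B$, take $A'=A$ and $\beta'=\beta$. For the inductive step it suffices to treat the case where $B'$ is obtained from $B$ by a single elementary child-expansion. Let $\ell$ be the $\mathcal{P}$-leaf of $B$ being expanded, say with label $u \in \Sigma$, and let $u = v$ be the unique relation in $\mathcal{R}$ with left-hand side $u$ (unique because $\mathcal{P}$ is arboreal); write $v = u_1 \cdots u_k$. Then $B'$ is obtained from $B$ by attaching $k$ new $\mathcal{P}$-leaves below $\ell$, labelled $u_1, \ldots, u_k$ from left to right, and $\ell$ becomes a $\mathcal{P}$-interior vertex of $B'$.

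Because $\beta$ is label-preserving, the $\mathcal{P}$-leaf $m := \beta^{-1}(\ell)$ of $A$ also has label $u$. Define $A'$ by attaching $k$ new $\mathcal{P}$-leaves below $m$, labelled $u_1, \ldots, u_k$ from left to right; then $A'$ is a $(\mathcal{P},w)$-forest with one more $\mathcal{P}$-interior vertex and $k-1$ more $\mathcal{P}$-leaves than $A$. Define $\beta' \in \mathcal{B}_{p+1, q+k-1}$ by taking $\beta$ and, in the neighbourhood of the wire that previously joined $m$ to $\ell$, replacing that wire by (i) a strand connecting the new $\mathcal{P}$-interior vertices $m$ (top) and $\ell$ (bottom), drawn parallel to (ii) $k$ new wires connecting the $i$-th child of $m$ to the $i$-th child of $\ell$ for each $i$. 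All other strands and wires of $\beta'$ are chosen to coincide with those of $\beta$. Then $\beta'$ is label-preserving on $\mathcal{P}$-leaves (the original bijection is unchanged, while the new wires send $u_i$ to $u_i$), so $(A',\beta',B')$ is a valid braided strand diagram. By construction, the pair $(m,\ell)$ is a dipole of $(A',\beta',B')$, and reducing it gives back $(A,\beta,B)$; hence $(A',\beta',B') \equiv (A,\beta,B)=\Delta$, completing the inductive step.

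The second statement is proved identically after exchanging the roles of $A$ and $B$: given $A''$ containing $A$ as a prefix, iteratively add dipoles whose top vertex is a $\mathcal{P}$-leaf of the current top forest being expanded and whose bottom vertex is the corresponding $\beta$-image in the current bottom forest, with children of matching labels (again the arboreal hypothesis forces uniqueness of the relation). The only genuine verification, in both cases, is that the modified braid still satisfies the label-preservation requirement of Definition~\ref{def:Diag}; but this is automatic from the fact that the relation $u=v$ determines the labels of the children on both sides, so the $k$ new parallel wires match labels trivially. This is the only place where anything can go wrong, but it is immediate, so I do not expect any real obstacle beyond writing the bookkeeping carefully.
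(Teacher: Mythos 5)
Your proof is correct and follows essentially the same route as the paper: add all the children of the corresponding $\mathcal{P}$-leaf on the other side, replace the single wire by $k$ parallel wires together with a parallel strand, and observe that this creates a dipole whose reduction recovers the original diagram. The only cosmetic difference is that the paper proves the ``expand the top forest'' case directly and deduces the other via the upside-down map $(R,\tau,S)\mapsto(S,\tau^{-1},R)$, whereas you start from the bottom forest; the content is identical.
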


\noindent
Roughly speaking, the lemma {and its proof} explain how to add dipoles in diagrams, instead of reducing them. This construction will be often used in the sequel.

\begin{proof}[Proof of Lemma \ref{lem:Bigger}.]
Let $A''$ be a $(\mathcal{P},w)$-forest obtained from $A$ by adding all the children of a $\mathcal{P}$-leaf $a$. In the diagram $(A,\beta,B)$, to the $\mathcal{P}$-leaf $a$ of $A$ is associated a wire $\mu$, which ends at some $\mathcal{P}$-leaf $b \in B$. Define the diagram $(A'', \beta'',B'')$ where $B'':= B \cup \{ \text{children of $b$}\}$ and where $\beta''$ is obtained from $\beta$ by replacing the single wire $\mu$ with a collection of wires of size the common number of children of $a,b$ (indeed, they have the same number of children since they have the same $\mathcal{A}$-label) and by adding a strand parallel to these wires connecting $a,b$ (which are $\mathcal{P}$-interior vertices respectively in $A,B$). See Figure~\ref{DipoleJ}. The second assertion of our lemma follows by iteration.
\begin{figure}
\begin{center}
\includegraphics[width=0.5\linewidth]{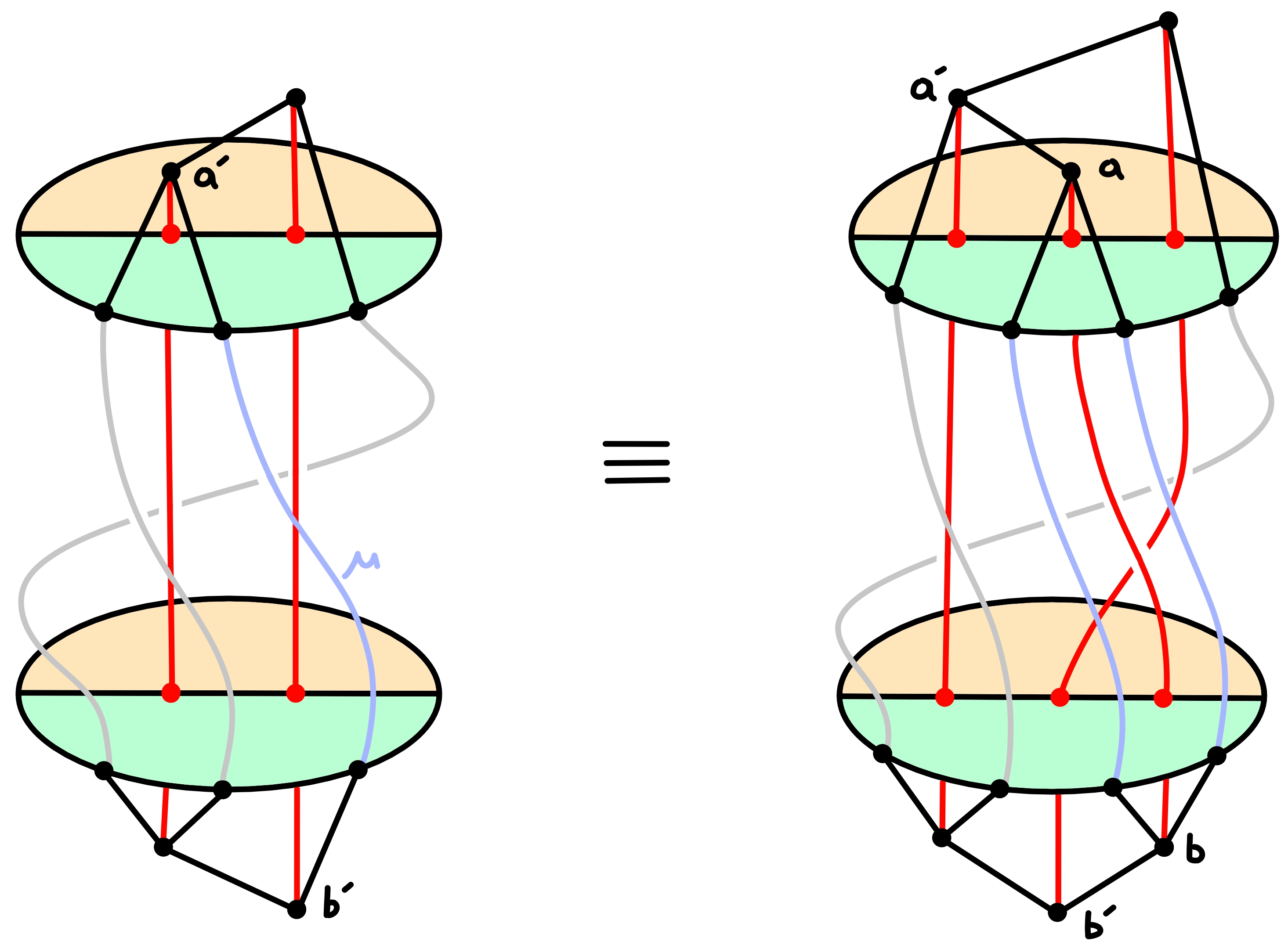}
\caption{Adding a dipole.}
\label{DipoleJ}
\end{center}
\end{figure}

\medskip \noindent
Observe that the map $(R,\tau,S) \mapsto (S,\tau^{-1},R)$, which turns upside down a diagram, sends equipotent diagrams to equipotent diagrams. Therefore, the first assertion of our lemma follows from the second one.
\end{proof}

\noindent
We are now ready to define our product between reduced braided strand diagrams:

\begin{definition}
Let $\Delta_1,\Delta_2$ be two braided strand diagrams over $(\mathcal{P},w)$. As a consequence of Lemma~\ref{lem:Bigger}, there exist $A,B,C$ and $\beta_1,\beta_2$ such that $\Delta_1 \equiv (A, \beta_1, B)$ and $\Delta_2 \equiv (B, \beta_2, C)$. The \emph{concatenation} $\Delta_1 \circ \Delta_2$ is the reduced form of $(A, \beta_2 \beta_1, C)$. 
\end{definition}

\noindent
Of course, it is not clear that our definition does not depend on the choices made in the construction. The next statement shows that this is indeed the case, and, moreover, that the operation we get is a group law.

\begin{prop}\label{prop:GroupLaw}
The concatenation $\circ$ defines a group law on the set of reduced braided strand diagrams over $(\mathcal{P},w)$.
\end{prop}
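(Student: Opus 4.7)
The plan is to verify first that the concatenation $\circ$ is well-defined on equipotence classes, and then to check the three group axioms (identity, inverses, associativity). The main obstacle will be well-definedness: the product depends on the choice of representations $(A, \beta_1, B)$ and $(B, \beta_2, C)$ with a common middle forest $B$, and Lemma~\ref{lem:Bigger} leaves considerable freedom in that choice. Once well-definedness is granted, the group axioms follow routinely by combining Lemma~\ref{lem:Bigger} with associativity of the braid product in $\mathcal{B}_{p,q}$.

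The core step for well-definedness is the following claim: given representations $(A, \beta_1, B)$ of $\Delta_1$ and $(B, \beta_2, C)$ of $\Delta_2$, simultaneously enlarging the common middle at a $\mathcal{P}$-leaf $b \in B$ via Lemma~\ref{lem:Bigger} to produce $(A'', \beta_1'', B'')$ and $(B'', \beta_2'', C'')$ yields an enlarged product $(A'', \beta_2'' \beta_1'', C'')$ equipotent to $(A, \beta_2 \beta_1, C)$. To verify it, let $a \in A$ be the origin of the wire of $\beta_1$ terminating at $b$ and let $c \in C$ be the terminus of the wire of $\beta_2$ starting at $b$. The construction in Lemma~\ref{lem:Bigger} replaces each of these wires with a bundle of parallel wires (one per child of $b$) together with a single strand from $a$ to $b$, respectively from $b$ to $c$. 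In the composition $\beta_2'' \beta_1''$, these two strands concatenate into a single strand from $a \in A''$ to $c \in C''$ parallel to a bundle of wires connecting the children of $a$ to those of $c$, which is precisely a dipole in the sense of Definition~\ref{def:Dipoles}; reducing it restores $(A, \beta_2 \beta_1, C)$. Well-definedness then follows because any two choices of middle forests $B$ and $B'$ can both be enlarged to the common refinement $B \cup B'$ by iterating the claim, and this iteration preserves the equipotence class of the product.

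For the group axioms: the neutral element is $\epsilon(w)$, which admits a representation of the form $(A, \iota_A, A)$ for every $(\mathcal{P},w)$-forest $A$, where $\iota_A$ denotes the trivial braid with straight vertical wires and strands, obtained by iterating Lemma~\ref{lem:Bigger} from $(\eta(w), \mathrm{id}, \eta(w))$; composing $\epsilon(w)$ with any $(A, \beta, B) \equiv \Delta$ therefore recovers $\Delta$. For inverses, one sets $\Delta^{-1} := (B, \beta^{-1}, A)$ whenever $\Delta \equiv (A, \beta, B)$; this is well-defined modulo equipotence by the upside-down symmetry observed at the end of the proof of Lemma~\ref{lem:Bigger}, and the product $(A, \beta^{-1} \beta, A) = (A, \mathrm{id}, A)$ reduces iteratively to $\epsilon(w)$ by collapsing each $\mathcal{P}$-interior vertex of $A$ against its corresponding copy in the lower forest. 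Associativity is obtained by choosing representations of three given elements along a compatible chain of middle forests (again via Lemma~\ref{lem:Bigger} and the common refinement of $(\mathcal{P},w)$-forests) and invoking associativity of the braid composition in $\mathcal{B}_{p,q}$.
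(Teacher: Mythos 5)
Your overall strategy is the one the paper itself follows: show that enlarging the common middle forest changes the product only by the creation of a dipole, deduce well-definedness by passing to a common refinement of the two middle forests, and then check the group axioms routinely. Your core claim about the simultaneous enlargement at a $\mathcal{P}$-leaf $b$ producing a dipole (the two added strands concatenating into a strand from $a$ to $c$ parallel to the bundle of wires joining their children) is correct and is essentially the paper's Claim~\ref{claim:ProductWellDefined}; the verifications of the identity, inverses, and associativity also match the paper's.

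There is, however, a gap in the passage from your core claim to well-definedness. The definition of $\Delta_1\circ\Delta_2$ allows \emph{arbitrary} representatives $(A,\beta_1,B)\equiv\Delta_1$ and $(B,\beta_2,C)\equiv\Delta_2$ with matching middle forest, whereas your claim only controls the \emph{canonical} enlargements produced by the construction of Lemma~\ref{lem:Bigger}. When you enlarge two different pairs of representatives, with middles $B$ and $B'$, up to the common refinement $B\cup B'$, you obtain two representatives of $\Delta_1$ with the same bottom forest (and two representatives of $\Delta_2$ with the same top forest), and your argument silently identifies them; only after that identification do the two products over $B\cup B'$ coincide and the chain of equipotences close. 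What is needed is the rigidity statement that a diagram admits at most one representative with a prescribed bottom (resp.\ top) forest --- equivalently, that adding dipoles so as to reach a prescribed forest yields a result independent of the choices and of the order in which the dipoles are added. This is exactly the paper's Lemma~\ref{lem:Equal}, deduced from the uniqueness of reduced forms (Proposition~\ref{prop:reduction}) together with the commutation of dipole additions, and it is invoked twice in the proof of Claim~\ref{claim:ProductWellDefined} to identify arbitrary representatives with the canonical enlargements. The step is not difficult, but it is a genuinely separate ingredient that your proposal neither states nor proves, and without it the well-definedness argument does not cover all the representatives allowed by the definition of $\circ$.
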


\noindent
We begin by proving the following observation: 

\begin{lemma}\label{lem:Equal}
If two diagrams $(A,\beta,B)$ and $(A',\beta',B)$ are equipotent, then $A=A'$ and $\beta=\beta'$. Similarly, if two diagrams $(A,\beta,B)$ and $(A,\beta',B')$ are equipotent, then $\beta=\beta'$ and $B=B'$. 
\end{lemma}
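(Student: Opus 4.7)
The plan is to reduce both assertions to a single structural fact: once the reduced form of a diagram is fixed, the entire diagram is determined by either its top or its bottom forest. First, I would invoke Proposition~\ref{prop:reduction} to give the two diagrams $(A,\beta,B)$ and $(A',\beta',B)$ a common reduced form $(A_0,\beta_0,B_0)$. Since a reduced diagram admits no dipole to reduce, any equipotence between such a diagram and a non-reduced one can be realised by a sequence consisting only of the dipole \emph{additions} described in the proof of Lemma~\ref{lem:Bigger}. Thus both $(A,\beta,B)$ and $(A',\beta',B)$ are obtained from $(A_0,\beta_0,B_0)$ by successive dipole additions.

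Next I would analyse what a single dipole addition does to a diagram $(X,\gamma,Y)$: as already explained in the proof of Lemma~\ref{lem:Bigger}, expanding a $\mathcal{P}$-leaf $y \in Y$ by adjoining its children forces a canonical change on the top side, since the unique wire of $\gamma$ arriving at $y$ starts at a $\mathcal{P}$-leaf $x \in X$ carrying the same $\Sigma$-label as $y$. The new top forest must then be $X \cup \{\text{children of } x\}$, the new bottom forest is $Y \cup \{\text{children of } y\}$, and the new braid is obtained from $\gamma$ by replacing the wire at $y$ with parallel wires connecting the children of $x$ to those of $y$, together with a strand from $x$ to $y$. Hence the resulting triple is a deterministic function of $(X,\gamma,Y)$ and of the chosen $\mathcal{P}$-leaf $y$ of $Y$. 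Iterating, the bottom forest $B$ already encodes which $\mathcal{P}$-leaves of $B_0$ have been expanded and how far, since this information is read off from the subforest $B \setminus B_0$. By induction on the number of dipole additions, $A$ and $\beta$ depend only on $(A_0,\beta_0,B_0)$ and $B$. Applied to our two diagrams, this yields $A = A'$ and $\beta = \beta'$. The second assertion is then obtained from the first via the involution $(R,\tau,S) \mapsto (S,\tau^{-1},R)$ used at the end of the proof of Lemma~\ref{lem:Bigger}, which preserves equipotence and swaps the roles of the top and bottom forests.

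The main subtlety I anticipate is justifying that the order in which dipoles are added does not affect the final diagram, so that the dipole data is indeed unambiguously encoded by $B \setminus B_0$. This reduces to a local commutativity statement: two dipole additions at distinct $\mathcal{P}$-leaves of the bottom forest commute. The verification is entirely analogous to the simultaneous-reduction argument already carried out in the proof of Proposition~\ref{prop:reduction}, and is the only nontrivial checking step.
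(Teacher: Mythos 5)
Your proposal is correct and follows essentially the same route as the paper: pass to the common reduction, observe that both diagrams are recovered from it by dipole additions, note that each addition is determined by the chosen bottom $\mathcal{P}$-leaf so that $B\setminus B_0$ encodes all the data, invoke commutativity of additions at distinct leaves, and deduce the second assertion by the top-bottom flip. The only difference is cosmetic: you spell out the order-independence step that the paper dismisses as ``clear.''
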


\begin{proof}
Fix two equipotent diagrams $(A,\beta,B)$ and $(A',\beta',B)$. Let $(R,\alpha,S)$ denote their common reduction. Observe that $S$ is a prefix of $B$. Because one can obtain $(R,\alpha,S)$ from $(A,\beta,B)$ and $(A',\beta',B)$ by reducing dipoles, it follows that $(A,\beta,B)$ and $(A',\beta',B)$ can be obtained from $(R,\alpha,S)$ by adding dipoles. Adding such dipoles amounts to adding all the children to some $\mathcal{P}$-leaves of $S$. After finitely many iterations we obtain $B$. But, in the two processes, we add the vertices in different orders. Therefore, in order to conclude that $(A,\beta,B)$ and $(A',\beta',B)$ coincide, it suffices to notice that adding two dipoles in one order or another yields the same diagram, which is clear. 

\medskip \noindent
The second assertion of our lemma is symmetric to the first one.
\end{proof}

\begin{proof}[Proof of Proposition \ref{prop:GroupLaw}.]
Let $\Delta_1=(A_1,\beta_1,B_2)$ and $\Delta_2=(A_2,\beta_2,B_2)$ be two reduced diagrams. As a consequence of Lemma~\ref{lem:Bigger}, there exist $A_1,\beta_1,\beta_2',B_2'$ such that $\Delta_1 \equiv (A_1' ,\beta_1', A_2 \cup B_2)$ and $\Delta_2 \equiv (A_2 \cup B_2, \beta_2', B_2')$. Given $A'',\beta_1'',B'',\beta_2'',C''$ such that $\Delta_1 \equiv (A'',\beta_1'',B'')$ and $\Delta_2 \equiv (B'', \beta_2'',C'')$, we want to prove that $(A_1', \beta_2'\beta_1',B_2') \equiv (A'', \beta_2'' \beta_1'',C'')$. This will prove that our product is well-defined.

\medskip \noindent
Because $\Delta_1$ can be obtained from $(A'',\beta_1'',B'')$ by reducing dipoles, necessarily $B''$ contains $B_2$ as a prefix. Similarly, because $\Delta_2$ can be obtained from $(B'',\beta_2'',A'')$ by reducing dipoles, $B''$ has to contain $A_2$ as a prefix. Thus, $B''$ contains the union $A_2 \cup B_2$ as a prefix. 

\begin{claim}\label{claim:ProductWellDefined}
Let $(R,\alpha,S),(S,\beta,T)$ and $(U,\mu,V),(V,\nu,W)$ be four diagrams such that $(R,\alpha,S) \equiv (U,\mu,V)$, such that $(S,\beta,T) \equiv (V,\nu,W)$, and such that $V$ contains $S$ as a prefix. Then $(R, \beta \alpha,T) \equiv (U, \nu \mu, V)$.
\end{claim}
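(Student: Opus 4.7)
The plan is to transport the two equipotences through the product by first rewriting both $(R,\alpha,S)$ and $(S,\beta,T)$ as diagrams whose middle forest is $V$, using Lemma~\ref{lem:Bigger}, and then identifying the resulting diagrams with $(U,\mu,V)$ and $(V,\nu,W)$ via Lemma~\ref{lem:Equal}. The underlying geometric idea is that adding a dipole at the bottom of $(R,\alpha,S)$ and adding the \emph{matching} dipole at the top of $(S,\beta,T)$ together correspond to a single dipole-addition applied to the composite diagram $(R,\beta\alpha,T)$.

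Concretely, since $V$ contains $S$ as a prefix, Lemma~\ref{lem:Bigger} produces a diagram $(R',\alpha',V)$ equipotent to $(R,\alpha,S)$, obtained by iteratively adding dipoles at the $\mathcal{P}$-leaves of $S$ whose children lie in $V$. Combined with $(R,\alpha,S)\equiv(U,\mu,V)$, Lemma~\ref{lem:Equal} forces $R'=U$ and $\alpha'=\mu$. Symmetrically, enlarging the top forest of $(S,\beta,T)$ to $V$ yields $(V,\beta'',T'')$, and the equipotence with $(V,\nu,W)$ together with Lemma~\ref{lem:Equal} gives $\beta''=\nu$ and $T''=W$. (Note that the conclusion of the claim should read $(U,\nu\mu,W)$; this is the diagram that naturally appears.)

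The key remaining step is to observe that these two enlargement procedures are compatible at the level of the composite diagram. Recall from the proof of Lemma~\ref{lem:Bigger} that adding a dipole at a $\mathcal{P}$-leaf $\ell$ replaces the single wire ending at $\ell$ with a bundle of parallel wires plus one central parallel strand (cf.\ Figure~\ref{DipoleJ}). Performing this operation on the bottom of $(R,\alpha,S)$ transforms $\alpha\mapsto \mu$ while performing the analogous operation on the top of $(S,\beta,T)$ transforms $\beta\mapsto \nu$; both replacements happen at exactly the same endpoints in the middle cylinder, namely at $\ell\in S\subset V$. Composing, one concludes that $\nu\mu$ is obtained from $\beta\alpha$ by the same simultaneous wire-splittings, and hence that $(U,\nu\mu,W)$ is obtained from $(R,\beta\alpha,T)$ by a finite sequence of dipole-additions, proving equipotence.

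The main obstacle I anticipate is the topological verification underlying the previous paragraph: one must check that stacking two parallel-strand gadgets (one arising from the bottom-enlargement of the first diagram, one from the top-enlargement of the second) and composing them in the cylinder yields a single parallel-strand gadget, up to braid isotopy rel endpoints. This is intuitively clear from the picture, since two strands each parallel to the same wire can be isotoped to a single parallel strand, but some care is required because of the interaction with the other strands of $\beta\alpha$. Once this local commutation is granted, the global statement follows by induction on the number of dipoles to be added.
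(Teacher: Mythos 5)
Your proposal is correct and follows essentially the same route as the paper's proof: add the dipoles turning $S$ into $V$ to the bottom of the first diagram and the top of the second, identify the results with $(U,\mu,V)$ and $(V,\nu,W)$ via Lemma~\ref{lem:Equal}, and observe that the two added dipoles stack into a single reducible dipole of the concatenated diagram (the paper does one dipole at a time and iterates, which is your induction). You also correctly note that the stated conclusion should read $(U,\nu\mu,W)$ rather than $(U,\nu\mu,V)$.
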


\noindent
We assume that $V$ is obtained from $S$ by adding all the children of a $\mathcal{P}$-leaf $v \in S$, the general case following by iteration. As $v$ is a $\mathcal{P}$-leaf of $S$, it indexes a wire in $(R,\alpha,S)$, which is also labelled by a $\mathcal{P}$-leaf $r$ of $R$. By adding a dipole like in the proof of Lemma~\ref{lem:Bigger}, we obtain a diagram $(R \cup \{\text{children of $r$}\}, \alpha', V)$ equipotent to $(R,\alpha,S)$. It follows from Lemma~\ref{lem:Equal} that this new diagram must coincide with $(U,\beta,V)$. Similarly, as $v$ is a $\mathcal{P}$-leaf of $S$, it indexes a wire in $(S,\beta,T)$, which is also labelled by a $\mathcal{P}$-leaf $t$ of $T$. By adding a dipole like in the proof of Lemma~\ref{lem:Bigger}, we obtain a diagram $(V, \beta', T \cup \{\text{children of $t$}\})$ equipotent to $(S,\beta,T)$. It follows from Lemma~\ref{lem:Equal} that this new diagram must coincide with $(V,\nu,W)$. So $(U,\nu \mu ,V)= (R \cup \{\text{children of $r$}\}, \beta' \alpha' , T \cup \{\text{children of $t$}\})$. In this diagram, the strand connecting $r$ and $t$ belongs to a dipole (coming from the two dipoles added in $(R,\alpha,S)$ and $(S,\beta,T)$), and reducing this dipole yields $(R, \beta \alpha, T)$. See Figure~\ref{Composition} for an illustration. Thus, our claim is proved.
\begin{figure}
\begin{center}
\includegraphics[width=\linewidth]{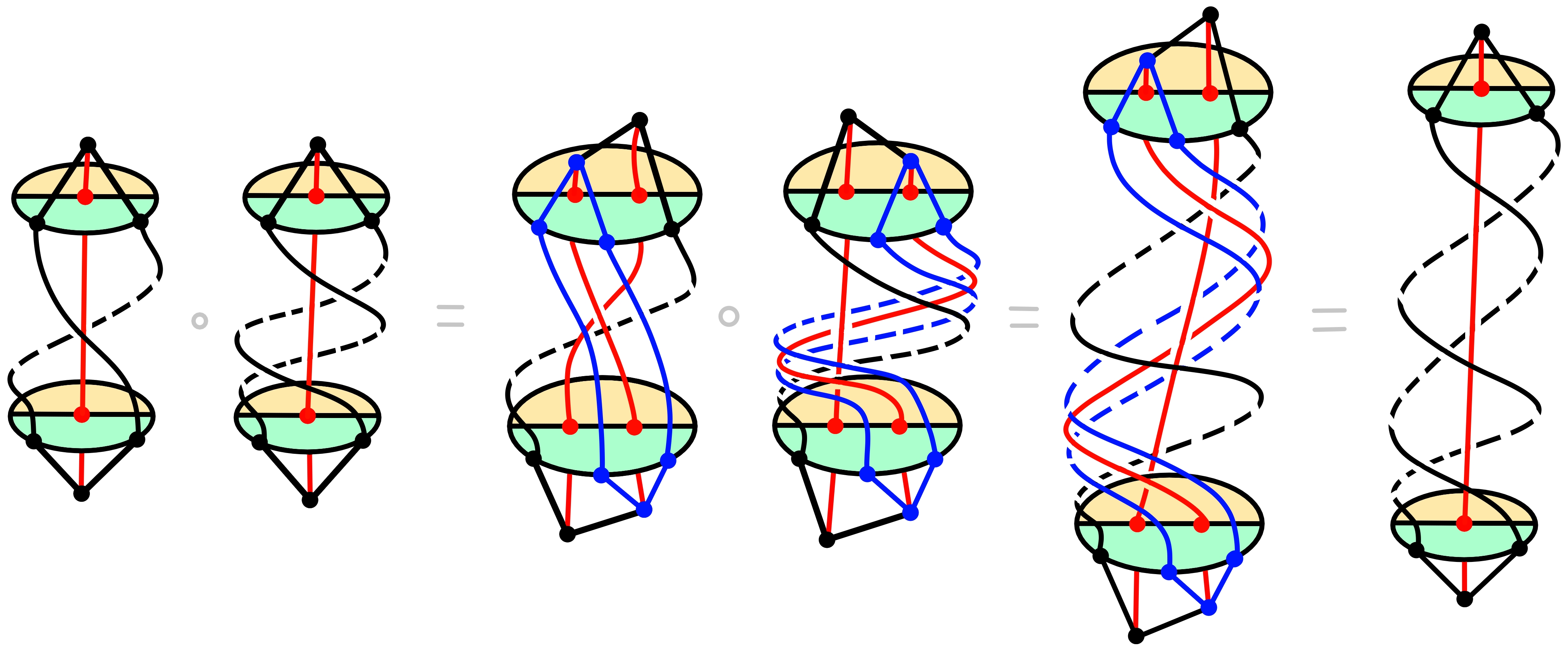}
\caption{Adding a dipole in a concatenation does not modify the result.}
\label{Composition}
\end{center}
\end{figure}

\medskip \noindent
Now, we know that our product is well-defined and that we can choose representatives of diagrams at our best convenience without modifying the result. It remains to show that the concatenation is a group law.

\medskip \noindent
It is clear that $\epsilon(w):=(\eta(w),\mathrm{id},\eta(w))$ is a neutral element. (Recall that $\eta(w)$ denotes the $(\mathcal{P},w)$-forest that has no edges.) Moreover, for every reduced diagram $(A,\beta,B)$, we have
$$(A,\beta,B) \circ (B, \beta^{-1},A) = (A, \mathrm{id},A) \equiv (\eta(w), \mathrm{id}, \eta(w))= \epsilon(w),$$
so every reduced diagram has an inverse (which amounts to reversing upside down the diagram). Finally, let $\Delta_1,\Delta_2,\Delta_3$ be three reduced diagrams. As a consequence of Lemma~\ref{lem:Bigger}, there exist $A,B,C,D$ and $\beta_1,\beta_2,\beta_3$ such that $\Delta_1 \equiv (A,\beta_1,B)$, $\Delta_2 \equiv (B,  \beta_2, C)$, and $\Delta_3 \equiv (C, \beta_3,D)$. Then
$$\Delta_1 \circ (\Delta_2 \circ \Delta_3) \equiv (A, (\beta_3 \beta_2) \beta_1, D) = (A, \beta_3 (\beta_2 \beta_1), D) \equiv (\Delta_1 \circ \Delta_2) \circ \Delta_3.$$
Thus, the concatenation is associative, concluding the proof of our proposition.
\end{proof}

\noindent
Proposition \ref{prop:GroupLaw} allows us to give the following definition:

\begin{definition}
Given an arboreal semigroup presentation $\mathcal{P}=\langle \mathcal{A} \mid \mathcal{R} \rangle$ and a baseword $w \in \mathcal{A}^+$, the \emph{Chambord group} $C(\mathcal{P},w)$ is the set of reduced braided strand diagrams endowed with the concatenation. 
\end{definition}

\noindent
For all $p,q \geq 0$, we say that a braid in $\mathcal{B}_{p,q}$ is \emph{pure} if it can be represented by a homeomorphism fixing pointwise the punctures and the marked points. In other words, every strand (resp. wire) in $\mathscr{D}_{p,q} \times [0,1]$ connects a puncture (resp. marked point) in $\mathscr{D}_{p,q} \times \{0\}$ to the same puncture (resp. marked point) in $\mathscr{D}_{p,q} \times \{1\}$. Observe that, if $(A,\beta,B)$ and $(A',\beta',B')$ are two diagrams such that one is obtained from the other by adding or reducing a dipole, then $\beta$ is pure if and only if $\beta'$ is pure. As a consequence,
$$PC(\mathcal{P},w):= \left\{ (A,\beta,B) \in C(\mathcal{P},w) \mid \beta \text{ pure} \right\} \leq C(\mathcal{P},w)$$
defines a subgroup, which we refer to as the \emph{pure subgroup} of $C(\mathcal{P},w)$.

\subsection{Diagram groups and the forgetful short exact sequence}\label{section:DiagramGroups}

\noindent
In the same way that that the Chambord group $C(\mathcal{P},w)$ is defined from triples $(A,\beta,B)$, it is possible to define the \emph{annular diagram group} $D_a(\mathcal{P},w)$ from triples $(A,\sigma,B)$ where $\sigma$ is a cyclic permutation from the $\mathcal{P}$-leaves of $A$ to the $\mathcal{P}$-leaves of $B$. This group will not be used in the rest of the article, so we leave the details to the reader. The point is that there is a \emph{forgetful map}
$$\mathfrak{f} : \left\{ \begin{array}{ccc} C(\mathcal{P},w) & \to &  D_a(\mathcal{P},w) \\ (A,\beta,B) & \mapsto & (A, \varsigma(\beta),B) \end{array} \right.$$
where, for all $p,q \geq 0$, $\varsigma$ sends every element of $\mathcal{B}_{p,q}$ to the cyclic permutation it induces on the marked points on $\partial \mathscr{D}_{p,q}$. This morphism may not be surjective. In fact, a triple $(A,\sigma,B)$ belongs to the image of $\mathfrak{f}$ if and only if it is \emph{balanced}, i.e. $A$ and $B$ have the same number of vertices. Let $D_a^b(\mathcal{P},w)$ denote the subgroup of $D_a(\mathcal{P},w)$ consisting of balanced triples. Then we have a short exact sequence
\begin{equation}\label{shortexactsequence}
1 \to B_\infty \to C(\mathcal{P},w) \overset{\mathfrak{f}}{\to} D^b_a(\mathcal{P},w) \to 1,
\end{equation}
where $B_\infty$ denotes the subgroup in $C(\mathcal{P},w)$ consisting of the triples of the form $(A,\beta,A)$, which is isomorphic to the group of finitely supported braids on infinitely many strands. 

\medskip \noindent
The annular diagram group $D_a(\mathcal{P},w)$ contains the \emph{planar diagram group} $D_p(\mathcal{P},w)$, consisting of the triples $(A,\mathrm{id},B)$. %The image of $C(\mathcal{P},w)$ in $D_a^b(\mathcal{P},w)$ coincides with $D_p^b(\mathcal{P},w) := D_p(\mathcal{P},w) \cap D_a^b(\mathcal{P},w)$.

\medskip \noindent
It is worth noticing that $D_a(\mathcal{P},w)$ can be described as a group of partial isometries of $T(\mathcal{P},w)$. More precisely, if $(A,\sigma,B)$ represents an element of $D_a(\mathcal{P},w)$, then it yields an isometry from the complement of the prefix $A \leq T(\mathcal{P},w)$ to the complement of the prefix $B \leq T(\mathcal{P},w)$ by permuting the components according to $\sigma$ via translations of the ambient plane.  

\medskip \noindent
In case $\mathcal{R}$ does not contain relations of the form $u=u$, then $D_p(\mathcal{P},w)$ coincides with the \emph{diagram groups} studied in~\cite{MR1448329, MR1396957}; and $D_a(\mathcal{P},w)$ coincides with the \emph{picture groups} introduced in~\cite{MR1396957} and studied in more details in~\cite{MR2136028}. For instance, if $\mathcal{P}=\langle x \mid x=x^2 \rangle$, then $D_p(\mathcal{P},x)$ is isomorphic to Thompson's group $F$ and $D_a(\mathcal{P},x)$ is isomorphic to Thompson's group $T$. However, these diagram groups are well-defined even if the semigroup presentation $\mathcal{P}$ is not arboreal, providing a larger class of groups.

\medskip \noindent
As we will show, for every locally finite planar tree $A$, the asymptotically rigid mapping class group $\mathfrak{mod}(A)$, whose definition is recalled in Section~\ref{section:ModA} below, is isomorphic to some Chambord group $C(\mathcal{P},w)$. Interestingly, this isomorphism induces an isomorphism between $D_a(\mathcal{P},w)$ and $\mathfrak{mod}(\mathscr{S}(A))$, sending $D_a^b(\mathcal{P},w)$ to $\mathfrak{mod}_f(A)$, and the short exact sequence mentioned above turns out to coincide with the forgetful exact sequence satisfied by $\mathfrak{mod}(A)$ (see Section~\ref{section:ModA} below). More precisely, there exists a commutative diagram

\centerline{\xymatrix{
1 \ar[ddd] \ar[r] & B_\infty \ar[ddd] \ar[r] & C(\mathcal{P},w) \ar[ddd] \ar[r] & D_a^b(\mathcal{P},w) \ar@{^{(}->}[dr] \ar[ddd] \ar[rr] && 1 \ar[ddd] \\ &&&& D_a(\mathcal{P},w) \ar[d] \\ &&&& \mathfrak{mod}(\mathscr{S}(A)) \\ 1 \ar[r] & B_\infty \ar[r] & \mathfrak{mod}(A) \ar[r] & \mathfrak{mod}_f(A) \ar@{^{(}->}[ur] \ar[rr] && 1
}}

\medskip \noindent
where vertical arrows are all isomorphisms. 

\medskip \noindent
Finally, notice that the short exact sequence~(\ref{shortexactsequence}) splits over $D_p(\mathcal{P},w) \cap D_a^b(\mathcal{P},w)$, generalising the observation \cite[Propositions~2.8 and~2.11]{FunarKapoudjian}.

\section{Links with asymptotically rigid mapping class groups}\label{section:MODA}

\subsection{Asymptotically rigid mapping class groups...}\label{section:ModA}

\medskip \noindent
Fix a locally finite tree $A$ embedded into the plane in such a way that its vertex-set is closed and discrete. The \emph{arboreal surface} $\mathscr{S}(A)$ is the oriented planar surface with boundary obtained by thickening $A$ in the plane. We denote by $\mathscr{S}^\sharp(A)$ the punctured arboreal surface obtained from $\mathscr{S}(A)$ by adding a puncture for each vertex of the tree. Following \cite{FunarKapoudjian}, we fix a \emph{rigid structure} on $\mathscr{S}^\sharp(A)$, i.e. a decomposition into \emph{polygons} by means of a family of pairwise non-intersecting arcs whose endpoints are on the boundary of $\mathscr{S}(A)$ such that each polygon contains exactly one vertex of the underlying tree in its interior and such that each arc crosses once and transversely a unique edge of the tree. See for instance Figure~\ref{Dsharp}.
\begin{figure}
\begin{center}
\includegraphics[trim={0 0 16cm 0},clip,width=0.6\linewidth]{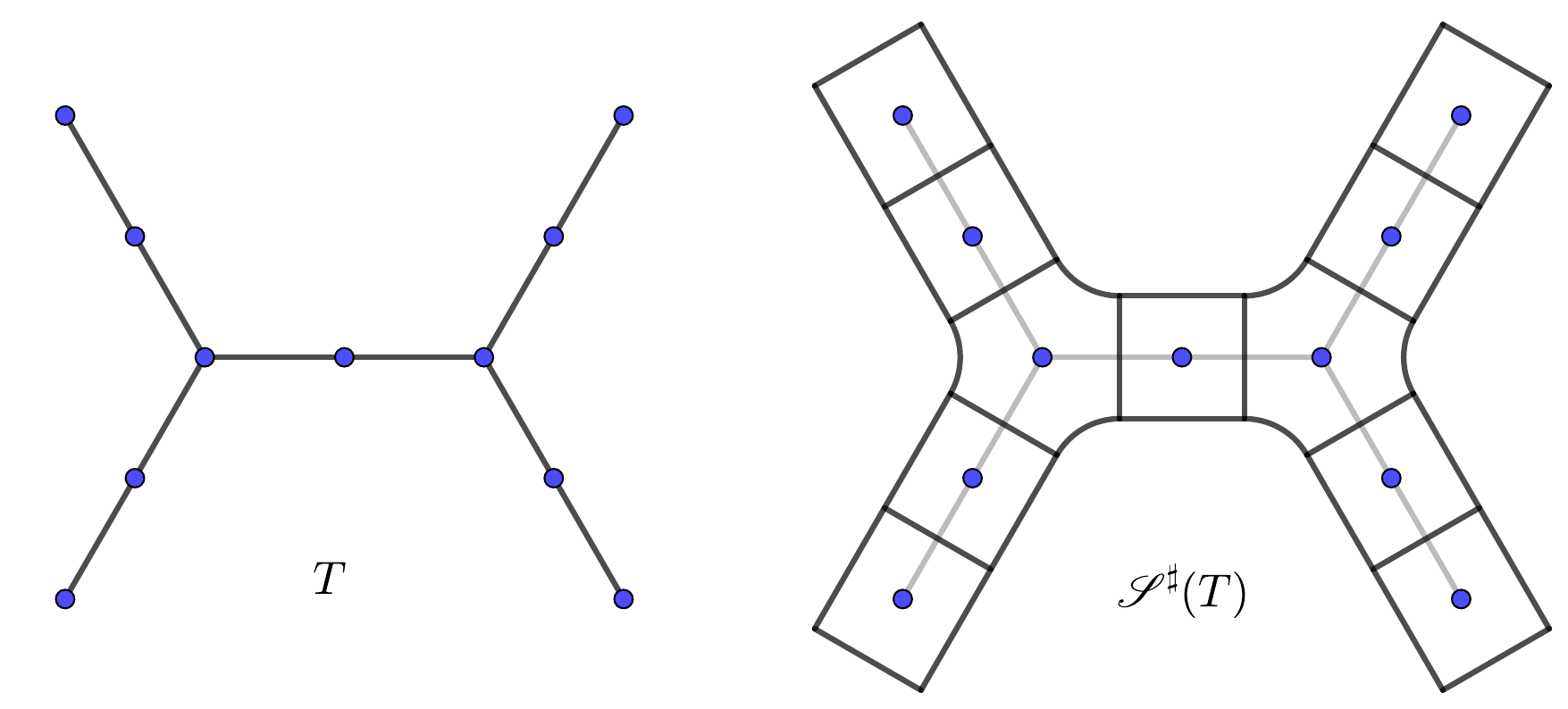}
\caption{Surfaces with rigid structures associated to a simplicial tree.}
\label{Dsharp}
\end{center}
\end{figure}

\medskip \noindent
A subsurface of $\mathscr{S}^\sharp(A)$ is \emph{admissible} if it is a non-empty connected finite union of polygons belonging to the rigid structure. A homeomorphism $\varphi : \mathscr{S}^\sharp(A) \to \mathscr{S}^\sharp(A)$ is \emph{asymptotically rigid} if the following conditions are satisfied:
\begin{itemize}
	\item there exists an admissible subsurface $\Sigma \subset \mathscr{S}^\sharp(A)$ such that $\varphi(\Sigma)$ is also admissible;
	\item the homeomorphism $\varphi$ is \emph{rigid outside $\Sigma$}, i.e. the restriction \[\varphi : \mathscr{S}^\sharp(A) \backslash \Sigma \to \mathscr{S}^\sharp(A) \backslash \varphi( \Sigma)\]
	 respects the rigid structure, mapping polygons to polygons. Such a surface $\Sigma$ is called a \emph{support} for $\varphi$.
\end{itemize}
We denote by $\mathfrak{mod} (A)$ the group of isotopy classes of orientation-preserving asymptotically rigid homeomorphisms of $\mathscr{S}^\sharp(A)$. We emphasize that isotopies have to fix each puncture. 

\medskip \noindent
In the sequel, we refer to the \emph{frontier} $\mathrm{Fr}(\Sigma)$ of an admissible subsurface $\Sigma$ as the union of the arcs defining the rigid structure that are contained in the boundary. Also, a polygon is called \emph{adjacent} to $\Sigma$ if it is not contained in $\Sigma$ but shares an arc with the frontier of~$\Sigma$.

\medskip \noindent
Similarly, one can define asymptotically rigid homeomorphisms of $\mathscr{S}(A)$ and the group $\mathfrak{mod}(\mathscr{S}(A))$. Then, there exists a natural \emph{forgetful map} $\mathfrak{mod}(A) \to \mathfrak{mod}(\mathscr{S}(A))$, which ``forgets'' the punctures of $\mathscr{S}^\sharp(A)$. We denote by $\mathfrak{mod}_f(A)$ the image of $\mathfrak{mod}(A)$ in $\mathfrak{mod}(\mathscr{S}(A))$. The kernel of the forgetful map corresponds to the homeomorphisms $\mathscr{S}^\sharp(A) \to \mathscr{S}^\sharp(A)$ that are the identity outside an admissible subsurface. In other words, we have a short exact sequence
$$1 \to B_\infty \to \mathfrak{mod}(A) \to \mathfrak{mod}_f(A) \to 1,$$
where $B_\infty$ denotes the limit $\bigcup\limits_{\text{$\Sigma$ admissible}} \mathrm{Mod}(\Sigma)$. Notice that $B_\infty$ is isomorphic to the group of finitely supported braids with infinitely many strands. We refer to the previous exact sequence as the \emph{forgetful short exact sequence} satisfied by $\mathfrak{mod}(A)$. 

\medskip \noindent
Another short exact sequence satisfied by $\mathfrak{mod}(A)$, not needed here and referred to as the \emph{arboreal short exact sequence}, is described in \cite[Section~2]{GLU}. Some of these exact sequences are given in Section~\ref{section:Examples}.

\subsection{...as Chambord groups}

\noindent
This section is dedicated to the proof of the following statement, which shows that the asymptotically rigid mapping class groups of our surfaces and the Chambord groups are closely related (see also Remark~\ref{remark:ChambordInMod} below):

\begin{thm}\label{thm:BigChambordMCG}
For every infinite locally finite planar tree $A$, there exist an arboreal semigroup presentation $\mathcal{P}= \langle \mathcal{A} \mid \mathcal{R} \rangle$, a letter $w \in \mathcal{A}$, and an isomorphism $\Phi : C(\mathcal{P},w) \overset{\sim}{\longrightarrow} \mathfrak{mod}(A)$. Moreover, there exists a vertex $x_0 \in A$ such that $\Phi$ induces a bijection
$$\begin{array}{c} \left\{ \left\{ g \in C(\mathcal{P},w) \left| \begin{array}{c} g \equiv (R,\beta,R) \\ \text{ $\beta$ a braid} \end{array} \right. \right\}, \text{ $R$ a $(\mathcal{P},w)$-forest} \right\} \\ \updownarrow \\ \left\{ \mathrm{Mod}(\Sigma, \partial \Sigma) \left| \begin{array}{c} \Sigma \subset \mathscr{S}^\sharp(A) \text{ admissible} \\ \text{containing the puncture $x_0$} \end{array} \right. \right\} \end{array}$$
between braid subgroups in $C(\mathcal{P},w)$ and in $\mathfrak{mod}(A)$, where $\mathrm{Mod}(\Sigma, \partial \Sigma)$ describes the mapping class group of $\Sigma$ preserving setwise the frontier. 
\end{thm}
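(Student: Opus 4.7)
The plan is to realize $\mathfrak{mod}(A)$ as a Chambord group by turning the planar rooted tree $(A,x_0)$, for a chosen basepoint $x_0\in A$, into an arboreal semigroup presentation and reading braided strand diagrams directly off asymptotically rigid homeomorphisms. I would take $\Sigma$ to be the set of planar-rooted isomorphism classes of the subtrees $T_v$ hanging below each vertex $v$ of $(A,x_0)$, and for every class $[T_v]$ with $v$ non-leaf in $A$ and children $v_1,\ldots,v_k$ in left-to-right order I would add the relation $[T_v]=[T_{v_1}]\cdots[T_{v_k}]$ to $\mathcal{R}$; both arboreality conditions are immediate from the fact that a planar rooted tree is determined by, and determines, the ordered tuple of isomorphism types of its children. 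Setting $w:=[T_{x_0}]$, the characterisation of $T(\mathcal{P},w)$ recalled just after the definition of $\mathcal{P}$-forests identifies $T(\mathcal{P},w)$ with $A$ as a $\Sigma$-labelled planar rooted tree. Under this identification a $(\mathcal{P},w)$-forest $R$ is a finite rooted subtree of $A$ in which every non-leaf of $R$ has all its children (in $A$) in $R$, and the polygons $P_v$ indexed by the $\mathcal{P}$-interior vertices $v$ of $R$ assemble into an admissible subsurface $\Sigma_R\subset\mathscr{S}^\sharp(A)$ whose punctures correspond to $\mathcal{P}$-interior vertices and whose frontier arcs correspond to $\mathcal{P}$-leaves.

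For a diagram $(R,\beta,S)$ I would then define $\Phi((R,\beta,S))$ as follows: via chosen identifications of $\Sigma_R$ and $\Sigma_S$ with $\mathscr{D}_{p,q}$, the braid $\beta\in\mathcal{B}_{p,q}$ becomes a homeomorphism $\Sigma_R\to\Sigma_S$, while the label-preservation assumption in Definition~\ref{def:Diag} guarantees that the components of $\mathscr{S}^\sharp(A)\setminus\Sigma_R$ attached to the $\mathcal{P}$-leaves of $R$ match those of $\mathscr{S}^\sharp(A)\setminus\Sigma_S$ in a $\Sigma$-label-preserving way; each such pair of matching components is therefore identifiable canonically and rigidly, and gluing these identifications to $\Sigma_R\to\Sigma_S$ produces an asymptotically rigid homeomorphism with support $\Sigma_R$. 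The main technical step, which I expect to be the principal obstacle, is to check that $\Phi$ descends to equipotence classes: the parallel-wires condition of Definition~\ref{def:Dipoles} is exactly the topological condition ensuring that a polygon $P_a\subset\Sigma_R$ is mapped rigidly onto $P_b\subset\Sigma_S$, so reducing the dipole amounts to peeling $P_a,P_b$ off the support and absorbing the local rigid action into the exterior, without altering the isotopy class. Given this, the homomorphism property follows from Lemma~\ref{lem:Bigger}: two diagrams can be enlarged to share a common middle $(\mathcal{P},w)$-forest $B$ with associated admissible subsurface $\Sigma_B$, and concatenation of diagrams translates to composition of the two restricted homeomorphisms on $\Sigma_B$.

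Surjectivity then follows from the very definition of $\mathfrak{mod}(A)$: given $\varphi\in\mathfrak{mod}(A)$, pick an admissible support $\Sigma\supseteq P_{x_0}$ with $\varphi(\Sigma)$ also admissible and containing $P_{x_0}$ (always arrangeable by enlargement), encode $\Sigma,\varphi(\Sigma)$ as $(\mathcal{P},w)$-forests $R,S$, and read off from $\varphi|_\Sigma$ the braid $\beta\in\mathcal{B}_{p,q}$; then $\Phi((R,\beta,S))=[\varphi]$. Injectivity follows because $\Phi((R,\beta,S))=\mathrm{id}$ forces $\Sigma_R=\Sigma_S$ and $\beta$ to be trivial in $\mathcal{B}_{p,q}$, so successive dipole reductions collapse $(R,\beta,S)$ to $\epsilon(w)$. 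Finally, the announced bijection at the level of subgroups is immediate from the construction: $R\mapsto\Sigma_R$ defines a bijection between $(\mathcal{P},w)$-forests with at least one $\mathcal{P}$-interior vertex and admissible subsurfaces containing $P_{x_0}$ (after absorbing any adjacent true leaves of $A$ into $\Sigma_R$), and under this correspondence the elements $g\equiv(R,\beta,R)$ are exactly those sent by $\Phi$ to mapping classes supported on $\Sigma_R$ and preserving its frontier setwise, i.e.\ to elements of $\mathrm{Mod}(\Sigma_R,\partial\Sigma_R)$.
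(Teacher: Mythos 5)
Your proposal follows essentially the same route as the paper: the same arboreal presentation built from isomorphism types of descendant subtrees with $w=[T_{x_0}]$, the same identification of $T(\mathcal{P},w)$ with $A$ and of $(\mathcal{P},w)$-forests with admissible subsurfaces, the same definition of $\Phi$ by extending a braid-induced homeomorphism rigidly, and you correctly isolate invariance under dipole reduction as the key technical point (which the paper handles via extremal discs in $\mathscr{D}_{p,q}$). The remaining steps (morphism via Lemma~\ref{lem:Bigger}, surjectivity, injectivity, and the subgroup correspondence) match the paper's argument.
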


\noindent
Fix a planar infinite locally finite tree $A$ and a root $x_0 \in A$. For every vertex $x \in A$, let $A(x)$ denote the isomorphism type of the rooted planar tree containing $x$ and all its descendants. Our alphabet is $\mathcal{A}:= \{ A(x) \mid x \in A\}$ and our set of relations $\mathcal{R}$ is
$$\{ A(x)=A(x_1) \cdots A(x_k) \mid x \in A, \text{ the children of $x$ are $x_1,\ldots, x_k$ from left to right} \}.$$
We emphasize that, if $x \in A$ is a leaf, then there is no relation of the form ``$A(x)=\text{something}$'' in $\mathcal{R}$. The semigroup presentation $\mathcal{P}:= \langle \mathcal{A} \mid \mathcal{R} \rangle$ is clearly arboreal. Finally, set $w:=A(x_0)$. Now, Theorem~\ref{thm:BigChambordMCG} is a direct consequence of the following statement:

\begin{prop}\label{prop:AsMCGareChambord}
The groups $\mathfrak{mod}(A)$ and $C(\mathcal{P},w)$ are isomorphic. 
\end{prop}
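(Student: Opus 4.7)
The plan is to construct an explicit isomorphism $\Phi : C(\mathcal{P},w) \to \mathfrak{mod}(A)$ via a geometric dictionary between $(\mathcal{P},w)$-forests and admissible subsurfaces of $\mathscr{S}^\sharp(A)$. The definitions of $\Sigma$ and $\mathcal{R}$ are tailored so that the label $A(x)$ of any vertex of $T(\mathcal{P},w)$ records and is recorded by the ordered sequence of labels of its children; this forces a canonical isomorphism of rooted planar labelled trees $T(\mathcal{P},w) \cong A$ (with root $x_0$). Given a $(\mathcal{P},w)$-forest $F$, write $F^\circ$ for its sub-forest of $\mathcal{P}$-interior vertices; the $(\mathcal{P},w)$-forest condition ensures that $F^\circ$ is a rooted subtree of $A$, and that every $\mathcal{P}$-leaf of $F$ is the unique child in $A$ of some vertex of $F^\circ$ lying outside $F^\circ$. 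Thickening $F^\circ$ in $\mathscr{S}^\sharp(A)$ produces an admissible subsurface $\Sigma_F$ whose punctures are in bijection with the $\mathcal{P}$-interior vertices of $F$ and whose frontier arcs are in bijection with the $\mathcal{P}$-leaves of $F$; in particular $\Sigma_F$ is homeomorphic to $\mathscr{D}_{p,q}$, where $(p,q)$ is the type of $F$. Moreover, the label $A(v) \in \Sigma$ of a $\mathcal{P}$-leaf $v$ encodes exactly the isomorphism type of the punctured subsurface of $\mathscr{S}^\sharp(A) \setminus \Sigma_F$ sitting beyond the frontier arc associated to $v$.

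To a diagram $\Delta = (F_1, \beta, F_2)$ I would associate the asymptotically rigid homeomorphism $\Phi(\Delta)$ built as follows: $\beta \in \mathcal{B}_{p,q}$ provides a homeomorphism $\varphi_\beta : \Sigma_{F_1} \to \Sigma_{F_2}$ matching punctures and frontier arcs, and because $\beta$ preserves the $\Sigma$-labelling of $\mathcal{P}$-leaves, corresponding frontier arcs abut isomorphic punctured subsurfaces in the complement; I then extend $\varphi_\beta$ by the unique rigid map across $\mathscr{S}^\sharp(A) \setminus \Sigma_{F_1}$. Well-definedness reduces to checking that reducing a dipole does not alter the resulting class: topologically, it amounts to shrinking $\Sigma_{F_1}$ and $\Sigma_{F_2}$ by a common adjacent polygon on which $\varphi_\beta$ was already acting rigidly. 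Invoking Lemma~\ref{lem:Bigger} to enlarge two diagrams to a common middle forest $B$, braid composition becomes the composition of mapping classes on the common support $\Sigma_B$, so $\Phi$ is a group homomorphism.

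For surjectivity, any $\varphi \in \mathfrak{mod}(A)$ admits an admissible support $\Sigma$ that we enlarge to contain the polygon of $x_0$; there is then a unique $(\mathcal{P},w)$-forest $F_1$ with $\Sigma = \Sigma_{F_1}$ and a unique $F_2$ with $\varphi(\Sigma) = \Sigma_{F_2}$, and the restriction $\varphi|_\Sigma$ is an element of $\mathcal{B}_{p,q}$ producing a preimage $\Phi(F_1, \beta, F_2) = \varphi$. For injectivity, if $\Phi(\Delta) = \Phi(\Delta')$, one uses Lemma~\ref{lem:Bigger} to bring $\Delta$ and $\Delta'$ to share a common pair of forests, observes that an asymptotically rigid homeomorphism is determined by its restriction to any support, and concludes by Lemma~\ref{lem:Equal} that $\Delta$ and $\Delta'$ are equipotent. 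The braid subgroup assertion is then immediate: a triple $(R, \beta, R)$ corresponds exactly to an asymptotically rigid homeomorphism whose support and image both equal $\Sigma_R$, namely an element of $\mathrm{Mod}(\Sigma_R, \partial \Sigma_R)$. The main obstacle is setting up the dictionary itself---recognizing that $\mathcal{P}$-leaves correspond to frontier arcs (rather than to extra punctures) and that the label-preservation condition on $\beta$ is the precise combinatorial shadow of the requirement that a homeomorphism between two admissible subsurfaces extend asymptotically rigidly; once this is in place, the remaining verifications become routine bookkeeping matching dipole reductions to the standard enlarging and shrinking operations between representations of the same asymptotically rigid homeomorphism.
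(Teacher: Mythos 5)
Your proposal follows essentially the same route as the paper: identify $A$ with $T(\mathcal{P},w)$, send $(\mathcal{P},w)$-forests to admissible subsurfaces (interior vertices to punctures, $\mathcal{P}$-leaves to frontier arcs), define $\Phi$ by rigid extension of the braid acting on the support, check invariance under dipole reduction (the paper's Claim on equipotent diagrams, proved via extremal discs), and then verify morphism, surjectivity and injectivity exactly as you outline. The argument is correct and matches the paper's proof in both structure and key ideas.
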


\begin{proof}
We begin by noticing that the planar rooted trees $A$ and $T(\mathcal{P},w)$ are isomorphic.

\begin{claim}\label{claim:TreeIso}
There exists an isomorphism $\varphi : A \to T(\mathcal{P},w)$ that sends $x_0$ to the root of $T(\mathcal{P},w)$, that preserves the left-right order, and such that, for every vertex $x \in A$, the label of $\varphi(x)$ is $A(x)$. 
\end{claim}

\noindent
We construct the isomorphism inductively by following the instructions below:
\begin{itemize}
	\item First, send $x_0$ to the root of $T(\mathcal{P},w)$.
	\item If $x \in A$ is a vertex such that $\varphi(x)$ is well-defined, let $x_1, \ldots, x_k$ denote its children from left to right. Because the label of $\varphi(x)$ is $A(x)$, by construction of $T(\mathcal{P},w)$ the children $y_1, \ldots, y_k$ (written from left to right) of $\varphi(x)$ are respectively labelled by $A(x_1),\ldots, A(x_k)$. Set $\varphi(x_i)=y_i$ for every $1 \leq i \leq k$.
\end{itemize}
By construction, $\varphi$ defines an isomorphism $A \to T(\mathcal{P},w)$ satisfying the desired properties. In the sequel, we identify $A$ and $T(\mathcal{P},w)$ for convenience. As a consequence, a $(\mathcal{P},w)$-forest, which can be thought of as a prefix of $T(\mathcal{P},w)$, can also be thought of as a subtree of $A$ (containing the root $x_0$). 

\medskip \noindent
From now on, we assume that $A$ and $\mathscr{S}^\sharp(A)$ are drawn on the plane in such a way that, for every vertex $x \in A$, the vertical line passing through $x$ has connected intersection with $\mathscr{S}^\sharp(A)$ and separates the leftmost child of $x$ and its descendants from its other children and their descendants. Also, for each disc $\mathscr{D}_{p,q}$, we assume that each marked point has a fixed small closed neighbourhood in the boundary such that all these arcs are pairwise disjoint and do not meet the horizontal line passing through the punctures. 

\medskip \noindent
Given an admissible subsurface $\Sigma \subset \mathscr{S}^\sharp(A)$ with $p$ punctures and $q$ arcs in its frontier, we define a homeomorphism $\psi_\Sigma : \Sigma \to \mathscr{D}_{p,q}$ as follows: first, draw a line $L_\Sigma$ in $\Sigma$ that starts from the left of the leftmost arc in $\mathrm{Fr}(\Sigma)$, that visits all the punctures in $\Sigma$ in the left-right order, that is monotonic in the horizontal direction, and that ends to the right of the rightmost arc in $\mathrm{Fr}(\Sigma)$; next, define $\psi_\Sigma$ by sending $L_\Sigma$ to the horizontal line passing through the punctures in $\mathscr{D}_{p,q}$, by sending the component of $\Sigma \backslash L_\Sigma$ containing $\mathrm{Fr}(\Sigma)$ to the inferior half-disc of $\mathscr{D}_{p,q}$ in such a way that the midpoints of the arcs in $\mathrm{Fr}(\Sigma)$ are sent to the marked points of $\mathscr{D}_{p,q}$ and that the arcs in $\mathrm{Fr}(\Sigma)$ are sent to the corresponding neighbourhoods of marked points, and by the sending the other component of $\Sigma \backslash L_\Sigma$ to the superior half-disc of $\mathscr{D}_{p,q}$. See Figure~\ref{HomeoDisque}. 
\begin{figure}
\begin{center}
\includegraphics[trim={1cm 4cm 0 5cm},clip,width=0.8\linewidth]{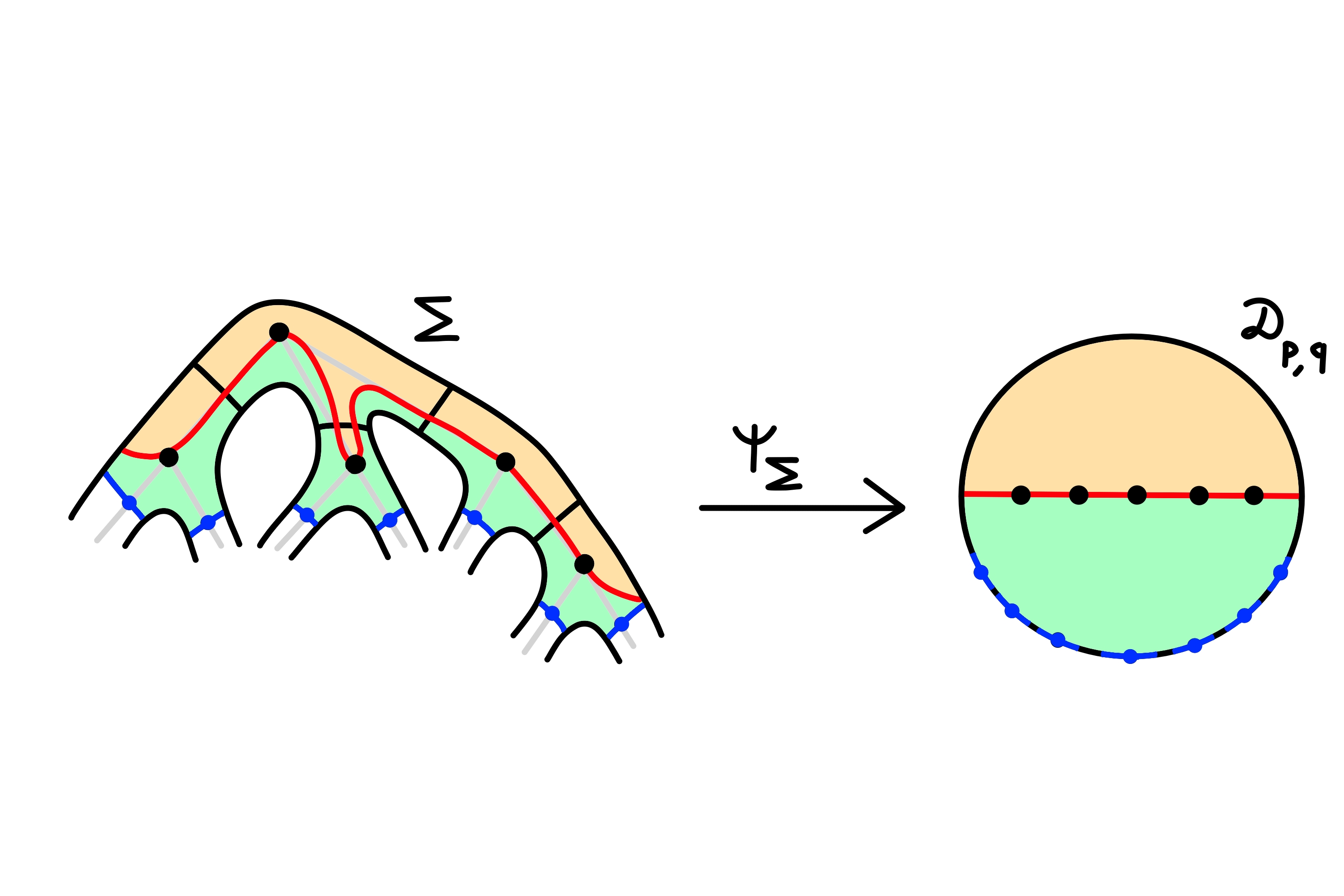}
\caption{Canonical homeomorphism $\psi_\Sigma$.}
\label{HomeoDisque}
\end{center}
\end{figure}

\medskip \noindent
As a consequence of Claim~\ref{claim:TreeIso}, a $(\mathcal{P},w)$-tree $T$ provides a subtree in $A$. We denote by $\Sigma(T)$ the admissible subsurface in $\mathscr{S}^\sharp(A)$ corresponding to this subtree. Observe that the number of punctures in $\Sigma(T)$ coincides with the number of $\mathcal{P}$-interior vertices in $T$ and that the number of arcs in the frontier of $\Sigma(T)$ coincides with the number of $\mathcal{P}$-leaves in $T$. In the sequel, we denote by $\psi_T$ the homeomorphism $\psi_{\Sigma(T)}$. 

\medskip \noindent
Given a diagram $(R,\beta,S)$ distinct from $\epsilon(w)$ (which amounts to requiring that $R$ and $S$ are not single vertices), we define an element $\Phi(R,\beta,S)$ of $\mathfrak{mod}(A)$ as the rigid extension of $\psi^{-1}_{R} \circ \beta^{-1} \circ \psi_{S} : \Sigma(S) \to \Sigma(R)$. Two remarks are necessary here:
\begin{itemize}
	\item In the expression $\psi^{-1}_{R} \circ \beta^{-1} \circ \psi_{S}$, $\beta$ is identified with one of its representatives that sends $\psi_R(\mathrm{Fr}(\Sigma(R)))$ to $\psi_S(\mathrm{Fr}(\Sigma(S)))$. Since our goal is to define an element of $\mathfrak{mod}(A)$, this abuse of notation is justified.
	\item The rigid extension of $\psi^{-1}_{R} \circ \beta^{-1} \circ \psi_{S}$ is possible since the fact that $\beta$ permutes the marked points of $\mathscr{D}_{p,q}$ by preserving the $\mathcal{A}$-labels induced by $R,S$ implies that, if $\psi^{-1}_R \circ \beta^{-1} \circ \psi_S$ sends an arc $\zeta \subset \mathrm{Fr}(\Sigma(S))$ to an arc $\xi \subset \mathrm{Fr}(\Sigma(R))$, then the components of $\mathscr{S}^\sharp(A) \backslash \Sigma(S)$ and $\mathscr{S}^\sharp(A) \backslash \Sigma(S)$ respectively delimited by $\zeta,\xi$ are homeomorphic as planar surfaces.
\end{itemize}
The key observation is the following:

\begin{claim}\label{claim:EquipotentGood}
If $(R,\beta, S)$ and $(R',\beta',S')$ are two equipotent diagrams, then $\Phi(R,\beta,S)$ and $\Phi(R',\beta',S')$ coincide in $\mathfrak{mod}(A)$.
\end{claim}

\noindent
Before turning to the proof of this claim, we need to introduce some vocabulary. Given a puncture $a$ and a set of marked points $M$ in $\mathscr{D}_{p,q}$, an \emph{extremal disc} in $\mathscr{D}_{p,q}$ \emph{spanned} by $a$ and $M$ is a topological closed disc $D \subset \mathscr{D}_{p,q}$ such that:
\begin{itemize}
	\item $D \cap \partial\mathscr{D}_{p,q}$ is connected, contains the neighbourhoods of the points in $M$ in its interior but does not meet any other neighbour;
	\item $a$ is the only puncture in $D$, and it belongs to its interior;
	\item $\partial D \backslash \partial \mathscr{D}_{p,q}$ is an arc that intersects at most twice the horizontal line passing through the punctures.
\end{itemize}
The point is that extremal discs can be used in order to recover extremal polygons in admissible subsurfaces, where a polygon in an admissible subsurface is referred to as \emph{extremal} if it is adjacent to at most one other polygon. More precisely:

\begin{fact}\label{fact:ExtremalDisc}
Let $\Sigma$ be an admissible subsurface with $p$ punctures and $q$ arcs in its frontier, and let $H \subset \Sigma$ be an extremal polygon. If $D \subset \mathscr{D}_{p,q}$ is an extremal disc spanned by $\psi_\Sigma( \text{puncture of $\Sigma$})$ and $\psi_\Sigma( \text{midpoints of arcs in $H \cap \mathrm{Fr}(\Sigma)$})$, then $\psi_{\Sigma}^{-1}(D)$ is homotopic to $H$ in $\Sigma$. 
\end{fact}

\noindent
The statement is clear by construction of $\psi_\Sigma$. We leave the details to the reader and turn to the proof of Claim~\ref{claim:EquipotentGood}.

\medskip \noindent
Assume that $(R',\beta',S')$ is obtained from $(R,\beta,S)$ by reducing a dipole, the general case follows by iteration. So there exist two $\mathcal{P}$-interior vertices $r \in R$ and $s \in S$ such that:
\begin{itemize}
	\item the children of $r,s$ are leaves;
	\item if $\ell_1, \ldots, \ell_k$ denote the children of $r$ written from left to right, then the children of $s$ are $\beta(\ell_1), \ldots, \beta(\ell_k)$ from left to right;
	\item the strand starting from $r$ ends at $s$ and is parallel to the wires connecting the children of $r$ to the children of $s$;
	\item $R'= R\backslash \{\text{children of $r$}\}$, $S'= S \backslash \{\text{children of $s$}\}$, and $\beta' \in \mathcal{B}_{p-1,q-k+1}$ is obtained from $\beta \in \mathcal{B}_{p,q}$ by replacing the strand connecting $a,b$ and the wires connecting their children with a single wire.
\end{itemize}
Necessarily, there exists an extremal disc $D \subset \mathscr{D}_{p,q}$ spanned by the puncture indexed by $r$ and the marked points indexed by the children of $r$ such that $\beta(D)$ is an extremal disc of $\mathscr{D}_{p,q}$ spanned by the puncture indexed by $s$ and the marked points indexed by the children of $s$. See Figure~\ref{ExtremeDisque}. As a consequence of Fact~\ref{fact:ExtremalDisc}, $\psi_R^{-1}(D)$ is homotopic to an extremal polygon $H_R$ of $\Sigma(R)$ and $\psi_S^{-1}(D)$ to an extremal polygon $H_S$ of $\Sigma(S)$. Up to modifying $\beta$ up to isotopy, we assume that $\psi_S^{-1} \circ \beta \circ \psi_R$ sends $H_R$ to $H_S$. 
\begin{figure}
\begin{center}
\includegraphics[trim={2cm 1cm 6cm 2cm},clip,width=0.35\linewidth]{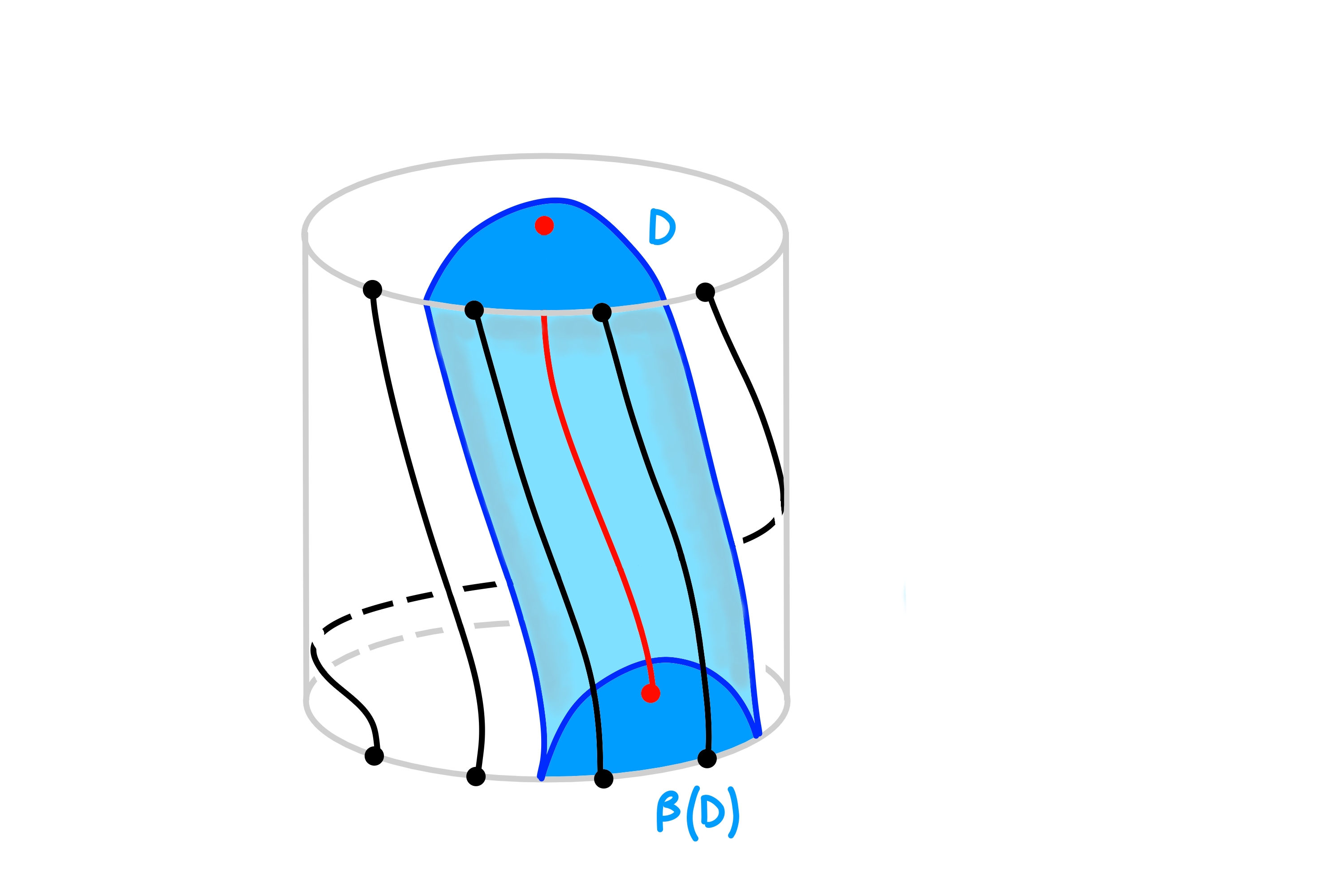}
\caption{Extremal discs associated to a dipole.}
\label{ExtremeDisque}
\end{center}
\end{figure}

\medskip \noindent
Thus, we have proved that $\psi_S^{-1} \circ \beta \circ \psi_R$ sends $\Sigma(R)\backslash H_R$ to $\Sigma(S) \backslash H_S$ and is rigid outside. Observe that $\Sigma(R')= \Sigma(R) \backslash H_R$, that $\Sigma(S')= \Sigma(S) \backslash H_S$, and that the homeomorphisms $\psi_{S'}^{-1} \circ \beta' \circ \psi_{R'}$ and $\psi_S^{-1} \circ \beta \circ \psi_R$ agree on $\Sigma(R')$. See Figure~\ref{Diagramme}. Since $\psi_{S'}^{-1} \circ \beta' \circ \psi_{R'}$ sends $\Sigma(R')$ to $\Sigma(S')$ and is rigid outside, we conclude that $\Phi(R,\beta,S)=\Phi(R',\beta',S')$ as desired. The proof of Claim~\ref{claim:EquipotentGood} is complete.
\begin{figure}
\begin{center}
\includegraphics[width=0.6\linewidth]{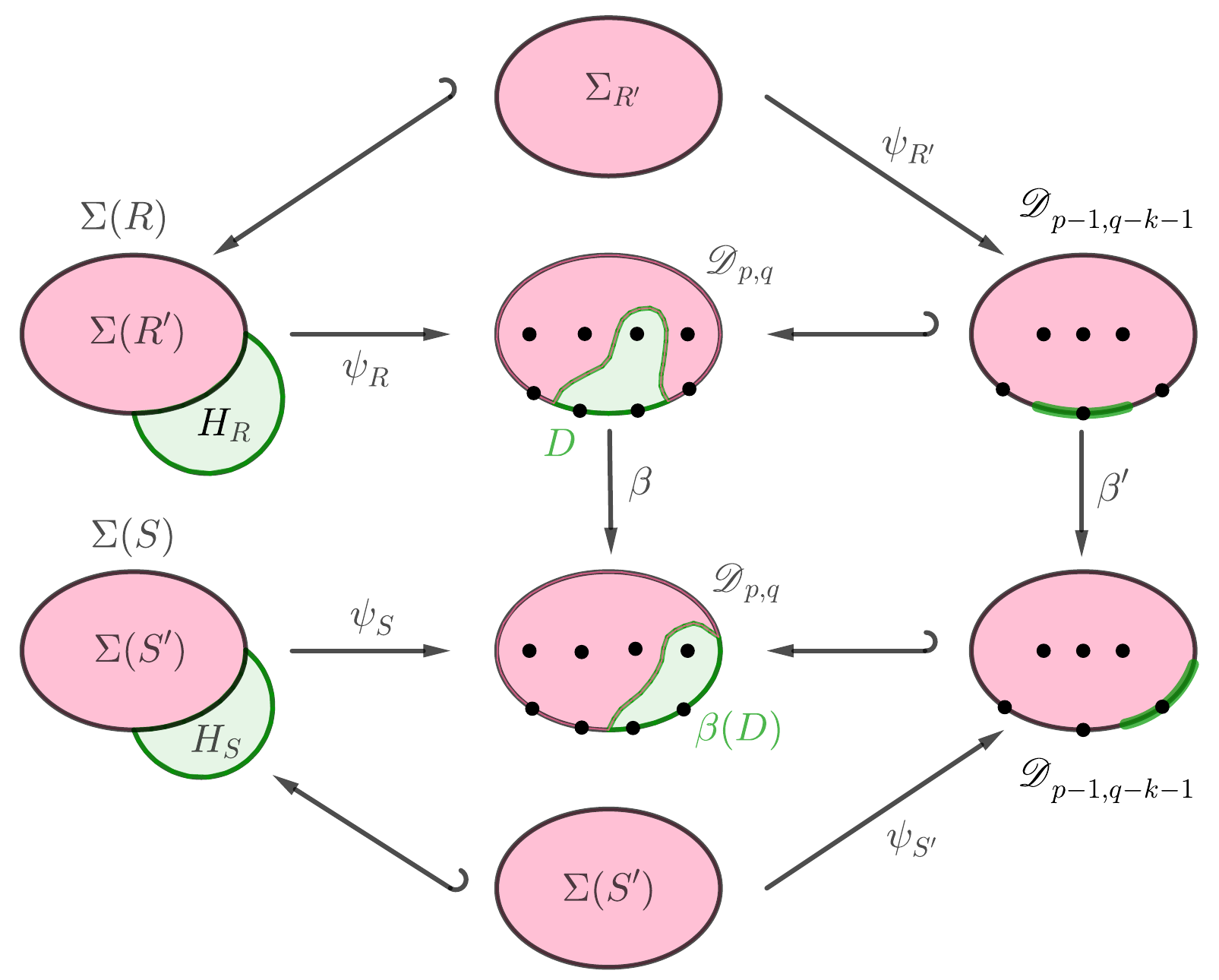}
\caption{Stability of $\Phi$ under dipole reduction.}
\label{Diagramme}
\end{center}
\end{figure}

\medskip \noindent
From now on, we can forget the specific construction of our homeomorphisms $\psi_T$: knowing that Claim~\ref{claim:EquipotentGood} holds will be sufficient in order to deduce that $\Phi$ is an isomorphism.

\begin{claim}
$\Phi$ is a morphism.
\end{claim}

\noindent
Let $\Delta_1,\Delta_2$ be two diagrams. Write $\Delta_1 \equiv (R,\beta_1,S)$ and $\Delta_2 \equiv (S,\beta_2,T)$. Then
\begin{itemize}
	\item $\Phi(\Delta_1)$ sends $\Sigma(S)$ to $\Sigma(R)$ via $\psi_R^{-1} \circ \beta_1^{-1} \circ \psi_S$ and is rigid outside;
	\item $\Phi(\Delta_2)$ sends $\Sigma(T)$ to $\Sigma(S)$ via $\psi_S^{-1} \circ \beta_2^{-1} \circ \psi_T$ and is rigid outside,
\end{itemize}
so $\Phi(\Delta_1) \circ \Phi(\Delta_2)$ sends $\Sigma(T)$ to $\Sigma(R)$ via $\psi_R^{-1} \circ \beta_1^{-1} \beta_2^{-1} \circ \psi_T$ and is rigid outside. Therefore, $\Phi(\Delta_1) \circ \Phi(\Delta_2)= \Phi((R,\beta_2\beta_1,T)) = \Phi(\Delta_1 \circ \Delta_2)$. Since $\Phi(\epsilon(w))= 1$, we conclude that $\Phi$ is a morphism.

\begin{claim}
$\Phi$ is surjective.
\end{claim}

\noindent
Let $\varphi$ be an asymptotically rigid homeomorphism. So it sends an admissible subsurface $\Sigma_1$ to some admissible subsurface $\Sigma_2$ and it is rigid outside. Without loss of generality, we assume that $\Sigma_1$ and $\Sigma_2$ contain the polygon associated to the root $x_0 \in A$. Let $T_1,T_2$ be $(\mathcal{P},w)$-trees such that $\Sigma_1=\Sigma(T_1)$ and $\Sigma_2=\Sigma(T_2)$. Set $\beta:= \psi_{T_2} \circ \varphi_{|\Sigma_1} \circ \psi^{-1}_{T_1}$ and $\Delta:= (T_2,\beta^{-1},T_1)$. Then $\Phi(\Delta)$ sends $\Sigma(T_1)=\Sigma_1$ to $\Sigma(T_2)=\Sigma_2$ via $\psi_{T_2}^{-1} \circ \beta \circ \psi_{T_1}= \varphi_{|\Sigma_1}$ and is rigid outside. We conclude that $\Phi(\Delta)=\varphi$.

\begin{claim}
$\Phi$ is injective.
\end{claim}

\noindent
Let $\Delta = (R,\beta,S)$ be a reduced diagram in the kernel of $\Phi$. We know that $\Phi(\Delta)$ sends $\Sigma(S)$ to $\Sigma(R)$ and is rigid outside. Since $\Phi(\Delta)$ is trivial in $\mathfrak{mod}(A)$, we must have $\Sigma(S)= \Sigma(R)$, hence $R=S$. So $\Phi(\Delta)$ sends $\Sigma(R)$ to $\Sigma(R)$ through $\psi_R^{-1 } \circ \beta^{-1} \circ \psi_R$ and is rigid outside. We must have $\beta=1$, hence $\Delta = (R,1,R) \equiv \epsilon(w)$, as desired.
\end{proof}

\begin{remark}\label{remark:ChambordInMod}
For every arboreal semigroup presentation $\mathcal{P}= \langle \mathcal{A} \mid \mathcal{R} \rangle$ and every letter $w \in \mathcal{A}^+$, one can reproduce the construction of the morphism $\Phi$ from the proof of Proposition~\ref{prop:AsMCGareChambord} in order to get an injective morphism $\Psi : C(\mathcal{P},w) \hookrightarrow \mathfrak{mod}(T(\mathcal{P},w))$. The justifications are the same. However, $\Psi$ may not be surjective. The main obstruction comes from the fact that the punctures of $\mathscr{S}^\sharp(T(\mathcal{P},w))$ (which correspond to the vertices of $T(\mathcal{P},w)$) are indexed by $\mathcal{A}$ and that a homeomorphism in the image of $\Psi$ has to preserve this colouring for all but finitely many punctures. For instance, if $\mathcal{P}=\langle a,b,c \mid a=bc, b=b, c=c \rangle$ and $w=a$, then $T(\mathcal{P},w)$ is a bi-infinite line but the image of $\Psi$ does not contain a homeomorphism permuting the two ends of $\mathscr{S}^\sharp(T(\mathcal{P},w))$. If $w$ is not a letter but a word of arbitrary length, observe that $C(\mathcal{P},w)$ is isomorphic to $C(\mathcal{P}',\epsilon)$ where $\epsilon$ is a new letter that does not belong to $\mathcal{A}$ and where $\mathcal{P}'= \langle \mathcal{A} \cup \{\epsilon\} \mid \mathcal{R} \cup \{ \epsilon=w\} \rangle$. It follows that every Chambord group embeds into some asymptotically rigid mapping class group. 
\end{remark}

\subsection{Examples}\label{section:Examples}

\begin{figure}
\begin{center}
\includegraphics[width=0.8\linewidth]{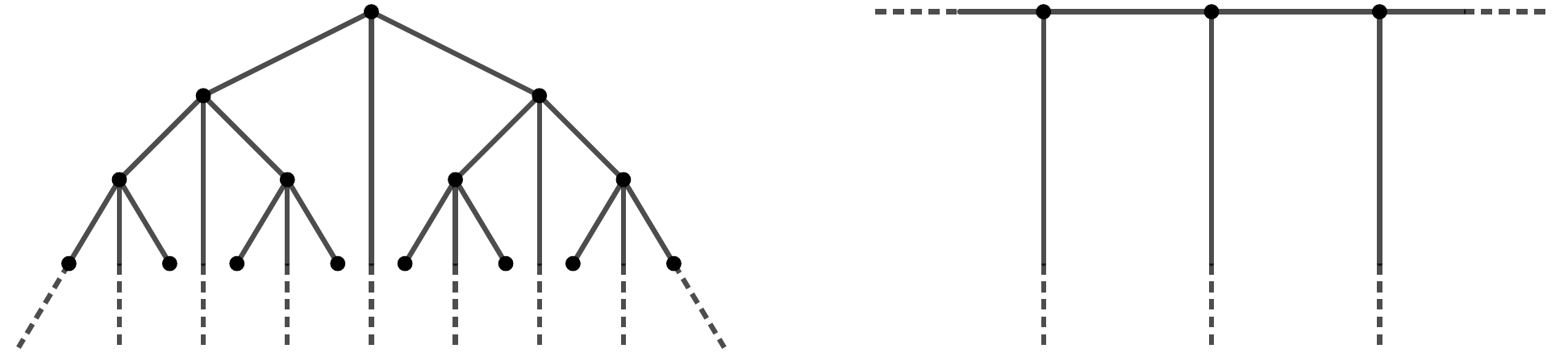}
\caption{The tree defining the Greenberg-Sergiescu group and the braided lamplighter group.}
\label{Graphes}
\end{center}
\end{figure}

\noindent
In this section, we record some examples of interest of asymptotically rigid mapping class groups and we apply Proposition~\ref{prop:AsMCGareChambord} in order to describe them as Chambord groups.

\paragraph{Braided Higman-Thompson groups.} Let $A_{n,m}$ denote the tree having one vertex of degree $m$ while all its other vertices have degree $n+1$, where $m \geq 1$ and $n \geq 2$. In \cite{GLU}, we refer to $\mathrm{br}T_{n,m}:= \mathfrak{mod}(A_{n,m})$ as the \emph{braided Higman-Thompson group}, the terminology being justified by the short exact sequence
$$1 \to B_\infty \to \mathrm{br}T_{n,m} \to T_{n,m} \to 1$$
satisfied by $\mathrm{br}T_{n,m}$, where $B_\infty$ denotes the group of finitely supported braids on infinitely many strands and where $T_{n,m}$ denotes Higman's generalisation of Thompson's group $T$. This family generalises the \emph{Ptolemy-Thompson group} $\mathrm{br}T_{2,3}$ introduced in \cite{FunarKapoudjian}. By applying Proposition~\ref{prop:AsMCGareChambord}, one sees that $\mathrm{br}T_{n,m}$ is isomorphic to the Chambord group $C(\mathcal{P}_{n,m},a)$ where $\mathcal{P}_{n,m}= \langle a,b \mid a = b^m, b=b^n \rangle$ if $m \neq n+1$ and $\mathcal{P}_{n,n+1}= \langle a \mid a=a^n \rangle$ otherwise. 

\paragraph{Braided Houghton groups.} Let $R_n$ denote the tree that is the union of $n$ infinite rays sharing a common origin. Following \cite{Degenhardt, FunarHoughton}, we refer to the finite-index subgroup $\mathrm{br}H_n \leq \mathfrak{mod}(R_n)$ that preserves the ends at infinity of $\mathscr{S}^\sharp (R_n)$ as the \emph{braided Houghton group}, the terminology being justified by the short exact sequence
$$1 \to PB_\infty \to \mathrm{br}H_n \to H_n \to 1$$
satisfied by $\mathrm{br}H_n$, where $PB_\infty$ is the pure subgroup of $B_\infty$ and where $H_n$ is the \emph{Houghton group} introduced in \cite{Houghton}. By applying Proposition~\ref{prop:AsMCGareChambord}, one sees that $\mathfrak{mod}(R_n)$ is isomorphic to the Chambord group $C(\mathcal{P}_n,a)$ where $\mathcal{P}_n:= \langle a,b \mid a=b^n, b=b \rangle$. The image of $\mathrm{br}H_n$ in $C(\mathcal{P}_n,a)$ corresponds to the diagrams in which the wires are not twisted around the cylinder. As a consequence, $\mathrm{br}H_n$ itself can be described as a Chambord group, namely $C(\mathcal{Q}_n,a)$ where $\mathcal{Q}_n:= \langle a,b_1, \ldots, b_n \mid a=b_1 \cdots b_n, b_1=b_1, \ldots, b_n=b_n \rangle$.

\paragraph{Greenberg-Sergiescu group.} Let $A$ denote the tree obtained from the rooted binary tree by gluing an infinite descending ray at each vertex. See Figure~\ref{Graphes}. The group $\mathfrak{mod}(A)$ was introduced in \cite{MR1090167}, where it is proved that it satisfies the short exact sequence
$$1 \to B_\infty \to \mathfrak{mod}(A) \to F' \to 1,$$
where $F'$ denotes the derived subgroup of $F$. Observe that, as a consequence, $\mathfrak{mod}(A)$ is not finitely generated. By applying Proposition~\ref{prop:AsMCGareChambord}, one sees that $\mathfrak{mod}(A)$ is isomorphic to the Chambord group $C(\mathcal{P},a)$ where $\mathcal{P}= \langle a,b \mid a=aba, b=b \rangle$.

\paragraph{Braided lamplighter group.} Let $A$ denote the tree obtained from a bi-infinite horizontal line by gluing an infinite descending ray at each vertex. See Figure~\ref{Graphes}. In \cite{GLU}, we refer to $\mathrm{br}\mathcal{L}:= \mathfrak{mod}(A)$ as the \emph{braided lamplighter group}, the terminology being justified by the short exact sequence
$$1 \to B_\infty \to \mathrm{br} \mathcal{L} \to \mathcal{L}:= \mathbb{Z} \wr \mathbb{Z} \to 1$$
satisfied by $\mathrm{br}\mathcal{L}$, where $\mathcal{L}$ is a \emph{lamplighter group}, defined as the \emph{wreath product} $\mathbb{Z} \wr \mathbb{Z}:= \left( \bigoplus_\mathbb{Z} \mathbb{Z} \right) \rtimes \mathbb{Z}$ where $\mathbb{Z}$ acts on the direct product by shifting the coordinates. As proved below by Proposition~\ref{prop:notFP}, $\mathrm{br} \mathcal{L}$ is finitely generated but not finitely presented. By applying Proposition~\ref{prop:AsMCGareChambord}, one sees that $\mathrm{br}\mathcal{L}$ is isomorphic to the Chambord group $C(\mathcal{P},a)$ where $\mathcal{P}= \langle a,b,c,p \mid a=bpc, b=bp, c=pc, p=p \rangle$.

\section{Nonpositive curvature}\label{section:CCC}

\subsection{Warm up: cubulation of Thompson's group $V$}\label{section:CCV}

\begin{figure}
\begin{center}
\includegraphics[width=0.8\linewidth]{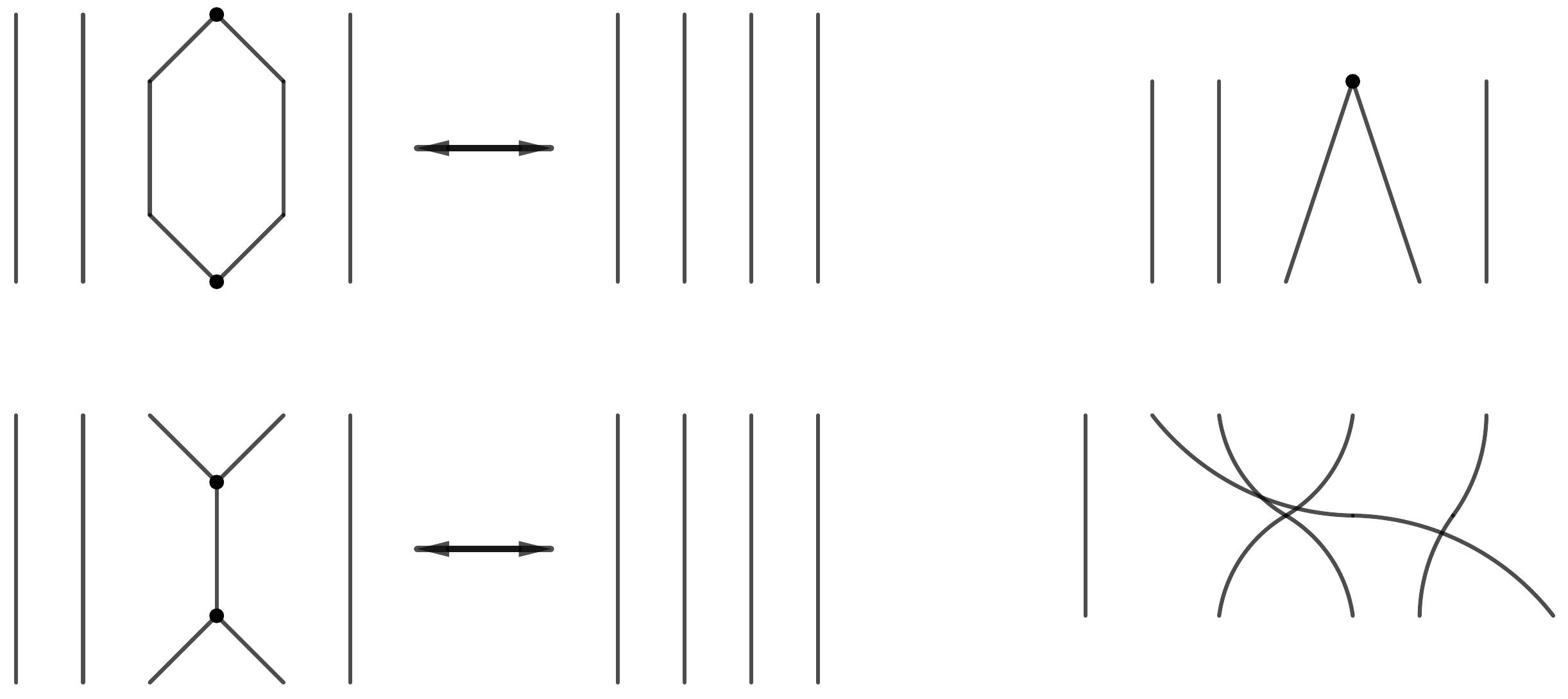}
\caption{On the left, the cancellations in the groupoid; on the right, illustration of the generators.}
\label{Cancel}
\end{center}
\end{figure}
\begin{figure}
\begin{center}
\includegraphics[width=0.9\linewidth]{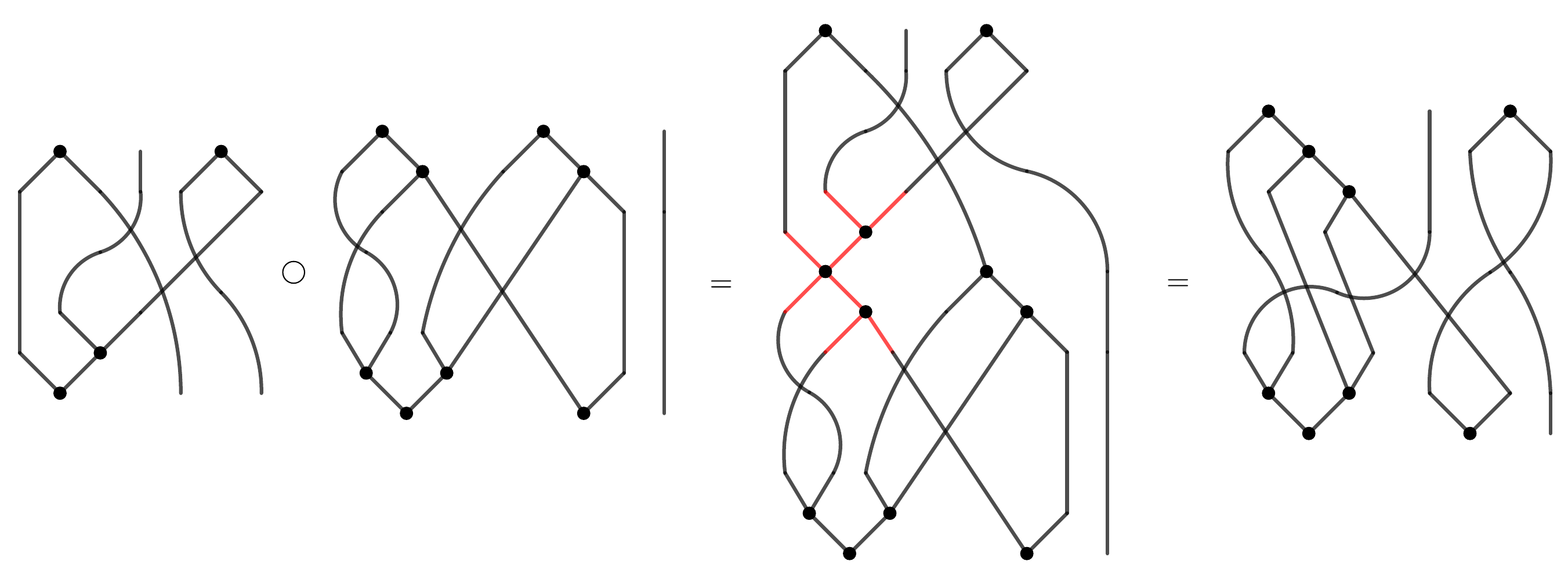}
\caption{A product between two open diagrams.}
\label{Produit}
\end{center}
\end{figure}
\noindent
As described in Section~\ref{section:warmup}, the elements of Thompson's groups $F \leq T \leq V$ can be represented by strand diagrams, a formalism we adapted in order to define our Chambord groups. It is worth noticing that strand diagrams can be similarly defined when rooted binary trees are replaced with rooted binary forests. We refer to such an \emph{open diagram} as an \emph{$(m,n)$-diagram} if its top forest has $m$ trees and its bottom forest $n$ trees. So the elements of Thompson's groups are represented by $(1,1)$-diagrams. 

\medskip \noindent
This more general family of diagrams is naturally endowed with a groupoid structure. Indeed, given a $(p,q)$-diagram $A$ and $(q,r)$-diagram $B$, we define the $(p,r)$-diagram $A \circ B$ by gluing $B$ below $A$ and by applying the cancellations illustrated by Figure~\ref{Cancel} as much as possible. See Figure~\ref{Produit}. Then, the corresponding Thompson group coincides with an isotropy group of the groupoid. 

\medskip \noindent
For Thompson's group $F$ (i.e.\ when the strands are not permuted), the groupoid of open diagrams is generated by diagrams with no edges in their bottom forests and with top forests that are unions of isolated vertices with a single caret. See the right hand side of Figure~\ref{Cancel}. In the Cayley graph of our groupoid constructed from this generating set, the $(1,\ast)$-diagrams define a connected component $X(F)$ on which $F$ naturally acts freely by left-multiplication. In other words, the vertices of $X(F)$ are the $(1,\ast)$-diagrams and one passes from one vertex to another by adding or removing a caret in the diagram. The key observation is that this graph is a \emph{median graph}, i.e. the one-skeleton of a CAT(0) cube complex (sometimes referred to as the \emph{Stein-Farley complex}). 

\medskip \noindent
For Thompson's group $V$, the groupoid is generated by the previous generators plus the diagrams with no edges in their top and bottom forests (so, roughly speaking, they just permute the strands), referred to as \emph{permutation diagrams}. See Figure~\ref{Cancel}. The Cayley graph of the groupoid with respect to this generating set is not a median graph. This is because right-multiplying a given diagram with permutation diagrams produces pairwise adjacent vertices, but there is no complete subgraphs of size $\geq 3$ in median graphs. Nevertheless, one can ``collapse'' these complete subgraphs to vertices in order to get a median graph; which amounts to considering, instead of the Cayley graph, the (connected component containing the $(1,\ast)$-diagrams of the) Schreier graph $X(V)$ with respect to the subgroups given by the permutation diagrams. Concretely, the vertices of $X(V)$ are the classes of $(1,\ast)$-diagrams up to right-multiplication by permutation diagrams and the edges connect two classes whenever there are two representatives such that one can be obtained from the other by right-multiplying by a generator that is not a permutation diagram (i.e.\ we essentially add or remove a caret at the bottom of the diagram). See Figure~\ref{complexe}.

\medskip \noindent
Thus, one obtains an action of Thompson's group $V$ on some CAT(0) cube complex. Details of the construction can be found in \cite{MR1978047, MR2136028}. Our goal, in the rest of the section, is to adapt this strategy to our Chambord groups. Unfortunately, there does not seem to be a natural groupoid structure here, but $(1,\ast)$-diagrams can be defined as well as a left product by diagrams representing elements of our Chambord group.
\begin{figure}
\begin{center}
\includegraphics[width=0.8\linewidth]{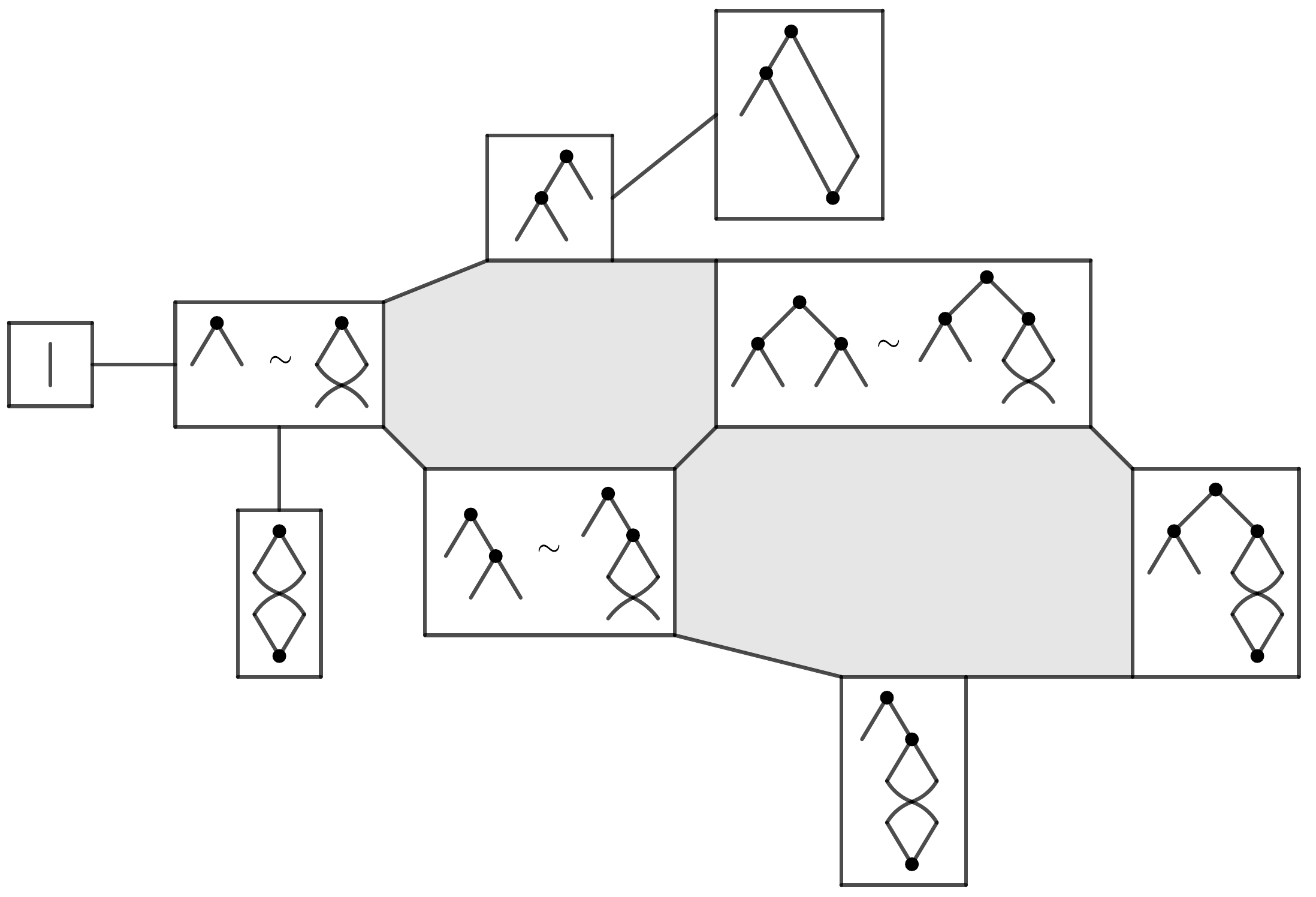}
\caption{A piece of the CAT(0) cube complex on which $V$ acts.}
\label{complexe}
\end{center}
\end{figure}

\subsection{Open braided strand diagrams}

\noindent
In all this section, we fix an arboreal semigroup presentation $\mathcal{P}= \langle \mathcal{A} \mid \mathcal{R} \rangle$ and a baseword $w \in \mathcal{A}^+$. Given a $\mathcal{P}$-forest $F$, we denote by $\iota(F)$ its set of $\mathcal{P}$-interior points, $\lambda(F)$ its set of $\mathcal{P}$-leaves, and $i(F):= |\iota(F)|$, $\ell(F):=|\lambda(F)|$. 

\begin{definition}
An \emph{open braided strand diagram $(A,\beta,\sigma,B)$} is the data of
\begin{itemize}
	\item two $\mathcal{P}$-forests $A,B$ satisfying $i(A) \geq i(B)$ and $\ell(A)=\ell(B)$;
	\item a braid $\beta \in \mathcal{B}_{i(A),\ell(A)}$ that induces a bijection $\lambda(A) \to \lambda(B)$ preserving the $\mathcal{A}$-labelling;
	\item an injective map $\sigma : \iota(B) \hookrightarrow \iota(A)$ preserving the left-right-orders. 
	
\end{itemize}
If $u \in \mathcal{A}^+$ (resp. $v \in \mathcal{A}^+$) denotes the word obtained by reading the labels of the roots of $A$ (resp. $B$) from left to right, one says that $(A,\beta,\sigma,B)$ is a \emph{$(u,v)$-diagram}. If we do not want to specify $v$, one says that it is a \emph{$(u,\ast)$-diagram}.
\end{definition}

\noindent
Similarly to braided strand diagrams (see Definition~\ref{def:Diag}), here we think of the marked points of $\mathscr{D}_{i(A),\ell(A)}$ as being indexed from left to right by the $\mathcal{P}$-leaves of both $A$ and $B$. Therefore, the braid $\beta \in \mathcal{B}_{i(A),\ell(A)}$, which induces a permutation on the marked points, naturally defines a bijection from the $\mathcal{P}$-leaves of $A$ to the $\mathcal{P}$-leaves of $B$, and we require this bijection to preserve the $\mathcal{A}$-labelling. 

\medskip \noindent
Graphically, we think of $(A, \beta, \sigma,B)$ as follows. We draw a copy of $A$ above a cylinder $\mathscr{D}_{i(A),\ell(A)} \times [0,1]$ and we connect its $\mathcal{P}$-leaves (resp. its $\mathcal{P}$-interior vertices) to the marked points (resp. the punctures) of $\mathscr{D}_{i(A),\ell(A)} \times \{0\}$ from left to right; similarly, we draw an inverted copy of $B$ below the cylinder, we connect its $\mathcal{P}$-leaves to the marked points of $\mathscr{D}_{i(A),\ell(A)} \times \{1\}$ from left to right, and we connect its $\mathcal{P}$-interior vertices to the punctures of $\mathscr{D}_{i(A),\ell(A)} \times \{1\}$ according to $\sigma$. Now, following the given braid $\beta$, we connect the marked points of $\mathscr{D}_{i(A),\ell(A)} \times \{0\}$ to the marked points of $\mathscr{D}_{i(A),\ell(A)} \times \{1\}$ with \emph{wires} in $\partial \mathscr{D}_{i(A),\ell(A)} \times [0,1]$ and we connect the punctures of $\mathscr{D}_{i(A),\ell(A)} \times \{0\}$ to the punctures in $\mathscr{D}_{i(A),\ell(A)} \times \{1\}$ with \emph{strands} in the interior of the cylinder $\mathscr{D}_{i(A),\ell(A)} \times [0,1]$. See Figure~\ref{Iso}.
\begin{figure}
\begin{center}
\includegraphics[trim={2.5cm 1.5cm 2.5cm 1cm},clip,width=0.5\linewidth]{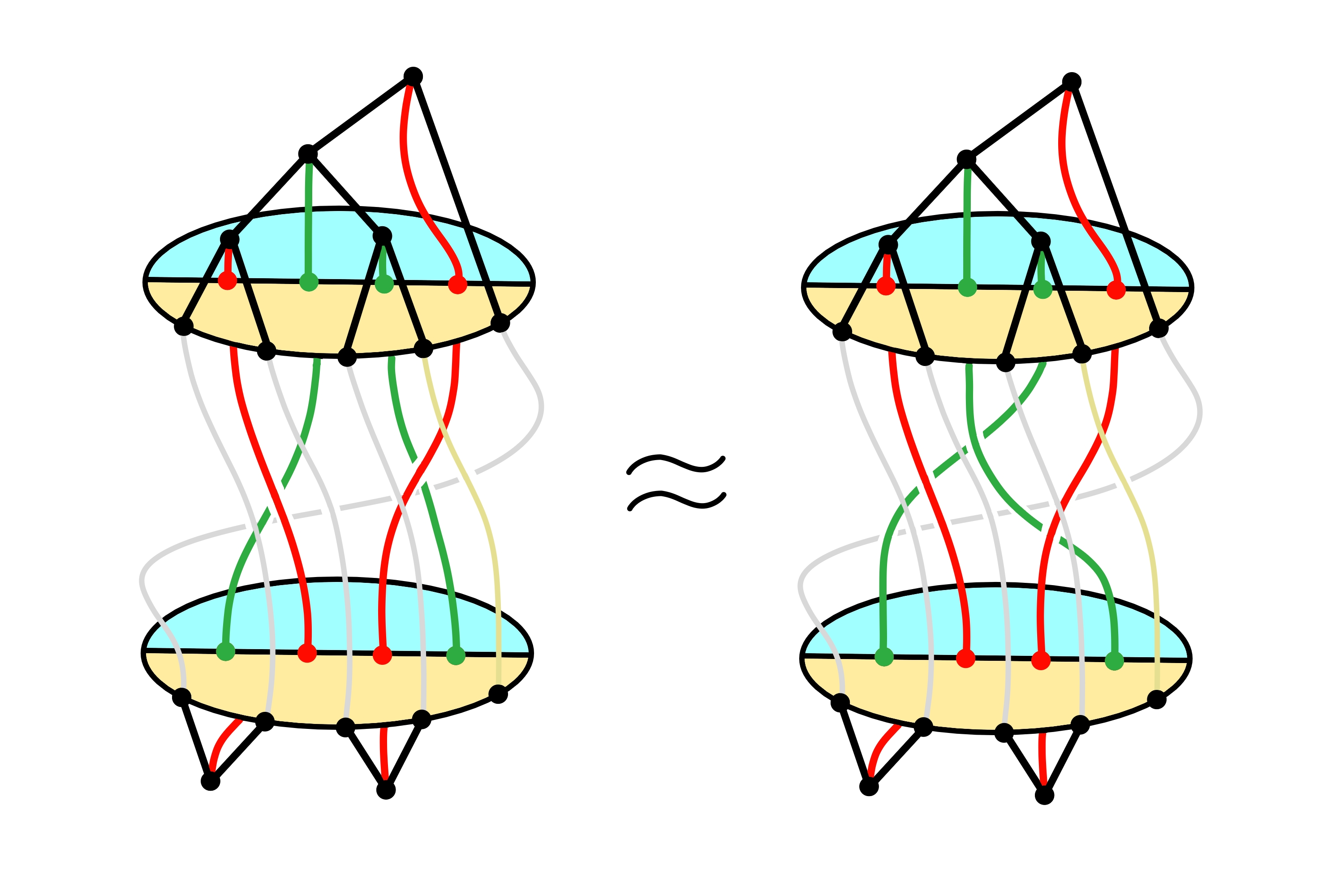}
\caption{Two isocephalese open braided strand diagrams over $\mathcal{P}= \langle x \mid x=x^2 \rangle$. The yellow wire and green strands are free.}
\label{Iso}
\end{center}
\end{figure}

\medskip \noindent
The major difference with (closed) braided strand diagrams is that a strand starting from a $\mathcal{P}$-interior vertex of the top forest may not be connected to a  vertex of the bottom tree. We refer to such a strand as a \emph{free strand}. We also refer to a wire that ends at a vertex of the bottom tree without ancestors as a \emph{free wire}. By extension, we refer to the correspond punctures and marked points as \emph{free punctures} and \emph{free marked points}.

\begin{definition}
Let $\Delta_1= (A,\beta,\sigma,B)$ be an open braided strand diagram. A second diagram $\Delta_2$ is \emph{isocephalese} to $\Delta_1$, written $\Delta_1 \approx \Delta_2$, if $\Delta_2 = (A,\beta' \beta,\sigma,B)$ for some $\beta'$ represented by a strand diagram such that the non-free strands are vertical and always in front of the free strands. Equivalently, $\beta'$ fixes (up to isotopy) a collection of straight arcs in the disc starting from the non-free punctures and ending on the lower part of the boundary.
\end{definition}

\noindent
Figure \ref{Iso} gives two examples of isocephalese open diagrams. Observe that, on open diagrams, being isocephalese defines an equivalence relation. 

\medskip \noindent
\emph{Dipoles}, \emph{dipole reductions}, and \emph{equipotence} are defined in the same way as for braided strand diagrams. See Definitions~\ref{def:Dipoles} and~\ref{def:equipotent}. By reproducing the proof of Proposition~\ref{prop:reduction} word by word, one also shows that an open braided strand diagram is equipotent to a unique reduced diagram, allowing us to define the \emph{reduction} of an open diagram.
Note that free strands and free wires are never part of a dipole. 

\medskip \noindent
In the rest of the article, we are not interested in all the open diagrams but only in those that come from closed diagrams. More formally:

\begin{definition}
A \emph{closable open $(\mathcal{P},w)$-diagram}, or \emph{clopen} for short, is an open $(w,\ast)$-diagram $(A,\beta,\sigma,B)$ such that there exists a $(\mathcal{P},w)$-diagram $(A,\beta, B')$ - which we consider as an open  $(\mathcal{P},w)$-diagram $(A,\beta,\sigma',B')$ by letting $\sigma'$ be the bijection induced by $\beta$ - such that $B$ can be obtained from $B'$ by removing $\mathcal{P}$-interior vertices and where $\sigma,\sigma'$ agree on the $\mathcal{P}$-interior vertices of $B$.
\end{definition}

\noindent
Graphically, a clopen $(\mathcal{P},w)$-diagram is just a (closed) $(\mathcal{P},w)$-diagram from which one removes $\mathcal{P}$-interior vertices in the bottom forest. Figure~\ref{Clopen} provides examples of open diagrams that are not closable. Observe that an open diagram obtained from a clopen diagram by adding or removing dipoles is still closable. Also, if two open diagrams are isocephalese and if one of them is closable, then the other one must be closable as well. 
\begin{figure}
\begin{center}
\includegraphics[width=0.7\linewidth]{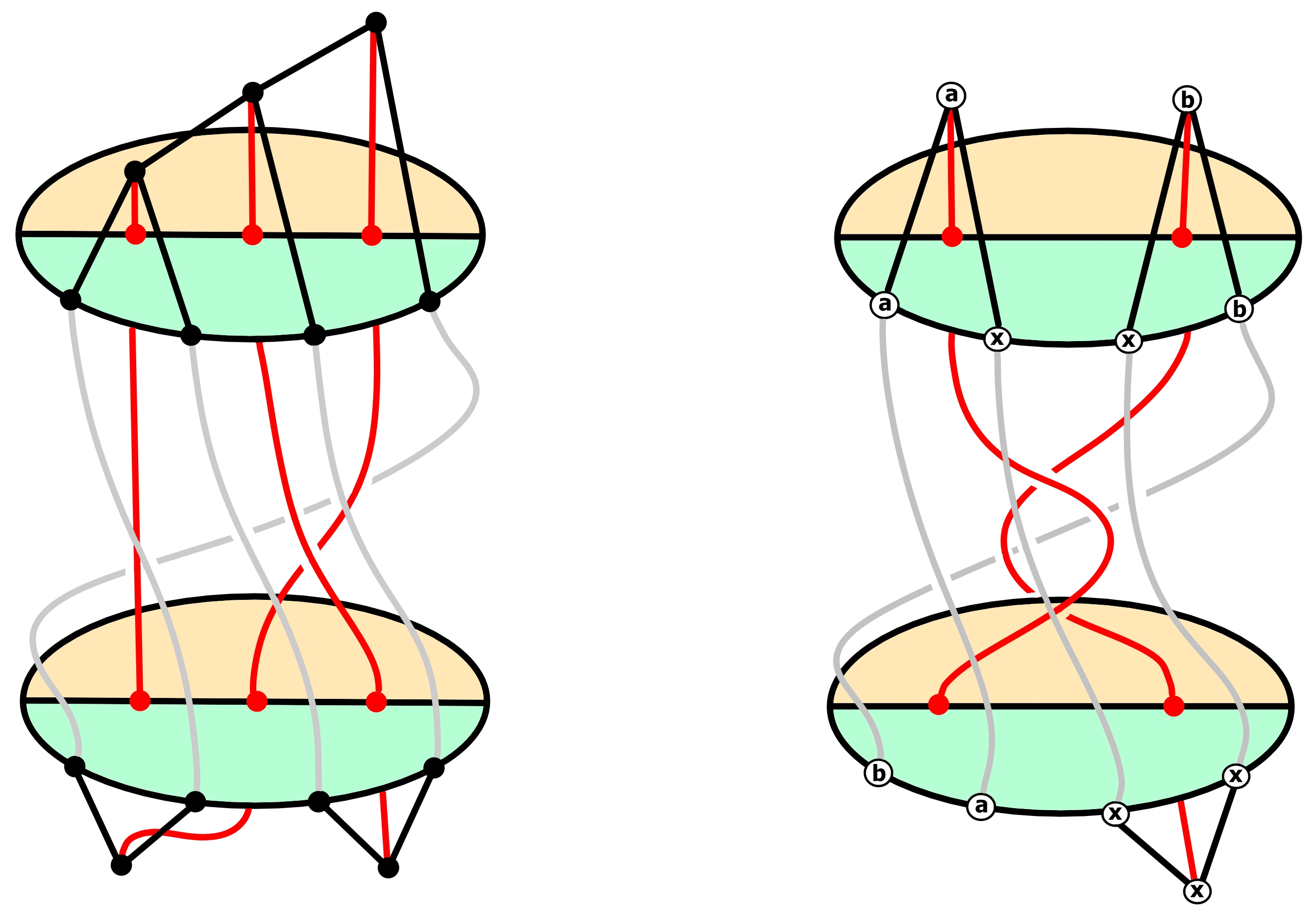}
\caption{Open diagrams over $\langle x \mid x=x^2 \rangle$ and $\langle a,b,x \mid a=ax, b=xb, x=x^2 \rangle$ that are not closable.}
\label{Clopen}
\end{center}
\end{figure}

\subsection{Cube complex and action}

\noindent
In all this section, we fix an arboreal semigroup presentation $\mathcal{P}= \langle \mathcal{A} \mid \mathcal{R} \rangle$ and a baseword $w \in \mathcal{A}^+$. Our goal is to construct a cube complex on which the Chambord group acts naturally. 

\begin{definition}\label{def:MCC}
Let $M(\mathcal{P},w)$ be the cube complex
\begin{itemize}
	\item whose vertices are the classes of clopen $(\mathcal{P},w)$-diagrams given by the equivalence relation generated by being equipotent or isocephalese;
	\item whose edges link two diagrams $(A,\beta,\sigma,B)$ and $(A,\beta,\sigma',B')$ when $B'$ is obtained from $B$ by removing a root and when $\sigma,\sigma'$ agree on the $\mathcal{P}$-interior vertices of $B'$;
	\item whose $k$-cubes fill in all the subgraphs in the one-skeleton isomorphic to the one-skeleton of a $k$-cube, $k \geq 2$.
\end{itemize}
If $(A,\beta,\sigma,B)$ is an open $(\mathcal{P},w)$-diagram, we denote by $[A,\beta,\sigma,B]$ the corresponding vertex of the cube complex $M(\mathcal{P},w)$.
\end{definition}

\noindent
Definition~\ref{def:MCC} gives a short description of our cube complex, but we need more precise characterisations of its vertices and edges. First, we observe that two open diagrams define the same vertex in $M(\mathcal{P},w)$ if and only if their reductions are isocephalese.

\begin{lemma}\label{lem:Classes}
Let $\Phi,\Psi$ be two clopen $(\mathcal{P},w)$-diagrams. If $[\Phi]=[\Psi]$ then there exist two isocephalese diagrams $\Phi',\Psi'$ respectively obtained from $\Phi,\Psi$ by reducing dipoles.
\end{lemma}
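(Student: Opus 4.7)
The strategy is to first extend Proposition~\ref{prop:reduction} to the clopen setting, giving each clopen diagram a unique reduced form, and then to show that isocephalism descends to reductions, so that $\Phi^r \approx \Psi^r$. Taking $\Phi' := \Phi^r$ and $\Psi' := \Psi^r$ will then give the conclusion. The first step is routine: every dipole $(a,b)$ in a clopen diagram $\Delta = (A,\beta,\sigma,B)$ is entirely \emph{non-free}, since $b \in \iota(B)$ (so the strand from $a$ terminates at the non-free puncture $\sigma(b)$) and the children of $b$ are $\mathcal{P}$-leaves of $B$ that are not roots of $B$ (so the wires to them are non-free). Thus reducing a dipole in a clopen diagram yields a clopen diagram, and the local-confluence-plus-termination argument of Proposition~\ref{prop:reduction} transfers verbatim to give a unique reduced form $\Delta^r$.

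The heart of the argument is the compatibility claim: \emph{if $\Delta \approx \Delta'$ then $\Delta^r \approx \Delta'^r$.} This I would prove by induction on the number of dipoles needed to reduce $\Delta$; the inductive step is to show that if $\Delta \to \Delta_1$ reduces a dipole $(a,b)$ and $\Delta = (A,\beta,\sigma,B) \approx \Delta' = (A, \beta'\beta, \sigma, B)$, then there exists $\Delta'_1$ obtained from $\Delta'$ by a single dipole reduction with $\Delta_1 \approx \Delta'_1$. The crucial observation is that the equivalent formulation of isocephalism forces $\beta'$ to fix each non-free puncture together with a straight arc to the lower boundary; in particular, $\beta'$ preserves the non-free combinatorial pairing of endpoints, and leaves the non-free skeleton invariant up to isotopy that avoids these arcs. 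Hence the combinatorial data of the dipole $(a,b)$ persists in $\Delta'$, and after a further isocephalese adjustment localized in the free complement of the dipole region (which untwists any extra braiding contributed by $\beta'$ and restores the parallelism condition of Definition~\ref{def:Dipoles}), $(a,b)$ is a genuine dipole in $\Delta'$ too. Reducing it produces a diagram $\Delta'_1$ which, by construction, agrees with $\Delta_1$ on all of the new non-free structure and differs from it only through a braid supported on the surviving free part, which is precisely to say that $\Delta_1 \approx \Delta'_1$.

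Granted the compatibility claim, the lemma follows by a short induction: decompose the hypothesis $[\Phi] = [\Psi]$ as a finite chain $\Phi = \Delta_0 \sim \Delta_1 \sim \cdots \sim \Delta_n = \Psi$ where each $\sim$ is either $\equiv$ or $\approx$; each $\equiv$-step leaves the reduction unchanged and each $\approx$-step preserves it up to $\approx$, so $\Phi^r \approx \Psi^r$. Since $\Phi^r$ and $\Psi^r$ are obtained from $\Phi$ and $\Psi$ by successive dipole reductions, setting $\Phi' := \Phi^r$ and $\Psi' := \Psi^r$ concludes the argument. The main obstacle is the compatibility claim: translating between the two formulations of isocephalism and verifying that the braid $\beta'$ cannot genuinely obstruct the dipole — only introduce extra twisting in the free region that can be undone by a further isocephalese move — is the one place where geometric care is needed. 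Everything else is bookkeeping.
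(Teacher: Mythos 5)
Your proposal is correct in substance, but it is organised differently from the paper's proof, and the comparison is worth making explicit. The paper does not pass through the uniqueness of reduced forms at all: it states four commutation rules (Fact~\ref{fact:CommutationRules}) -- your ``$\approx$ commutes with $\to$'' claim in both directions, plus the two diamond-lemma confluence rules for pairs of reductions/expansions -- and uses them to normalise an arbitrary chain witnessing $[\Phi]=[\Psi]$ into the shape $\Phi \to \cdots \to \Phi' \approx \cdots \approx \Psi' \leftarrow \cdots \leftarrow \Psi$, from which the lemma is immediate. You instead combine the single rule ``if $\Delta \approx \Delta'$ and $\Delta \to \Delta_1$, then $\Delta' \to \Delta'_1$ with $\Delta_1 \approx \Delta'_1$'' with the uniqueness of reduced forms (the clopen analogue of Proposition~\ref{prop:reduction}, which the paper does record) and an induction on the length of a reduction sequence. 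What your route buys is that it proves the stronger statement $\Phi^r \approx \Psi^r$ directly -- this is exactly the paper's Corollary~\ref{cor:Classes}, which the paper only derives afterwards from the lemma by a further application of Fact~\ref{fact:CommutationRules}. Both arguments ultimately rest on the same unproved geometric assertion, namely that a dipole survives an isocephalese move; the paper asserts this as the first bullet of Fact~\ref{fact:CommutationRules} without proof, and your justification (the dipole involves only non-free strands and wires, while $\beta'$ only braids free strands behind the non-free skeleton) is at least as detailed as what the paper offers.

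One point deserves care in your write-up. You allow yourself ``a further isocephalese adjustment'' of $\Delta'$ before the dipole $(a,b)$ becomes genuine. If that adjustment were a genuine isocephalese move (replacing $\Delta'$ by some $\Delta'' \approx \Delta'$) rather than a mere isotopy of the braid representative, your induction would become circular: you would obtain $\Delta^r \approx \Delta''^r$ but would still need the very compatibility claim under proof to relate $\Delta''^r$ to $\Delta'^r$. The clean statement -- and the one the paper's Fact~\ref{fact:CommutationRules} asserts -- is that the dipole is already present in $\Delta'$ itself, the ``adjustment'' being only a choice of representative of the braid $\beta'\beta$ up to isotopy. Since the parallelism condition of Definition~\ref{def:Dipoles} is itself an up-to-isotopy condition, this is how you should phrase it; with that rephrasing the induction closes and the argument is complete.
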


\begin{proof}
Our lemma is a straightforward consequence of the following observation, where the arrow $\to$ means that the right diagram is obtained from the left diagram by reducing a dipole. The first two points give us information on the commutation between the reduction operations and being isocephalese, and the last two show local confluence properties. 

\begin{fact}\label{fact:CommutationRules}
Let $A,B,C$ be three clopen $(\mathcal{P},w)$-diagrams. The following assertions hold:
\begin{itemize}
	\item If $A \approx B \to C$, then there exists some $B'$ such that $A \to B' \approx C$.
	\item If $A \leftarrow B \approx C$, then there exists some $B'$ such that $A \approx B' \leftarrow C$.
	\item If $A \neq C$ and $A \to B \leftarrow C$, then there exists some $B'$ such that $A \leftarrow B' \to C$.
	\item If $A \neq C$ and $A \leftarrow B \to C$, then there exists some $B'$ such that $A \to B' \leftarrow C$. 
\end{itemize}
\end{fact}

\noindent
Indeed, if $[\Phi]= [\Psi]$, then there exists a sequence $\Delta_0=\Phi, \Delta_1, \ldots, \Delta_{k-1},\Delta_k=\Psi$ such that, for every $0 \leq i \leq k-1$, $\Delta_i \approx \Delta_{i+1}$ or $\Delta_i \to \Delta_{i+1}$ or $\Delta_i \leftarrow \Delta_{i+1}$. By applying the commutation rules given by Fact~\ref{fact:CommutationRules}, we find a new sequence $\Xi_0= \Phi, \Xi_1, \ldots, \Xi_{r-1},\Xi_r=\Psi$ and two indices $0 \leq p \leq q \leq r$ such that $\Delta_i \to \Delta_{i+1}$ for every $0 \leq i \leq p-1$, $\Delta_i \approx \Delta_{i+1}$ for every $p \leq i \leq q-1$, and $\Delta_i \leftarrow \Delta_{i+1}$ for every $q \leq i \leq r-1$. Setting $\Phi':= \Xi_p$ and $\Psi':= \Xi_q$ yields the desired diagrams.
\end{proof}

\begin{cor}\label{cor:Classes}
Let $\Phi,\Psi$ be two clopen $(\mathcal{P},w)$-diagrams. Then $[\Phi]=[\Psi]$ if and only if the reductions of $\Phi$ and $\Psi$ are isocephalese.
\end{cor}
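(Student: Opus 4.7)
The proof splits into two directions, with the backward one immediate and the forward one relying on Lemma~\ref{lem:Classes} together with Fact~\ref{fact:CommutationRules}. For the backward direction: if the reductions $\hat{\Phi}$ and $\hat{\Psi}$ of $\Phi$ and $\Psi$ are isocephalese, then $\Phi \equiv \hat{\Phi}$, $\hat{\Phi} \approx \hat{\Psi}$, and $\hat{\Psi} \equiv \Psi$, so $[\Phi]=[\Psi]$ by the very definition of the equivalence relation used to build $M(\mathcal{P},w)$.

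For the forward direction, assuming $[\Phi]=[\Psi]$, the plan is first to invoke Lemma~\ref{lem:Classes} to find clopen diagrams $\Phi'$ and $\Psi'$, obtained from $\Phi$ and $\Psi$ respectively by iterated dipole reductions, with $\Phi' \approx \Psi'$. Since equipotence preserves the reduction, $\Phi'$ and $\Phi$ share the reduction $\hat{\Phi}$, and likewise for $\Psi'$ and $\hat{\Psi}$. The remaining task is to upgrade the relation $\Phi' \approx \Psi'$ all the way to $\hat{\Phi} \approx \hat{\Psi}$.

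This upgrade is achieved by iterating the first item of Fact~\ref{fact:CommutationRules}, namely that whenever $A \approx B$ and $B \to C$ there exists $B'$ with $A \to B' \approx C$. Fix a reduction sequence $\Phi'=\Phi'_0 \to \Phi'_1 \to \cdots \to \Phi'_n=\hat{\Phi}$. Applied inductively with $A=\Psi'_i$, $B=\Phi'_i$, $C=\Phi'_{i+1}$, this fact yields a parallel sequence $\Psi'=\Psi'_0 \to \Psi'_1 \to \cdots \to \Psi'_n$ with $\Phi'_i \approx \Psi'_i$ at every step; in particular $\Psi'_n \approx \hat{\Phi}$.

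To conclude, I need to check that $\Psi'_n$ is reduced, so that $\Psi'_n=\hat{\Psi}$. If $\Psi'_n$ admitted a dipole reduction $\Psi'_n \to \Xi$, then applying the same item of Fact~\ref{fact:CommutationRules} to $\hat{\Phi} \approx \Psi'_n$ and this reduction would provide a dipole reduction emanating from $\hat{\Phi}$, contradicting the fact that $\hat{\Phi}$ is reduced. The only delicate point in the whole argument is the bookkeeping of the iteration (ensuring that a well-defined $\Psi'_{i+1}$ is produced at every step), but this is exactly the content of the first item of Fact~\ref{fact:CommutationRules}, so no new geometric input is required beyond Lemma~\ref{lem:Classes} and that fact.
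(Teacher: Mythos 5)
Your proof is correct and follows essentially the same route as the paper: apply Lemma~\ref{lem:Classes} to obtain $\Phi' \approx \Psi'$, then iterate the first item of Fact~\ref{fact:CommutationRules} to carry a reduction sequence $\Phi' \to \cdots \to \hat{\Phi}$ across the isocephalese relation, and finally observe that the resulting endpoint on the $\Psi$-side must be reduced (the paper states directly that isocephalese diagrams are simultaneously reduced or not, whereas you derive this from the same commutation rule) and hence equals $\hat{\Psi}$ by uniqueness of reductions.
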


\begin{proof}
If $[\Phi]=[\Psi]$ then, according to Lemma~\ref{lem:Classes}, there exist two isocephalese diagrams $\Phi',\Psi'$ respectively obtained from $\Phi,\Psi$ by reducing dipoles, i.e. $\Phi \to \cdots \to \Phi' \approx \Psi' \leftarrow \cdots \leftarrow \Psi$. Let $\Phi''$ denote the reduction of $\Phi'$. Because $\Phi'' \leftarrow \cdots \leftarrow \Phi' \approx \Psi'$, it follows from Fact~\ref{fact:CommutationRules} that there exists some $\Psi''$ such that $\Phi'' \approx \Psi'' \leftarrow \cdots \leftarrow \Psi'$. Observe that if, among two isocephalese diagrams, one of them is reduced, then the other must be reduced as well. Consequently, $\Psi''$ has to be reduced. But we know that $\Psi''$ can be obtained from $\Psi$ by reducing dipoles, so $\Psi''$ coincides with the reduction of $\Psi$. Thus, we have proved that the reductions $\Phi'',\Psi''$ of $\Phi,\Psi$ are isocephalese. The converse is clear.
\end{proof}

\noindent
Next, we identify a few elementary operations on open diagrams that allow us to pass from one vertex to one of its neighbours. Given an open $(\mathcal{P},w)$-diagram $\Phi = (A,\beta,\sigma,B)$, we say that a diagram $\Psi$ is:
\begin{itemize}
	\item a \emph{top expansion} of $\Phi$ if there exist a $\mathcal{P}$-leaf $a \in A$, corresponding to a free wire, such that $\Psi$ is obtained by adding the $k$ children of $a$ to $A$, by replacing the isolated vertex $b$ of $B$ attached by a wire to $a$ with a collection of $k$ isolated vertices, by replacing the wire connecting $a$ and $b$ with $k$ wires connecting the children of $a$ to the $k$ new isolated vertices of $B$ from left to right, and by adding a {free} strand starting from $a$ parallel to the previous wires;
	\item a \emph{bottom expansion} of $\Phi$ if $\Psi=(A,\beta,\sigma',B')$ such that $B$ can be obtained from $B'$ by removing a $\mathcal{P}$-interior vertex and such that $\sigma,\sigma'$ agree on the $\mathcal{P}$-interior vertices of~$B$; 
	\item a \emph{top contraction} of $\Phi$ if $\Phi$ is a top expansion of $\Psi$;  
	\item a \emph{bottom contraction} of $\Phi$ if $\Phi$ is a top expansion of $\Psi$.
\end{itemize}

We refer to Figure~\ref{Op} for an illustration of these elementary operations. Notice that an open diagram obtained from a clopen diagram by a top expansion or a bottom contraction is necessarily closable. However, the same observation may not hold for top contractions and bottom expansions. Note also that a top contraction can be obtained as a reduction of a suitable bottom extension.
\begin{figure}
\begin{center}
\includegraphics[width=\linewidth]{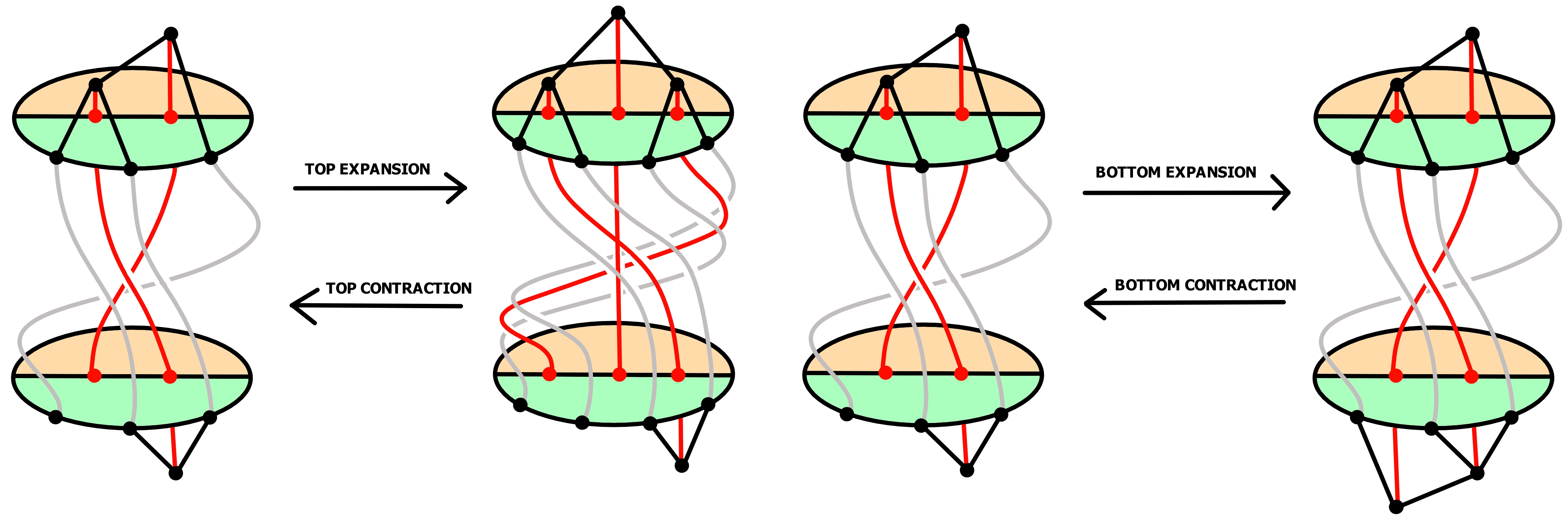}
\caption{Expansions and contractions of open diagrams.}
\label{Op}
\end{center}
\end{figure}

\begin{lemma}\label{lem:Adjacent}
Let $\Phi,\Psi$ be two reduced clopen $(\mathcal{P},w)$-diagrams. Assume that $[\Phi]$ and $[\Psi]$ are adjacent vertices in $M(\mathcal{P},w)$. Then exactly one of the following assertions occurs:
\begin{itemize}
	\item $\Psi$ is isocephalese to a top expansion of $\Phi$;
	\item $\Psi$ is isocephalese to a top contraction of $\Phi'$ for some $\Phi'$ isocephalese to $\Phi$;
	\item $\Psi$ is isocephalese to a bottom contraction of $\Phi$;
	\item $\Psi$ is isocephalese to a bottom expansion of $\Phi'$ for some $\Phi'$ isocephalese to $\Phi$.
\end{itemize}
\end{lemma}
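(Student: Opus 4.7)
The plan is to produce representatives of $[\Phi]$ and $[\Psi]$ witnessing the adjacency, and then to analyse the effect of dipole reductions on these representatives. Using the edge definition in $M(\mathcal{P},w)$ together with Corollary~\ref{cor:Classes}, one can first pick clopen diagrams $\tilde{\Phi} = (A, \beta, \sigma_1, B_1) \equiv \Phi$ and $\tilde{\Psi} = (A, \beta, \sigma_2, B_2) \equiv \Psi$ such that, up to interchanging the roles of $\Phi$ and $\Psi$, the forest $B_2$ is obtained from $B_1$ by removing a root $r$. Because $\ell(A) = \ell(B_1) = \ell(B_2)$, the vertex $r$ must be $\mathcal{P}$-interior (otherwise removing it would decrease the number of $\mathcal{P}$-leaves). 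Setting $a := \sigma_1(r) \in \iota(A)$, this vertex $a$ lies outside the image of $\sigma_2$, so the strand emanating from $a$ is free in $\tilde{\Psi}$.

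The argument then splits into two cases according to whether the pair $(a, r)$ ever appears as a dipole during a reduction of $\tilde{\Phi}$. In Case A, the idea is to invoke the confluence of the reduction process (Lemma~\ref{lem:Confluent}) to order the reductions so that $(a, r)$ is the last dipole reduced; the intermediate diagram $\hat{\Phi}$ in which $(a, r)$ is the sole remaining dipole is equipotent to $\tilde{\Phi}$, hence to $\Phi$. Since none of the preliminary reductions leading to $\hat{\Phi}$ involves the vertex $r$, these reductions commute with the removal of $r$; applying them to $\tilde{\Psi}$ therefore produces the diagram $\hat{\Psi}$ obtained from $\hat{\Phi}$ by removing $r$. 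This $\hat{\Psi}$ will be reduced: the dipole $(a, r)$ has been destroyed, the strand issuing from $a$ is now free and cannot participate in any dipole, and the former children of $r$ are $\mathcal{P}$-leaves by the dipole hypothesis. Consequently $\Psi \approx \hat{\Psi}$. Reducing the remaining dipole $(a, r)$ in $\hat{\Phi}$ yields $\Phi$, and a direct comparison of $\hat{\Psi}$ with $\Phi$ should then show that $\hat{\Psi}$ is exactly the result of applying a top expansion to $\Phi$ at the $\mathcal{P}$-leaf $a$, which carries a free wire to $r$. Hence $\Psi$ will be isocephalese to a top expansion of $\Phi$.

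In Case B, where $(a, r)$ is never a dipole during any reduction of $\tilde{\Phi}$, every reduction involved is independent of $r$ and thus commutes with its removal, so the reduction of $\tilde{\Psi}$ will be isocephalese to the diagram obtained from $\Phi$ by removing the $\mathcal{P}$-interior root $r$ from its bottom forest---that is, to a bottom contraction of $\Phi$. The symmetric situation, in which $B_1$ is obtained from $B_2$ by removing a root, is handled by interchanging the roles of $\tilde{\Phi}$ and $\tilde{\Psi}$ throughout; the two corresponding sub-cases produce the remaining two assertions of the lemma, namely that $\Psi$ is isocephalese to a top contraction of some $\Phi' \approx \Phi$, respectively to a bottom expansion of some $\Phi' \approx \Phi$. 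Mutual exclusivity of the four cases then follows from comparing the sizes of the top and bottom forests of $\Phi$ and $\Psi$, which are invariants of the equivalence class.

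The principal technical delicacy is expected to lie in Case A, where one must verify that the braid of $\hat{\Psi}$ matches the braid produced by a bona fide top expansion of $\Phi$ up to isocephalese. This should boil down to the observation that the free strand introduced by a top expansion is determined up to isotopy by the requirement of being parallel to the newly added wires, while any remaining ambiguity in its position relative to the other free strands is precisely what is absorbed by the isocephalese equivalence.
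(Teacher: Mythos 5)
Your proposal is correct and follows essentially the same route as the paper: both arguments take edge-witnessing representatives $(A,\beta,\sigma_1,B_1)$ and $(A,\beta,\sigma_2,B_2)$ with $B_2=B_1\setminus\{r\}$, pass to reductions via Corollary~\ref{cor:Classes}, and split into two cases according to whether the removed root participates in a dipole that gets reduced (your ``$(a,r)$ eventually becomes a dipole'' dichotomy is an equivalent, and arguably more precise, reformulation of the paper's case distinction on whether $b$ survives as a $\mathcal{P}$-interior vertex of the reduced bottom forest), yielding a top expansion in one case and a bottom contraction in the other, with the remaining two cases obtained by symmetry and the isocephalese ambiguity absorbed exactly as you describe. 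The only addition worth noting is that you also justify the ``exactly one'' clause via the invariance of $(i(A),i(B))$, a point the paper leaves implicit.
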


\begin{proof}
First, assume that there exist clopen $(\mathcal{P},w)$-diagrams $(A,\beta,\sigma,B)$ and $(A,\beta,\sigma',B')$ such that $B'$ is obtained from $B$ by removing a $\mathcal{P}$-interior vertex and such that $[\Phi]=[A,\beta,\sigma,B]$ and $[\Psi]=[A,\beta,\sigma',B']$. As a consequence of Corollary~\ref{cor:Classes}, there exists some $\Phi'$ (resp. $\Psi'$) isocephalese to $\Phi$ (resp. $\Psi$) that is the reduction of $(A,\beta,\sigma,B)$ (resp. $(A,\beta,\sigma',B')$).
%As a consequence of Lemma~\ref{lem:Classes}, there exists some $\Phi'$ (resp. $\Psi'$) isocephalese to $\Phi$ (resp. $\Psi$) that can be obtained from $(A,\beta,\sigma,B)$ (resp. $(A,\beta,\sigma',B')$)  by reducing dipoles. In fact, because a diagram isocephalese to a reduced diagram must be reduced, $\Phi'$ (resp. $\Psi'$) coincides with the reduction of $(A,\beta,\sigma,B)$ (resp. $(A,\beta,\sigma,B')$)\an{Pourquoi ne pas utiliser directement Corollary \ref{cor:Classes} et dire: "As a consequence of Corollary \ref{cor:Classes}, there exists some $\Phi'$ (resp. $\Psi'$) isocephalese to $\Phi$ (resp. $\Psi$) that is the reduction of $(A,\beta,\sigma,B)$ (resp. $(A,\beta,\sigma',B')$) . }. 
Let $b \in B$ denote the vertex that does not belong to $B'$ and write $\Phi'=(R, \alpha, \varepsilon,S)$. We distinguish two cases.

\medskip \noindent
Assume that $b \in S$. %\an{This implies that $b$ is not a vertex of a dipole, hence $(R,\alpha,\varsigma,S\backslash \{b\})$ is reduced and by uniqueness of the reduction $(R,\alpha,\varsigma,S\backslash \{b\})=\Psi'$. In other words, $\Psi'$ is the bottom contraction of $\Phi'$ over $b$. }
In other words, we add dipoles to $\Phi'$ to get $(A,\beta,\sigma,B)$, we remove the vertex $b$ (which does not belong to these dipoles), and we reduce the diagram to get $\Psi'$. Clearly, the reduction kills all the dipoles added to $\Phi'$, so $\Psi'=(R,\alpha,\varsigma,S\backslash \{b\})$. In other words, $\Psi'$ is the bottom contraction of $\Phi'$ over $b$. 

\medskip \noindent
Assume that $b \notin S$. %\an{This implies that $b$ is the bottom vertex of a dipole. Consequently, adding this dipole to $\Phi'$ and removing $b$ gives a reduce diagram which is, by uniqueness of the reduction $\Phi'$. Hence, $\Psi'$ is a top expansion of $\Phi'$.} \an{peut-ï¿½tre que ï¿½a vaudrait le coup de dï¿½finir top expansion comme ï¿½a non ?}
In other words, we add dipoles to $\Phi'$ to get $(A,\beta,\sigma,B)$, we remove the bottom vertex $b$ of a dipole, and we reduce the diagram to get $\Psi'$. Clearly, the reduction kills all the dipoles added to $\Phi'$ but the one containing $b$. So passing from $\Phi'$ to $\Psi'$ amounts to adding a single dipole and removing its bottom vertex, i.e. $\Psi'$ is a top expansion of $\Phi'$. 

\medskip \noindent
Thus, we have proved that $\Psi$ is isocephalese to $\Psi'$, which is either a bottom contraction or a top expansion of $\Phi' \approx \Phi$. By symmetry, it follows that, if our clopen $(\mathcal{P},w)$-diagrams $(A,\beta,\sigma,B)$ and $(A,\beta,\sigma',B')$ are such that $[\Phi]=[A,\beta,\sigma',B']$ and $[\Psi]=[A,\beta,\sigma,B]$, then $\Psi$ is isocephalese to $\Psi'$, which is either a bottom expansion or a top contraction of~$\Phi' \approx \Phi$. 

\medskip \noindent
In order to conclude the proof of our lemma, it suffices to observe that applying a top expansion or a bottom contraction commutes with the relation of being isocephalese. More precisely, if $\Theta$ is an open diagram, if $p$ (resp. $q$) is a vertex in its top forest (resp. bottom forest), and if $\Theta'$ is a diagram isocephalese to $\Theta$, then the diagrams obtained from $\Theta$ and $\Theta'$ by applying a top expansion over $p$ (resp. a bottom contraction over $q$) are isocephalese.
%In order to conclude the proof of our lemma, observe that, in the case $\Psi'$ is isocephalese to the top expansion (resp. bottom contraction) of $\Phi' \approx \Phi$, then $\Psi'$ must be isocephalese to a top expansion (resp. bottom contraction) of $\Phi$. \an{pourquoi c'est vrai pour top expansion ?}
\end{proof}

\noindent
Now, we want to describe how the Chambord group $C(\mathcal{P},w)$ acts on $M(\mathcal{P},w)$. Let $(R,\alpha,S) \in C(\mathcal{P},w)$ be an element and $[A,\beta,\sigma,B] \in M(\mathcal{P},w)$ a vertex. By adding dipoles, we find two diagrams $(R',\alpha',S') \equiv (R, \alpha,S)$ and $(A',\beta',\sigma',B') \equiv (A,\beta,\sigma,B)$ such that $S'=A'$. Then we set
$$(R, \alpha, S) \cdot [A,\beta,\sigma,B] = [R', \beta' \alpha', \sigma',B'].$$
Of course, we need to justify that the vertex thus defined does not depend on the choices we made and we need to verify that such a definition yields an action of $C(\mathcal{P},w)$ on $M(\mathcal{P},w)$ by automorphisms. This can be done by following almost word for word the arguments from Section~\ref{section:GroupLaw}, where we showed that the concatenation between elements of $C(\mathcal{P},w)$ is well-defined and yields a group law. Some explanation is given below, but we refer to Section~\ref{section:GroupLaw} when the arguments are identical.

\medskip \noindent
\textbf{A well-defined...} First, given our group element $(R,\alpha,S) \in C(\mathcal{P},w)$ and our vertex $[A,\beta,\sigma,B] \in M(\mathcal{P},w)$, the diagrams $(R',\alpha',S')$ and $(A',\beta',\sigma',B')$ can be constructed by adding dipoles like in Lemma~\ref{lem:Bigger}. By following Claim~\ref{claim:ProductWellDefined}, we know that adding more dipoles than needed leads to dipoles in $(R',\beta'\alpha',\sigma,B')$ so that it does not modify the result up to equipotence. Moreover, because $(A',\beta',\sigma',B')$ is closable, $(R',\beta',\alpha',\sigma',B')$ must be a clopen $(\mathcal{P},w)$-diagram. Finally, it is clear that replacing $(A,\beta,\sigma,B)$ with a diagram isocephalese to it yields a diagram isocephalese to our result $(R',\beta'\alpha',\sigma',B')$. Thus, the vertex $[R',\beta' \alpha',\sigma',B']$ is well-defined and it does not depend on the choices we made.

\medskip \noindent
\textbf{...action...} Given two elements $a,b \in C(\mathcal{P},w)$ and a vertex $x \in M(\mathcal{P},w)$, we can write $a \equiv (A, \alpha,B)$, $b\equiv (B,\beta,C)$, and $x=[C, \gamma,\sigma,D]$, and we have
$$a \cdot (b \cdot x) = [A, (\gamma \cdot \beta) \cdot \alpha, \sigma, D] = [A, \gamma \cdot (\beta \alpha), \sigma, D] = (ab) \cdot x.$$
Moreover, the neutral element $\epsilon(w)$ of $C(\mathcal{P},w)$ acts trivially since
$$\epsilon(w) \cdot x = (C,\mathrm{id},C) \cdot [C,\gamma,\sigma,D]= [C,\gamma,\sigma,D]=x.$$
Thus, we have defined an action by permutations on the vertices of $M(\mathcal{P},w)$.

\medskip \noindent
\textbf{...by automorphisms.} In order to show that our action on the vertices of $M(\mathcal{P},w)$ extends to an action on the whole cube complex by automorphisms, it suffices to verify that adjacency between vertices is preserved. So let $x,y \in M(\mathcal{P},w)$ be two adjacent vertices and let $g \in C(\mathcal{P},w)$ be an element. By construction of $M(\mathcal{P},w)$, we can write, up to switching $x$ and $y$, $x=[A,\beta,\mu,P]$ and $y=[A,\beta,\nu,Q]$ where $Q$ is obtained from $P$ by removing a $\mathcal{P}$-interior vertex (which has to be a root). By adding dipoles, we can write $g \equiv (R,\alpha,A')$, $x=[A',\beta',\mu',P']$, and $y=[A',\beta',\nu',Q']$; and the key observation is that $Q'$ can still be obtained from $P'$ by removing a $\mathcal{P}$-interior vertex. We have $g \cdot x = [R, \beta'\alpha,\mu',P']$ and $g \cdot y = [R,\beta' \alpha,\nu', Q']$, so $g \cdot x$ and $g \cdot y$ are still adjacent in $M(\mathcal{P},w)$, as desired. 

\medskip \noindent
Thus, we have constructed an action of the Chambord group $C(\mathcal{P},w)$ on the cube complex $M(\mathcal{P},w)$ by automorphisms. We conclude this section by recording a few properties of this action.

\begin{lemma}\label{lem:Translate}
Every vertex $x$ in $M(\mathcal{P},w)$ admits a $C(\mathcal{P},w)$-translate that is of the form $[T, \mathrm{id}, \emptyset, \eta(m)]$, where $\emptyset$ is the empty map and where $m$ is the word obtained by reading the $\mathcal{A}$-labels from left to right of the roots of the bottom forest in any representative of~$x$. 
\end{lemma}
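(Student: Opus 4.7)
The plan is to exhibit an explicit element $g \in C(\mathcal{P},w)$ whose inverse sends $x$ to a representative of the prescribed form. Start with any reduced representative $(A,\beta,\sigma,B)$ of $x$. The definition of a clopen diagram provides a $(\mathcal{P},w)$-forest $B^\sharp$ with $B \subseteq B^\sharp$ (obtained from $B^\sharp$ by iteratively removing some $\mathcal{P}$-interior roots whose children become new roots) such that $(A,\beta,B^\sharp)$ is a closed $(\mathcal{P},w)$-diagram and $\sigma$ coincides with the restriction to $\iota(B)$ of the inverse of the canonical bijection $\iota(A) \to \iota(B^\sharp)$ induced by the braid $\beta$. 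The candidate element is then $g := (A,\beta,B^\sharp) \in C(\mathcal{P},w)$, so that $g^{-1} \equiv (B^\sharp,\beta^{-1},A)$.

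Since the bottom forest of $g^{-1}$ coincides with the top forest of the chosen representative of $x$, no dipole extensions are needed in order to apply the action. A direct reading of the action formula yields
\[
g^{-1} \cdot x \;=\; [B^\sharp,\, \beta\beta^{-1},\, \tau,\, B] \;=\; [B^\sharp,\, \mathrm{id},\, \tau,\, B],
\]
where $\tau : \iota(B) \hookrightarrow \iota(B^\sharp)$ is the natural inclusion: a strand starting from $v \in \iota(B^\sharp)$ at the top of the composite travels down through $\beta^{-1}$ to $\beta^{-1}(v) \in \iota(A)$ and is then carried back to $v$ by $\beta$, ending at the bottom copy of $v$ precisely when $v \in \iota(B)$ and staying free otherwise.

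The final step is to reduce this representative. For every $v \in \iota(B)$, the children of $v$ in $B$ are identical to its children in $B^\sharp$ (the removals defining $B \subseteq B^\sharp$ only affected ancestors of $v$), and the identity-braid strand joining the top copy of $v$ to the bottom copy of $v$ is parallel to the wires joining the corresponding $\mathcal{P}$-leaves; whenever all children of $v$ are $\mathcal{P}$-leaves this constitutes a dipole. Reducing bottom-up — first at vertices of $\iota(B)$ all of whose children are $\mathcal{P}$-leaves, then iteratively at their parents, which become eligible once their own children have been turned into $\mathcal{P}$-leaves by the previous step — absorbs the entirety of $\iota(B)$ into single wires. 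The bottom forest shrinks exactly to the collection of roots of $B$, i.e.\ to $\eta(m)$; the top forest becomes the sub-$(\mathcal{P},w)$-forest $T$ of $B^\sharp$ obtained by deleting every strict descendant of every vertex of $\iota(B)$; the braid stays trivial; and $\tau$ restricts to the canonical (possibly empty) inclusion $\iota(\eta(m)) \hookrightarrow \iota(T)$. Confluence of dipole reduction (Proposition~\ref{prop:reduction}) guarantees that the output does not depend on the order of reductions, and since the $\Sigma$-labels of the roots of the bottom forest are invariant under both the action of $C(\mathcal{P},w)$ and the equivalences $\approx$ and $\equiv$, the word $m$ is indeed the one described in the statement. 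The only potential obstacle is to check carefully that the bottom-up reduction actually terminates in the claimed form rather than creating new obstructions higher up, but this follows immediately from the observation that removal of the children of $v$ in the reduction step turns $v$ itself into a $\mathcal{P}$-leaf of the bottom forest, so the process is strictly monotone and eventually exhausts~$\iota(B)$.
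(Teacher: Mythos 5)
Your proposal is correct and follows essentially the same route as the paper: both use closability to produce a closed diagram $(A,\beta,B^{+})$, act by (the reduction of) $(B^{+},\beta^{-1},A)$ to land on $[B^{+},\mathrm{id},\sigma,B]$, and then reduce this representative to one of the form $(T,\mathrm{id},\emptyset,\eta(m))$. The only difference is cosmetic: where you carry out the bottom-up dipole reduction by hand, the paper reaches the same conclusion by observing that $(B^{+},\mathrm{id},\sigma,B)$ is obtained from $(B^{+},\mathrm{id},B^{+})$ --- whose reduction is $\epsilon(w)$ --- by deleting $\mathcal{P}$-interior vertices of the bottom forest, so all of $\iota(B)$ must disappear in the reduction.
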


\noindent
We recall that, given a word $m \in \mathcal{A}^+$, $\eta(m)$ denotes the $(\mathcal{P},m)$-forest with no edges. In other words, $\eta(m)$ is a collection of isolated vertices such that the word $m$ is obtained by reading their $\mathcal{A}$-labels from left to right. Lemma~\ref{lem:Translate} will be often used in the next section in order to simplify the proofs. It justifies why we defined vertices of $M(\mathcal{P},w)$ as classes of clopen diagrams instead of more general open diagrams.

\begin{proof}[Proof of Lemma~\ref{lem:Translate}.]
Let $x \in M(\mathcal{P},w)$ be a vertex. Write $x$ as $[A,\beta,\sigma,B]$ for some clopen $(\mathcal{P},w)$-diagram $(A,\beta,\sigma,B)$. By definition of being closable, there must exist a (closed) $(\mathcal{P},w)$-diagram $(A,\beta,B^+)=(A,\beta, \sigma^+,B^+)$ such that $B$ can be obtained from $B^+$ by removing $\mathcal{P}$-interior vertices and such that $\sigma,\sigma^+$ agree on the $\mathcal{P}$-interior vertices of $B$. Then the reduction $g$ of $(B^+,\beta^{-1},A)$ is an element of $C(\mathcal{P},w)$ and we have
$$g \cdot x = (B^+, \beta^{-1}, A) \cdot [A,\beta,\sigma, B] = [B^+, \mathrm{id}, \sigma, B].$$
Observe that $(B^+,\mathrm{id},\sigma,B)$ can be obtained from $(B^+,\mathrm{id},B^+)$ by removing $\mathcal{P}$-interior vertices in the bottom forest. Because $(B^+,\mathrm{id},B^+)$ has reduction $\epsilon(w)$, this shows that the $\mathcal{P}$-interior vertices of $B$ in $(B^+,\mathrm{id},\sigma,B)$ disappear in the reduction, i.e. there exists a $(\mathcal{P},w)$-forest $T$ such that $(T, \mathrm{id},\emptyset,{\eta(m)})$ is the reduction of $(B^+,\mathrm{id},\sigma,B)$, where $m$ denotes the word obtained by reading from left to right the $\mathcal{A}$-labels of the roots of $B$. We conclude that $g \cdot x = [T,\mathrm{id},\emptyset,\eta(m)]$, as desired.
\end{proof}

\noindent
As it will be observed later, it turns out that the stabiliser in $C(\mathcal{P},w)$ of a cube in $M(\mathcal{P},w)$ must fix a vertex. Consequently, in order to understand cube-complexes it is sufficient to understand vertex-stabilisers. This is done by our next lemma (combined with Lemma~\ref{lem:Translate}).

\begin{lemma}\label{lem:Stab}
Let $x \in M(\mathcal{P},w)$ be a vertex of the form $[A,\mathrm{id},\emptyset,\eta(m)]$. Then, with respect to the action $C(\mathcal{P},w) \curvearrowright M(\mathcal{P},w)$,
$$\mathrm{stab}(x) = \{ g \in C(\mathcal{P},w) \mid g \equiv (A,\beta,A), \text{ $\beta$ arbitrary} \}.$$
As a consequence, $\mathrm{stab}(x)$ is isomorphic to the braid group $\mathcal{B}_{i(A),\ell(A)}$.
\end{lemma}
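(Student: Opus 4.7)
The plan is to prove the two inclusions in the displayed equality and then identify the resulting group with the braid group.

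For the inclusion $\supseteq$, I would take any representative of the form $(A,\beta,A)$ and compute $(A,\beta,A)\cdot x$ directly. Since the bottom forest of $(A,\beta,A)$ already coincides with the top forest of the representative $(A,\mathrm{id},\emptyset,\eta(m))$ of $x$, no insertion of dipoles is needed, and the definition of the action gives $(A,\beta,A)\cdot x=[A,\beta,\emptyset,\eta(m)]$. The key observation is that $(A,\mathrm{id},\emptyset,\eta(m))$ has no $\mathcal{P}$-interior vertices in its bottom forest, hence every strand starting from $\iota(A)$ is free; the constraint on non-free strands in the definition of \emph{isocephalese} is therefore vacuous, and $(A,\beta,\emptyset,\eta(m))\approx(A,\mathrm{id},\emptyset,\eta(m))$ for every braid $\beta$. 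Thus $(A,\beta,A)\in\mathrm{stab}(x)$.

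For the converse inclusion, I would take $g\in\mathrm{stab}(x)$, pick a representative $(R,\alpha,S)\equiv g$, and choose the common expansion $T:=S\cup A$. Using Lemma~\ref{lem:Bigger}, $g$ expands to $(R^+,\alpha^+,T)$; and $(A,\mathrm{id},\emptyset,\eta(m))$ expands, as an open diagram with top forest $T$, to some $(T,\gamma,\tau,D)$. A direct inspection shows that the expansion only inserts parallel strands alongside straight wires, so $\gamma=\mathrm{id}$. The action formula yields $g\cdot x=[R^+,\alpha^+,\tau,D]$. Since $g\cdot x=x$, Corollary~\ref{cor:Classes} forces the reduction of $(R^+,\alpha^+,\tau,D)$ to be isocephalese to $(A,\mathrm{id},\emptyset,\eta(m))$; as the latter has no non-free strands, the reduction is of the form $(A,\beta,\emptyset,\eta(m))$ for some $\beta\in\mathcal{B}_{i(A),\ell(A)}$.

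The hard step will be lifting the open dipole reductions from $(R^+,\alpha^+,\tau,D)$ to $(A,\beta,\emptyset,\eta(m))$ into closed dipole reductions on $(R^+,\alpha^+,T)$. I would argue that each open dipole pairing $u\in\iota(R^+)$ with $v\in\iota(D)$ corresponds to a closed dipole pairing $u$ with $\tau(v)\in\iota(T)\setminus\iota(A)$: the children of $v$ in $D$ are canonically identified with the children of $\tau(v)$ in $T$ (because $D$ is obtained from $\eta(m)$ by the same hierarchical expansion that produces $T$ from $A$), and the parallel-to-wire condition is inherited verbatim from the braid $\alpha^+$. Applying the lifted sequence eliminates precisely the vertices of $\iota(R^+)\setminus\iota(A)$ on top and $\iota(T)\setminus\iota(A)$ on the bottom, producing $(A,\beta,A)$. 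Hence $g\equiv(R^+,\alpha^+,T)\equiv(A,\beta,A)$, proving the reverse inclusion.

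Finally, for the braid group identification, the assignment $(A,\beta,A)\mapsto\beta$ is well-defined on equivalence classes by Lemma~\ref{lem:Equal} (two equipotent diagrams sharing top and bottom $A$ must have the same braid), and it is surjective onto $\mathcal{B}_{i(A),\ell(A)}$ since every braid yields a valid closed diagram. Because the concatenation of two diagrams of the form $(A,\cdot,A)$ requires no insertion of dipoles, this map is a group homomorphism, yielding the claimed isomorphism $\mathrm{stab}(x)\cong\mathcal{B}_{i(A),\ell(A)}$.
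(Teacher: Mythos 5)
Your proposal is correct and follows essentially the same route as the paper's proof: expand $g$ so that its bottom forest contains $A$ as a prefix, expand the representative of $x$ compatibly (the paper's observation that $\sigma$ is induced by the canonical embedding of the bottom forest into the top one is exactly your ``canonical identification of the children of $v$ with those of $\tau(v)$''), invoke Corollary~\ref{cor:Classes}, and lift the open dipole reductions to the closed diagram to land on $(A,\beta,A)$. The only differences are that you spell out the easy inclusion and the final identification with $\mathcal{B}_{i(A),\ell(A)}$, which the paper leaves implicit.
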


\begin{proof}
Let $g \in C(\mathcal{P},w)$ be an element stabilising $x$. By adding dipoles to $g$, we know that $g$ is equipotent to a diagram whose bottom forest contains $A$ as a prefix, i.e. we can write $g \equiv (R, \alpha , A \otimes P)$. Here, the notation $A \otimes P$ refers to a $\mathcal{P}$-forest obtained from $A$ and another $\mathcal{P}$-forest $P$, such that the word obtained by reading from left to right the $\mathcal{A}$-labels of the roots of $P$ coincides with the word obtained by reading from left to right the $\mathcal{A}$-labels of the $\mathcal{P}$-leaves of $A$, by identifying the roots of $P$ with the $\mathcal{P}$-leaves of $A$ from left to right (so by gluing $P$ below $A$). Observe that, by adding dipoles to $(A, \mathrm{id},\emptyset, \eta(m))$, we can write $x$ as $[A \otimes P, \mathrm{id},\sigma,P]$. In $(A \otimes P, \mathrm{id}, \sigma,P)$, the braid connected to a $\mathcal{P}$-interior vertex $p \in P$ in the bottom forest must be connected to the same $p$ in the top forest $A \otimes P$ (identifying $P$ with its image in $A \otimes P$). Consequently, 
\begin{description}
	\item[$(\ast)$] $\sigma : \iota(P) \hookrightarrow \iota(A \otimes P)$ is induced by the obvious embedding $P \hookrightarrow A \otimes P$. 
\end{description}
Because $g$ stabilises $x$, we have
$$[A, \mathrm{id},\emptyset,\eta(m)] = x = g \cdot x = [R, \alpha, \sigma,P],$$
which implies, according to Corollary~\ref{cor:Classes}, that the reduction of $(R,\alpha,\sigma,P)$ is isocephalese to $(A, \mathrm{id},\emptyset,\eta(m))$, or, equivalently, is of the form $(A,\beta,\emptyset,\eta(m))$. Thus, there exists a sequence of dipole reductions transforming $(R,\alpha,\sigma,P)$ into $(A,\beta, \emptyset,\eta(m))$. As a consequence of the observation $(\ast)$ above, it is possible to add vertices to the bottom forest of $(R,\alpha,\sigma,P)$ in order to get the closed diagram $(R,\alpha,A \otimes P)$. The previous sequence of dipole reductions applied to $(R,\alpha, A\otimes P)$ then leads to the diagram $(A,\beta, A)$. We conclude that $g \equiv (R,\alpha, A\otimes P)$ must be equipotent to $(A,\beta,A)$. Conversely, every element of $C(\mathcal{P},w)$ equipotent to a diagram of this form clearly stabilises $x$.
\end{proof}

\subsection{CAT(0)ness}

\noindent
From now on, we fix an arboreal semigroup presentation $\mathcal{P} = \langle \mathcal{A} \mid \mathcal{R} \rangle$ and a baseword $w \in \mathcal{A}^+$. This section is dedicated to the proof of the following statement:

\begin{thm}\label{thm:BigCC}
$M(\mathcal{P},w)$ is a CAT(0) cube complex. 
\end{thm}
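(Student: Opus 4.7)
The strategy is to apply Gromov's classical criterion: a cube complex is CAT(0) if and only if it is simply connected and every vertex link is a flag simplicial complex.

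For the flag condition, since $C(\mathcal{P},w)$ acts on $M(\mathcal{P},w)$ by cubical automorphisms and Lemma~\ref{lem:Translate} provides, for every vertex, a translate of the form $v = [T, \mathrm{id}, \emptyset, \eta(m)]$, it suffices to verify flagness at such standard vertices. Lemma~\ref{lem:Adjacent} classifies the edges emanating from $v$ into four types of elementary operations (top expansion, top contraction, bottom expansion, bottom contraction; the last is vacuous at such a $v$). I would then establish the following characterization: two such edges span a square in $M(\mathcal{P},w)$ if and only if the corresponding operations are \emph{disjoint}, in the sense that they affect disjoint sets of $\mathcal{P}$-forest vertices and, when strands or wires are involved, the braid $\beta$ introduces no interaction between the relevant pieces. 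The forward direction is essentially immediate; the reverse is proved by explicitly constructing the opposite corner of the square by performing the two operations in either order. Flagness then reduces to the trivial observation that pairwise disjointness of a finite collection of operations implies joint disjointness and compatibility.

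For simple connectivity I would prove the stronger statement of contractibility, using a discrete Morse-theoretic argument in the spirit of the Stein--Farley complex for diagram groups. Define the height $h([A,\beta,\sigma,B]) := |A|+|B|$ on the reduced representative, well-defined thanks to Proposition~\ref{prop:reduction} combined with Corollary~\ref{cor:Classes}. The global minimum is the base vertex $v_0 := [\eta(w), \mathrm{id}, \emptyset, \eta(w)]$. For every other vertex $v$, at least one of a bottom contraction (when the bottom forest contains a $\mathcal{P}$-interior vertex) or a top contraction (otherwise) is available and strictly decreases $h$. A canonical deterministic choice of such a move (say the leftmost applicable one in the reduced representative) yields a discrete vector field flowing to $v_0$, and Forman's theory then gives that $M(\mathcal{P},w)$ is contractible, hence simply connected.

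The main obstacle is the characterization of commuting operations that underlies the flag condition. Because the braid $\beta$ carries nontrivial information, ``disjoint support'' on the underlying $\mathcal{P}$-forests is insufficient: the corresponding strands must also be arrangeable in $\mathcal{B}_{p,q}$ without mutual interference. The isocephalese equivalence is crucial here, since it lets one braid the free strands created by expansions freely; and the parallelism requirement in Definition~\ref{def:Dipoles} for dipoles must be maintained when a top contraction interacts with other nearby operations. A careful case analysis is needed for the mixed pairs, most delicately top contractions paired with bottom expansions, where a new strand is inserted at an arbitrary $\mathcal{P}$-interior vertex of the top forest. Once this local commutation analysis is carried out, the rest of the argument, including the height-decreasing discrete Morse function, is routine.
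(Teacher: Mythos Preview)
Your overall architecture is sound and close in spirit to the paper's, but there is one genuine gap and one point where you should be more explicit.

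\medskip
\textbf{The Morse-theoretic step does not work as stated.} Assigning a height $h=|A|+|B|$ to each \emph{vertex} and picking, at every non-base vertex, a single canonical edge along which $h$ drops is \emph{not} the input Forman's theory requires. A discrete gradient vector field must pair cells of adjacent dimensions throughout the complex; what you describe only produces pairings of $0$-cells with $1$-cells. Every edge not chosen as ``canonical'', and every square and higher cube, remains critical, so the conclusion of Forman's theorem is merely that $M(\mathcal{P},w)$ has the homotopy type of a CW-complex with one $0$-cell and many higher cells---which says nothing about contractibility. To push a Morse argument through you would have to either (i) extend the matching to higher cells and verify acyclicity of $V$-paths, or (ii) prove that descending links are contractible and run a Bestvina--Brady style filtration. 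Neither is automatic here.

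The paper avoids this entirely. It uses the height $h=i(A)-i(B)$ (which changes by exactly $\pm 1$ across every edge), takes an arbitrary combinatorial loop, finds a local minimum of $h$ along it, and replaces that valley by the opposite two sides of a square. Since this move strictly raises the total height while keeping the support bounded, the process terminates, giving a direct disc-filling. This is more elementary than Morse theory and is all that is needed, since Gromov's criterion only asks for simple connectivity.

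\medskip
\textbf{On the link condition.} Your plan to characterise ``commuting operations'' is essentially correct, but you must not forget that Gromov's criterion requires the link to be a \emph{simplicial} flag complex: in particular, two edges at a vertex must span at most one square. This is exactly the $K_{3,2}$-freeness that the paper isolates and proves separately (Proposition~\ref{prop:Bipartite}). Your ``iff'' characterisation would give uniqueness of the square only if you also show that the opposite corner is \emph{uniquely determined} by the two operations---something worth stating and checking, since the isocephalese relation could in principle identify distinct-looking corners. The paper sidesteps the full link analysis by using Chepoi's criterion (Proposition~\ref{prop:CriterionCC}): simple connectivity, $K_{3,2}$-freeness, and the $3$-cube condition. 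The last two are handled by a finite case analysis organised around the height orientation (Lemma~\ref{lem:ParallelOrientation}) and the top/bottom labelling of edges (Claim~\ref{claim:ParallelTopBottom}), which is less ambitious than a complete description of links but entirely sufficient.
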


\noindent
Recall that a cube complex is \emph{CAT(0)} if it is simply connected and if the links of its vertices are simplicial flag complexes. An alternative description is given by the proposition below. In its statement, given a graph $X$, we refer to the \emph{cube-completion} $X^\square$ as the cube complex obtained from $X$ by filling in every subgraph isomorphic to the one-skeleton of a cube with cube of the corresponding dimension.

\begin{prop}\label{prop:CriterionCC}
Let $X$ be a connected graph. Assume that
\begin{itemize}
	\item the cube-completion $X^\square$ is simply connected;
	\item $X^\square$ satisfies the \emph{$3$-cube condition} (i.e. three pairwise distinct squares that share a vertex and that pairwise share an edge must span a $3$-cube);
	\item $X$ does not contain copy of the complete bipartite graph $K_{3,2}$.
\end{itemize}
Then $X^\square$ is a CAT(0) cube complex.
\end{prop}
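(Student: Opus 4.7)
The plan is to apply Gromov's link condition, which characterises CAT(0) cube complexes as the simply connected cube complexes whose vertex links are flag simplicial complexes. Simple connectivity of $X^\square$ is granted, so I focus on showing that the link $\mathrm{lk}(v, X^\square)$ at every vertex $v$ is a flag simplicial complex.

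First, I verify that $\mathrm{lk}(v)$ is a simplicial complex, using the absence of $K_{3,2}$. Vertices of $\mathrm{lk}(v)$ correspond to edges of $X$ incident to $v$, and its edges correspond to squares of $X^\square$ containing $v$. If two distinct squares of $X^\square$ both contained $v$ together with the same pair of incident edges $va, vb$, then they would provide two distinct common neighbours $c,c'$ of $a$ and $b$ other than $v$, so that $\{v,c,c'\}$ and $\{a,b\}$ would embed $K_{3,2}$ into $X$, a contradiction. Hence no two vertices of $\mathrm{lk}(v)$ are joined by more than one edge, and $\mathrm{lk}(v)$ is a simplicial complex.

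Next, I verify flagness by induction on the size $n+1 \geq 3$ of a clique in $\mathrm{lk}(v)$. The base case $n+1=3$ translates verbatim to three edges at $v$ pairwise spanning squares, and the $3$-cube condition supplies the required $3$-cube, hence the $2$-simplex in $\mathrm{lk}(v)$. For the inductive step, given $n+1 \geq 4$ edges $e_1,\dots,e_{n+1}$ at $v$ that are pairwise connected in $\mathrm{lk}(v)$, denote by $a_i$ the endpoint of $e_i$ distinct from $v$. By induction, $e_1,\dots,e_n$ span an $n$-cube $C$ at $v$. The squares spanned by the pairs $\{e_i,e_{n+1}\}$ for $i\leq n$ yield edges $e_i'$ at $a_{n+1}$, and the $3$-cube condition applied to each triple $\{e_i,e_j,e_{n+1}\}$ ensures that any two $e_i', e_j'$ span a square at $a_{n+1}$. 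A second application of the induction hypothesis, this time at $a_{n+1}$, produces an $n$-cube $C'$ spanned by $e_1',\dots,e_n'$. Combining $C$, $C'$ and the connecting edges should yield the $1$-skeleton of an $(n+1)$-cube inside $X$, whose cube-completion delivers the required $(n+1)$-cube at $v$.

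The main obstacle is the assembly step: one must verify that for each vertex $u$ of $C$, a corresponding vertex $u'$ of $C'$ is well-defined (independently of the coordinate-by-coordinate path from $v$ to $u$ used to construct it) and that all the edges $uu'$ actually exist in $X$. This is precisely where the absence of $K_{3,2}$ plays its second role: any two a priori distinct candidates for such a $u'$ are common neighbours of a common pair of vertices, and $K_{3,2}$-freeness forces them to coincide. Once this identification is carried out at every level, the $1$-skeleton of an $(n+1)$-cube sits inside $X$, the cube-completion supplies the top-dimensional cube, and the Gromov link condition is verified.
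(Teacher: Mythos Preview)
The paper does not prove this proposition; it simply records that the statement is contained in the proof of \cite[Theorem~6.1]{mediangraphs}. Your direct verification of Gromov's link condition is therefore a self-contained alternative, and the architecture is sound: $K_{3,2}$-freeness makes the link simplicial, and the $3$-cube condition together with $K_{3,2}$-freeness makes it flag.

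The assembly step you flag is, however, genuinely under-argued. What must actually be shown is that for every vertex $u$ of $C$ the vertex $\phi(u)$ of $C'$ in the corresponding position is adjacent to $u$ in $X$, and this requires an induction on the distance from $v$ to $u$ inside $C$. For $u=a_{ij}$ at distance $2$, one applies the $3$-cube condition at $v$ to $e_i,e_j,e_{n+1}$, producing an eighth vertex $w$ adjacent to $a_{ij},\phi(a_i),\phi(a_j)$; then $K_{3,2}$-freeness (with $a_{n+1}$ as a third common neighbour of $\phi(a_i),\phi(a_j)$) forces $w=\phi(a_{ij})$. For larger distances one repeats this, but now the $3$-cube condition must be invoked at vertices \emph{other than} $v$ (e.g.\ at $a_1$ to handle $a_{123}$), something your sketch does not make explicit. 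A smaller point: ruling out double edges in $\mathrm{lk}(v)$ does not by itself make it a simplicial complex; one also needs that no two distinct $(k{+}1)$-cubes at $v$ share the same $k{+}1$ edges, which follows from the same $K_{3,2}$ mechanism (or comes for free once the flagness induction is carried out together with uniqueness).
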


\noindent
The converse of the proposition also holds, but this observation will not be used in the sequel. Although not stated explicitly, Proposition~\ref{prop:CriterionCC} is contained in the proof of \cite[Theorem~6.1]{mediangraphs}. %Alternatively, one can also notice that the proofs in \cite[Section~2.5]{WisePreprint}, describing the machinery of disc diagrams, only use the assumptions in Proposition~\ref{prop:CriterionCC}; from which we can recover the basic properties satisfied by hyperplanes, as suggested in \cite[Section~3.2]{MR2986461}. 
\medskip

\noindent
Our goal below is to verify that Proposition~\ref{prop:CriterionCC} applies to the cube complex $M(\mathcal{P},w)$. We begin by proving that our cube complex is connected.

\begin{prop}\label{prop:Mconnected}
Let $(A,\beta,\sigma,B)$ be a $(w,\ast)$-diagram. The distance between $[A,\beta,\sigma,B]$ and $[\epsilon(w)]$ in $M(\mathcal{P},w)$ is equal to $i(A) + i(B)$. As a consequence, $M(\mathcal{P},w)$ is connected.
\end{prop}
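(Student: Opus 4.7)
The plan is to establish the distance equality by a matching pair of inequalities and deduce connectedness immediately; implicitly, $(A,\beta,\sigma,B)$ is taken as a reduced representative of its class, since both $[A,\beta,\sigma,B]$ and the equation with $i(A)+i(B)$ are invariants of the class only in the reduced form.

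For the lower bound $\mathrm{dist}\geq i(A)+i(B)$, I would introduce the potential $\bar\rho:M(\mathcal{P},w)\to\mathbb{N}$ defined by $\bar\rho([\Delta])=i(A_0)+i(B_0)$ where $(A_0,\beta_0,\sigma_0,B_0)$ is the reduced form of any representative of the class. This is well-defined thanks to the open-diagram analogue of Proposition~\ref{prop:reduction} (uniqueness of reductions up to isocephalese) combined with the fact that isocephalese only modifies the braid, not the forests, so preserves $i(A)$ and $i(B)$. The key claim is that $\bar\rho$ varies by exactly one along every edge of $M(\mathcal{P},w)$. By Lemma~\ref{lem:Adjacent}, up to isocephalese each edge corresponds to a top/bottom expansion or contraction. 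A top expansion applied to a reduced diagram produces another reduced diagram (the new free strand has no bottom partner with which to form a dipole) and increases $i(A)+i(B)$ by exactly one; top contraction is its inverse. Bottom contraction from a reduced representative likewise stays reduced and decreases $i(B)$ by one. Bottom expansion requires a case distinction: if no dipole is created, the result is reduced and $\bar\rho$ goes up by one; if the newly added $\mathcal{P}$-interior root of $B$ pairs with the free-stranded $\mathcal{P}$-interior vertex of $A$ it is attached to, a direct computation of the reduction yields $i(A_{\mathrm{red}})=i(A)-1$ and $i(B_{\mathrm{red}})=i(B)$, so $\bar\rho$ drops by one. Hence $\bar\rho$ is $1$-Lipschitz on edges (in fact varies by exactly one), so the distance from $[A,\beta,\sigma,B]$ to $[\epsilon(w)]$ is bounded below by $\bar\rho([A,\beta,\sigma,B])-\bar\rho([\epsilon(w)])=i(A)+i(B)$.

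For the upper bound, I would exhibit an explicit path of length $i(A)+i(B)$. In a first phase, I perform $i(B)$ successive bottom contractions: at each step there is a $\mathcal{P}$-interior root of $B$ to contract, since $\mathcal{P}$-leaves are by definition leaves of the forest and so any $\mathcal{P}$-interior vertex admits a chain of $\mathcal{P}$-interior ancestors up to such a root. This brings us to a class $[A,\beta,\emptyset,\eta(m)]$ in which all strands have become free. In a second phase, I perform $i(A)$ further moves, each a bottom expansion engineered to create a dipole whose reduction decreases $\bar\rho$ by one: at each step I choose a deepest $\mathcal{P}$-interior vertex $a\in A$ (one whose children are all $\mathcal{P}$-leaves) together with $k$ consecutive positions in the current bottom word whose labels match $a$'s children's labels in order, use the freedom afforded by isocephalese (available since $\sigma=\emptyset$) to realign the wires accordingly, and then attach those $k$ roots of the current $\eta(\cdot)$ under a new $\mathcal{P}$-interior root $v$ with $\sigma'(v)=a$, creating the dipole. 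After $i(A)$ such moves both forests equal $\eta(w)$ and, since no strands remain, isocephalese collapses the class to $[\epsilon(w)]$. Every vertex being at finite distance from $[\epsilon(w)]$, we conclude that $M(\mathcal{P},w)$ is connected.

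The hard part will be the justification of the second phase of the upper bound, namely that at each step one can indeed find a deepest $\mathcal{P}$-interior vertex $a\in A$ whose children's labels form a consecutive subword of the current bottom word. The structural input here is that $A$ and the closure $B^{+}$ of the original $B$ are both $(\mathcal{P},w)$-forests with matching leaf-multisets, so both arise as expansions of $w$ and admit compatible contraction orders; combined with the fact that isocephalese lets us freely permute wires within each label class when all strands are free, this should produce the required pair $(a,\text{subword})$ at every step, though writing down the precise structural argument requires care.
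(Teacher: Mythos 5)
Your overall strategy coincides with the paper's: the same two--phase path for the upper bound ($i(B)$ bottom contractions down to $[A,\beta,\emptyset,\eta(m)]$, then $i(A)$ dipole-creating bottom expansions down to $[\epsilon(w)]$), and a height/potential argument for the lower bound. Your treatment of the lower bound is in fact more complete than the paper's, which disposes of it with ``it is clear from Lemma~\ref{lem:Adjacent}''; checking via Lemma~\ref{lem:Adjacent} that $\bar\rho$ changes by exactly one along each of the four edge types is the right way to make that precise, and your observation that a dipole-creating bottom expansion reduces to a single dipole cancellation (so $\bar\rho$ drops by exactly one, not more) is a point worth recording.

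The step you flag as ``the hard part'' is indeed the crux, and as written it remains a genuine gap (one the paper's own one-line description of the second phase also elides). Moreover, there are two constraints to verify there, not one. Besides finding a deepest $\mathcal{P}$-interior vertex $a$ of the current top forest whose children's labels occur, in order, as a consecutive subword of the current bottom word, you must check that the diagram produced by the bottom expansion is still \emph{closable}: only clopen diagrams are vertices of $M(\mathcal{P},w)$, and bottom expansions do not preserve closability in general. This constraint has real bite. For $\mathcal{P}=\langle a,b,c\mid a=bc,\ b=c,\ c=b\rangle$, start from the closed diagram $(F,\beta,F')$ where $F$ is the forest $a\to bc$, $b\to c$, $c\to b$ (leaf word $cb$), $F'$ is the forest $a\to bc$, $b\to c\to b$ (leaf word $bc$), and $\beta$ is the forced transposition; after phase one the bottom word is $bc$ while the top leaf word is $cb$, and a careless choice of which isolated bottom vertex the new root adopts at the second contraction step produces an open diagram with bottom root word $cb$ that is not closable, hence not a vertex of $M(\mathcal{P},w)$ at all. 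The correct path exists, but the realignments must be chosen compatibly with a fixed closed extension $(A,\beta,C)$ of the starting diagram. So your closing remark that ``$A$ and $B^{+}$ admit compatible contraction orders'' is the right structural input, but until that argument is written down with the closability check built in, the upper bound --- and hence connectedness --- is not yet proved. Everything else in your argument is sound.
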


\begin{proof}
By removing all the $\mathcal{P}$-interior vertices in $B$, we construct a path in $M(\mathcal{P},w)$ of length $i(B)$ from $[A,\beta,\sigma, B]$ to $[A,\beta, {\emptyset},\eta(m)]$ for some word $m$. (Recall that $\eta(m)$ denotes the $(\mathcal{P},w)$-forest with no edges.) Observe that $[A,\beta,{\emptyset},\eta(m)]=[A,\mathrm{id}, {\emptyset},\eta(m)]$. Now, by adding $\mathcal{P}$-interior vertices to $\eta(m)$ we obtain in order to create dipoles with the $\mathcal{P}$-interior vertices in $A$, we construct a path in $M(\mathcal{P},w)$ of length $i(A)$ from $[A,\mathrm{id},\emptyset,\eta(m)]$ to  $[A,\mathrm{id}, \mathrm{id},A]=[\epsilon(w)]$. Our path from $[A,\beta,\sigma,B]$ to $[\epsilon(w)]$ has total length $i(A)+i(B)$ as desired. Conversely, it is clear from Lemma~\ref{lem:Adjacent} that a path from $[A,\beta, \sigma,B]$ to $[\epsilon(w)]$ must have length at least $i(A)+i(B)$. 
\end{proof}

\noindent
The second step towards the proof of Theorem~\ref{thm:BigCC} is to show that our cube complex is simply connected.

\begin{prop}\label{prop:Msimplyconnected}
$M(\mathcal{P},w)$ is simply connected.
\end{prop}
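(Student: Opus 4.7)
The strategy is to contract an arbitrary loop in the $1$-skeleton of $M(\mathcal{P},w)$ by a sequence of homotopies through $2$-cubes, argued by induction on a height function. Define $f : M(\mathcal{P},w)^{(0)} \to \mathbb{N}$ by $f([A,\beta,\sigma,B]) := i(A) + i(B)$. By Proposition~\ref{prop:Mconnected}, $f(v)$ equals the combinatorial distance from $v$ to $[\epsilon(w)]$, so $f$ is well-defined on equivalence classes and changes by exactly $\pm 1$ along each edge of $M(\mathcal{P},w)$.

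The core step is the following square-filling assertion: whenever a vertex $v$ has two distinct neighbors $v^-$ and $v^+$ with $f(v^{\pm}) = f(v)-1$, there exists $v'$ with $f(v') = f(v)-2$ adjacent to both, so that $\{v, v^-, v', v^+\}$ is a $4$-cycle and hence spans a $2$-cube by the very definition of $M(\mathcal{P},w)$. To prove this I would pick a reduced representative of $v$ and, via Lemma~\ref{lem:Adjacent}, realize each descending edge, possibly after an isocephalese modification of $v$, as either a top contraction or a bottom contraction. Three cases then arise:
\begin{itemize}
\item if both descending edges are top contractions at distinct $\mathcal{P}$-interior vertices $a_1, a_2 \in A$, the two corresponding free strands can be simultaneously made parallel to their respective wires by a single isocephalese move, after which both contractions become available and independent;
\item if both are bottom contractions at $\mathcal{P}$-interior roots $b_1, b_2$ of $B$, these roots must be distinct (otherwise $v^- = v^+$) and trivially commute;
\item in the mixed case of a top contraction at $a \in A$ and a bottom contraction at a root $b \in B$, the isolated vertices of $B$ involved in the top contraction carry wires and are therefore $\mathcal{P}$-leaves, whereas $b$ is $\mathcal{P}$-interior by definition of a bottom contraction; consequently $b$ is disjoint from them and the two moves decouple.
\end{itemize}
In each case, executing both moves simultaneously in the chosen representative produces the desired $v'$, and comparing heights forces the four vertices $v, v^-, v', v^+$ to be pairwise distinct.

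Given a loop $\gamma = (v_0, \ldots, v_n = v_0)$, I would then argue by lexicographic induction on the pair $(M(\gamma), N(\gamma))$, where $M(\gamma) := \max_i f(v_i)$ and $N(\gamma) := |\{i : f(v_i) = M(\gamma)\}|$. At any index $i$ with $f(v_i) = M(\gamma)$, both $v_{i-1}$ and $v_{i+1}$ lie at height $M(\gamma)-1$; if $v_{i-1} = v_{i+1}$ the subpath $v_{i-1}v_i v_{i+1}$ is a backtrack and homotopes away, and otherwise the square-filling assertion provides a $2$-cube across which $v_{i-1} v_i v_{i+1}$ is homotopic to $v_{i-1} v' v_{i+1}$. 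Either operation strictly decreases $(M(\gamma), N(\gamma))$ in the lexicographic order, so in finitely many steps $\gamma$ becomes the constant loop at $[\epsilon(w)]$, proving simple connectivity.

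The principal obstacle lies in the case analysis of the square-filling assertion: one has to verify that the two descending moves genuinely decouple inside a single representative, and this is most delicate in the mixed case, where the argument hinges on the rigidity built into the definition of a $\mathcal{P}$-forest --- specifically, the fact that wire-endpoints of any top-contraction configuration are forced to be $\mathcal{P}$-leaves, and therefore cannot coincide with the $\mathcal{P}$-interior root removed by the bottom contraction.
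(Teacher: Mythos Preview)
Your approach is conceptually similar to the paper's---a Morse-theoretic reduction of loops via square-fillings---but you have chosen the opposite height function, and this creates a real difficulty. The paper uses $h([A,\beta,\sigma,B]) := i(A) - i(B)$ and fills at local \emph{minima}: at such a vertex both neighbours are reached by moves that \emph{increase} $h$, namely top expansions and bottom contractions. By Lemma~\ref{lem:Adjacent} these are exactly the two operations that can be applied directly to a fixed reduced representative $\Phi_i$ without first passing to an isocephalese diagram, and the paper notes explicitly that both preserve closability. Hence the two ascending moves visibly commute on $\Phi_i$, the fourth corner is automatically clopen, and the square closes with no further work. Termination is obtained by bounding the total height via a fixed ``support'' forest.

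Your function $f = i(A) + i(B)$ forces you to fill at local \emph{maxima}, so the relevant descending moves are top contractions and bottom contractions. Top contractions are the problematic ones: Lemma~\ref{lem:Adjacent} only guarantees that the neighbour is isocephalese to a top contraction of \emph{some} $\Phi' \approx \Phi$, not of $\Phi$ itself, and the paper explicitly warns that top contractions (and bottom expansions) need not preserve closability. In your case of two top contractions at $a_1,a_2$ you therefore need a \emph{single} isocephalese representative $\Phi'$ in which both free strands are simultaneously parallel to their respective wire-packets; your sentence ``by a single isocephalese move'' asserts this but does not justify it, and it is not obvious since the two free strands may braid with one another. You also do not verify that the diagram obtained after performing both contractions is still clopen, i.e.\ that $v'$ is actually a vertex of $M(\mathcal{P},w)$. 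These are genuine gaps; the paper's choice of $h$ is precisely designed to sidestep them.
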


\noindent
In order to prove our proposition, we need to introduce a convenient \emph{height function}: given a $(w,\ast)$-diagram $\Delta=(A,\beta,\sigma,B)$, we set $h(\Delta):= i(A)-i(B)$. Observe that the height remains the same after reduction of a dipole and that it is constant on a class of isocephalese diagrams. As a consequence, it makes sense to refer to the height of a vertex of $M(\mathcal{P},w)$. Observe that the heights of any two adjacent vertices always differ by one. Also, notice that the height of a vertex is preserved by the action of $C(\mathcal{P},w)$. Indeed, given a vertex $x \in M(\mathcal{P},w)$ and an element $g \in C(\mathcal{P},w)$, fix two representatives $(A,\alpha, B)$ and $(B, \beta, \sigma, C)$ of $g$ and $x$ respectively. Then $(A,\beta \alpha,\sigma,C)$ represents $gx$. Hence
$$h(gx)= i(A)-i(C)=i(B)-i(C)= h(x),$$
proving the desired assertion. 

\begin{proof}[Proof of Proposition~\ref{prop:Msimplyconnected}.]
Given a loop $\gamma$ in the one-skeleton of $M(\mathcal{P},w)$, we define its \emph{support} in the following way. Let $[\Phi_0], \ldots, [\Phi_{n-1}]$ denote the vertices of $\gamma$, where each $\Phi_i$ is reduced. For every $0 \leq i \leq n-1$, write $\Phi_i$ as $(A_i,\beta_i,\sigma_i,B_i)$. Now, define $\mathrm{supp}(\gamma)$ as the forest that is the union of all the $A_i$. Of course, $\Phi_i$ is not the unique reduced diagram representing the vertex $[\Phi_i]$, but, according to Corollary~\ref{cor:Classes}, the $(\mathcal{P},w)$-forest $A_i$ does not depend on this choice.  Thus, the support of $\gamma$ is well-defined.

\medskip \noindent
Our goal is to show that $\gamma$ can be modified, up to homotopy, in order to get a single vertex.  

\medskip \noindent
If $\gamma$ backtracks along an edge, then the modification just consists in removing this backtrack. Now, assume that $\gamma$ does not backtrack along an edge and is not reduced to a single vertex. Then there exists some $i \in \mathbb{Z}_n$ such that $h(\Phi_i)<h(\Phi_{i-1}), h(\Phi_{i+1})$. As a consequence of Lemma~\ref{lem:Adjacent}, $\Phi_{i-1}$ and $\Phi_{i+1}$ are isocephalese (in fact, without loss of generality, equal) to either a top expansion or a bottom contraction of $\Phi_i$. So four cases may happen.
\begin{itemize}
	\item $\Phi_{i-1}$ and $\Phi_{i+1}$ are two top expansions of $\Phi_i$, i.e. there exist two distinct $\mathcal{P}$-leaves $a_{i-1},a_{i+1} \in A_i$ such that $\Phi_{i-1},\Phi_{i+1}$ are respectively the top expansions of $\Phi_i$ over $a_{i-1},a_{i+1}$. Let $\Phi_i'$ denote the diagram obtained by applying simultaneously the top expansions over $a_{i-1},a_{i+1}$. Observe that the top $(\mathcal{P},w)$-forest of $\Phi_i'$ coincides with $A_{i-1} \cup A_{i+1}$, that $h(\Phi_i')>h(\Phi_i)$, and that $[\Phi_{i-1}]$, $[\Phi_i]$, $[\Phi_{i+1}]$, and $[\Phi_i']$ are the vertices of a square in $M(\mathcal{P},w)$.
	\item $\Phi_{i-1}$ and $\Phi_{i+1}$ are two bottom contractions of $\Phi_i$, i.e. there exist two distinct roots $b_{i-1},b_{i+1} \in B_i$ such that $\Phi_{i-1},\Phi_{i+1}$ are respectively the bottom contractions of $\Phi_i$ over $b_{i-1},b_{i+1}$. Let $\Phi_i'$ denote the diagram obtained by applying simultaneously the bottom contractions over $a_{i-1},a_{i+1}$. Observe that the top $(\mathcal{P},w)$-forest of $\Phi_i'$ coincides with $A_i$, and that $[\Phi_{i-1}]$, $[\Phi_i]$, $[\Phi_{i+1}]$, that $h(\Phi_i')>h(\Phi_i)$, and $[\Phi_i']$ are the vertices of a square in $M(\mathcal{P},w)$.
	\item $\Phi_{i-1}$ is a top expansion of $\Phi_i$ and $\Phi_{i+1}$ is a bottom contraction of $\Phi_i$, i.e. there exist a $\mathcal{P}$-leaf $a \in A_i$ (resp. a root $b \in B_i$) such that $\Phi_{i-1}$ (resp. $\Phi_{i+1}$) is the top expansion of $\Phi_i$ over $a$ (resp. the bottom contraction of $\Phi_i$ over $b$). Let $\Phi_i'$ denote the diagram obtained by applying simultaneously our top expansion and bottom contraction. Observe that the top $(\mathcal{P},w)$-forest of $\Phi_i'$ coincides with $A_i$, that $h(\Phi_i')>h(\Phi_i)$, and that $[\Phi_{i-1}]$, $[\Phi_i]$, $[\Phi_{i+1}]$, and $[\Phi_i']$ are the vertices of a square in $M(\mathcal{P},w)$.
	\item The remaining case is symmetric to the previous one, switching the indices $i$ and $i+1$.
\end{itemize}
The key observation is that, by replacing $[\Phi_i]$ with $[\Phi_i']$ along $\gamma$, one obtains a new loop $\gamma'$ that is homotopy equivalent to $\gamma$, whose length is equal to that of $\gamma$, whose support lies in $\mathrm{supp}(\gamma)$, and such that the sum of the heights of its vertices is larger than the same sum for $\gamma$. 

\medskip \noindent
We iterate the process, creating a sequence of loops $\gamma,\gamma',\gamma'', \ldots$ that are all homotopy equivalent. Observe that this sequence must be finite because the total height of each loop (i.e. the sum of the heights of all the vertices) is bounded above by $\mathrm{length}(\gamma) \cdot |\mathrm{supp}(\gamma)|$. But the only possibility for our sequence to stop is that one of the loops is reduced to a single vertex.
\end{proof}

\noindent
The third step towards the proof of Theorem~\ref{thm:BigCC} is to verify that the one-skeleton of $M(\mathcal{P},w)$ is $K_{3,2}$-free. 

\begin{prop}\label{prop:Bipartite}
The one-skeleton of $M(\mathcal{P},w)$ does not contain a copy of the complete bipartite graph $K_{3,2}$. 
\end{prop}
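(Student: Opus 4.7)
Suppose for contradiction that $M(\mathcal{P},w)$ contains a copy of $K_{3,2}$: two distinct vertices $x,y$ sharing three distinct common neighbors $v_1,v_2,v_3$. I plan to combine three ingredients: the height function $h(A,\beta,\sigma,B):=i(A)-i(B)$ introduced in the proof of Proposition~\ref{prop:Mconnected}, the canonical form provided by Lemma~\ref{lem:Translate}, and the explicit description of neighbors given by Lemma~\ref{lem:Adjacent}. Because every edge changes $h$ by $\pm 1$, the $1$-skeleton is bipartite, so $x$ and $y$ are non-adjacent and at distance exactly $2$; consequently $h(x)-h(y)\in\{-2,0,2\}$ and each $h(v_i)$ lies in $\{h(x)-1,h(x)+1\}\cap\{h(y)-1,h(y)+1\}$.

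Using the $C(\mathcal{P},w)$-action I translate so that $x=[T,\mathrm{id},\emptyset,\eta(m)]$. The equality $\sigma=\emptyset$ forces $\eta(m)$ to contain only $\mathcal{P}$-leaves, so no bottom contraction originates from $x$. By Lemma~\ref{lem:Adjacent}, every edge incident to $x$ is then tagged by a single position in $T$: an upward edge is a top expansion at some $\mathcal{P}$-leaf $a$ of $T$; a downward edge is either a top contraction at an internal vertex $q$ of $T$ whose children are all $\mathcal{P}$-leaves, or a bottom expansion attaching a new $\mathcal{P}$-interior root below a free top strand emanating from some internal vertex of $T$. Each move has a predictable effect on the internal set $\iota$ of the top forest: a top expansion adjoins a single element, a top contraction removes a single element, and the two ``bottom'' moves leave the top side unchanged. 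This cardinality bookkeeping is the engine of the argument.

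When $h(y)=h(x)+2$, all three $v_i$ are top expansions of $x$ over pairwise distinct $\mathcal{P}$-leaves $a_1,a_2,a_3$, and $y$ is an upward neighbor of each $v_i$. A cardinality count on $\iota$ of the top forest of $y$ forces all three moves $v_i\to y$ to be of the same type. If they are all bottom contractions, the top forest of $y$ coincides with that of each $v_i$, so all the $a_i$ must agree---a contradiction. If they are all top expansions over further leaves $\ell_i$, then the top forest of $y$ is obtained from $T$ by attaching the children of both $a_i$ and $\ell_i$, so the unordered pair $\{a_i,\ell_i\}$ must be the same for $i=1,2,3$, contradicting pigeonhole on the three distinct $a_i$. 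The symmetric case $h(y)=h(x)-2$ is handled by exchanging the roles of $x$ and $y$ and applying Lemma~\ref{lem:Translate} to $y$. When $h(y)=h(x)$, I split according to the distribution of the signs of $h(v_i)-h(x)\in\{+1,-1\}$: in the uniform sub-cases the previous argument applies verbatim with $y$ playing the opposite-direction role; in the mixed sub-cases it suffices to show that any two top expansions of $x$ share only $x$ as a common down-neighbor (and dually), which follows from the same set-theoretic comparison on $\iota$ applied to the pair of candidate down-moves.

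The main obstacle is the combinatorial bookkeeping: for each pair of edges $x\to v_i\to y$, one must run through all four products of move types while tracking isocephalese representatives and verifying that intermediate non-trivial braids do not spoil the vertex equalities. The canonical form from Lemma~\ref{lem:Translate} is precisely what makes this tractable, since it freezes the braid at $x$ to the identity and eliminates all $\mathcal{P}$-interior roots from the bottom forest: every move out of $x$ is then parameterized by a single vertex of $T$, and every candidate coincidence among $y$-values reduces to an elementary identity between subsets of $T$.
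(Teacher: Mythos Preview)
Your approach uses the same toolkit as the paper—the height function, the canonical form from Lemma~\ref{lem:Translate}, and the top/bottom forest labelling extracted from Lemma~\ref{lem:Adjacent}—and your treatment of the cases $h(y)=h(x)\pm 2$ is correct and essentially identical to the paper's endgame.

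The gap is in the case $h(y)=h(x)$. Your claim that ``the previous argument applies verbatim with $y$ playing the opposite-direction role'' is not right: swapping $x$ and $y$ leaves you in the same configuration (both are now local extrema of the same type among $\{x,v_1,v_2,v_3,y\}$), so nothing is gained. Concretely, in the uniform sub-case where all $v_i$ lie at height $h(x)-1$, each $v_i$ is a top contraction \emph{or a bottom expansion} of $x$, and each edge $v_i\to y$ is a top expansion \emph{or a bottom contraction}; the bookkeeping on $\iota$ of the top forest no longer forces a single move type across all three branches, and one must additionally argue that the resulting $y$ has the same top and bottom forests as $x$ (and then invoke the isocephalese relation, using that all strands are free, to conclude $y=x$). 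None of this is the ``previous argument''. Similarly, the ``(and dually)'' for the mixed sub-case with two $v_i$ below $x$ is not symmetric to the case you sketched: bottom expansions of $x$ introduce $\mathcal{P}$-interior vertices in the bottom forest, so the common up-neighbour analysis is genuinely different.

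These cases can be salvaged (for instance, translate so that a vertex of minimal height among $\{x,y,v_1,v_2,v_3\}$ is in canonical form, then apply your ``two top expansions share only the base as a common down-neighbour'' claim), but doing so amounts to reproving Lemma~\ref{lem:ParallelOrientation} inside your argument. The paper's proof is cleaner precisely because it invokes that lemma up front: applying it to the three $4$-cycles in $K_{3,2}$ immediately forces $|h(x)-h(y)|=2$, eliminating the problematic case entirely. It then uses Claim~\ref{claim:ParallelTopBottom} (opposite edges in a square are both top or both bottom) to pin down the edge types before the final pigeonhole on the unordered pairs $\{a_i,\ell_i\}$. You should either invoke Lemma~\ref{lem:ParallelOrientation} directly, or give a complete treatment of the $h(y)=h(x)$ sub-cases rather than deferring to ``verbatim'' and ``dually''.
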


\noindent
We begin by proving a preliminary lemma.
But before even stating it, let us observe that the vertices of $M(\mathcal{P},w)$ are naturally labelled by pairs of $\mathcal{P}$-forests. Indeed, if $x \in M(\mathcal{P},w)$ is a vertex, then there exists a reduced diagram $(A,\beta,\sigma,B)$ such that $x=[A,\beta,\sigma,B]$. As a consequence of Corollary~\ref{cor:Classes}, the pair $(A,B)$ does not depend on the reduced diagram we chose. Therefore, the pair $(A,B)$ naturally labels the vertex $x$. Of course, distinct vertices may be labelled by the same pair. It follows from Lemma~\ref{lem:Adjacent} that, when passing from a vertex to one of its neighbours, one adds/removes all the children of a $\mathcal{P}$-leaf to/from the top forest $A$ (for convenience, we denote by $c(a)$ the collection of all the children of such a $\mathcal{P}$-leaf $a \in A$) or one adds/removes a $\mathcal{P}$-interior vertex to/from the bottom forest $B$. Such a labelling of the vertices of $M(\mathcal{P},w)$ will be quite useful in the next proofs.

\begin{lemma}\label{lem:ParallelOrientation}
Let $(p,q,r,s)$ be a simple $4$-cycle in the one-skeleton of $M(\mathcal{P},w)$. If $h(p)<h(q)$ then $h(s)<h(r)$.
\end{lemma}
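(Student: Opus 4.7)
The plan is to show that every simple $4$-cycle in $M(\mathcal{P},w)$ is a commuting square of elementary moves, from which the height relation will follow at once.

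By Lemma~\ref{lem:Adjacent}, each edge of $M(\mathcal{P},w)$ is represented, up to isocephalese equivalence, by exactly one of four elementary operations on a reduced diagram: a top expansion at a $\mathcal{P}$-leaf $a \in \lambda(A)$, its inverse top contraction, a bottom contraction removing a $\mathcal{P}$-interior root $b$ of $B$, or its inverse bottom expansion. I will label each edge by its acting vertex ($a$ or $b$); the induced height change is $+1$ for top expansion and bottom contraction and $-1$ for their inverses. The first step will be to verify that two elementary operations with distinct labels commute at the level of vertices of $M(\mathcal{P},w)$: two top operations at different $\mathcal{P}$-leaves of $A$ touch disjoint subtrees of $A$ and disjoint $\mathcal{P}$-leaf roots of $B$; two bottom operations at different interior roots of $B$ touch disjoint subtrees of $B$; and a top operation only modifies $B$ at a $\mathcal{P}$-leaf root while a bottom operation only modifies a $\mathcal{P}$-interior root. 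Any braid twist produced by swapping the order involves only free strands, and is therefore absorbed by isocephalese equivalence.

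Consider now a simple $4$-cycle $p \to q \to r \to s \to p$ with associated moves $m_1, m_2, m_3, m_4$. Since $A$ changes only through top moves (by adding or removing entire child-sets) and $i(B)$ only through bottom moves, the cycle identity $m_4 m_3 m_2 m_1(p) = p$ will force the top moves (and independently the bottom moves) among $m_1, \ldots, m_4$ to pair up as inverse operations at matching labels. If such an inverse pair were adjacent in the cycle, it would compose to the identity, identifying two of the four vertices and contradicting simplicity. Hence the two inverse pairs must sit at opposite positions, giving $m_3 = m_1^{-1}$ and $m_4 = m_2^{-1}$. Substituting in the cycle identity yields $m_2^{-1} m_1^{-1} m_2 m_1 = \mathrm{id}$, equivalently $m_1 m_2 = m_2 m_1$; thus $m_1$ and $m_2$ commute, and the cycle is the commuting square $\{p,\, m_1(p),\, m_2 m_1(p),\, m_2(p)\}$ with edges carrying the moves $m_1,\, m_2,\, m_1^{-1},\, m_2^{-1}$ in order.

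The edges $p \sim q$ and $s \sim r$ then both realise the move $m_1$ (the second in the direction $s \to r$) and hence induce the same height change. Combined with $h(p) < h(q)$, this gives $h(r) - h(s) = h(q) - h(p) > 0$, and thus $h(s) < h(r)$ as desired. The most delicate point will be the commutation step: one must check that two commuting moves applied in either order yield diagrams whose reductions are isocephalese, so that they represent the same vertex of $M(\mathcal{P},w)$. This reduces to showing that the braid difference between the two orderings consists of a twist supported on free strands, which vanishes under isocephalese equivalence; Corollary~\ref{cor:Classes} will then identify the two endpoints.
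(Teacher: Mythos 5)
Your overall strategy---classify the four edge moves via Lemma~\ref{lem:Adjacent}, show that they pair off into inverse pairs, rule out adjacent pairs by simplicity, and read off the height relation from the resulting opposite pairing---is viable, and in substance it repackages the paper's own case analysis (the paper instead enumerates the possible height orientations of the cycle, kills three by arithmetic, and then runs through the top/bottom labellings of the remaining one using the forest labels of the vertices). Your final height computation is correct. But as written, the proposal asserts rather than proves exactly the two points that constitute the content of the lemma. The pairing claim needs the explicit observations that the top forest is an invariant of a vertex (Corollary~\ref{cor:Classes}), that the child-sets $c(a)$ of distinct leaves are disjoint, and that $\iota(B)$ is untouched by top moves; this is what forces the labels to match, and it is precisely the bookkeeping the paper carries out case by case. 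One must also check the degenerate pairings (e.g.\ expand at $a$, expand at a child of $a$, then contract both), which still yield adjacent pairs and so are still excluded, but which your ``disjoint subtrees'' remark does not obviously cover.

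More seriously, the step ``an adjacent inverse pair composes to the identity, identifying two vertices'' is where the real difficulty lives, and you defer ``the most delicate point'' to the wrong place. By Lemma~\ref{lem:Adjacent}, the edge from $q$ to $r$ realising a top contraction over $a$ only says that $r$ is represented by a contraction of \emph{some} diagram isocephalese to a representative of $q$ --- not necessarily of the top expansion of $p$'s representative itself. So ``contract after expand at the same leaf'' is not literally the identity on diagrams; one must show that contracting two isocephalese diagrams over $a$ yields isocephalese results. This is exactly what the paper proves in its Case $(i)$, using that the strand over $a$ is free and parallel to the wires of the children of $a$ in both diagrams, so that the isocephalese braid relating them survives the contraction. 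The difficulty you do flag --- that two moves with distinct labels commute up to isocephalese equivalence --- is not needed for this lemma at all (it is what one needs later for the $3$-cube condition, Proposition~\ref{prop:MThreeCube}); your own derivation of $h(s)<h(r)$ uses only the pairing and the exclusion of adjacent pairs. With the cancellation step justified as above, your argument goes through.
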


\begin{proof}
Assume for contradiction that $h(p)<h(q)$ and $h(s)>h(r)$. By orienting the edges in our cycle from low height to high height, four orientations are possible, as illustrated by Figure~\ref{SquaresOne}.  Three of them are clearly impossible:
\begin{itemize}
	\item if $h(p)>h(s)$ and $h(q)>h(r)$, then we must have $h(q)=h(r)+1$ but also $h(q)=h(p)+1=h(s)+2=h(r)+3$, a contradiction;
	\item if $h(s)>h(p)$ and $h(r)>h(q)$, the configuration is symmetric to the previous one;
	\item if $h(p)>h(s)$ and $h(r)>h(q)$, then $h(p)=h(s)+1=h(r)+2=h(q)+3=h(p)+4$, a contradiction.
\end{itemize}
From now on, we assume that $h(s)>h(p)$ and $h(q)>h(r)$. As a consequence of Lemma~\ref{lem:Adjacent}, a directed edge corresponds to applying a top expansion or a bottom contraction (in order to pass from the initial vertex to the terminal vertex of the edge); we refer to top and bottom edges accordingly. As a consequence of Lemma~\ref{lem:Translate}, we can assume without loss of generality that the edges containing $p$ are top edges. Indeed, Lemma~\ref{lem:Translate} shows that we can assume that $p$ is represented by a diagram whose bottom forest is trivial, and the only way to increase the height of such a vertex is to perform a top contraction. However, the two remaining edges, those not containing $p$, can be either top or bottom. As illustrated by Figure~\ref{SquaresOne}, there are four cases to consider. 
\begin{figure}
\begin{center}
\includegraphics[width=0.8\linewidth]{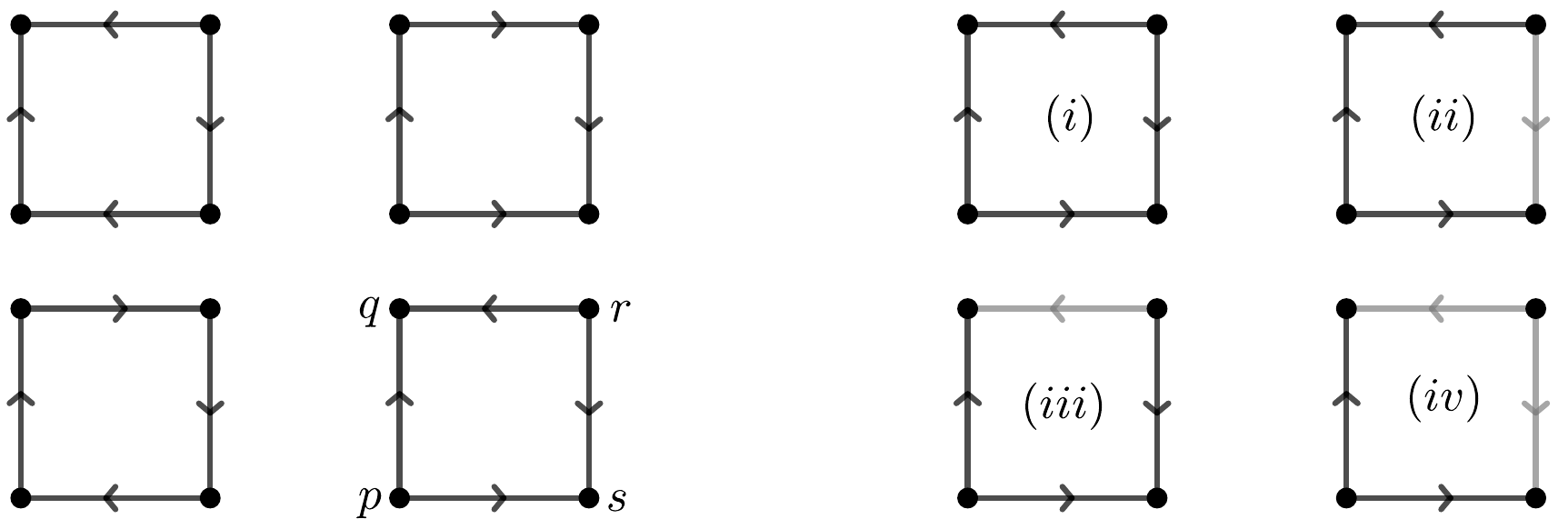}
\caption{On the left, the four possible orientations of our square; on the right, the four possible colourings of our square (the top edges are black, the bottom edges grey).}
\label{SquaresOne}
\end{center}
\end{figure}

%\medskip \noindent
%\emph{Cases $(iii)-(v)$.} In the third configuration, the vertices are labelled by pairs of $\mathcal{P}$-forests in such a way that the sequence of bottom forests is of the form $B,B,B,B, B\backslash \{b\}$ (following the cycle clockwisely, starting from the top left vertex). But the first and last forests are distinct, a contradiction. A similar contradiction is obtained in the fourth configuration by considering the sequence of top forests. In the fifth configuration, the sequence of bottom forests is of the form $B,B,B \backslash \{b\}, B, B\backslash \{b,b'\}$, so we also get a contradiction.

\medskip \noindent
\emph{Case $(i)$.} In the first configuration, the vertices are labelled by pairs of $\mathcal{P}$-forests given by reduced representatives in such a way that the sequence of top forests is of the form (following the cycle clockwisely, starting from $p$)
$$A, \ A \cup c(a), \ (A\cup c(a)) \backslash c(a'), \ (A \cup c(a) \cup c(a'')) \backslash c(a'), \ (A \cup c(a) \cup c(a'')) \backslash (c(a''') \cup c(a''')).$$
Because we must have $A = (A \cup c(a) \cup c(a'')) \backslash (c(a') \cup c(a'''))$, necessarily $\{a,a''\}=\{a', a'''\}$. If $a=a'$ and $a''=a'''$, then we can write $q=[\Phi]=[\Psi]$, for some isocephalese reduced diagrams $\Phi,\Psi$, such that $p$ and $r$ are represented respectively by diagrams $\Phi'$ and $\Psi'$ obtained from $\Phi$ and $\Psi$ by applying a top contraction over $a=a'$. Because the strands in $\Phi$ and $\Psi$ starting from $a$ are free and parallel to the wires indexed by the children of $a$ in both $\Phi$ and $\Psi$, it follows that the braid applied in order to get $\Psi$ from $\Phi$ can be applied in order to get $\Psi'$ from $\Phi'$, proving that $\Psi$ and $\Phi$ are isocephalese. Thus, we have found the contradiction $p=[\Phi']=[\Psi']=r$. Otherwise, assume that $a=a'''$. Here, the situation is simpler because we know from Lemma~\ref{lem:Adjacent} that diagrams representing $q$ and $s$ can be obtained from a fixed representative of $p$ by applying top expansions over $a$ and $a'''$. (We do not need to consider diagrams isocephalese to our representative of $p$.) The equality $a=a'''$ immediately implies that $p=q$, again a contradiction.

\medskip \noindent
\emph{Cases $(ii)$ and $(iii)$.} Following the cycle clockwisely starting from $s$ in case $(ii)$, and anticlockwisely starting from $q$ in case $(iii)$, the vertices are labelled by pairs of $\mathcal{P}$-forests in such a way that the sequence of bottom forests is of the form $B,B,B,B,B\backslash \{b\}$. Because we cannot have $B=B \backslash \{b\}$, we get a contradiction.

\medskip \noindent
\emph{Case $(iv)$.} Following the cycle clockwisely starting from $q$, the vertices are labelled by pairs of $\mathcal{P}$-forests in such a way that the sequence of top forests is of the form $A,A,A, A \backslash c(a), (A\backslash c(a) ) \cup c(a')$. Because we must have $A=(A\backslash c(a) ) \cup c(a')$, necessarily $a=a'$. But this implies that $q=s$, a contradiction.
\end{proof}

\begin{proof}[Proof of Proposition~\ref{prop:Bipartite}.]
As a consequence of Lemma~\ref{lem:Adjacent}, the edges of $M(\mathcal{P},w)$ can be naturally labelled as \emph{top} and \emph{bottom}, depending on whether they correspond to a top or bottom expansion/contraction. We begin by proving the following general observation:

\begin{claim}\label{claim:ParallelTopBottom}
Two opposite edges in a square are either both top or both bottom.
\end{claim}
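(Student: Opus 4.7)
The plan is to characterise the edge type intrinsically via a forest-valued invariant. For each vertex $v$ of $M(\mathcal{P},w)$, let $A(v)$ denote the top forest of any reduced representative of $v$; this is well-defined by Corollary~\ref{cor:Classes}. By inspection of the four cases listed in Lemma~\ref{lem:Adjacent}, top expansions and contractions add or remove the $k \geq 1$ children of a $\mathcal{P}$-leaf and therefore strictly alter the top forest, while bottom expansions and contractions leave it unchanged. Thus an edge $uv$ is of bottom type if and only if $A(u) = A(v)$, so the type is determined by the two endpoints alone.

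Next I will use $i(A(v))$ as a numerical invariant: a top edge changes it by $\pm 1$, a bottom edge leaves it unchanged. Summing these signed increments around a $4$-cycle must yield $0$, so the number of top edges in a square is even, i.e., $0$, $2$, or $4$. In the cases $0$ and $4$ the claim is immediate, and it remains to treat the case of exactly two top edges by showing that they cannot be adjacent.

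Suppose for contradiction that $pq$ and $qr$ are top while $rs$ and $sp$ are bottom, the other adjacent configurations being handled by rotating the cycle. The two bottom edges give $A(r) = A(s) = A(p)$, and the cancellation of $i(A)$ forces one of $pq, qr$ to be an expansion and the other a contraction. Comparing $A(p)$ and $A(r)$ via the two top operations, the fact that distinct $\mathcal{P}$-leaves have disjoint sets of children forces both operations to be performed at the same $\mathcal{P}$-leaf $a$. Consequently the top expansion at $a$ sends both $p$ and $r$ to $q$. Since this operation admits a canonical inverse, namely the top contraction at $a$ from Lemma~\ref{lem:Adjacent}, it acts injectively on vertices: applying its inverse to $q$ recovers a unique preimage, forcing $p = r$ and contradicting the simplicity of the $4$-cycle.

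The main delicacy is precisely the injectivity of the top expansion on vertex classes: we must check that if two vertices $p$ and $r$ are related to $q$ by the "same" top expansion at $a$, then they genuinely coincide as vertices of $M(\mathcal{P},w)$. This reduces to verifying that the top contraction at $a$ is well-defined independently of the chosen representative of $q$, which ultimately follows from the uniqueness of reductions provided by Proposition~\ref{prop:reduction} together with the fact that isocephalese equivalence preserves the top-forest data on which the operation depends.
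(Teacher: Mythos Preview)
Your approach differs from the paper's: the paper assumes two opposite edges have different types and runs through four labelled configurations (from Figure~\ref{SquaresTwo}), tracking the sequence of top or bottom forests around the cycle until it fails to close up. You instead use the invariant $i(A(v))$ to force an even number of top edges and reduce to the single case of two adjacent top edges. This is a cleaner reduction, but it shifts all the weight onto the injectivity of top expansion at $a$, which the paper's case analysis is arranged to avoid: the only vertex-level conclusion the paper needs (its third configuration) is that top expansion at $a$ is a well-defined \emph{function} on vertex classes, never that it is injective.

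The gap is in your last paragraph. You assert that top contraction at $a$ is well-defined on $q$ because Proposition~\ref{prop:reduction} gives unique reductions and isocephalese equivalence preserves the top-forest data on which the operation depends. But top contraction depends on more than the top forest: it requires the strand from $a$ to be free and parallel to the wires indexed by $c(a)$, and an arbitrary isocephalese representative of $q$ need not satisfy this. What actually makes the argument go through is that both of your representatives of $q$ are explicitly top expansions, so in each of them the $a$-strand is parallel to the same $c(a)$-wires; since those wires are untouched by the isocephalese braid relating the two representatives, that braid fixes the position of the $a$-strand and therefore descends to an isocephalese braid between the two contracted diagrams. This is exactly the argument carried out in case~(i) of the proof of Lemma~\ref{lem:ParallelOrientation}; you should reproduce it or point to it rather than invoke Proposition~\ref{prop:reduction}, which does not address the braid data at all. (You also silently treat only the sub-case where $p\to q$ is the expansion; the other sub-case, where $q$ has the smaller top forest, is genuinely easier since it only needs well-definedness of top expansion.)
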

\begin{figure}
\begin{center}
\includegraphics[width=0.7\linewidth]{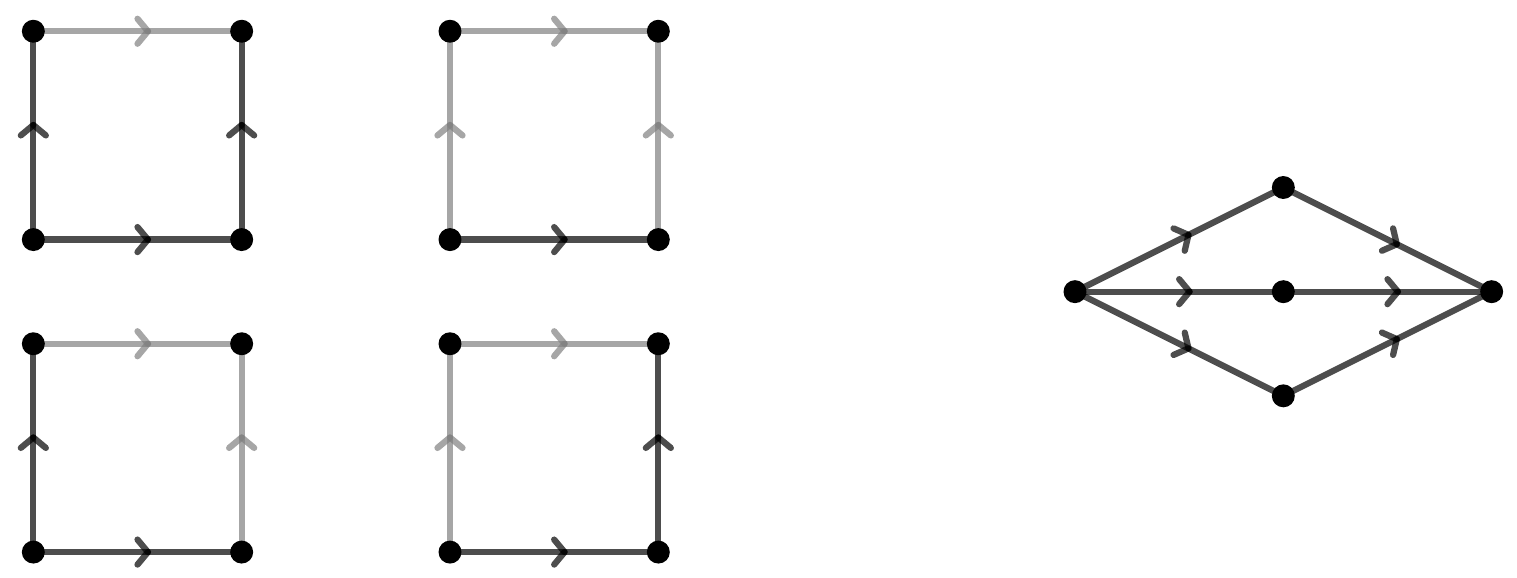}
\caption{On the left, the four possible colourings of our square; on the right, our colouring of $K_{3,2}$. (The top edges are black, the bottom edges grey.)}
\label{SquaresTwo}
\end{center}
\end{figure}

\noindent
Assume for contradiction that there exists a square in $M(\mathcal{P},w)$ with two opposite edges one of them being top and the other bottom. Then one of the configurations illustrated by Figure~\ref{SquaresTwo} occurs.
\begin{itemize}
	\item In the first configuration, the sequence of bottom forests indexing the vertices of our cycle is of the form $B,B,B,B,B \backslash \{b\}$. Since $B$ cannot coincide with $B \backslash \{b\}$, we get a contradiction.
	\item In the second configuration, the sequence of top forests indexing the vertices of our cycle is of the form $A, A,A,A, A \backslash c(a)$. Since $A$ cannot coincide with $A{\setminus} c(a)$, we get a contradiction.
	\item In the third configuration, the sequence of top forests indexing the vertices of our cycle, starting from bottom left vertex $v$, is of the form $A,A \cup c(a), A \cup c(a), A\cup c(a), (A \cup c(a)) \backslash c(a')$. Because $A$ must agree with $(A \cup c(a) ) \backslash c(a')$, necessarily $a=a'$. It follows that the two edges starting from $v$ coincide, a contradiction.
	\item In the fourth configuration, the sequence of top forests indexing the vertices of our cycle is of the form $A,A,A,A \cup c(a),A \cup c(a) \cup c(a')$. Since $A$ cannot coincide with $A \cup c(a) \cup c(a')$, we get a contradiction.
\end{itemize}
This concludes the proof of our claim.

\medskip \noindent
Now assume that $M(\mathcal{P},w)$ contains a copy of $K_{3,2}$. Let $a$ and $b$ denote the two vertices of degree $3$ in $K_{3,2}$. As a consequence of Lemma~\ref{lem:Translate}, we can assume without loss of generality that the three oriented edges containing $a$ are top and go away from $a$. Indeed, Lemma~\ref{lem:Translate} shows that we can assume that $a$ is represented by a diagram whose bottom forest is trivial, and the only way to increase the height of such a vertex is to perform a top contraction. It follows from  Lemma~\ref{lem:ParallelOrientation} and Claim~\ref{claim:ParallelTopBottom} that our configuration is all illustrated by Figure~\ref{SquaresTwo}.
%As a consequence of Lemma~\ref{lem:ParallelOrientation} and Claim~\ref{claim:ParallelTopBottom}, the edges of $K_{3,2}$ are all directed from $a$ to $b$ (up to switching $a$ and $b$) and they are either all top or all bottom \an{why is it the case without Lemma~\ref{lem:Translate} }. As a consequence of Lemma~\ref{lem:Translate}, we can assume without loss of generality that they are all top. 
Then we can find a $\mathcal{P}$-forest $A$ and vertices $u_1,u_2,v_1,v_2,w_1,w_2$ such that $a$ is indexed by the top forest $A$, $b$ by the top forest
$$A \cup c(u_1) \cup c(u_2)= A \cup c(v_1) \cup c(v_2)=A \cup c(w_1) \cup c(w_2),$$
while the three other vertices are indexed by the top forests $A \cup c(u_1)$, $A \cup c(v_1)$, and $A \cup c(w_1)$. Necessarily, $\{u_1,u_2\}=\{v_1,v_2\}=\{w_1,w_2\}$. So $u_1=v_1$ or $v_1=w_1$ or $u_1=w_1$. In each case, we deduce that, in our copy of $K_{3,2}$, two vertices of degree $2$ coincide, a contradiction. This completes the proof of our lemma.
\end{proof}

\noindent
Finally, we prove that our cube complex $M(\mathcal{P},w)$ satisfies the $3$-cube condition.

\begin{prop}\label{prop:MThreeCube}
$M(\mathcal{P},w)$ satisfies the $3$-cube condition.
\end{prop}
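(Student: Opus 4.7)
The plan is to mimic the case-analytic strategy used in the proof of Proposition~\ref{prop:Bipartite}. First, by Lemma~\ref{lem:Translate}, I would translate the common vertex of the three given squares to the canonical form $v = [T, \mathrm{id}, \emptyset, \eta(m)]$ for some $(\mathcal{P}, w)$-forest $T$ and word $m \in \Sigma^+$. The three squares then correspond to three distinct edges $e_1, e_2, e_3$ emanating from $v$, each pair of which spans one of the three squares. By Lemma~\ref{lem:Adjacent}, each $e_i$ represents, up to isocephalese equivalence, an elementary operation $O_i$ on a representative of $v$; since $\eta(m)$ has no $\mathcal{P}$-interior vertices, the bottom contraction case cannot arise, so $O_i$ is either a top expansion over a $\mathcal{P}$-leaf of $T$, a top contraction of a reducible structure in $T$, or a bottom expansion attaching a new $\mathcal{P}$-interior root to the bottom forest linked via $\sigma$ to a $\mathcal{P}$-interior vertex of $T$. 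I denote by $v_i$ the other endpoint of $e_i$, and by $v_{ij}$ the vertex opposite $v$ in the given square spanned by $e_i$ and $e_j$.

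The key analytical step, paralleling Claim~\ref{claim:ParallelTopBottom} and the case analysis of Lemma~\ref{lem:ParallelOrientation}, is to characterize when two operations $O_i, O_j$ commute, i.e.\ span a square at $v$. Top expansions and contractions act on the top forest $T$ together with the $\mathcal{P}$-leaves of the bottom forest and the free strands, while bottom expansions act only on the bottom forest and the injection $\sigma$. These two families touch disjoint parts of any diagram representative, so a top operation and a bottom expansion always commute. Two bottom expansions span a square exactly when they introduce distinct new roots linked to distinct $\mathcal{P}$-interior vertices of $T$. Two top operations span a square exactly when their effective targets in $T$ are disjoint: two top expansions must be over distinct $\mathcal{P}$-leaves, two top contractions must involve disjoint reducible subtrees, and a top expansion over $a$ combined with a top contraction rooted at $a'$ requires $a \notin c(a')$. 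These statements are established by a case analysis on the labelling of vertices by pairs of $\mathcal{P}$-forests, entirely analogous to the proof of Proposition~\ref{prop:Bipartite}.

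Finally, assuming pairwise commutativity, the three operations $O_1, O_2, O_3$ pairwise act on disjoint portions of any common representative. Since disjointness is transitive for such local edits, the three operations act on mutually disjoint loci and can therefore be performed simultaneously, producing a vertex $v^*$. The eight vertices $v, v_1, v_2, v_3, v_{12}, v_{13}, v_{23}, v^*$, together with the twelve evident edges and six squares obtained by applying subsets of $\{O_1, O_2, O_3\}$, then form the desired combinatorial 3-cube in $M(\mathcal{P}, w)$. The main obstacle lies in the second paragraph, specifically in formalising the disjointness condition when one of the operations is a top contraction that requires an isocephalese representative distinct from the one exhibiting another operation; this is managed by careful tracking of representatives using Corollary~\ref{cor:Classes}, precisely as in the proofs of Claim~\ref{claim:ParallelTopBottom} and Lemma~\ref{lem:ParallelOrientation}.
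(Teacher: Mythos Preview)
Your plan is viable in outline but misses the key simplification the paper exploits, and this makes the hardest step---the one you flag as ``the main obstacle''---genuinely difficult rather than a routine bookkeeping exercise.

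The paper does \emph{not} translate the common vertex $v$. Instead it first orients all edges by the height function and observes (via Lemma~\ref{lem:ParallelOrientation}) that the three-square configuration admits only four orientation types. In three of these there is a \emph{unique} vertex of minimal height, and the paper translates \emph{that} vertex to canonical form $[A,\mathrm{id},\emptyset,\eta(m)]$. From a minimal-height vertex in canonical form, every adjacent vertex has strictly greater height, so by Lemma~\ref{lem:Adjacent} each neighbouring edge is a top expansion (bottom contractions are impossible since $\eta(m)$ has no $\mathcal{P}$-interior vertices, and top contractions and bottom expansions would \emph{decrease} height). Thus all three operations are top expansions over distinct $\mathcal{P}$-leaves of $A$, and the missing vertex is simply $[A\cup c(a_1)\cup c(a_2)\cup c(a_3),\mathrm{id},\emptyset,\eta(m)]$ or a variant thereof---no isocephalese representatives are needed at all. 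The fourth orientation type (no unique minimum) is handled separately with a short ad hoc argument using Claim~\ref{claim:ParallelTopBottom}.

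By contrast, your choice to translate $v$ itself forces you to handle top contractions and bottom expansions, both of which (per Lemma~\ref{lem:Adjacent}) may require passing to a representative isocephalese to, but distinct from, $(T,\mathrm{id},\emptyset,\eta(m))$. The difficulty is then exactly what you identify: two such operations may demand \emph{different} isocephalese representatives, and it is not automatic that a single representative accommodates all three. Your assertion that ``disjointness is transitive for such local edits'' is neither the right statement (disjointness is not transitive) nor an argument that the required isocephalese choices are compatible. This is a real gap, not merely an omitted detail; filling it would require work comparable to the paper's fourth-configuration analysis but repeated across several sub-cases. The paper's height-based choice of basepoint is precisely what collapses most of this case analysis.
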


\begin{proof}
Let us consider three squares pairwise intersecting along an edge with a vertex common to all of them. As a consequence of Lemma~\ref{lem:ParallelOrientation}, the edges of these squares can be oriented by the height function (from low to high height) in four ways, as illustrated by Figure~\ref{3Cube}. In the three first configurations, there is a unique vertex $x$ that is of minimal height. As a consequence of Lemma~\ref{lem:Translate}, we can assume without loss of generality that $x$ is of the form $[A,\mathrm{id},\emptyset, \eta(m)]$. Then, it follows from Lemma~\ref{lem:Adjacent} that our configurations are as described by Figure~\ref{Conf}. In the three cases, the missing vertex of our $3$-cube is respectively, $[A \cup c(a_1) \cup c(a_2) \cup c(a_3),\mathrm{id},\emptyset, \eta(m)]$, $[A \cup c(a_1) \cup c(a_3),\mathrm{id},\emptyset, \eta(m)]$, and $[A \cup c(a_3),\mathrm{id},\emptyset, \eta(m)]$. 
\begin{figure}
\begin{center}
\includegraphics[width=0.8\linewidth]{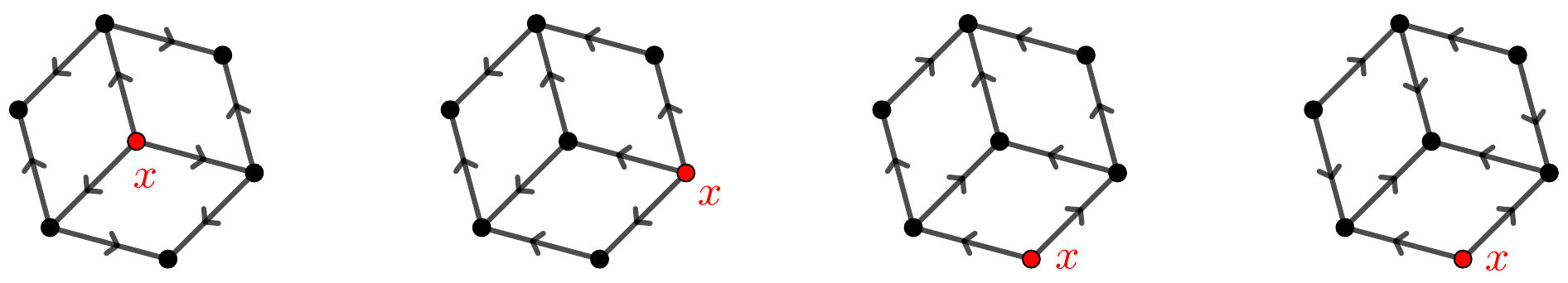}
\caption{Possible orientations of a cycle of three squares.}
\label{3Cube}
\end{center}
\end{figure}
\begin{figure}
\begin{center}
\includegraphics[width=\linewidth]{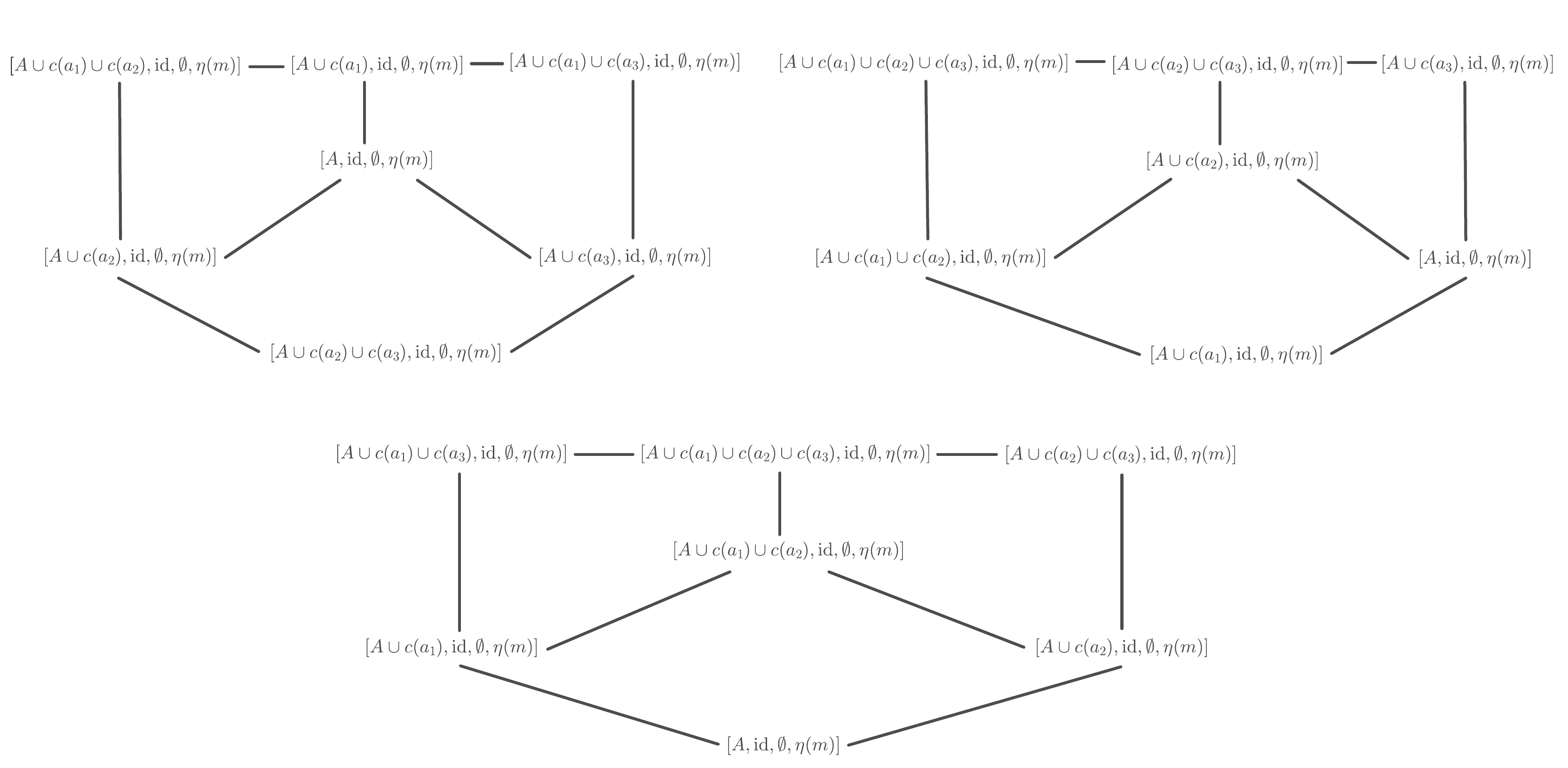}
\caption{The three first possible configurations.}
\label{Conf}
\end{center}
\end{figure}

\medskip \noindent
From now on, we focus on the fourth configuration of Figure~\ref{3Cube}. Let $x$ denote the bottom vertex. As a consequence of Lemma~\ref{lem:Translate}, we can assume without loss of generality that $x$ is of the form $[A, \mathrm{id},\emptyset,\eta(m)]$. We deduce from Lemma~\ref{lem:Adjacent} that the vertices of the lower square are as given by Figure~\ref{Configuration}. According to Claim~\ref{claim:ParallelTopBottom}, two cases may happen, depending on whether the three vertical edges are all top or all bottom. In the former case, the remaining vertices of our three squares are given by the left configuration of Figure~\ref{Configuration}, and $[A\backslash c(a_3),\mathrm{id},\emptyset,\eta(m)]$ is the missing vertex of our $3$-cube. In the latter case, the remaining vertices of our three squares are given by the right configuration of Figure~\ref{Configuration}, where $B$ is a forest with a single $\mathcal{P}$-interior vertex and where $(A \cup c(a_1), \alpha,\sigma, B)$ (resp. $(A \cup c(a_2), \alpha_2, \sigma_2,B)$, $(A \cup c(a_1) \cup c(a_2), \alpha_{12},\sigma_{12},B)$) is a diagram obtained from $(A \cup c(a_1), \mathrm{id},\emptyset, \eta(m))$ (resp. $(A \cup c(a_2), \mathrm{id}, \emptyset, \eta(m))$, $(A \cup c(a_1) \cup c(a_2), \mathrm{id}, \emptyset, \eta(m))$) by a bottom expansion. If $(A,\alpha_1,\sigma_1,B)$ denote the diagram obtained from $(A \cup c(a_1),\alpha,\sigma,B)$ by a top contraction over $a_1$, we claim that $[A,\alpha_1,\sigma_1,B]$ is the missing vertex of our $3$-cube. It is clearly adjacent to $[A,\mathrm{id},\emptyset, \eta(m)]$ and $[A \cup c(a_1),\alpha, \sigma,B]$, but it remains to show that it is also adjacent to $[A \cup c(a_2),\alpha_2, \sigma_2,B]$. We know that
\begin{itemize}
	\item from $(A,\alpha_1,\sigma_1,B)$, applying a top expansion over $a_1$ yields $(A \cup c(a_1), \alpha, \sigma,B)$;
	\item from $(A \cup c(a_1),\alpha,\sigma,B)$, applying a bottom contraction over the $\mathcal{P}$-interior vertex of $B$ yields $(A \cup c(a_1),\mathrm{id},\emptyset, \eta(m))$;
	\item from $(A \cup c(a_1),\mathrm{id},\emptyset, \eta(m))$, applying a top contraction over $a_1$ yields $(A,\mathrm{id},\emptyset, \eta(m))$;
	\item from $(A,\mathrm{id},\emptyset, \eta(m))$, applying a top expansion over $a_2$ yields $(A \cup c(a_2),\mathrm{id},\emptyset, \eta(m))$;
	\item from $(A \cup c(a_2),\mathrm{id},\emptyset, \eta(m))$, applying a bottom expansion yields $(A \cup c(a_2),\alpha_2,\sigma_2,B)$.
\end{itemize}
Along this sequence of transformations of diagrams, the two top expansion and contraction over $a_1$ cancel out, as well as the two bottom expansion and contraction, showing that $(A \cup c(a_2), \alpha_2, \sigma_2,B)$ can be obtained from $(A, \alpha_1,\sigma_2,B)$ by a top expansion over $a_2$. Thus, the vertices $[A,\alpha_1,\sigma_1,B]$ and $[A \cup c(a_2),\alpha_2,\sigma_2,B]$ are adjacent, as desired.
%The key observation is that Lemma~\ref{lem:Adjacent} implies the equalities
%$$[A\cup c(a_1)\cup c(a_2),\alpha_1,\sigma_1,B]= [A \cup c(a_1) \cup c(a_2), \alpha_{12},\sigma_{12},B] = [A \cup c(a_1) \cup c(a_2), \alpha_2,\sigma_2,B].$$
%Consequently, $\sigma_1=\sigma_2$ and $\alpha_2= \tau \alpha_1$ for some braid $\tau$ such that the braid connected to the unique $\mathcal{P}$-interior vertex $B$ is always in front of all the other strands. This implies that $[A,\alpha_1,\sigma_1,B]$ is adjacent to $[A \cup c(a_2),\alpha_2,\sigma_2,B]$, as desired, and this concludes the proof of our proposition.
\begin{figure}
\begin{center}
\includegraphics[width=\linewidth]{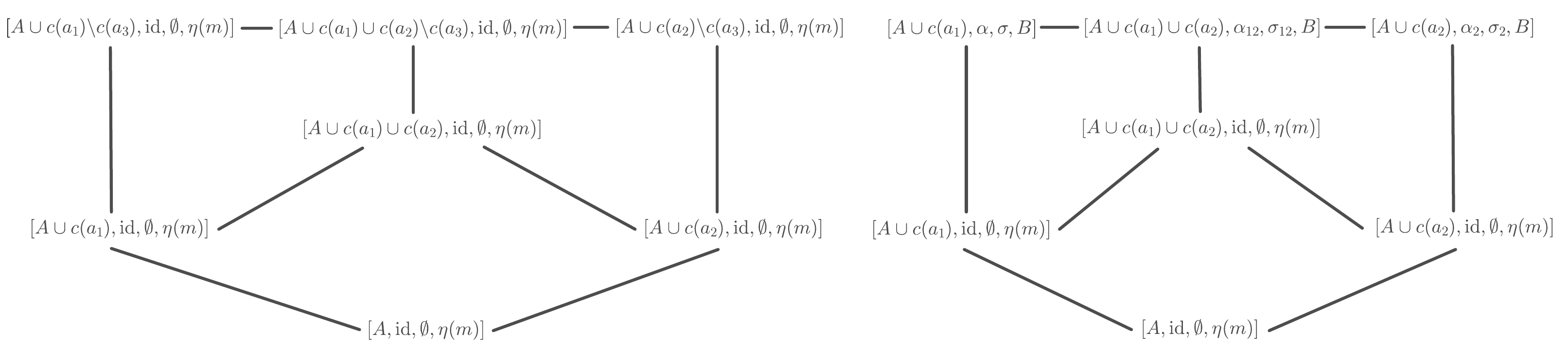}
\caption{The fourth possible configuration.}
\label{Configuration}
\end{center}
\end{figure}
\end{proof}

\begin{proof}[Proof of Theorem~\ref{thm:BigCC}.]
As a consequence of Propositions~\ref{prop:Mconnected},~\ref{prop:Msimplyconnected},~\ref{prop:Bipartite}, and~\ref{prop:MThreeCube}, we know that Proposition~\ref{prop:CriterionCC} applies, proving that $M(\mathcal{P},w)$ is a CAT(0) cube complex, as desired.
\end{proof}

\begin{remark}
The construction of our CAT(0) cube complex can be adapted in order to construct a CAT(0) cube complex $Q_a(\mathcal{P},w)$ (resp. $Q_p(\mathcal{P},w)$) on which the annular diagram group $D_a(\mathcal{P},w)$ (resp. the planar diagram group $D_p(\mathcal{P},w)$ described in Section~\ref{section:DiagramGroups} acts. Such a cube complex coincides with the complex constructed in \cite{MR1978047, MR2136028}, illustrated in Section~\ref{section:CCV} for Thompson's group $V$. Then, the labelling of the vertices of $M(\mathcal{P},w)$ by pairs of forests actually comes from a natural projection $M(\mathcal{P},w) \to Q_p(\mathcal{P},w)$, which turns out to be equivariant with respect to the canonical epimorphism $C(\mathcal{P},w) \to D_p(\mathcal{P},w)$. It can be shown that the quotients $M(\mathcal{P},w)/ C(\mathcal{P},w)$ and $Q_p(\mathcal{P},w)/ D_p(\mathcal{P},w)$ are isomorphic, providing a complex of braid groups whose fundamental group is $C(\mathcal{P},w)$ and whose underlying complex has fundamental group $D_p(\mathcal{P},w)$. Then, the previous projection $M(\mathcal{P},w) \to Q_p(\mathcal{P},w)$ turns out to be a development (see \cite[Chapter II.12]{MR1744486}). 
\end{remark}

\section{A few applications}\label{section:Afew}

\noindent
In this section, we record a few direct applications of the actions of Chambord groups on the CAT(0) cube complexes constructed in Section~\ref{section:CCC}.

\paragraph{Fixed-point properties.} First, we turn to the proof of Theorem~\ref{thm:IntroFW}, which states that the only groups satisfying the property $(FW_\omega)$ that can be embedded into Chambord groups must be finite. Recall that a group satisfies the property $(FW_\omega)$ if all its actions on CAT(0) cube complexes without infinite cubes are elliptic. {Let us observe that a group with property $(FW_\omega)$ in particular does not surject onto $\mathbb{Z}$.} We begin by proving the theorem for braid groups.

\begin{lemma}\label{lem:FWomega}
For all $p,q \geq 0$, every infinite subgroup in $\mathcal{B}_{p,q}$ virtually surjects onto $\mathbb{Z}$.
\end{lemma}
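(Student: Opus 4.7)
The plan is to reduce the claim to the analogous statement for the pure braid group $P_p$, and then to prove this reduced statement by induction on $p$ using Artin's combing theorem.

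First, the short exact sequence $1 \to \mathcal{B}_p \to \mathcal{B}_{p,q} \to \mathbb{Z}_q \to 1$ recalled in Section~\ref{sec:braidedstrand} shows that, for any infinite subgroup $H \leq \mathcal{B}_{p,q}$, the intersection $H \cap \mathcal{B}_p$ has index dividing $q$ in $H$ and is therefore a finite-index, hence still infinite, subgroup of $H$ sitting inside $\mathcal{B}_p$. Intersecting further with the pure braid group $P_p \leq \mathcal{B}_p$ (which has index $p!$ as the kernel of the natural permutation morphism $\mathcal{B}_p \twoheadrightarrow S_p$) produces an infinite finite-index subgroup of $H$ contained in $P_p$. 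Since the property of virtually surjecting onto $\mathbb{Z}$ is inherited from any finite-index subgroup, it suffices to prove that every infinite subgroup of $P_p$ virtually surjects onto $\mathbb{Z}$.

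I would then induct on $p$. The base cases $p \leq 2$ are immediate because $P_p$ is either trivial or isomorphic to $\mathbb{Z}$. For $p \geq 3$, Artin's combing theorem provides a split short exact sequence
\[ 1 \to F_{p-1} \to P_p \overset{\pi}{\longrightarrow} P_{p-1} \to 1 \]
where $F_{p-1}$ is a free group of rank $p-1$. Given an infinite subgroup $K \leq P_p$, two cases arise. Either $\pi(K)$ is infinite, and the inductive hypothesis applied in $P_{p-1}$ yields a finite-index subgroup $L \leq \pi(K)$ surjecting onto $\mathbb{Z}$; the preimage $\pi^{-1}(L) \cap K$ is then a finite-index subgroup of $K$ that still surjects onto $\mathbb{Z}$. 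Or $\pi(K)$ is finite, and then $K \cap F_{p-1}$ is a finite-index infinite subgroup of $K$; as a nontrivial subgroup of a free group it is itself free of rank at least one, and thus surjects onto $\mathbb{Z}$ through its abelianization.

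I do not expect any serious obstacle. The only subtle point is that the lemma makes no finite generation assumption on $H$, but this causes no difficulty: every nontrivial subgroup of a free group is free, and any free group of positive rank (finite or infinite) admits a surjection onto $\mathbb{Z}$, for instance by projecting a free basis to a single generator. All the real content is imported from the combing decomposition of $P_p$, which is what makes the induction work.
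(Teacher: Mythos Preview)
Your proposal is correct and follows essentially the same route as the paper: reduce to the pure braid group $P_p$ via the finite-index inclusion, then induct on $p$ using the strand-forgetting map $P_p \twoheadrightarrow P_{p-1}$ with free kernel. The only cosmetic difference is that the paper strengthens the inductive statement to ``every \emph{non-trivial} subgroup of $P_p$ surjects onto $\mathbb{Z}$'' (rather than virtually surjects), which lets the dichotomy be ``image trivial vs.\ non-trivial'' and avoids passing to further finite-index subgroups during the induction; your version works just as well.
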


\begin{proof}
Because the pure subgroup $P\mathcal{B}_p \leq \mathcal{B}_{p,q}$ has finite index $q\cdot p!$, it suffices to show that every non-trivial subgroup $H \leq P\mathcal{B}_p$ surjects onto $\mathbb{Z}$. We argue by induction over $p$. If $p \leq 1$, then $P\mathcal{B}_p$ is trivial and there is nothing to prove. If $p \geq 2$, then we know thanks to a famous theorem due to Artin that the morphism $P \mathcal{B}_p \twoheadrightarrow P \mathcal{B}_{p-1}$ that forgets the last strand has a free kernel. Either $H$ has a non-trivial image under this morphism, and our induction hypothesis implies that $H$ surjects onto $\mathbb{Z}$, or $H$ lies in the free kernel, and the conclusion is clear. 
\end{proof}

\begin{cor}\label{cor:FWomega}
For all $p,q \geq 0$, every subgroup in $\mathcal{B}_{p,q}$ satisfying the property $(FW_\omega)$ must be finite and hence in particular cyclic.
\end{cor}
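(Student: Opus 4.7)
The plan is to combine Lemma~\ref{lem:FWomega} with the standard observation that property $(FW_\omega)$ is inherited by finite-index subgroups, and then to use the short exact sequence $1 \to \mathcal{B}_p \to \mathcal{B}_{p,q} \to \mathbb{Z}_q \to 1$ recalled in Section~\ref{sec:braidedstrand} to upgrade finiteness to cyclicity.

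First I would record the finite-index stability of $(FW_\omega)$. Given a finite-index subgroup $H_0$ of a group $H$ and an action of $H_0$ on a CAT(0) cube complex $X$ with no infinite cube, one coinduces to obtain an action of $H$ on the product $X^{[H:H_0]}$. Being a finite product of copies of $X$, this coinduced cube complex again has no infinite cube; and, by construction, an unbounded $H_0$-orbit in $X$ yields an unbounded $H$-orbit in the coinduced complex. Therefore, if $H$ satisfies $(FW_\omega)$, then so does $H_0$.

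Now let $H \leq \mathcal{B}_{p,q}$ satisfy $(FW_\omega)$ and suppose for contradiction that $H$ is infinite. By Lemma~\ref{lem:FWomega}, some finite-index subgroup $H_0 \leq H$ surjects onto $\mathbb{Z}$. By the previous paragraph, $H_0$ still satisfies $(FW_\omega)$. But a surjection $H_0 \twoheadrightarrow \mathbb{Z}$ produces an unbounded action of $H_0$ on the real line, viewed as a one-dimensional CAT(0) cube complex, contradicting $(FW_\omega)$ as already remarked in the text. Hence $H$ is finite.

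To obtain cyclicity I would invoke the short exact sequence $1 \to \mathcal{B}_p \to \mathcal{B}_{p,q} \to \mathbb{Z}_q \to 1$: the classical braid group $\mathcal{B}_p$ being torsion-free, any finite subgroup of $\mathcal{B}_{p,q}$ meets $\mathcal{B}_p$ trivially and therefore embeds into the cyclic group $\mathbb{Z}_q$. I do not expect any substantial obstacle here; the only step worth writing out carefully is the coinduction verification, which is standard but ensures that ``no infinite cube'' is preserved when passing from $H_0$ to $H$.
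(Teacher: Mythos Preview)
Your proof is correct and follows essentially the same route as the paper: both use coinduction on the product $X^{[H:H_0]}$ to show that $(FW_\omega)$ passes to finite-index subgroups, combine this with Lemma~\ref{lem:FWomega} to force finiteness, and then read off cyclicity from the short exact sequence $1 \to \mathcal{B}_p \to \mathcal{B}_{p,q} \to \mathbb{Z}_q \to 1$ using that $\mathcal{B}_p$ is torsion-free. Your write-up is slightly more explicit about why the coinduced complex still has no infinite cube, but there is no substantive difference in strategy.
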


\begin{proof}
It is well-known that, if a group $G$ contains a finite-index subgroup $K$ acting on a cell complex $X$, then $G$ itself naturally acts on the Cartesian product $X^{[G:K]}$. This implies that the property $(FW_\omega)$ passes from a group to its finite-index subgroups. Our corollary follows from this observation and from Lemma~\ref{lem:FWomega}. Using the exact sequence from Section~\ref{sec:braidedstrand} we see that finite subgroups of $\mathcal{B}_{p,q}$ have to be cyclic. 
\end{proof}

\begin{proof}[Proof of Theorem~\ref{thm:IntroFW}.]
Let $\mathcal{P}=\langle \mathcal{A} \mid \mathcal{R} \rangle$ be an arboreal semigroup presentation, $w \in \mathcal{A}^+$ a baseword, and $H \leq C(\mathcal{P},w)$ a subgroup satisfying the property $(FW_\omega)$. Because a group acting on a median graph with bounded orbits has to stabilise a cube, necessarily $H$ stabilises a cube with respect to the action of $C(\mathcal{P},w)$ on $M(\mathcal{P},w)$. In fact, it has to fix a vertex:

\begin{claim}\label{claim:VertexFixed}
The stabiliser in $C(\mathcal{P},w)$ of a cube of $M(\mathcal{P},w)$ must fix a vertex.
\end{claim}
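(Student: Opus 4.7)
} The strategy is to show that the height function $h(A,\beta,\sigma,B) := i(A) - i(B)$ from the proof of Proposition~\ref{prop:Msimplyconnected} both is preserved by the action of $C(\mathcal{P},w)$ on $M(\mathcal{P},w)$ and, on any cube of $M(\mathcal{P},w)$, singles out a unique minimising vertex. Once these two properties are established, the stabiliser of any cube, acting on the $2^n$ vertices of that cube while preserving $h$, will automatically fix the unique minimum, yielding the claim.

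The first step is to check height-invariance. Given $g \equiv (R,\alpha,S) \in C(\mathcal{P},w)$ and $x = [A,\beta,\sigma,B]$, I would use the recipe from the definition of the action: pick representatives $(R',\alpha',S')$ of $g$ and $(A',\beta',\sigma',B')$ of $x$, obtained by adding dipoles, such that $S' = A'$, and set $g \cdot x = [R',\beta' \alpha',\sigma',B']$. Since $(R,\alpha,S)$ is a \emph{closed} $(\mathcal{P},w)$-diagram, $i(R) = i(S)$, and this equality is preserved when adding dipoles to a closed diagram, so $i(R') = i(S') = i(A')$. On the other hand, adding a dipole to the clopen diagram $(A,\beta,\sigma,B)$ simultaneously adjoins a $\mathcal{P}$-interior vertex to the top and to the bottom forest (because free strands and free wires are never part of a dipole), hence does not change $h$. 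Combining these two observations gives
$$h(g \cdot x) = i(R') - i(B') = i(A') - i(B') = h([A',\beta',\sigma',B']) = h(x).$$

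The second step is a purely combinatorial statement about cubes in $M(\mathcal{P},w)$. By Lemma~\ref{lem:ParallelOrientation}, any two opposite edges of a square of $M(\mathcal{P},w)$ are coherently oriented with respect to $h$. Consequently, the edges of an $n$-cube $C$ split into $n$ parallel classes, each class endowed with a well-defined height variation $\varepsilon_i \in \{+1,-1\}$. Choosing an arbitrary base vertex $v_0 \in C$ and identifying the vertex-set of $C$ with $\{0,1\}^n$ so that $v_0 \leftrightarrow (0,\ldots,0)$, one obtains $h(v_1,\ldots,v_n) = h(v_0) + \sum_{i=1}^n \varepsilon_i v_i$. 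This expression attains its minimum at the unique vertex characterised by $v_i = 0$ whenever $\varepsilon_i = +1$ and $v_i = 1$ whenever $\varepsilon_i = -1$.

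Putting the two steps together concludes the argument: the stabiliser of $C$ permutes its vertices, preserving $h$ by the first step, and therefore fixes the unique height-minimising vertex produced by the second step. The main (though modest) subtlety lies in verifying height-invariance of the action, since this requires carefully matching the dipole additions used to define $g \cdot x$ with the arithmetic of $i(\cdot)$ on top and bottom forests; once that is in hand, the rest is a direct consequence of Lemma~\ref{lem:ParallelOrientation}.
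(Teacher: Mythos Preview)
Your proposal is correct and follows exactly the same approach as the paper: show that the height function is $C(\mathcal{P},w)$-invariant and that every cube has a unique vertex of minimal height (via Lemma~\ref{lem:ParallelOrientation}), whence the stabiliser of a cube fixes that vertex. The paper dispatches this in two lines, whereas you have spelled out the details of the height-invariance computation and the parallel-class argument, but the underlying argument is identical.
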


\noindent
Indeed, it follows from Lemma~\ref{lem:ParallelOrientation} that a cube contains a unique vertex of minimal height. Since the height function is preserved by the action of $C(\mathcal{P},w)$, our claim follows.

\medskip \noindent
Therefore, we deduce from Lemmas~\ref{lem:Stab} and~\ref{lem:Translate} that $H$ embeds into $\mathcal{B}_{p,q}$ for some $p,q \geq 0$. We conclude from Lemma~\ref{cor:FWomega} that $H$ must be finite. 
\end{proof}

\paragraph{Torsion elements.} Following the lines of \cite[Theorem 4.3]{GLU}, one can exploit the CAT(0) cube complexes we constructed in order to show that finite subgroups in Chambord groups are necessarily cyclic and to classify the possible orders of finite-order elements (depending on the arboreal semigroup presentation and the baseword). Because this is already done for asymptotically rigid mapping class groups in~\cite{GLU}, we do not pursue this direction further and leave the details to the reader. In this article, we restrict ourselves to the following observation about pure subgroups, as defined at the end of Section~\ref{section:GroupLaw}:

\begin{prop}
Pure subgroups in Chambord groups are torsion-free.
\end{prop}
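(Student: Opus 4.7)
The plan is to copy the strategy of the proof of Theorem~\ref{thm:IntroFW} and use the action on the CAT(0) cube complex $M(\mathcal{P},w)$. Given a torsion element $g \in PC(\mathcal{P},w)$, the finite cyclic group $\langle g \rangle$ satisfies $(FW_\omega)$ and so must stabilise a cube of $M(\mathcal{P},w)$; by Claim~\ref{claim:VertexFixed}, that stabiliser fixes a vertex, so $g$ itself fixes some vertex $x$. By Lemma~\ref{lem:Translate}, after conjugating by a suitable $h \in C(\mathcal{P},w)$, we may assume $hx = [T, \mathrm{id}, \emptyset, \eta(m)]$, and Lemma~\ref{lem:Stab} identifies the stabiliser of $hx$ with the braid group $\mathcal{B}_{i(T),\ell(T)}$, realising $hgh^{-1} \equiv (T, \beta, T)$ for a unique braid $\beta$.

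The crux is to check that $\beta$ is pure. Purity of the braid part is invariant under adding and reducing dipoles — this is precisely the observation used right after Proposition~\ref{prop:GroupLaw} to make $PC(\mathcal{P},w)$ a subgroup — and a conjugation in $C(\mathcal{P},w)$ reduces, at the braid level, to embedding $\beta$ into a larger $\mathcal{B}_{p,q}$ by inserting parallel strands and then conjugating by another braid. Both operations preserve purity, since the pure braid group is normal in the full braid group. Thus $\beta$ belongs to $P\mathcal{B}_{i(T),\ell(T)}$, which via the exact sequence $1 \to \mathcal{B}_{i(T)} \to \mathcal{B}_{i(T),\ell(T)} \to \mathbb{Z}_{\ell(T)} \to 1$ coincides with the classical pure braid group $P\mathcal{B}_{i(T)}$ (pure elements automatically map to $0 \in \mathbb{Z}_{\ell(T)}$).

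Finally, $\beta$ has finite order, but the classical pure braid group $P\mathcal{B}_{i(T)}$ is torsion-free — for instance because it is the fundamental group of a configuration space of points in a disc, which is an aspherical open subset of Euclidean space. Hence $\beta = 1$, and therefore $g = 1$. The main obstacle is the verification that conjugation in $C(\mathcal{P},w)$ preserves purity of the braid part, which is a routine but mildly technical unpacking of the concatenation rule defined in Section~\ref{section:GroupLaw}; the rest of the argument is a direct combination of Lemmas~\ref{lem:Translate} and~\ref{lem:Stab} with Claim~\ref{claim:VertexFixed} and the classical torsion-freeness of pure braid groups.
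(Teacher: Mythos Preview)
Your argument has a genuine gap at the conjugation step. You assert that conjugating a pure element $g$ by an arbitrary $h \in C(\mathcal{P},w)$ yields a pure element, justifying this by the normality of the pure braid group inside a fixed $\mathcal{B}_{p,q}$. But this would force $PC(\mathcal{P},w)$ to be normal in $C(\mathcal{P},w)$, and it is not. For $\mathcal{P}=\langle x\mid x=x^2\rangle$, the forgetful map $\mathfrak{f}$ of Section~\ref{section:DiagramGroups} sends $C(\mathcal{P},x)$ onto Thompson's group $T$ and $PC(\mathcal{P},x)$ onto Thompson's group $F$; since $T$ is simple and $F$ is a proper nontrivial subgroup, $F$ is not normal in $T$, so $PC$ cannot be normal in $C$. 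Concretely, take $g=(A_1,\mathrm{id},A_2)$ with $A_1,A_2$ the two three-leaf binary trees (a pure element) and $h=(A_1,\tau,A_1)$ with $\tau$ the order-three rotation of the three marked points; a direct computation shows that $hgh^{-1}$ has nontrivial wire permutation, hence is not pure. The flaw in your reasoning is that when $g\equiv(A,\gamma,B)$ with $A\neq B$, the dipole additions required to match $h$ with $g$ and those required to match $hg$ with $h^{-1}$ are not the same, so the braid of $hgh^{-1}$ is \emph{not} obtained from $\gamma$ by embedding into a larger $\mathcal{B}_{p,q}$ and conjugating; your description of conjugation ``at the braid level'' is only correct when the top and bottom forests of $g$ agree.

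The paper's proof avoids conjugating $g$ altogether: it observes directly that the stabiliser in $PC(\mathcal{P},w)$ of any vertex is torsion-free. For a vertex of the special form $[A,\mathrm{id},\emptyset,\eta(m)]$ this is immediate from Lemma~\ref{lem:Stab}, since an element $g\in PC$ fixing it satisfies $g\equiv(A,\beta,A)$ and purity is preserved under equipotence, forcing $\beta\in P\mathcal{B}_{i(A)}$. The passage to an arbitrary vertex (via Lemma~\ref{lem:Translate}) is the delicate point, and this is precisely where your conjugation argument breaks down; the correct route does not go through normality of $PC$.
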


\begin{proof}
Let $\mathcal{P}=\langle \mathcal{A} \mid \mathcal{R} \rangle$ be an arboreal semigroup presentation and $w \in \mathcal{A}^+$ a baseword. As a consequence of Claim~\ref{claim:VertexFixed}, a finite-order element in $C(\mathcal{P},w)$ must fix a vertex of the cube complex $M(\mathcal{P},w)$. But it follows from Lemmas~\ref{lem:Stab} and~\ref{lem:Translate} that $PC(\mathcal{P},w)$ acts on $M(\mathcal{P},w)$ with torsion free vertex-stabilisers. The desired conclusion follows.
\end{proof}

\paragraph{Finite presentability.} Following \cite{MR4033504}, an action on a CAT(0) cube complex can be used in order to prove that some group is not finitely presented. For instance, such a strategy applies to the braided lamplighter group $\mathrm{br} \mathcal{L}$ briefly discussed in Section~\ref{section:Examples}. In fact, thanks to \cite{GT}, it can be proved that the lamplighter group $\mathbb{Z}\wr \mathbb{Z}$ (as well as many other wreath products) does not admit a finitely presented braided version. More precisely:

\begin{prop}\label{prop:notFP}
Let $A$ be a non-trivial group and $H$ an infinite group. Assume that they do not contain non-abelian free subgroups. If a group $G$ satisfies a short exact sequence
$$1 \longrightarrow B_\infty \longrightarrow G \longrightarrow A \wr H \longrightarrow 1,$$
where $B_\infty$ denotes the group of finitely supported braids on infinitely many strands, then $G$ is not finitely presented.
\end{prop}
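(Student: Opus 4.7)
The plan is to derive a contradiction by assuming that $G$ is finitely presented. A quick preliminary reduction: if either $A$ or $B$ fails to be finitely generated, then since $A$ is nontrivial and $B$ is infinite, the wreath product $A \wr B$ is not finitely generated, and hence neither is $G$, so there is nothing to prove. Accordingly I may assume that both $A$ and $B$ are finitely generated.

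Next, under the given hypotheses ($A$ nontrivial, $B$ infinite, neither containing non-abelian free subgroups), I would invoke the classical theory of wreath products---following Baumslag's original argument that $\mathbb{Z} \wr \mathbb{Z}$ is not finitely presented and its generalisations due to Bieri and Strebel---to conclude that $A \wr B$ is not finitely presented. Morally, any finite presentation would need infinitely many relations of the form $[a,\, b^n a b^{-n}] = 1$ (for $a \in A$ nontrivial and $b \in B$ of infinite order) to record the fact that the $B$-conjugates of $a$ pairwise commute; the no-non-abelian-free-subgroup condition is what rules out the handful of exceptional wreath products that happen to be finitely presented.

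The central step, and the main obstacle, is to show that $B_\infty$ is finitely normally generated in $G$. Once this is in hand, the standard fact (a direct consequence of Tietze transformations) that if $G$ is finitely presented and $N \triangleleft G$ is finitely normally generated then $G/N$ is finitely presented yields the desired contradiction: $A \wr B \cong G / B_\infty$ would be finitely presented, contrary to the previous paragraph. To carry out the finite-normal-generation step, I would exploit the direct-limit structure $B_\infty = \bigcup_n \mathcal{B}_n$ together with the conjugation action of $G$ on $B_\infty$ (which factors through $A \wr B$): one expects the shift action of the infinite subgroup $B \leq A \wr B$ on the strand indices to conjugate a standard Artin generator $\sigma_1$ to every $\sigma_i$, so that $B_\infty$ becomes the $G$-normal closure of the single element $\sigma_1$. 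Establishing this conjugacy-orbit statement using only the abstract extension data is precisely where the machinery of \cite{GT} is expected to intervene, and is the hardest part of the argument.
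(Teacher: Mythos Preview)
Your overall strategy is correct, and in fact more elementary than the paper's, but you have badly misjudged where the difficulty lies.

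The step you flag as ``the hardest part'' --- showing that $B_\infty$ is finitely normally generated in $G$ --- is in fact trivial and does not require any information about the conjugation action of $G$ on $B_\infty$, let alone the machinery of \cite{GT}. The group $B_\infty$ is already the normal closure \emph{in itself} of the single Artin generator $\sigma_1$: from the braid relation $\sigma_i\sigma_{i+1}\sigma_i=\sigma_{i+1}\sigma_i\sigma_{i+1}$ one reads off $(\sigma_i\sigma_{i+1})\,\sigma_i\,(\sigma_i\sigma_{i+1})^{-1}=\sigma_{i+1}$, so all the $\sigma_i$ are conjugate in $B_\infty$. Hence $B_\infty$ is the $G$-normal closure of $\sigma_1$ regardless of how $G$ sits above it. With this in hand your argument goes through immediately: Baumslag's theorem gives that $A\wr B$ is never finitely presented when $A$ is nontrivial and $B$ is infinite (no hypothesis on free subgroups is needed here either), and the standard Tietze argument finishes. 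So your proof actually establishes the proposition without the no-non-abelian-free-subgroup assumption.

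The paper's proof is quite different and genuinely uses that hypothesis. It invokes \cite{GT} to factor the quotient map $G\twoheadrightarrow A\wr B$ through an acylindrically hyperbolic group $\bar G$, and then argues by cases on whether the image of $B_\infty$ in $\bar G$ is finite or infinite. In the finite case one gets a surjection of $A\wr B$ onto a group containing a non-abelian free subgroup, contradicting the hypothesis on $A$ and $B$; in the infinite case one exploits the fact that in $B_\infty$ any element can be conjugated off its own support to contradict the existence of independent loxodromics. Your route bypasses all of this by going straight through Baumslag's theorem, which is both shorter and yields a slightly stronger conclusion.
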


\begin{proof}
Assume for contradiction that $G$ is finitely presented. As a consequence of \cite{GT}, the quotient map $G \twoheadrightarrow A \wr H$ factors through a quotient $\bar{G}$ that is acylindrically hyperbolic. Let $B$ denote the image of $B_\infty$ in $\bar{G}$. Observe that $B$ is a normal subgroup in $\bar{G}$. We distinguish two cases.

\medskip \noindent
Assume first that $B$ is finite. Then $G/ B_\infty$, which is isomorphic to $A \wr H$, surjects onto $\bar{G}/B$. Because $\bar{G}$ contains a non-abelian free subgroup, as any acylindrically hyperbolic group, so does $\bar{G}/B$ since $B$ is finite. It follows that $A \wr H$, and a fortiori $A$ or $H$, contains a non-abelian free subgroup, contradicting our assumptions.

\medskip \noindent
Next, assume that $B$ is infinite. Let $X$ be a hyperbolic space on which $\bar{G}$ acts non-elementarily and acylindrically. As an infinite normal subgroup, the induced action of $B$ on $X$ must be non-elementary \cite[Lemma~7.2]{OsinAcyl}, which implies that there exist two loxodromic elements $a,b \in B$ generating a free group. Let $\alpha,\beta \in B_\infty$ be two pre-images respectively of $a,b$. If $\gamma \in B_\infty$ is a braid that sends the supports of $\alpha,\beta$ sufficiently far away, then $\gamma \alpha \gamma^{-1}$ commutes with $\alpha$ and $\beta$. A fortiori, if $g$ denotes the image of $\gamma$ in $B$, then so does $gag^{-1}$ with $a$ and $b$. Since $gag^{-1}$ must stabilise quasi-axes of $a$ and $b$, it follows that $gag^{-1}$ is elliptic. We conclude that $a$ is elliptic, a contradiction.
\end{proof}

\section{Polycyclic subgroups}\label{section:Polycyclic}

\noindent
This last section is dedicated to the main application of the CAT(0) cube complexes constructed in Section~\ref{section:CCC}. Namely, we want to prove that, in Chambord groups, polycyclic groups are virtually abelian and undistorted. For this purpose, the following statement, which shows that every action of a polycyclic group on a CAT(0) cube complex essentially factorises through an abelian quotient, will be central.

\begin{thm}[\cite{CubicalFlat}]\label{thm:PolycyclicCC}
Let $G$ be a polycyclic group acting on a CAT(0) cube complex $X$. Then $G$ contains a finite-index subgroup $H$ such that $$\mathcal{E}:=\{g \in H \mid \text{$g$ is elliptic in $X$} \}$$ defines a normal subgroup of $H$ with $H/\mathcal{E}$ free abelian. 
\end{thm}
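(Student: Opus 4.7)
My plan is to prove Theorem~\ref{thm:PolycyclicCC} by induction on the Hirsch length $h(G)$, after first replacing $G$ by a finite-index subgroup so that $G$ is torsion-free poly-$\mathbb{Z}$. The base case $h(G) = 0$ (where $G$ is trivial) is immediate. For the inductive step, I would write $G$ as an extension $1 \to N \to G \to \mathbb{Z} \to 1$ with $N$ polycyclic of Hirsch length $h(G)-1$, and apply the induction hypothesis to the action $N \curvearrowright X$. After replacing $N$ by a characteristic finite-index subgroup---which remains normal in $G$, so the set $\mathcal{E}_N$ of $X$-elliptic elements of $N$ is automatically $G$-normal since ellipticity is a conjugation invariant---one may assume $\mathcal{E}_N \triangleleft G$ with $N/\mathcal{E}_N \cong \mathbb{Z}^m$ free abelian.

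The core of the argument is then to construct a ``cubical translation length'' homomorphism $\tau : H \to \mathbb{Z}^k$ on a suitable finite-index subgroup $H \leq G$ containing $\mathcal{E}_N$, whose kernel is precisely the elliptic subgroup $\mathcal{E}$. The starting point is the cubical version of the flat torus theorem: commuting loxodromic isometries of a CAT(0) cube complex share a combinatorial flat on which their translation lengths add, yielding such a $\tau_N : N/\mathcal{E}_N \to \mathbb{Z}^m$ on the already-abelian quotient, whose kernel is exactly the elliptic subgroup of $N/\mathcal{E}_N$ (so trivial, by definition of $\mathcal{E}_N$). Once the analogous $\tau$ is defined on $H$, its kernel will be $\mathcal{E}$ and $H/\mathcal{E}$ will embed into $\mathbb{Z}^k$, hence be free abelian.

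The main obstacle will be extending $\tau_N$ across the extension by $\mathbb{Z}$: a generator of $G/N$ acts on $N/\mathcal{E}_N \cong \mathbb{Z}^m$ by an automorphism in $\mathrm{GL}_m(\mathbb{Z})$ which a priori need not preserve $\tau_N$, so the putative combined homomorphism on $G$ may fail to be well-defined. To overcome this, I would pass to a further finite-index subgroup of $G$ on which the conjugation action of $\mathbb{Z}$ on $N/\mathcal{E}_N$ becomes unipotent (using residual finiteness of $\mathrm{GL}_m(\mathbb{Z})$, via reduction modulo a sufficiently large prime), and then exploit the fact that any element commuting with a loxodromic isometry $a$ of a CAT(0) cube complex preserves the parallel class of its combinatorial axis and hence its translation length. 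Checking that these two ingredients combine to produce a well-defined $\tau$ on all of $H$---equivalently, that the generator of $H/N$ either acts loxodromically with axis parallel to the flat of $N/\mathcal{E}_N$ (increasing $k$ by one) or acts elliptically modulo $\mathcal{E}_N$ (keeping $k=m$)---is the delicate technical point, and is where the structure theory of commuting isometries on CAT(0) cube complexes developed in \cite{CubicalFlat} does all the work.
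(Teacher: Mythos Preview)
The paper does not prove Theorem~\ref{thm:PolycyclicCC}; it is quoted as a black box from \cite{CubicalFlat} and used later in the proof of Theorem~\ref{thm:PolyMod}. So there is no proof in the paper to compare your proposal against. The only remark the authors make is the sentence immediately following the statement: the product of two commuting elliptic isometries is elliptic, so the conclusion is clear when $G$ is abelian, while the dihedral example shows why one must pass to a finite-index subgroup in general.

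As for your sketch itself: the inductive scheme on Hirsch length, the reduction to a characteristic (hence $G$-normal) finite-index subgroup of $N$, and the idea of assembling a translation-length homomorphism are all reasonable and are indeed in the spirit of how such results are proved. Two places deserve care. First, you need to know that cubical isometries are semisimple (no parabolics), so that ``kernel of $\tau$'' really coincides with ``elliptic elements''; this is Haglund's theorem and typically requires passing to the cubical subdivision. Second, your unipotency step is not quite enough by itself: making the $\mathbb{Z}$-action on $N/\mathcal{E}_N$ unipotent does not force it to be trivial, and a genuinely unipotent nontrivial action would still obstruct extending $\tau_N$ to a homomorphism on $H$. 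What actually saves you is that conjugation preserves translation length on $X$, so the induced $\mathbb{Z}$-action on $N/\mathcal{E}_N$ must preserve the injective homomorphism $\tau_N$ into $\mathbb{Z}^m$, hence be of finite order; passing to a further finite-index subgroup then makes it trivial, and the extension becomes central. You gesture at this (``preserves\ldots its translation length''), but it is worth making explicit that this is the mechanism, not residual finiteness of $\mathrm{GL}_m(\mathbb{Z})$.
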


\noindent
Observe that replacing $G$ with the finite-index subgroup $H$ is necessary because the product of two elliptic isometries may not be elliptic. For instance, consider the usual action of the infinite dihedral group $D_\infty$ on a bi-infinite line. However, the product of two commuting elliptic isometries is again elliptic, making the conclusion of Theorem~\ref{thm:PolycyclicCC} clear when $G$ is abelian.

\subsection{Abelian subgroups are undistorted}

\noindent
In this section, we focus on the distortion of specific subgroups in Chambord groups. Recall that, given a group $G$ and a finitely generated subgroup $H \leq G$, $H$ is \emph{undistorted} if, for every finitely generated subgroup $K \leq G$ containing $H$, the inclusion $H \hookrightarrow K$ is a quasi-isometric embedding. The section is dedicated to the proof of the following statement:

\begin{thm}\label{thm:DistAbelian}
Let $\mathcal{P}= \langle \mathcal{A} \mid \mathcal{R} \rangle$ be an arboreal semigroup presentation and $w \in \mathcal{A}^+$ a baseword. Finitely generated abelian subgroups in $C(\mathcal{P},w)$ are undistorted.
\end{thm}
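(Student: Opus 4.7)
The approach is to combine the action of $C(\mathcal{P},w)$ on the CAT(0) cube complex $M(\mathcal{P},w)$ from Theorem~\ref{thm:BigCC} with Theorem~\ref{thm:PolycyclicCC}, the undistortion of vertex-stabilisers in $C(\mathcal{P},w)$ (the Chambord-group analogue of Theorem~\ref{thm:IntroDistortion}), and the classical fact that finitely generated abelian subgroups of braid groups are undistorted.

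Fix $H \leq K \leq C(\mathcal{P},w)$ with $H$ finitely generated abelian and $K$ finitely generated. Since undistortion is invariant under passing to finite-index subgroups, I apply Theorem~\ref{thm:PolycyclicCC} to the action of $H$ on $M(\mathcal{P},w)$: this produces a finite-index subgroup $H_0 \leq H$, a normal subgroup $\mathcal{E} \trianglelefteq H_0$ consisting of elliptic elements, and an isomorphism $H_0/\mathcal{E} \cong \mathbb{Z}^k$. As $\mathbb{Z}^k$ is projective among abelian groups the extension splits, yielding $H_0 \cong \mathcal{E} \oplus L$ with $L \cong \mathbb{Z}^k$, so I may assume $H = H_0$.

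I then analyse the two factors separately. By Claim~\ref{claim:VertexFixed}, $\mathcal{E}$ fixes a vertex $v_0 \in M(\mathcal{P},w)$, and Lemma~\ref{lem:Stab} identifies $\mathrm{Stab}(v_0)$ with a braid group $\mathcal{B}_{p,q}$. Abelian subgroups of braid groups being undistorted, $\mathcal{E}$ is undistorted in $\mathrm{Stab}(v_0)$; combining with Theorem~\ref{thm:IntroDistortion} (which gives undistortion of $\mathrm{Stab}(v_0)$ in $K$) yields $|e|_\mathcal{E} \sim |e|_K$ for every $e \in \mathcal{E}$. For the translating factor, every non-identity element of $L$ acts non-elliptically on $M(\mathcal{P},w)$ by the very construction of $\mathcal{E}$. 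Invoking the general theory of free abelian actions on CAT(0) cube complexes without non-trivial elliptic elements (encapsulated in Proposition~\ref{prop:CCundistorted} and \cite{CubicalFlat}), the orbit map $\ell \mapsto \ell \cdot v_0$ is a quasi-isometric embedding, which gives $|\ell|_L \lesssim d(v_0, \ell v_0) \leq |\ell|_K$ for every $\ell \in L$.

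Finally, I combine the two estimates. For $h = e \cdot \ell \in H$ with $e \in \mathcal{E}$ and $\ell \in L$, the word metric on the direct sum satisfies $|h|_H \sim |e|_\mathcal{E} + |\ell|_L$. Since $H$ is abelian and $e$ fixes $v_0$, one has $h \cdot v_0 = \ell \cdot e \cdot v_0 = \ell \cdot v_0$, hence $|\ell|_L \lesssim d(v_0, h v_0) \leq |h|_K$. The triangle inequality and the previous bounds then yield $|e|_K \leq |h|_K + |\ell|_K \lesssim |h|_K + |\ell|_L \lesssim |h|_K$, so $|e|_\mathcal{E} \sim |e|_K \lesssim |h|_K$. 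Summing produces $|h|_H \lesssim |h|_K$, which is the required quasi-isometric embedding. The main obstacle is the orbit quasi-isometric embedding for the translating factor $L$: turning the purely qualitative absence of elliptic elements into the quantitative lower bound $d(v_0, \ell v_0) \gtrsim |\ell|_L$ requires the structural fact that commuting hyperbolic cubical isometries admit a common invariant flat subcomplex on which the whole group acts by translations, which is the technical content packaged in Proposition~\ref{prop:CCundistorted}.
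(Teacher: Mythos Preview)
Your argument is essentially the paper's own proof of Proposition~\ref{prop:CCundistorted} unrolled in the concrete setting, and the paper's proof of the theorem consists precisely of verifying the hypotheses of that proposition (via Lemmas~\ref{lem:Translate}, \ref{lem:Stab} and Proposition~\ref{prop:BraidUndistorted}, together with biautomaticity of braid groups). So the strategy is identical; you give a direct estimate where the paper argues by contradiction with a sequence $g_n$, but the content is the same.

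Two small fixups are needed. First, Claim~\ref{claim:VertexFixed} only says that a cube-stabiliser fixes a vertex; to conclude that $\mathcal{E}$ stabilises a cube in the first place you must invoke Lemma~\ref{lem:FixedPoint} (finitely generated, purely elliptic, no infinite cube $\Rightarrow$ bounded orbits). Second, the quantitative lower bound $|\ell|_L \lesssim d(v_0,\ell v_0)$ for the loxodromic factor is not the content of Proposition~\ref{prop:CCundistorted} but rather of Lemma~\ref{lem:CCundistorted} (or directly the flat torus theorem for the free, hence proper, action of $L$). With these two citations adjusted your direct argument goes through and is arguably cleaner than the paper's contradiction version.
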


\noindent
The strategy is to apply the following general observation, which is of independent interest:

\begin{prop}\label{prop:CCundistorted}
Let $G$ be a group acting on a CAT(0) cube complex. Assume that vertex-stabilisers are undistorted in $G$ and have their finitely generated abelian subgroups undistorted. Then finitely generated abelian subgroups in $G$ are undistorted.
\end{prop}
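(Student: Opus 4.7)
Let $H \leq G$ be a finitely generated abelian subgroup and let $K \leq G$ be a finitely generated subgroup containing $H$; I want to establish $|h|_H \preceq |h|_K$ for $h \in H$, the reverse inequality being automatic. Let $X$ denote the CAT(0) cube complex on which $G$ acts. Applying Theorem~\ref{thm:PolycyclicCC} to the polycyclic group $H$ and passing to a finite-index subgroup (which does not affect distortion), I may assume that the subset $\mathcal{E} \leq H$ of $X$-elliptic elements forms a normal subgroup with $H/\mathcal{E} \cong \mathbb{Z}^n$. Since $H$ is abelian and $\mathbb{Z}^n$ is free, the quotient splits, so I fix a decomposition $H \cong \mathcal{E} \oplus \mathbb{Z}^n$ and write each $h$ uniquely as $h = e \cdot t$ with $e \in \mathcal{E}$ and $t \in \mathbb{Z}^n$; word length in $H$ is then equivalent to $|e|_\mathcal{E} + \|t\|_{\mathbb{Z}^n}$.

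\textbf{Finding a fixed vertex for $\mathcal{E}$.} A key preliminary is that $\mathcal{E}$ admits a common fixed vertex on $X$, up to a further finite-index subgroup. Two commuting elliptic isometries $a,b$ of any CAT(0) space share a fixed point, since $a$ preserves the non-empty closed convex set fixed by $b$ and the $a$-equivariant nearest-point projection sends a fixed point of $a$ to a common fixed point of both. Iterating over a finite generating set, $\mathcal{E}$ fixes a point $p \in X$; replacing $\mathcal{E}$ by the finite-index subgroup that pointwise fixes the closed cube carrying $p$, one has $\mathcal{E} \leq \mathrm{stab}(v)$ for some vertex $v \in X$.

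\textbf{Bounding the two parts.} Choose $v$ as basepoint, and let $L$ bound the displacement of the generators of $K$, so that the translation length satisfies $\tau(k) \leq L|k|_K$ for all $k \in K$. An elliptic element commuting with a hyperbolic element must act trivially on its axis, so $\tau$ is $\mathcal{E}$-invariant and descends to $H/\mathcal{E} \cong \mathbb{Z}^n$; there, subadditivity for commuting isometries, integer homogeneity, and positivity on nontrivial elements (which are hyperbolic by construction of $\mathcal{E}$) promote it to a norm, yielding $\|t\|_{\mathbb{Z}^n} \asymp \tau(h) \leq L|h|_K$. For the $\mathcal{E}$-part, the two hypotheses combine: $\mathcal{E}$ is undistorted in $\mathrm{stab}(v)$, and $\mathrm{stab}(v)$ is undistorted in $K' := \langle K, \mathrm{stab}(v) \rangle$ (a finitely generated subgroup of $G$, since $\mathrm{stab}(v)$ is finitely generated). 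Hence $|e|_\mathcal{E} \asymp |e|_{K'} \leq |e|_K$, and combining with $|e|_K \leq |h|_K + |t|_K \leq |h|_K + \|t\|_{\mathbb{Z}^n} \preceq |h|_K$ gives $|e|_\mathcal{E} \preceq |h|_K$. Adding the two estimates yields $|h|_H \preceq |h|_K$, as desired. The main delicate points are the common fixed-point lemma for commuting elliptic isometries and the verification that translation length descends to a genuine norm on the free abelian quotient; the rest is bookkeeping.
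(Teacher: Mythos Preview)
Your overall strategy matches the paper's: split $H$ (up to finite index) into an elliptic part $\mathcal{E}$ and a purely loxodromic complement $\mathbb{Z}^n$, control the elliptic part via the two undistortion hypotheses, and control the $\mathbb{Z}^n$ part via translation length. The paper runs this as a proof by contradiction, but the content is the same.

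The gap is in the sentence ``subadditivity\ldots, integer homogeneity, and positivity on nontrivial elements\ldots\ promote it to a norm, yielding $\|t\|_{\mathbb{Z}^n} \asymp \tau(h)$''. Subadditivity, homogeneity, and positivity on $\mathbb{Z}^n\setminus\{0\}$ do \emph{not} by themselves force $\tau$ to be comparable to the standard word norm. The homogeneous extension of such a $\tau$ to $\mathbb{R}^n$ is only a seminorm, and its kernel can meet $\mathbb{Z}^n$ trivially while still being nonzero: for instance $\tau(a,b)=|a+\alpha b|$ with $\alpha$ irrational is subadditive, homogeneous, and strictly positive on $\mathbb{Z}^2\setminus\{0\}$, yet $\tau(a_k,b_k)/(|a_k|+|b_k|)\to 0$ along good rational approximations to $-\alpha$. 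You might object that this particular $\tau$ cannot arise from a cube-complex action, but your argument never invokes anything specific to cube complexes at this step, so as written it does not exclude such behaviour.

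This is exactly what the paper isolates as Lemma~\ref{lem:CCundistorted}: to get $\tau(t)\geq C\,|t|_{\mathbb{Z}^n}$ it passes (via \cite{MR3704240}) to an invariant finite-dimensional CAT(0) subcomplex and then applies the flat torus theorem, obtaining a genuine Euclidean flat on which the $\mathbb{Z}^n$ acts properly and cocompactly; on that flat, translation length is an honest norm on $\mathbb{R}^n$, hence equivalent to the standard one. You need an argument of this kind (properness of the induced $\mathbb{Z}^n$-action, existence of an invariant flat, or some cube-complex-specific integrality argument carried to its conclusion) to close the gap. The rest of your bookkeeping, including the treatment of $\mathcal{E}$ through $\mathrm{stab}(v)$, is fine.
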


\noindent
In order to prove Proposition~\ref{prop:CCundistorted} we need the next preliminary lemma. Given a CAT(0) cube complex $X$, we denote by $\|\cdot\|$ the corresponding \emph{translation length function}, defined by 
$$\|g\|:= \lim\limits_{n \to + \infty} \frac{1}{n} d(x_0,g^n \cdot x_0), \text{ $g \in \mathrm{Isom}(X)$, $x_0 \in X$ basepoint}.$$
It is well-known that the limit exists and does not depend on the basepoint $x_0$ we choose.

\begin{lemma}\label{lem:CCundistorted}
Let $A$ be a finitely generated abelian group acting properly on a CAT(0) cube complex. There exists some $C>0$ such that $\|a\| \geq C \cdot |a|_A$ for every $a \in A$, where $| \cdot |_A$ denotes the word length with respect to $A$.
\end{lemma}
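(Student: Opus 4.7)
The plan is to exploit the Flat Torus Theorem to produce an $A$-invariant Euclidean flat in $X$ on which $\|\cdot\|$ realises a genuine Euclidean norm, and then to compare this norm with the word length via \v{S}varc--Milnor.

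I would first check that $\|\cdot\|$ defines a seminorm on $A$. The homogeneity $\|a^k\|=|k|\cdot\|a\|$ is built into the definition of stable translation length, while subadditivity on commuting pairs follows from the identity $(ab)^n=a^nb^n$ combined with the triangle inequality $d(x_0,a^nb^nx_0)\leq d(x_0,a^nx_0)+d(x_0,b^nx_0)$. Since $A$ is abelian, this yields subadditivity throughout. Moreover $\|a\|=0$ precisely when $a$ is elliptic, and properness of the action forces these to coincide with the (finite) torsion subgroup $T\leq A$. Hence $\|\cdot\|$ descends to a norm on the free quotient $A/T\cong\mathbb{Z}^n$.

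I would next apply the Flat Torus Theorem of Bridson--Haefliger to the free abelian subgroup $A_0\cong\mathbb{Z}^n$ of $A$. Its semi-simplicity hypothesis is satisfied: finite-order elements are elliptic on CAT(0) cube complexes, while every infinite-order cellular isometry of the (locally finite-dimensional) CAT(0) cube complexes relevant to us is hyperbolic, with no parabolic phenomena. The theorem then produces an $A_0$-invariant, isometrically embedded Euclidean flat $\mathbb{R}^n\hookrightarrow X$ on which $A_0$ acts cocompactly as a translation lattice. On this flat, the translation length of $a\in A_0$ coincides with the Euclidean length $\|v_a\|_2$ of its translation vector, and the correspondence $a\mapsto v_a$ is a linear isomorphism between $A_0$ and this lattice.

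Finally, the \v{S}varc--Milnor lemma applied to the proper, cocompact translation action $A_0\curvearrowright\mathbb{R}^n$ provides a bi-Lipschitz equivalence between the word metric on $A_0$ and the Euclidean metric, yielding a constant $C>0$ with $\|a\|\geq C\cdot|a|_{A_0}$ for every $a\in A_0$ (one may equivalently view this as the equivalence of any two norms on $\mathbb{R}^n$, restricted to a lattice). Passing back to $|\cdot|_A$ via the finite-index inclusion $A_0\leq A$, so that the two word lengths are comparable on $A_0$, yields the desired inequality after suitably adjusting $C$. The main subtlety in the plan is verifying the semi-simplicity hypothesis needed for the Flat Torus Theorem; this is handled by invoking the standard dichotomy between elliptic and hyperbolic cellular isometries on locally finite-dimensional CAT(0) cube complexes.
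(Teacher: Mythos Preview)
Your approach is essentially the paper's: Flat Torus Theorem plus \v{S}varc--Milnor. The paper does not bother with the seminorm discussion (it is not needed), and it works with $A$ directly on the flat rather than first passing to a torsion-free $A_0$, but these are cosmetic differences.

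The one point where your argument is weaker than the paper's is the justification of semi-simplicity. You invoke a ``standard dichotomy between elliptic and hyperbolic cellular isometries on locally finite-dimensional CAT(0) cube complexes'', but this is not quite the right statement: Haglund's combinatorial semi-simplicity holds in full generality, while CAT(0) semi-simplicity (which is what the Flat Torus Theorem of \cite[Theorem~II.7.1]{MR1744486} requires) is only guaranteed in \emph{finite} dimension. The lemma is stated for arbitrary CAT(0) cube complexes, and ``locally finite-dimensional'' is both an extra hypothesis you are importing and, by itself, still not enough. The paper handles this by first citing \cite{MR3704240} to produce an $A$-invariant subcomplex $Y$ that is CAT(0) and genuinely finite-dimensional; the Flat Torus Theorem is then applied inside $Y$, and the finite dimension also supplies the comparison factor $\dim(Y)^{-1}$ between the combinatorial and CAT(0) translation lengths. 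You should insert this reduction step; once you do, the rest of your argument goes through as written.
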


\begin{proof}
As a consequence of \cite{MR3704240}, $A$ stabilises a subcomplex $Y$ that is CAT(0) and finite-dimensional, and it follows from the flat torus theorem \cite[Theorem II.7.1]{MR1744486} that $Y$ contains a CAT(0)-convex flat $F$ on which $A$ acts properly and cocompactly. Fix a point $x_0 \in F$. As a consequence of the Milnor-\v Svarc lemma, there exist constants $C_1,C_2>0$ such that $d_2(a \cdot x_0, b \cdot x_0) \geq C_1 \cdot |a^{-1}b|_A - C_2$ for all $a,b \in A$, where $d_2$ denotes the CAT(0) metric. Because $Y$ is finite-dimensional, we also know that the metrics $d$ and $d_2$ are biLipschitz equivalent on $Y$. In particular, there exists a constant $C_3>0$ such that $d(x,y) \geq C_3 d_2(x,y)$ for all vertices $x,y \in Y$. Then
$$\begin{array}{lcl} \|a\| & \geq & \displaystyle C_3 \lim\limits_{n \to + \infty} \frac{1}{n} d_2(x_0,a^n \cdot x_0) \geq C_1 C_3  \lim\limits_{n \to + \infty} \frac{1}{n} \left( |a^n|_A - C_2/C_3 \right) \\ \\ & \geq & \displaystyle C_1 C_3 \cdot |a|_A \end{array}$$
for all $a \in A$. This concludes the proof of our lemma.
\end{proof}

\begin{proof}[Proof of Proposition~\ref{prop:CCundistorted}.]
Assume for contradiction that $G$ contains a distorted finitely generated abelian subgroup $A$. Up to replacing $G$ with a finitely generated subgroup containing $A$, we assume that $G$ is finitely generated. The fact that $A$ is distorted in $G$ precisely means that there exist elements $g_n \in A$ such that $|g_n|_G / |g_n|_A \to 0$ as $n \to + \infty$. 

\medskip \noindent
As a consequence of Proposition~\ref{thm:PolycyclicCC}, up to replacing $A$ by a finite index subgroup, we can assume that $A$ is a direct product $A_1 \oplus A_2$ where $A_1$ is elliptic and $A_2$ purely loxodromic. (This can also be shown directly without using Proposition~\ref{thm:PolycyclicCC}). Again up to replacing $A$ with a finite-index subgroup, we assume without loss of generality that $A_1$ fixes a vertex. For every $n \geq 0$, fix $a_n \in A_1$ and $b_n \in A_2$ such that $g_n=a_nb_n$. We distinguish two cases.

\medskip \noindent
First, assume that there exists some $C>0$ such that $|b_n|_A \geq C \cdot |g_n|_A$ for every $n \geq 0$. Because $\|\cdot\|$ satisfies a triangle inequality, we have $\|g_n\| \leq C_1 \cdot |g_n|_G$ for every $n \geq 0$, where $C_1$ is the maximum of the $\|s\|$ where $s$ is one of our generators of $G$. Also, observe that, since $A_1$ is elliptic, the equality $\|g_n\|=\|b_n\|$ holds for every $n \geq 0$. By combining these two observations with Lemma~\ref{lem:CCundistorted}, we find some constant $C_2>0$ such that
$$C_2 \cdot |g_n|_A \leq C_2 \cdot |b_n|_A \leq \|b_n\| = \|g_n\| \leq C_1 \cdot |g_n|_G$$
for every $n \geq 0$. We obtain a contradiction by dividing by $|g_n|_A$ and by letting $n$ go to infinity.

\medskip \noindent
Next, assume that $|b_n|_A/|g_n|_A \to 0$ as $n \to + \infty$. Up to extracting a subsequence, this amounts to assuming the previous case does not happen. Because vertex-stabilisers are undistorted in $G$ and have their finitely generated abelian subgroups undistorted, there exists some $C>0$ such that $|a_n|_G \geq C \cdot |a_n|_A$ for every $n \geq 0$. We have
$$\begin{array}{lcl} |g_n|_G & \geq & |a_n|_G- |b_n|_G \geq C \cdot |a_n|_A - |b_n|_A \\ \\ & \geq & C \left( |a_n|_A + |b_n|_A \right) - (C+1) \cdot |b_n|_A = C \cdot |g_n|_A - (C+1) \cdot |b_n|_A \end{array}$$
for every $n \geq 0$. We obtain a contradiction by dividing by $|g_n|_A$ and by letting $n$ go to infinity.
\end{proof}

\noindent
Thus, in order to prove Theorem~\ref{thm:DistAbelian}, it remains to verify that the vertex-stabilisers characterised by Lemma~\ref{lem:Stab} are undistorted in the Chambord group under consideration. This is done by our next proposition.

\begin{prop}\label{prop:BraidUndistorted}
Let $\mathcal{P}= \langle \mathcal{A} \mid \mathcal{R} \rangle$ be an arboreal semigroup presentation, $w \in \mathcal{A}^+$ a baseword, and $A$ a $(\mathcal{P},w)$-forest. The braid subgroup $$\mathcal{B}_A:= \{g \in C(\mathcal{P},w) \mid g \equiv (A,\alpha,A) \text{ for some braid $\alpha\in\mathcal{B}_{i(A),\ell(A)}$} \}$$ is undistorted in $C(\mathcal{P},w)$.
\end{prop}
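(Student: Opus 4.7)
The plan is to construct a set-theoretic quasi-retraction $\pi : C(\mathcal{P},w) \to \mathcal{B}_A$ whose restriction to any finitely generated subgroup $G \subseteq C(\mathcal{P},w)$ containing $\mathcal{B}_A$ is coarsely Lipschitz from the word metric of $G$ to the word metric of $\mathcal{B}_A$. Once such a map is available with $\pi|_{\mathcal{B}_A} = \mathrm{id}$, undistortion follows immediately: for every $b \in \mathcal{B}_A$ one has $|b|_{\mathcal{B}_A} = |\pi(b)|_{\mathcal{B}_A} \leq K \cdot |b|_G + K'$, where $K,K'$ depend only on a fixed finite generating set of $G$.

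The construction of $\pi$ exploits the diagrammatic description developed in Section~\ref{section:Chambord}. Given $g \in C(\mathcal{P},w)$ with reduced form $(R,\beta,S)$, Lemma~\ref{lem:Bigger} allows us to add dipoles to both $R$ and $S$ until each contains $A$ as a prefix, producing an equipotent representative $(R^+,\beta^+,S^+)$. Inside the cylinder carrying $\beta^+$, the vertices of $A$ single out $i(A)$ distinguished punctures together with $\ell(A)$ distinguished arcs on the boundary; erasing the remaining strands and wires yields a braid $\pi_A(\beta^+) \in \mathcal{B}_{i(A),\ell(A)}$, and I set $\pi(g) := [(A, \pi_A(\beta^+), A)] \in \mathcal{B}_A$. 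First I would verify that this definition is independent of the chosen enlargement---which reduces to observing that the additional dipoles contribute only parallel strands and wires lying outside the $A$-region and therefore do not affect the erasure---and that $\pi|_{\mathcal{B}_A}$ is the identity, which is immediate since an element $(A,\beta,A)$ is already in the desired form with $\pi_A(\beta) = \beta$.

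The core of the argument is the coarse Lipschitz property: for every generator $s$ of $G$ and every $g \in G$, the element $\pi(g)^{-1}\pi(gs) \in \mathcal{B}_A$ must have word length bounded by a constant $K(s)$ independent of $g$. The strategy is to select compatible enlargements of $g$ and $s$ whose bottom and top forests coincide (via Lemma~\ref{lem:Bigger}), compute $\pi(gs)$ from the composed braid, and observe that the ``far-away'' portion of the composition---which can be arbitrarily large as $g$ grows---consists of trivial parallel strand-wire configurations erased by $\pi_A$. The local modification of the composed braid introduced by $s$ is then controlled by a bounded braid depending only on $s$ and $A$.

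The main obstacle is precisely this uniform bound. One must prove that the restriction operator $\pi_A$ is genuinely insensitive to arbitrarily large trivial extensions of the diagram outside a fixed neighbourhood of the $A$-region, and that each generator $s$ admits a representative whose support is contained in such a bounded neighbourhood. Combining these two facts yields $K(s)$. The failure of $\pi_A$ to be a homomorphism means this cannot be deduced formally from compatibility of multiplication, so the estimate must be obtained by a careful geometric analysis of how $s$-induced dipole cascades interact with the $A$-region. This uniform-insensitivity argument, together with the bookkeeping of dipole additions under composition, is the technical heart of the proof; once it is established, the quasi-retraction property and the conclusion follow routinely.
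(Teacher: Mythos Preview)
Your overall strategy---building a set-theoretic quasi-retraction onto $\mathcal{B}_A$ and showing it is coarsely Lipschitz on any finitely generated overgroup---is exactly the paper's approach. There are, however, two genuine problems.

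First, your projection is not well-defined as stated. Once you enlarge to $(R^+,\beta^+,S^+)$ with $A$ a prefix of both forests, the $\mathcal{P}$-leaves of $A$ need not be $\mathcal{P}$-leaves of $R^+$ (they may have acquired children in $R^+$), so there are no ``$\ell(A)$ distinguished arcs on the boundary'' of the cylinder to keep; and even for the $i(A)$ distinguished punctures coming from the top copy of $A$, the strands starting there need not end at the corresponding punctures of the bottom copy of $A$, so the erasure does not obviously produce an element of $\mathcal{B}_{i(A),\ell(A)}$. The paper avoids both issues by forgetting the wires entirely: it retracts onto the finite-index subgroup $\mathcal{B}_A^- = \{(A,\alpha,A) : \alpha \in \mathcal{B}_{i(A)} \leq \mathcal{B}_{i(A),\ell(A)}\}$, keeping only the strands whose \emph{top} endpoint lies in $\iota(A)$, and requiring only the top forest to contain $A$.

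Second, and more seriously, you correctly isolate the uniform bound on $\pi(g)^{-1}\pi(gs)$ as the crux, but you neither prove it nor propose the right mechanism. Your sketch suggests that ``each generator $s$ admits a representative whose support is contained in a bounded neighbourhood'' of the $A$-region; this is not how the argument can go, because concatenating with $g$ forces one to add arbitrarily many dipoles to the representative of $s$, so no fixed small support survives the composition. The paper's argument is different: it shows that $\pi_A(gs) = \pi_A(g)\cdot(A,\beta,A)$ where $\beta$ is a \emph{subbraid of size $i(A)$} of some braid representing $s$, and then proves (Claim~\ref{claim:Bfinite}) that the set $\mathscr{B}_{i(A)}(s)$ of all such subbraids is finite. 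The key to that finiteness is a parallelism observation: every strand created by a dipole addition is parallel to a strand already present, so modulo parallelism the number of strands is bounded by the number of vertices in the reduced form of $s$, and replacing a strand by a parallel one alters the resulting subbraid only by boundedly many elementary twists. This is the substantive content missing from your sketch.
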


\begin{proof}
Recall that, for all $p,q \geq 0$, the subgroup $\mathcal{B}_p$ of $\mathcal{B}_{p,q}$ is given by the braids that fix each marked point on the boundary $\partial \mathscr{D}_{p,q}$ (or, in other words, whose wires are vertical straight lines). In the sequel, denote the ``true'' braid associated to an $\alpha \in \mathcal{B}_p$, namely the braid obtained from $\alpha$ by keeping its strands but removing its wires, as $\alpha^\circ$. 

\medskip \noindent
First, we define a projection $\pi_A : C(\mathcal{P},w) \to \mathcal{B}_A^-$, where $$\mathcal{B}_A^-:=\{g \in C(\mathcal{P},w) \mid g \equiv (A,\alpha,A) \text{ for some $\alpha \in \mathcal{B}_{i(A)} \leq \mathcal{B}_{i(A),\ell(A)}$} \},$$ as follows. Given a reduced diagram $g \in C(\mathcal{P},w)$, we add dipoles in order to write $g\equiv (R,\alpha,S)$ where $R$ contains $A$ as a prefix. Next, we set $\pi_A(g):= (A,\hat{\alpha},A)$ where $\hat{\alpha} \in \mathcal{B}_{i(A)}$ is the braid such that $\hat{\alpha}^\circ$ is obtained from $\alpha^\circ$ by removing the strands that are not connected to a $\mathcal{P}$-interior vertex of the prefix $A$ in the top forest $R$. It is worth noticing that $\hat{\alpha}$ does not depend on the representative $(R,\alpha,S)$ we chose, because strands connected to possible extra dipoles are removed anyway, so $\pi_A$ is well-defined. Also observe that $\pi_A$ restricts to the identity on $\mathcal{B}_A^-$.

\medskip \noindent
Next, given an element $g \in C(\mathcal{P},w)$ and an integer $k \geq 0$, we set
$$\mathscr{B}_k(g):= \{ \alpha \in \mathcal{B}_k \mid \text{$\alpha^\circ$ subbraid of $\beta^\circ$ of size $k$}, \ g \equiv (R,\beta,S) \},$$
where $\alpha^\circ$ being a \emph{subbraid of $\beta^\circ$ of size $k$} means that $\alpha^\circ$ is obtained from $\beta^\circ$ by removing all but $k$ strands. Observe that:

\begin{claim}\label{claim:ForRetraction}
For all $a,b \in C(\mathcal{P},w)$, there exists some $\beta \in \mathscr{B}_{i(A)}(b)$ such that the equality $\pi_A(ab)=\pi_A(a) \cdot (A,\beta,A)$ holds.
\end{claim}

\noindent
Write $a \equiv (R,\mu,S)$ and $b\equiv (S,\nu,T)$ where $R$ contains $A$ as a prefix. Let $\hat{\mu} \in \mathcal{B}_{i(A)}$ be such that $\hat{\mu}^\circ$ is the subbraid of $\mu^\circ$ containing the strands connected to the $\mathcal{P}$-interior vertices of $A$ in $R$, and let $\hat{\nu} \in \mathcal{B}_{i(A)}$ be such that $\hat{\nu}^\circ$ is the subbraid of $\nu^\circ$ containing the strands connected to the $\mathcal{P}$-interior vertices of $S$ that are the endpoints of the strands in $\hat{\mu}$. Then we have 
$$\pi_A(ab) = (A, \hat{\nu} \hat{\mu}, A) = (A,\hat{\mu},A) \cdot (A, \hat{\nu},A) = \pi_A(a) \cdot (A,\hat{\nu},A)$$
where $\hat{\nu} \in \mathscr{B}_{i(A)}(b)$. This concludes the proof of our claim.

\medskip \noindent
Our next observation is the key point of the argument:

\begin{claim}\label{claim:Bfinite}
For all $g \in C(\mathcal{P},w)$ and $k \geq 0$, the set $\mathscr{B}_k(g)$ is finite.
\end{claim}

\noindent
During the proof of the claim, the following terminology will be needed. For all $p,q \geq 0$, let us say that two strands $s_1,s_2$ in a braid $\alpha \in \mathcal{B}_{p,q}$ are \emph{parallel} if there exist two arcs $a_1 \subset \mathscr{D}_{p,q} \times \{0\}$, resp. $a_2 \subset \mathscr{D}_{p,q} \times \{1\}$, connecting the endpoints of $s_1, s_2$ in $\mathscr{D}_{p,q} \times \{0\}$, resp. in $\mathscr{D}_{p,q} \times \{1\}$, and lying in the lower part of the disc such that $a_1 \cup s_2 \cup a_2$ and $s_1$ are homotopic (with fixed endpoints) in the complement of the strands distinct from $s_1,s_2$ in $\mathscr{D}_{p,q} \times [0,1]$. (This amounts to saying that, up to isotopy, there exists a ribbon in $(\text{lower part of } \mathscr{D}_{p,q}) \times [0,1]$ disjoint from the strands different from $s_1,s_2$ such that two of its opposite sides are delimited by $s_1,s_2$ while its two remaining sides connect the endpoints of $s_1,s_2$ respectively in $\mathscr{D}_{p,q} \times \{0\}$, $\mathscr{D}_{p,q}\times\{1\}$.)

\medskip \noindent
Now, fix some $g \in C(\mathcal{P},w)$, some $k \geq 0$, and some finite subset $\mathscr{B} \subset \mathscr{B}_k(g)$. Write $g=(R,\alpha,S)$. By adding dipoles, we then write $g \equiv (U,\beta,V)$ such that all the braids in $\mathscr{B}$ are given by subbraids of $\beta^\circ$. Observe that $U$ has to contain $R$ as a prefix. As a consequence of the following three observations, we know that, if $r$ is a $\mathcal{P}$-leaf of $R$ and if $u \in U$ is one of its descendants, then the strands in $\beta$ starting from $r$ and $u$ are parallel.
\begin{itemize}
	\item In a diagram, if a strand is parallel to a wire and if we expand this wire to get a dipole, then, in the new diagram, the previous strand is parallel to the new wires.
	\item In a diagram, if a strand is parallel to a wire and if we expand a distinct wire to get a dipole, then, in the new diagram, the previous strand is still parallel to the wire.
	\item In a diagram, if two strands are parallel to the same wire, then they are parallel.
\end{itemize}
Indeed, it follows from the first two points that, in $(U,\beta,V)$, a strand that is connected to a vertex $v$ not $\mathcal{P}$-interior in $R \leq U$ must be parallel to the wires indexed by the $\mathcal{P}$-leaves of $U$ that are descendants of $v$. Next, we deduce from the third point that, for every vertex $v$ not $\mathcal{P}$-interior in $R \leq U$, the strand connected to $v$ must be parallel to the strand connected to the $\mathcal{P}$-leaf of $R$ that is a parent of $v$, as desired. Thus, up to parallelism, the number of strands in $\beta$ is at most the number of vertices $\# R$  of $R$. 

\medskip \noindent
Let $\mathscr{C}$ denote the parallel classes of strands in $\beta$, and define a map $\xi : \mathscr{B} \to \mathscr{C}^k$ that sends a subbraid of $\beta$ of size $k$ to the sequence of parallel classes of its $k$ strands (from left to right, seeing strands of an element in $\mathscr{B}$ as strands of $\beta$). Observe that, given an element of $\mathscr{B}$, replacing a strand with a parallel strand in $\beta$ may only modify the braid we get by a pre- and post-composition with an elementary twist that permutes two punctures along an arc in the lower halfdisc. So, for every $\mu \in \mathscr{B}$, there exist at most $(k-1)^2$ braids $\nu \in \mathscr{B}$ such that $\nu$ is obtained from $\mu$ by replacing a fixed strand with another parallel strand. It follows that the pre-image of a single point under $\xi$ has cardinality at most $(k-1)^{2k}$. We conclude that 
$$|\mathscr{B}| \leq k (k-1)^{2k} \cdot |\mathscr{C}| \leq k (k-1)^{2k} \cdot \# R.$$
Because our upper bound does not depend on $\mathscr{B}$, we deduce that $\mathscr{B}_k(g)$ must be finite. This completes the proof of Claim~\ref{claim:Bfinite}.

\medskip \noindent
We are now ready to conclude the proof of our proposition. So let $G \leq C(\mathcal{P},w)$ be a finitely generated subgroup containing $\mathcal{B}_A$. Because $\mathcal{B}_A^-$ has finite index in $\mathcal{B}_A$, it suffices to show that $\mathcal{B}_A^-$ is quasi-isometrically embedded in $G$. Fix a finite symmetric generating set $S \subset G$ and a finite generating set $R \subset \mathcal{B}_A^-$ containing $$\bigcup\limits_{s \in S} \{ g \in C(\mathcal{P},w) \mid g \equiv (A,\alpha,A) \text{ for some } \alpha \in \mathscr{B}_{i(A)}(s)\},$$
which is possible thanks to Claim~\ref{claim:Bfinite}. Let $\rho : G \to \mathcal{B}_A^-$ denote the restriction of $\pi_A$ to $G$. As a consequence of Claim~\ref{claim:ForRetraction}, for every $g \in G$ and every $s \in S$, there exists some $\varsigma \in \mathscr{B}_{i(A)}(s)$ such that $\rho(gs) = \rho(g) \cdot (A,\varsigma,A)$; observe that $(A,\varsigma,A)$ represents an element of $R$. Thus, $\rho$ defines a Lipschitz quasi-retraction of $G$ onto $\mathcal{B}_A^-$, which implies that $\mathcal{B}_A^-$ is quasi-isometrically embedded in $G$.

\end{proof}

\begin{proof}[Proof of Theorem~\ref{thm:DistAbelian}.]
As biautomatic groups, braid groups have their (finitely generated) abelian subgroups undistorted (see for instance \cite[Theorem~III.$\Gamma$.4.10]{MR1744486} and \cite[Theorem~9.3.4]{MR1161694}). As a consequence, the combination of Lemmas~\ref{lem:Translate} and~\ref{lem:Stab} with Proposition~\ref{prop:BraidUndistorted} shows that Proposition~\ref{prop:CCundistorted} applies, proving our theorem.
\end{proof}

\subsection{Polycyclic subgroups are virtually abelian}

\noindent
This section is dedicated to the proof of the following statement:

\begin{thm}\label{thm:PolyMod}
For every locally finite planar tree $A$, polycyclic subgroups in $\mathfrak{mod}(A)$ are virtually abelian.
\end{thm}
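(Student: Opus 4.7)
The plan is to translate the problem to the cubical setting and combine Theorem~\ref{thm:PolycyclicCC} with Theorem~\ref{thm:DistAbelian}. By Theorem~\ref{thm:BigChambordMCG}, $\mathfrak{mod}(A)$ is isomorphic to a Chambord group $C(\mathcal{P}, w)$, which acts on the CAT(0) cube complex $M(\mathcal{P}, w)$ constructed in Section~\ref{section:CCC}; any polycyclic subgroup $H$ inherits an action on $M(\mathcal{P},w)$. Applying Theorem~\ref{thm:PolycyclicCC} produces a finite-index subgroup $H_0 \leq H$ fitting in a short exact sequence
\[
1 \longrightarrow \mathcal{E} \longrightarrow H_0 \longrightarrow \mathbb{Z}^n \longrightarrow 1,
\]
where $\mathcal{E}$ is the normal subgroup of elements acting elliptically on $M(\mathcal{P}, w)$. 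Since virtual abelianity is inherited by and from finite-index subgroups, it suffices to show that $H_0$ is virtually abelian.

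The first main step is to prove that $\mathcal{E}$ itself is virtually abelian. Every element of $\mathcal{E}$ fixes a vertex, by Claim~\ref{claim:VertexFixed} combined with ellipticity, and vertex-stabilisers are isomorphic to braid groups $\mathcal{B}_{p,q}$ by Lemma~\ref{lem:Stab}. I would argue by induction on the Hirsch length of $\mathcal{E}$ that, after passing to a further finite-index subgroup, $\mathcal{E}$ globally fixes a vertex. The inductive step uses a normal infinite cyclic subgroup $\langle z \rangle \trianglelefteq \mathcal{E}$, whose fixed set is a non-empty, convex, $\mathcal{E}$-invariant subcomplex on which $\mathcal{E}/\langle z \rangle$ still acts with all elements elliptic. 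Once $\mathcal{E}$ (or a finite-index subgroup thereof) fixes a vertex, it embeds into $\mathcal{B}_{p,q}$, and polycyclic subgroups of braid groups are virtually abelian (a consequence of the Birman--Lubotzky--McCarthy classification of solvable subgroups of mapping class groups applied to the punctured disc), so $\mathcal{E}$ is virtually abelian. I expect the main technical obstacle to lie in this fixed-point induction, since the fixed set of an elliptic isometry of a CAT(0) cube complex is CAT(0)-convex but not automatically a cubical subcomplex: one may have to work with a cubical subdivision, or argue combinatorially using the hyperplane structure, to keep the machinery of Section~\ref{section:CCC} available throughout the induction.

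It remains to upgrade ``$\mathcal{E}$ virtually abelian with free abelian quotient'' to ``$H_0$ virtually abelian''. After a further finite-index passage, $H_0$ becomes an extension of $\mathbb{Z}^n$ by a free abelian group $\mathbb{Z}^m \leq \mathcal{E}$. Theorem~\ref{thm:DistAbelian} forces $\mathbb{Z}^m$ to be undistorted in $H_0$, which in turn forces the conjugation action of $\mathbb{Z}^n$ on $\mathbb{Z}^m$ to have all its integer eigenvalues on the unit circle: otherwise $\mathbb{Z}^m$ would be exponentially distorted in $H_0$, as happens in $\mathbb{Z}^m \rtimes_A \mathbb{Z}$ whenever $A$ has an eigenvalue of absolute value strictly greater than one. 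By Kronecker's theorem those eigenvalues are roots of unity, so after passing to yet another finite-index subgroup the action becomes unipotent, making $H_0$ nilpotent. Finally, any finitely generated nilpotent group that is not virtually abelian has its centre containing a cyclic subgroup of distortion at least quadratic (witnessed by a Heisenberg-type subquotient), again contradicting Theorem~\ref{thm:DistAbelian}. Hence $H$ is virtually abelian, proving the theorem.
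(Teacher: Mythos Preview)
Your overall architecture matches the paper's: reduce to a Chambord group, apply Theorem~\ref{thm:PolycyclicCC} to get a finite-index subgroup with elliptic kernel $\mathcal{E}$ and free abelian quotient, show $\mathcal{E}$ lands in a braid group (hence is virtually abelian), and then use Theorem~\ref{thm:DistAbelian} to kill the remaining non-abelian nilpotency. Two points deserve comment.

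\medskip
\textbf{The fixed-point step has a gap and is unnecessary.} Your Hirsch-length induction assumes the existence of a normal infinite cyclic subgroup $\langle z\rangle \trianglelefteq \mathcal{E}$, but polycyclic groups need not have one: in $\mathbb{Z}^2\rtimes_A\mathbb{Z}$ with $A\in SL_2(\mathbb{Z})$ hyperbolic, the only nontrivial normal abelian subgroup is $\mathbb{Z}^2$, and no cyclic subgroup of it is normal in the whole group. You could try to repair this by inducting on a normal free abelian subgroup instead, but then the base case (a finitely generated abelian group acting purely elliptically has a global fixed point) is already essentially Lemma~\ref{lem:FixedPoint}. The paper simply invokes Lemma~\ref{lem:FixedPoint} directly: since $\mathcal{E}$ is finitely generated (as a subgroup of a polycyclic group) and acts purely elliptically on $M(\mathcal{P},w)$, which has no infinite cube, it has a global fixed vertex by Lemma~\ref{lem:FixedPoint} together with Claim~\ref{claim:VertexFixed}. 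This bypasses all the fixed-set technicalities you flag.

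\medskip
\textbf{Your route to nilpotency is genuinely different and works.} Once $\mathcal{E}$ is virtually abelian, the paper exploits the topological picture: $\mathcal{E}$ sits in some $\mathrm{Mod}(\Sigma)$, the normalising action of the ambient group factors through $\mathrm{Mod}(\Sigma)$, and Lemma~\ref{lem:Normaliser} (centralisers of abelian subgroups have finite index in their normalisers in braid groups) forces a finite-index subgroup to centralise $\mathcal{E}$, yielding nilpotency. Your argument instead stays purely group-theoretic: undistortion of $\mathbb{Z}^m$ in $H_0$ (from Theorem~\ref{thm:DistAbelian}) forbids eigenvalues off the unit circle in the conjugation representation $\mathbb{Z}^n\to GL_m(\mathbb{Z})$, Kronecker's theorem makes them roots of unity, and a finite-index pass makes the action unipotent, hence $H_0$ nilpotent. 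This is correct and arguably cleaner, since it avoids the mapping-class-group normaliser lemma; the paper's approach, on the other hand, makes the geometry of the situation more transparent. Both finish the same way via Lemma~\ref{lem:NilpotentAbelian}.
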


\noindent
The theorem is stated for asymptotically rigid mapping class groups instead of more general Chambord groups because the proof is much easier and because our study deals with asymptotically rigid mapping class groups anyway. However, it is worth noticing that, since Remark~\ref{remark:ChambordInMod} shows that a Chambord group always embeds into an asymptotically rigid mapping class group, the conclusion of Theorem~\ref{thm:PolyMod} automatically holds for Chambord groups.

\medskip \noindent
The three following preliminary lemmas will be needed in the proof of Theorem \ref{thm:PolyMod}.

\begin{lemma}\label{lem:NilpotentAbelian}
Let $G$ be a group all of whose infinite cyclic subgroups are undistorted. Then every finitely generated nilpotent subgroup in $G$ must be virtually abelian.
\end{lemma}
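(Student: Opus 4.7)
The plan is to argue by contradiction: starting from a finitely generated nilpotent subgroup $N \leq G$ that fails to be virtually abelian, I will exhibit an infinite cyclic subgroup of $N$ (hence of $G$) that is distorted, contradicting the hypothesis on $G$. This is a well-known observation: commutators deep in the lower central series of a finitely generated nilpotent group are distorted as soon as the group is not virtually abelian.

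\medskip\noindent
First I would reduce to the torsion-free case. By Malcev's theorem, the finitely generated nilpotent group $N$ admits a torsion-free subgroup $N_0$ of finite index. Since $N$ is not virtually abelian, $N_0$ itself cannot be abelian, so $N_0$ is torsion-free nilpotent of some class $c \geq 2$. Because $\gamma_c(N_0) = [\gamma_{c-1}(N_0), N_0]$ is non-trivial, I can pick $a \in \gamma_{c-1}(N_0)$ and $b \in N_0$ with $z := [a,b] \neq 1$. The element $z$ lies in $\gamma_c(N_0) \subseteq Z(N_0)$, so it is central in $N_0$.

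\medskip\noindent
The next step is to exploit the centrality of $z$ to produce distortion. Because $[a,b]$ is central in $N_0$, the usual commutator identities collapse to $[a^n, b^n] = [a,b]^{n^2} = z^{n^2}$, and the left-hand side is manifestly represented by the word $a^{-n}b^{-n}a^n b^n$ of length $O(n)$ in any fixed finite generating set of $N_0$. Hence $|z^{n^2}|_{N_0} = O(n)$. Since $N_0$ is torsion-free and $z \neq 1$, the element $z$ has infinite order, so $\langle z \rangle \leq G$ is an infinite cyclic subgroup of $G$. The inclusions $\langle z \rangle \hookrightarrow N_0 \hookrightarrow G$ are $1$-Lipschitz, so $|z^{n^2}|_G \leq |z^{n^2}|_{N_0} = O(n)$. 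Thus $|z^k|_G$ grows at most like $\sqrt{k}$, which means $\langle z \rangle$ is distorted in $G$, contradicting the hypothesis.

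\medskip\noindent
There is essentially no obstacle to overcome: once one has a central non-trivial commutator $z$, its $n^2$-th power collapses to a word of length $O(n)$ by a one-line computation, yielding the desired distortion. The only mild subtlety is the initial reduction to a torsion-free finite-index subgroup, needed to ensure that $z$ has infinite order and generates a genuinely infinite cyclic subgroup of $G$; this is handled by Malcev's theorem at the outset.
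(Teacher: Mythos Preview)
Your proof is correct and follows the same strategy as the paper, which simply cites \cite{MR1872804} for the fact that a finitely generated nilpotent group that is not virtually abelian contains a distorted infinite cyclic subgroup; you have supplied the standard explicit argument (central commutator $z=[a,b]$ with $[a^n,b^n]=z^{n^2}$) rather than appealing to the reference. One minor point of phrasing: writing $|z^{n^2}|_G$ presumes $G$ is finitely generated, which is not assumed; what you actually establish (and all you need) is that $\langle z\rangle$ is distorted in the finitely generated subgroup $N_0\leq G$, which by the paper's definition of ``undistorted'' is exactly what it means for $\langle z\rangle$ to be distorted in $G$.
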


\begin{proof}
Every finitely generated nilpotent subgroup that is not virtually abelian contains distorted cyclic subgroups, see for instance \cite{MR1872804}.
\end{proof}

\begin{lemma}\label{lem:Normaliser}
For every $p \geq 0$, the centraliser of an abelian subgroup in the braid group $\mathcal{B}_p$ has finite index in its normaliser.
\end{lemma}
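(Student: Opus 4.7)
The plan is to identify $\mathcal{B}_p$ with the mapping class group of the $p$-punctured disc $D_p$ (with boundary fixed pointwise) and to apply the Nielsen-Thurston decomposition of Birman-Lubotzky-McCarthy to abelian subgroups. To any abelian subgroup $A \leq \mathcal{B}_p$ I would associate its \emph{canonical reduction system} $\sigma(A)$: a finite (possibly empty) multicurve in $D_p$, preserved setwise up to isotopy by every element of $A$, such that on each component of $D_p \setminus \sigma(A)$ every element of $A$ restricts to either a periodic or a pseudo-Anosov mapping class. The key point is that $\sigma(A)$ is a conjugacy invariant: $\sigma(gAg^{-1}) = g \cdot \sigma(A)$ for all $g \in \mathcal{B}_p$.

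The first step is then immediate: since every $g \in N(A)$ satisfies $g \cdot \sigma(A) = \sigma(A)$, one obtains a homomorphism $\rho \colon N(A) \to \mathrm{Sym}(\sigma(A))$ with finite image, whose kernel $K$ therefore has finite index in $N(A)$. Replacing $K$ by a further finite-index subgroup, I may assume every element of $K$ fixes each curve of $\sigma(A)$ up to isotopy and preserves each complementary component $\Sigma$ of $D_p \setminus \sigma(A)$ with its orientation.

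The second step is to show that $K$ acts on $A$ by conjugation through a finite quotient. Up to passing to a finite-index subgroup of $A$, it decomposes as the sum of a multi-twist part supported on $\sigma(A)$ together with ``pseudo-Anosov parts'' supported on the components $\Sigma$ on which $A|_{\Sigma}$ contains a pseudo-Anosov $\phi_\Sigma$. The multi-twist part is centralised by $K$ since $K$ fixes every curve of $\sigma(A)$. For each pseudo-Anosov component, the centraliser of $\phi_\Sigma$ in the mapping class group of $\Sigma$ is virtually infinite cyclic (a classical result of McCarthy), so $K$ acts on $\langle \phi_\Sigma \rangle$ by $\pm 1$. Combining over the finitely many components, the conjugation action of $K$ on $A$ factors through a finite elementary abelian $2$-group, so $C(A) \cap K$ has finite index in $K$, and finally $[N(A) : C(A)] < \infty$.

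The main obstacle will be technical: carefully decomposing $A$ (up to finite index) into its multi-twist part on $\sigma(A)$ and its pseudo-Anosov parts on the complementary components, and checking that the centraliser of a pseudo-Anosov on each subsurface is virtually cyclic. All of this is classical Nielsen-Thurston material, but it must be assembled with care in order to control the whole conjugation action of $K$ on $A$ simultaneously.
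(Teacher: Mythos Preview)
Your approach via Nielsen--Thurston theory is essentially correct and will work, but it is genuinely different from the paper's argument. The paper exploits the biautomaticity of $\mathcal{B}_p$: by results of \cite{MR3483604}, an abelian subgroup $H$ of a biautomatic group admits a common basepoint through which every $h \in H$ translates along a geodesic by its (conjugacy-invariant) translation length $\ell(h)$. One then chooses $R$ large enough that $S = \{h \in H : \ell(h) \le R\}$ generates $H$; this set is finite, and the normaliser permutes it, so the kernel of $N(H) \to \mathrm{Sym}(S)$ is exactly $C(H)$ and has finite index. This is shorter, avoids any surface topology, and in fact proves the statement for all biautomatic groups at once.

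Your route trades generality for concreteness: it stays entirely within classical mapping class group theory (canonical reduction systems, McCarthy's virtually cyclic centralisers of pseudo-Anosovs) and does not invoke biautomaticity. One point you should make explicit when writing it up: after passing to a finite-index subgroup $A' \le A$ with a clean decomposition, you show that a finite-index $K' \le N(A)$ centralises $A'$, not $A$ itself. To close the gap, observe that $K'$ still normalises $A$, so it acts on $A$ by automorphisms fixing the finite-index subgroup $A'$ pointwise; since $A$ is finitely generated abelian, the group of such automorphisms is finite, and hence $[K' : K' \cap C(A)] < \infty$. With that remark your argument is complete.
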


\begin{proof}
Let $H \leq \mathcal{B}_p$ be an abelian subgroup. Observe that, necessarily, $H$ must be finitely generated. Because $\mathcal{B}_p$ is biautomatic \cite[Theorem~9.3.4]{MR1161694}, it follows from \cite[Propositions~4.2 and~4.4]{MR3483604} that, given a Cayley graph $X$ of $\mathcal{B}_p$, there exists a vertex $x \in X$ such that, for every $h \in H$, $h$ acts as a translation of some length $\ell(h)$ on a bi-infinite geodesic passing through $x$. Fix some $R \geq 1$ sufficiently large such that $S:= \{h \in H \mid \ell(h) \leq R\}$ generates $H$. Observe that $S$ is finite, since it lies in $\{h \in H \mid d(x,h \cdot x) \leq R\}$, and that the action by conjugation on $H$ of the normaliser of $H$ in $\mathcal{B}_p$ stabilises $S$, since $\ell(\cdot)$ is conjugacy-invariant. By considering the kernel of this action, we conclude that the centraliser of $H$ in $\mathcal{B}_p$ has finite index in its normaliser, as desired.
\end{proof}

\begin{lemma}[\cite{GLUcremona}]\label{lem:FixedPoint}
Let $G$ be a finitely generated group acting on a CAT(0) cube complex without infinite cube. If the action is purely elliptic then it must be elliptic.
\end{lemma}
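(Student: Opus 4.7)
The plan is to locate a vertex of $X$ (or of its cubical subdivision, so that every elliptic element fixes an actual vertex rather than the barycenter of a cube) that is fixed by a chosen finite generating set $\{g_1, \ldots, g_n\}$ of $G$; such a vertex is then automatically fixed by all of $G$. For each generator $g_i$, the fixed-point set $F_i := \mathrm{Fix}(g_i)$ is a non-empty convex subcomplex of $X$ by the ellipticity of $g_i$, so the task reduces to showing that $\bigcap_{i=1}^n F_i \neq \emptyset$.

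First, I would prove pairwise intersection: for any $i, j$, the convex subcomplexes $F_i$ and $F_j$ meet. Assume for contradiction that they are disjoint. As non-empty disjoint convex subcomplexes in a CAT(0) cube complex without infinite cube, they admit a \emph{bridge}: a convex subcomplex of $X$ splitting as a product $P \times [0, L]$ with $L = d(F_i, F_j) > 0$, satisfying $P \times \{0\} \subset F_i$ and $P \times \{L\} \subset F_j$. The isometries $g_i$ and $g_j$ respectively fix one of the two faces of this bridge pointwise, so $\langle g_i, g_j\rangle$ preserves the product decomposition up to possibly swapping factors. A combinatorial analysis of how $g_i$ and $g_j$ act on the interval factor $[0,L]$ then produces a word in $g_i, g_j$ that translates non-trivially along this factor, yielding a loxodromic element of $G$ and contradicting pure ellipticity.

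Having established pairwise intersection, I would invoke the Helly property for finite families of convex subcomplexes in a CAT(0) cube complex without infinite cubes: pairwise intersection implies non-empty global intersection. Applied to $F_1, \ldots, F_n$, this gives a vertex $x \in \bigcap_i F_i$, which is fixed by every generator and hence by all of $G$, proving the action of $G$ is elliptic.

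The main obstacle is the pairwise intersection step: one must convert the hypothesis of disjoint fixed sets into an actual loxodromic element in $\langle g_i, g_j\rangle$ via a careful analysis of the bridge decomposition. The remaining ingredients---convexity of fixed sets of elliptic isometries and the Helly property for convex subcomplexes---are standard facts about CAT(0) cube complexes, and the no-infinite-cube hypothesis is precisely what ensures that the bridge is well-defined and finite-dimensional so that these combinatorial tools apply uniformly.
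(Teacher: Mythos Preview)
The paper does not actually prove this lemma; it quotes it from \cite{GLUcremona}. So there is no in-paper argument to compare against, and your proposal must be judged on its own.

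Your overall architecture---pass to the cubical subdivision, use that each $F_i=\mathrm{Fix}(g_i)$ is a nonempty convex subcomplex, establish pairwise intersection, then invoke the Helly property for finitely many convex subcomplexes in a median graph---is a legitimate route to the conclusion, and the Helly step genuinely needs no dimension hypothesis. The difficulty, as you correctly flag, is entirely in the pairwise step.

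However, your bridge argument for that step has a real gap. You assert that since $g_i$ fixes $P\times\{0\}$ and $g_j$ fixes $P\times\{L\}$ pointwise, the group $\langle g_i,g_j\rangle$ ``preserves the product decomposition up to possibly swapping factors.'' This does not follow: $g_i$ fixes $F_i$ pointwise but has no reason to stabilise $F_j$, hence no reason to stabilise the bridge $P\times[0,L]$ or the set of hyperplanes separating $F_i$ from $F_j$. So there is no interval factor on which $\langle g_i,g_j\rangle$ visibly acts, and the promised ``combinatorial analysis of the action on $[0,L]$'' cannot get started as written. Concretely, already in the standard cubulation of $\mathbb{R}^2$ one can take $g_i,g_j$ to be order-$4$ rotations about distinct lattice points; then $g_ig_j$ and $g_jg_i$ are both elliptic (order~$2$), and the loxodromic you need is something like $g_j^{2}g_i^{2}$, which your bridge picture does not produce.

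The underlying claim---two elliptic automorphisms of a CAT(0) cube complex with disjoint fixed-point sets generate a subgroup containing a loxodromic---is true, but its proof requires a different mechanism than the one you sketch (for instance an analysis of how the two isometries act on a separating hyperplane and its iterates, in the spirit of Serre's argument for trees, or an orbit-unboundedness argument). If you want to keep your outline, that is the lemma you must supply a genuine proof of; the rest of your plan is sound.
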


\begin{proof}[Proof of Theorem \ref{thm:PolyMod}.] 
Let $P \leq \mathfrak{mod}(A)$ be a polycyclic subgroup. According to Theorem~\ref{thm:BigChambordMCG}, $\mathfrak{mod}(A)$ is isomorphic to some Chambord group $C(\mathcal{P},w)$; so it acts on the CAT(0) cube complex $M(\mathcal{P},w)$. Then, we deduce from Theorem~\ref{thm:PolycyclicCC} that there exists a finite-index subgroup $\dot{P} \leq P$ such that $\mathcal{E}:= \{ g \in \dot{P} \mid \text{elliptic in $M(\mathcal{P},w)$} \}$ is a normal subgroup in $\dot{P}$ and such that $\dot{P}/ \mathcal{E}$ is free abelian. 

\medskip \noindent
It follows from the fact that $\dot{P}$ is polycyclic that $\mathcal{E}$ must be finitely generated. As a consequence of Lemma~\ref{lem:FixedPoint} and Claim~\ref{claim:VertexFixed}, $\mathcal{E}$ fixes a vertex in $\mathrm{M}(\mathcal{P},w)$. It follows from the characterisation of vertex-stabilisers given by Lemmas~\ref{lem:Translate} and~\ref{lem:Stab} (combined with Theorem~\ref{thm:BigChambordMCG}) that $\mathcal{E}$ lies in $\mathrm{Mod}(\Sigma)$ for some admissible subsurface $\Sigma$. From now on, we assume that $\Sigma$ is the unique admissible subsurface that is minimal with respect to the inclusion among the admissible subsurfaces $\Xi$ satisfying $\mathcal{E} \subset \mathrm{Mod}(\Xi)$. 

\medskip \noindent
Because $\dot{P}$ normalises $\mathcal{E}$, it must stabilise $\Sigma$. (More precisely, every element in $\dot{P}$ can be represented by a homeomorphism $g$ satisfying $g \Sigma= \Sigma$.) Consequently, the morphism $\dot{P} \to \mathrm{Aut}(\mathcal{E})$ given by the action of $\dot{P}$ on $\mathcal{E}$ by conjugation factorises through the inner automorphism group of $\mathrm{Mod}(\Sigma)$, i.e. for every $g \in \dot{P}$ there exists some $h \in \mathrm{Mod}(\Sigma)$ such that $g\beta g^{-1} = h \beta h^{-1}$ for every $\beta \in \mathrm{Mod}(\Sigma)$. Let $\ddot{P}$ denote the pre-image under this morphism of the inner automorphism group of $\mathrm{Mod}(\Sigma,\partial \Sigma)$.

\medskip \noindent
So far, we have a finite-index subgroup $\ddot{P} \leq P$ that contains the normal subgroup $\ddot{\mathcal{E}} := \mathcal{E} \cap \ddot{P} \leq \mathrm{Mod}(\Sigma,\partial \Sigma)$ such that $\ddot{P}/\ddot{\mathcal{E}}$ is free abelian and such that the action by conjugation of $\ddot{P}$ on $\ddot{\mathcal{E}}$ factorises through the inner automorphism group of $\mathrm{Mod}(\Sigma,\partial \Sigma)$. But we know that, in $\mathrm{Mod}(\Sigma, \partial \Sigma)$, solvable subgroups are virtually abelian \cite{MR726319}, so $\ddot{\mathcal{E}}$ must be virtually abelian. Because polycyclic groups are subgroup separable \cite{PolyLERF}, there exists a finite-index subgroup $\bar{P} \leq \ddot{P}$ such that $\bar{\mathcal{E}}:= \ddot{\mathcal{E}} \cap \bar{P}$ is abelian. As a consequence of Lemma~\ref{lem:Normaliser}, $\bar{P}$ virtually centralises $\bar{\mathcal{E}}$, i.e. there exists a finite-index subgroup $\hat{P}\leq \bar{P}$ containing $\bar{\mathcal{E}}$ in its center. 

\medskip \noindent
Now, $\hat{P}$ is nilpotent and it follows from Lemma \ref{lem:NilpotentAbelian} (which applies thanks to Theorems~\ref{thm:DistAbelian} and~\ref{thm:BigChambordMCG}) that it must be virtually abelian, concluding the proof of our theorem.
\end{proof}

\section{Concluding remarks and open questions}\label{section:last}

\noindent
Let us summarise what we know about the structure of subgroups in asymptotically rigid mapping class groups.

\paragraph{Finitely generated solvable subgroups.} Our main result shows that polycyclic subgroups must be virtually abelian and undistorted, so it is natural to ask if the conclusion holds for a more general family of finitely generated solvable groups. However, our next statement shows that this does not happen even for metabelian groups.

\begin{prop}\label{prop:Wreath}
Let $A$ be a locally finite planar tree. If $A$ admits a loxodromic isometry, then, for every $d \geq 1$, $\mathfrak{mod}(A)$ contains a copy of $\mathbb{Z}\wr \mathbb{Z}$ that has distortion at least polynomial of degree $d$.
\end{prop}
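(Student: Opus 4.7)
The argument splits into two parts: first constructing a copy of $\mathbb{Z}\wr\mathbb{Z}$ inside $\mathfrak{mod}(A)$, and then arranging for polynomial distortion of degree $d$.

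For the embedding, let $\tau \in \mathfrak{mod}(A)$ be an element induced by a loxodromic isometry of $A$ (with axis $\gamma$). Because $\tau$ has infinite orbit on the vertex set of $\gamma$, we can choose an admissible subsurface $\Sigma_0 \subset \mathscr{S}^\sharp(A)$ containing a vertex of $\gamma$ in its interior and small enough that the translates $\tau^n \Sigma_0$ are pairwise disjoint. Pick an infinite-order element $\beta \in \mathrm{Mod}(\Sigma_0, \partial \Sigma_0)$ --- this is possible provided $\Sigma_0$ contains at least one puncture, by taking e.g.\ a Dehn twist or a pure braid. Since $\tau^n \beta \tau^{-n}$ is supported on $\tau^n \Sigma_0$, the conjugates $\{\tau^n \beta \tau^{-n}\}_{n \in \mathbb{Z}}$ pairwise commute, and a standard argument (each $\tau^n\beta\tau^{-n}$ has infinite order and no nontrivial product of them can be trivial thanks to the disjoint-support property) gives $\langle \tau, \beta\rangle \cong \mathbb{Z} \wr \mathbb{Z}$.

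For the distortion of degree $d$, the idea is to replace the naive lamp $\beta$ by a carefully engineered element $\beta_d$ supported on a larger admissible subsurface $\Sigma_d \supset \Sigma_0$ (still with disjoint $\tau$-translates), chosen so that $\beta_d$ has a short ambient description but, read inside the resulting wreath product $\langle \tau, \beta_d\rangle$, produces ``thick'' configurations of large wreath-product length. Concretely, one exhibits a sequence of elements $w_k$ in this copy of $\mathbb{Z}\wr\mathbb{Z}$ whose wreath word length is at least $C k^d$ but whose length as elements of $\mathfrak{mod}(A)$ is at most $C' k$. The elements $w_k$ will be built from nested conjugates or commutators of $\beta_d$ by powers of $\tau$ together with short mapping classes living in the braid group $\mathrm{Mod}(\widetilde{\Sigma})$ of a suitable admissible subsurface $\widetilde{\Sigma}$; the construction is iterative, with each level of nesting contributing one degree to the polynomial blow-up.

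The main obstacle is precisely the quantitative step: producing $\beta_d$ together with the witnessing sequence $w_k$ achieving the lower bound $n^d$. The natural approach is induction on $d$, where at each step one introduces a new short ``accelerator'' in $\mathfrak{mod}(A)$ (a conjugator built from the braid group within an enlarged admissible subsurface) that boosts the degree of distortion by one. Verifying that each additional level genuinely contributes one degree reduces to a length estimate for iterated conjugation, relating the ambient word length of a nested expression to the wreath-product length of the configuration it encodes --- the careful counting of these lengths, and the verification that the supports remain suitably disjoint under $\tau$, is the technical heart of the proof.
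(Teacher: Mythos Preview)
Your first paragraph --- the construction of a copy of $\mathbb{Z}\wr\mathbb{Z}$ using a loxodromic element and a braid supported on a small admissible subsurface with disjoint translates --- is essentially the paper's argument and is fine.

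The gap is in the second part. You outline an inductive scheme for producing a special lamp $\beta_d$ and witnesses $w_k$, but you never actually carry it out: you do not specify $\beta_d$, you do not build the ``accelerators,'' and you do not perform the length estimates that you yourself call ``the technical heart of the proof.'' As it stands, this is a plan rather than a proof, and it is not at all clear that the iterative accelerator idea can be made to work as described (in particular, why conjugating by short elements of a fixed braid group $\mathrm{Mod}(\widetilde{\Sigma})$ should raise the distortion degree by one at each step is left entirely unexplained).

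More importantly, this whole direct construction is unnecessary. The paper's argument is much shorter: once you have \emph{any} copy of $\mathbb{Z}\wr\mathbb{Z}$ inside $\mathfrak{mod}(A)$, you simply invoke the known fact (Davis--Olshanskii, \cite{MR2811580}) that $\mathbb{Z}\wr\mathbb{Z}$ contains, for every $d\geq 1$, a copy of itself with distortion at least polynomial of degree $d$. Composing embeddings, such a sub-copy is then at least $d$-polynomially distorted in $\mathfrak{mod}(A)$. So the entire quantitative step reduces to a citation, and there is no need to engineer anything inside $\mathfrak{mod}(A)$ beyond the single copy of $\mathbb{Z}\wr\mathbb{Z}$ you already built.
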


\noindent
The proposition applies for instance to the braided lamplighter group $\mathrm{br}\mathcal{L}$ and the braided Ptolemy-Thompson groups $\mathrm{br}T_n:= \mathrm{br}T_{n,n+1}$ (see Section~\ref{section:Examples} for definitions).

\begin{proof}[Proof of Proposition~\ref{prop:Wreath}.]
Let $g \in \mathfrak{mod}(A)$ be a rigid homeomorphism induced by a loxodromic isometry of $A$ and let $\tau$ be a braid twisting some pair of punctures represented by two consecutive vertices along the axis of the previous isometry. Then $\{g^{2k} \tau g^{-2k} \mid k \in \mathbb{Z}\}$ is a collection of braids with pairwise disjoint supports, so it defines the basis of a free abelian subgroup of infinite rank. Moreover, $\langle g^2 \rangle$ acts on it by conjugation: it shifts the generators of the free basis. It is clear that the subgroup $\langle \tau , g \rangle \leq \mathfrak{mod}(A)$ is isomorphic to $\mathbb{Z}\wr \mathbb{Z}$. 

\medskip \noindent
It is proved in \cite{MR2811580} that, for every $d \geq 1$, $\mathbb{Z} \wr \mathbb{Z}$ contains a copy of itself with distortion at least polynomial of degree $d$. This concludes the proof of our proposition.
\end{proof}

\noindent
Also, observe that, in the conclusion of our main result, virtually abelian subgroups cannot be replaced with abelian subgroups since braid groups themselves contain virtually abelian subgroups that are not abelian. For instance, if $\alpha,\beta$ are two braids having disjoint supports $A,B$ and if $\gamma$ is a twist that permutes $A,B$, then $\langle \alpha,\beta,\gamma \rangle$ decomposes as $\mathbb{Z}^2 \rtimes \mathbb{Z}$ where $\mathbb{Z}$ acts on $\mathbb{Z}^2$ by permuting the two direct factors. Clearly, $\langle \alpha,\beta,\gamma \rangle$ is not abelian (indeed, $\gamma \alpha \gamma^{-1}=\beta$) but $\langle \alpha , \beta, \gamma^2 \rangle$ is a free abelian subgroup of index two.

\paragraph{Large abelian subgroups.} As shown by Proposition~\ref{prop:Wreath}, asymptotically rigid mapping class groups may contain free abelian subgroups of infinite rank. It is natural to ask what kind of abelian subgroups can be found.

\begin{question}
Let $A$ be a locally finite planar tree. Is an abelian subgroup in $\mathfrak{mod}(A)$ necessarily a direct sum of cyclic groups? Can it contain $\mathbb{Q}$ for instance?
\end{question}

\noindent
The question about $\mathbb{Q}$ is particularly intriguing. If we compare with other Thompson-like groups, close to our Chambord groups, we know that every abelian subgroup in a diagram group must be free \cite[Theorem 16]{MR1725439} but, on the other hand, the cyclic extension $\hat{T}$ of Thompson's group $T$ contains $\mathbb{Q}$ \cite{BelkQ}.

\paragraph{Torsion subgroups.} Finite subgroups in asymptotically rigid mapping class groups have been classified in \cite{GLU}, and Theorem~\ref{thm:IntroFW} proves that finitely generated torsion subgroups must be finite. This is sufficient to show that, in some cases, torsion subgroups are always finite.

\begin{prop}\label{prop:TorsionFinite}
For all $m \geq 1, n \geq 2$ satisfying $m \neq n-1$, every torsion subgroup in $\mathrm{br}T_{n,m}$ must be finite.
\end{prop}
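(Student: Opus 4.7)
The plan is to reduce the question to $T_{n,m}$ via the forgetful short exact sequence, and then to exploit the action on the annular CAT(0) cube complex provided by the Chambord description.

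First, recall the forgetful short exact sequence
$$1 \longrightarrow B_\infty \longrightarrow \mathrm{br}T_{n,m} \longrightarrow T_{n,m} \longrightarrow 1$$
from Section~\ref{section:Examples}. Its kernel $B_\infty$ is the direct limit of the classical braid groups $\mathcal{B}_p = \mathrm{Mod}(\mathscr{D}_{p,0})$, each of which is torsion-free; hence $B_\infty$ is torsion-free. Any torsion subgroup $H \leq \mathrm{br}T_{n,m}$ thus intersects $B_\infty$ trivially, and so embeds into $T_{n,m}$. It suffices to prove that every torsion subgroup of $T_{n,m}$ is finite under the condition $m \neq n-1$.

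Second, by the identification $T_{n,m} \cong D^b_a(\mathcal{P}_{n,m}, a)$ and the remark following Proposition~\ref{prop:MThreeCube}, the group $T_{n,m}$ acts on an annular analog $Q_a(\mathcal{P}_{n,m}, a)$ of the cube complex $M(\mathcal{P}_{n,m}, a)$. An analog of Lemma~\ref{lem:Stab} identifies vertex-stabilisers as finite cyclic groups (the rotational symmetries of $(\mathcal{P}_{n,m}, a)$-forests), and an analog of Claim~\ref{claim:VertexFixed} shows that every torsion element fixes a vertex. Using the annular version of Theorem~\ref{thm:IntroFW} (via Lemma~\ref{lem:FixedPoint} applied to the corresponding action), every finitely generated torsion subgroup of $T_{n,m}$ is finite, so any torsion subgroup $H$ is locally finite.

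The heart of the argument is then to produce a common fixed vertex for the locally finite group $H$ in $Q_a(\mathcal{P}_{n,m}, a)$. This is where the condition $m \neq n-1$ plays its essential role: the possible orders of torsion elements of $T_{n,m}$ are constrained by $\gcd(k,n-1) \mid m$, and when $m \neq n-1$ one can associate to each torsion element a \emph{canonical} minimal $(\mathcal{P}_{n,m}, a)$-forest on which it acts by a nontrivial rotation. By canonicality this minimal forest is preserved by any element of $H$ normalising (in particular, commuting with) the given element; iterating over $H$ yields a common fixed vertex, and $H$ thus sits inside a finite cyclic vertex-stabiliser.

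The main obstacle is the third step: rigorously establishing the canonicality of the minimal rotationally-invariant forest attached to a torsion element, and showing that these canonical data assemble into a single fixed vertex for all of $H$. The degenerate case $m = n-1$ has to be ruled out precisely because then $m + j(n-1) = (j+1)(n-1)$ is a multiple of $n-1$ for every $j \geq 0$, so the same rotation is realised on arbitrarily refined forests and no canonical minimum exists.
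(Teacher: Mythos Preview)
Your reduction to $T_{n,m}$ via the forgetful sequence and the deduction that torsion subgroups are locally finite are both correct, but you are working much harder than necessary and the part you flag as ``the main obstacle'' is a genuine gap that you have not closed.

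The paper's argument bypasses your entire third step. Once you know that any torsion subgroup $H$ is locally finite (which already follows from Theorem~\ref{thm:IntroFW} applied directly to $\mathrm{br}T_{n,m}$, without passing to $T_{n,m}$), the hypothesis $m \neq n-1$ enters through \cite[Theorem~4.7]{GLU}: there is a uniform bound $N$ on the order of every finite subgroup of $\mathrm{br}T_{n,m}$. A locally finite group whose finite subgroups have bounded order is itself finite (any $N+1$ elements generate a finite subgroup of order $\leq N$, forcing a coincidence). That is the whole proof.

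Your ``canonical minimal forest'' strategy could in principle be made to work, since $H$ is locally cyclic and hence abelian, so uniqueness of a minimal fixed vertex for one nontrivial $g\in H$ would force all of $H$ to fix it. But you have not established that uniqueness, and your discussion of why $m=n-1$ breaks it is heuristic rather than a proof. In effect you are attempting to re-derive the bound on finite-subgroup orders from scratch, when the paper simply cites it.
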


\begin{proof}
As a consequence of Theorem~\ref{thm:IntroFW}, every finitely generated torsion subgroup in $\mathrm{br}T_{n,m}$ must be finite. Therefore, any torsion subgroup in $\mathrm{br}T_{n,m}$ must be a union of finite subgroups. However, we know from \cite[Theorem~4.7]{GLU} that a finite subgroup in $\mathrm{br}T_{n,m}$ cannot be arbitrarily large, so the desired conclusion follows.
\end{proof}

\noindent
The case $m=n-1$ is less clear because $\mathrm{br}T_{n,n-1}$ contains finite-order elements of arbitrarily large order. Proposition~\ref{prop:TorsionFinite} contrasts with the fact that Thompson's group $T$ contains infinite torsion subgroups such as $\mathbb{Q}/\mathbb{Z}$ (see for instance \cite{MR2847521}). This leads naturally to the question:

\begin{question}
Does there exist a locally finite planar tree $A$ such that $\mathfrak{mod}(A)$ contains an infinite torsion subgroup?
\end{question}

\paragraph{Co-hopfian groups.} The diagrammatic representations of Thompson's groups $F,T,V$ can be used in order to prove that these groups are not co-hopfian, i.e. they contain proper copies of themselves. Following the terminology introduced in Section~\ref{section:CCV}, this can be done by replacing the $(1,2)$-diagram from the generating set of the groupoid with an arbitrary $(1,2)$-diagram. However, as mentioned earlier, our braided diagrams are not naturally endowed with the structure of a groupoid, and a similar argument fails. Hence the question: when are asymptotically rigid mapping class groups co-hopfian? As a particular case:

\begin{question}
Are the braided Higman-Thompson groups $\mathrm{br}T_{n,m}$ co-hopfian?
\end{question}

\addcontentsline{toc}{section}{References}

\bibliographystyle{alpha}
{\footnotesize\bibliography{Biblio}}

\Address

%\addcontentsline{toc}{section}{Index}
%
%\printindex

\end{document}